\newtheorem*{LCL}{Lebesgue's covering lemma}
\newtheorem*{prI}{Reflection principle~I}
\newtheorem*{prII}{Reflection principle~II}
\newtheorem*{prIII}{Reflection principle~III}
\newtheorem{thm}{Theorem}[section]
\newtheorem{lem}[thm]{Lemma}
\newtheorem{cor}[thm]{Corollary}
\newtheorem{prop}[thm]{Proposition}
\theoremstyle{definition}
\newtheorem*{notes}{Notes}
\newtheorem*{note}{Note}
\newtheorem{exas}[thm]{Examples}
\newtheorem{exa}[thm]{Example}
\newtheorem{remark}[thm]{Remark}
\newtheorem{remarks}[thm]{Remarks}
\newtheorem{defn}[thm]{Definition}
\newtheorem{noteB}{Note}
\newtheorem{note2.4}{Note}
\newtheorem{note2.6}{Note}
\newtheorem{sn}[thm]{Definition}
\newtheorem{step}{Step}
\newtheorem{que}[thm]{Question}
\newtheorem{no}[thm]{Standing notation}
\newtheorem{snAus}[thm]{Notation}
\newtheorem*{no*}{Standing notation}
\newtheorem{sa}[thm]{Standing assumption}
\newcommand{\z}{\boldsymbol{z}}
\journal{DCDS-A}
\begin{document}

\begin{frontmatter}
\title{Minimality, distality and equicontinuity for semigroup actions on compact Hausdorff spaces}

\address{$($Dedicated to the memory of Professor Isaac Namioka$)$}

\author{Joseph Auslander}
\ead{jna@math.umd.edu}
\address{Department of Mathematics, University of Maryland, College Park, MD 20742, U.S.A.}

\author{Xiongping Dai}
\ead{xpdai@nju.edu.cn}
\address{Department of Mathematics, Nanjing University, Nanjing 210093, People's Republic of China}

\begin{abstract}
Let $\pi\colon T\times X\rightarrow X$ with phase map $(t,x)\mapsto tx$, denoted $(\pi,T,X)$, be a \textit{semiflow} on a compact Hausdorff space $X$ with phase semigroup $T$. If each $t\in T$ is onto, $(\pi,T,X)$ is called surjective; and if each $t\in T$ is 1-1 onto $(\pi,T,X)$ is called invertible and in latter case it induces $\pi^{-1}\colon X\times T\rightarrow X$ by $(x,t)\mapsto xt:=t^{-1}x$, denoted $(\pi^{-1},X,T)$. In this paper, we show that $(\pi,T,X)$ is equicontinuous surjective iff it is uniformly distal iff $(\pi^{-1},X,T)$ is equicontinuous surjective. As applications of this theorem, we also consider the minimality, distality, and sensitivity of $(\pi^{-1},X,T)$ if $(\pi,T,X)$ is invertible with these dynamics. We also study the pointwise recurrence and Gottschalk's weak almost periodicity of $\mathbb{Z}$-flow with compact zero-dimensional phase space.
\end{abstract}

\begin{keyword}
Equicontinuity; distality; minimality; almost periodicity; reflection principle

\medskip
\MSC[2010] 37B05 $\cdot$ 37B20 $\cdot$ 20M20
\end{keyword}
\end{frontmatter}

\tableofcontents
\section{Introduction}\label{sec1}
Let $T$ be a \textit{topological semigroup} with identity $e$; that is, $T$ is a $T_2$-space, meanwhile it is a multiplicative semigroup with $te=et=t$ for all $t\in T$ such that the binary operation $(s,t)\mapsto st$ of $T\times T$ to $T$ is continuous.
Let $X$ be a non-empty compact $T_2$-space, unless stated otherwise, in this paper. Given $A\subset X$ by $\textrm{Int}_XA$ and $\textrm{cls}_XA$ we will denote respectively the interior and closure of $A$ relative to the space $X$. We will write $\varDelta_X=\{(x,x)\,|\,x\in X\}$ for the diagonal set of $X\times X$.

We say that $\pi\colon T\times X\rightarrow X, (t,x)\mapsto tx$ is a \textit{semiflow} or \textit{transformation semigroup} \cite{G,EEN} with phase space $X$ and with phase semigroup $T$, denoted $(T,X)$, if the phase map $(t,x)\mapsto tx$ is jointly continuous from $T\times X$ to $X$ such that
$$ex=x\ \forall x\in X\quad \textrm{and}\quad
t(sx)=(ts)x\ \forall s,t\in T, x\in X.$$ 
Here $\pi\colon (t,x)\mapsto tx$ is called the phase map of $(T,X)$.
When $T$ is a topological group, i.e., $G$ is a group such that $(s,t)\mapsto st^{-1}$ of $T\times T$ onto $T$ is continuous, then we shall call $(T,X)$ a \textit{flow} or \textit{transformation group} with the phase group $T$ (cf.~\cite{GH,E69,Fur,Aus,EE}).

Given any integer $k\ge2$, write $X^k=X\times\dotsm\times X$ ($k$-times) and let $(T,X^k)$ stand for the product semiflow with phase map $(t,(x_1,\dotsc,x_k))\mapsto (tx_1,\dotsc,tx_k)$.

\begin{no}\label{no1.1}
Given $(T,X), x\in X$ and subsets $A,U,V$ of $X$, we write
\begin{enumerate}
\item $Tx=\{tx\,|\,t\in T\}$,
\item $TA={\bigcup}_{t\in T}tA={\bigcup}_{x\in A}Tx$,
\item $N_T(x,U)=\{t\in T\,|\,tx\in U\}$,
\item ${N_T(U,V)}=\{t\in T\,|\,U\cap t^{-1}V\not=\emptyset\}$,
\item $t^{-1}x=\{y\in X\,|\,ty=x\}$ for all $t\in T$.
\end{enumerate}
\end{no}

\begin{no}\label{no1.2}
Let $\mathscr{U}_X$ be the compatible symmetric uniform structure of the compact $T_2$-space $X$; then for all $\varepsilon\in\mathscr{U}_X$, $x\in X$, and $B\subset X$, we will write
\begin{enumerate}
\item[1)] $\varepsilon[B]={\bigcup}_{x\in B}\varepsilon[x]$ where $\varepsilon[x]=\{y\in X\,|\,(x,y)\in\varepsilon\}$;
\item[2)] $T(\varepsilon[x],x)={\bigcup}_{t\in T}t(\varepsilon[x]\times\{x\})$; and
\item[3)]  for all $n\ge2$, $\frac{\varepsilon}{n}$ stands for an entourage in $\mathscr{U}_X$ with $(\frac{\varepsilon}{n})^n\subset\varepsilon$. In other words, if $x,z\in X$ are such that
$(x,y_1)\in\frac{\varepsilon}{n},  (y_1,y_2)\in\frac{\varepsilon}{n}, \dotsc, (y_{n-2},y_{n-1})\in\frac{\varepsilon}{n}$, and $(y_{n-1},z)\in\frac{\varepsilon}{n}$
then $(x,z)\in\varepsilon$.
\end{enumerate}
See, e.g.,~\cite{Kel, GH} and \cite[Appendix~II]{Aus}.
\end{no}

\begin{no}\label{sn1.3}
\begin{enumerate}
\item $(T,X)$ is \textit{surjective} if and only if each $t\in T$ is an onto self-map of $X$, i.e., $tX=X$ for all $t\in T$.
\item $(T,X)$ is called \textit{effective} if $t\not=e$ implies $tx\not=x$ for some $x\in X$.

\item $(T,X)$ is \textit{invertible} iff each $t\in T$ is 1-1 onto. In this case, by $\langle T\rangle$ we will denote the smallest group of self-homeomorphisms of $X$ containing $T$, and then $(\langle T\rangle,X)$ is a flow.
\end{enumerate}
\end{no}

However, it should be noted that since $T$ is in general neither a right-syndetic nor a normal subsemigroup of $\langle T\rangle$, $(T,X)$ and $(\langle T\rangle,X)$ do generally not possess the same dynamics. In fact, contrary to what one might hope or expect, the passage from group to the semigroup case is not straightforward in many important cases we will consider here; cf., e.g.,~\cite{EEN}.

Benjamin Weiss had pointed out an example of a minimal action of a group $\langle T\rangle$ with a generating subsemigroup $T$ whose action is not minimal (cf.~\cite[p.~3062]{AA}). We can, however, show that if $T$ is amenable here, Weiss' case does not occur (see Reflection principle~II below).

On the other hand, although $(\langle T\rangle, X)$ is minimal if $(T,X)$ is so; yet a minimal subset of $T$ need not be a minimal set of $\langle T\rangle$. Let us see an explicit example for this as follows.

\begin{exas}\label{ex1.4}
\begin{enumerate}
\item \textit{There exists an invertible semiflow $(T,X)$ such that there are points of $X$ which are minimal for $(T,X)$ but not for $(\langle T\rangle,X)$.}

\begin{proof}
Indeed, let $X=[-1,2]$ with the usual topology and for every $\alpha$ with $0<\alpha<1$, define two self homeomorphisms of $X$ as follows:
\begin{gather*}
f_\alpha\colon X\rightarrow X,\
\begin{cases}x\mapsto x& \textrm{if }-1\le x\le 0,\\
x\mapsto\alpha x& \textrm{if }0\le x\le 1,\\
x\mapsto(2-\alpha)x+2(\alpha-1)& \textrm{if }1\le x\le 2;\end{cases}\quad
\intertext{and}
g_\alpha\colon X\rightarrow X,\
\begin{cases}
x\mapsto(2-\alpha)x+(1-\alpha)& \textrm{if }-1\le x\le 0,\\
x\mapsto 1-\alpha(1-x) & \textrm{if }0\le x\le 1,\\
x\mapsto x & \textrm{if }1\le x\le 2.\end{cases}
\end{gather*}
Now let $T=\langle f_\alpha,g_\alpha\,|\,0<\alpha<1\rangle_+$ be the discrete semigroup generated by $\{f_\alpha\,|\, 0<\alpha<1\}$ and $\{g_\alpha\,|\, 0<\alpha<1\}$. It is easy to see that each $t\in T$ is bijective so that $(T,X)$ is invertible. And $\Lambda=[0,1]$ is minimal for $(T,X)$ so that every point of $\Lambda$ is minimal for $(T,X)$ but not for $(\langle T\rangle,X)$. In fact, since for every $x\in\Lambda$ we have $\textrm{cls}_X\langle T\rangle x=X$, and $-1$ and $2$ are fixed points, which are the only minimal points of $(\langle T\rangle,X)$, thus $x\in\Lambda$ is not minimal for $(\langle T\rangle,X)$.
Here each $t\in T\setminus\{e\}$ restricted to $\Lambda$ is not surjective.
\end{proof}

\item {\it There is an invertible semiflow $(T,X)$ on a compact metric space $X$ such that each orbit $Tx$ is not dense in $X$ but $(\langle T\rangle,X)$ has a residual set of points that have dense orbits.}

\begin{proof}
In fact, $(T,X)$ above is such a semiflow. The orbit of every point $x\in(-1,2)$ is not dense in $X$ for $(T,X)$ but dense for $(\langle T\rangle,X)$.
\end{proof}

\item Notice that there are points $x\in X$ such that $\textrm{Int}_XTx\not=\emptyset$ in 1. above. However, if we only consider the rational $\alpha$ with $0<\alpha<1$, then no point $x\in X$ such that $\textrm{Int}_X\langle T\rangle x\not=\emptyset$.
\end{enumerate}
\end{exas}

In applications of the topological dynamical systems theory, in fact we are often concerned with only semiflows, not flows. For example, for a flow $\pi\colon\mathbb{R}\times M^n\rightarrow M^n, (t,x)\mapsto tx$ on a manifold $M^n$ induced by a vector field, we are usually interested to the dynamics like recurrence and almost periodicity of trajectories as $t\to+\infty$ or $t\to-\infty$, not $|t|\to+\infty$. In this case, we need essentially to consider the invertible semiflow $\pi_+\colon\mathbb{R}_+\times M^n\rightarrow M^n$ (cf., e.g.,~\cite{SS74, SS, SSY, SY}).

Although dynamics on a metric phase space is an important case, yet in many interesting cases we have to face with non-metric phase spaces. For example, the universal dynamics are usually defined on compact $T_2$ non-metrizable phase spaces (cf.~\cite{E60, Ch62, C63-D, E69, G76, Aus, DG, ACD}). The Stone-\v{C}ech compactification $\beta T$ of the phase group or semigroup $T$ is also an important phase space which is compact $T_2$ non-metrizable in general.

In this paper, we shall be mainly concerned with the dynamics\,---\,equicontinuity, distality, minimality, sensitivity, and weak almost periodicity, each of which is one of the most fruitful notions in the abstract theory of topological dynamics.

These dynamics are all independent of the topology on the phase semigroup $T$. Thus henceforth, unless specified otherwise, we will assume in our later discussion:

\begin{sa}
\begin{enumerate}
\item The phase semigroup of any semiflow is a discrete infinite semigroup with identity element $e$. In this case, every compact subset of $T$ is finite.

\item The phase space of any semiflow is always assumed to be a non-empty compact $T_2$-space with the compatible symmetric uniform structure $\mathscr{U}$.
\end{enumerate}
\end{sa}

It turns out that many of our semiflow results can be useful for studying flows; for example, $\S\ref{sec7.2}$ and $\S\ref{uap}$.

\medskip
The authors would like to thank Professors Ethan~Akin Xiangdong~Ye for their much motivating discussion.

\subsection{Basic notions and preliminary lemmas}\label{sec1.1}
Let $(T,X)$ be a semiflow with phase semigroup $T$ and with phase map $\pi\colon(t,x)\mapsto tx$. We then first introduce and unify the most basic and important dynamics notions needed throughout in our later arguments.

\subsubsection{Equicontinuity by $\varepsilon$-$\delta$}\label{sec1.1.1}
\begin{description}
\item[(a)] $(T,X)$ is called \textit{equicontinuous} in case given $\varepsilon\in\mathscr{U}_X$ there exists a $\delta\in\mathscr{U}_X$ such that for all $t\in T$, $t(x,y)\in\varepsilon$ if $x,y\in X$ with $(x,y)\in\delta$.

\newpage
\item[(b)] We say $(T,X)$ is \textit{equicontinuous at $x\in X$}, denoted $x\in\mathrm{Equi}\,(T,X)$, if for all $\varepsilon\in\mathscr{U}_X$ there is a $\delta\in\mathscr{U}_X$ such that $t(\delta[x])\subset\varepsilon[tx]$ for all $t\in T$; or equivalently, $T(\delta[x],x)\subset\varepsilon$.
\end{description}
\begin{note}
These notions may also be defined for semiflows with phase spaces that are non-compact uniform spaces; see \cite[$\S$11]{GH}.
\end{note}

By (\textbf{a}), the equicontinuity of $(T,X)$ is independent of the topology of the phase semigroup $T$.
In addition, since here $X$ is a compact $T_2$-space, thus it holds that:

\begin{LCL}[{cf.~\cite[Theorem~5.27]{Kel}}]
If $\mathscr{V}$ is an open cover of $X$, then there exists a ``Lebesgue index'' $\delta\in\mathscr{U}_X$ such that given $x\in X$, $\delta[x]\subseteq V$ for some $V\in\mathscr{V}$.
\end{LCL}

From Lebesgue's covering lemma above and general topology, we can then easily obtain the following basic characterizations of equicontinuity of semiflows:

\begin{lem}[{cf.~\cite[11.06, 11.09, 11.12, 11.23]{GH} for general function spaces}]\label{lem1.6}
Let $(T,X)$ be a semiflow with phase map $(t,x)\mapsto tx$. Then the following statements are pairwise equivalent:
\begin{enumerate}
\item[$(1)$] $(T,X)$ is equicontinuous.
\item[$(2)$] $\mathrm{Equi}\,(T,X)=X$.
\item[$(3)$] If $\alpha\in\mathscr{U}_X$, then there exists a finite partition $\mathscr{X}$ of $X$ such that $A\in\mathscr{X}$ and $t\in T$ implies $tA\times tA\subseteq\alpha$.
\item[$(4)$] $(T,X)$ is totally bounded with $X^X$ in its space-index uniformity; that is, to each $\alpha\in\mathscr{U}_X$ there corresponds a finite subset $K$ of $T$ such that for every $t\in T$, $(tx,kx)\in\alpha\ \forall x\in X$, for some $k\in K$.
\item[$(5)$] If $\alpha\in\mathscr{U}_X$, then there exists a finite partition $\mathscr{T}$ of $T$ such that $B\in\mathscr{T}$ and $x\in X$ implies $Bx\times Bx\subseteq\alpha$.
\item[$(6)$] The orbit space $\{(tx)_{t\in T}\,|\,x\in X\}$ is totally bounded in $X^T$ provided with its space-index uniformity; i.e., to each $\alpha\in\mathscr{U}_X$ there corresponds a finite subset $K$ of $X$ such that for every $x\in X$, $(tx,tx_k)\in\alpha\ \forall t\in T$, for some $x_k\in K$.
\end{enumerate}
\begin{note}
We will show in $\S\ref{uap}$ that if it is surjective $(T,X)$ is equicontinuous iff it is almost periodic~(cf.~Theorem~\ref{u8.3}). Thus Lemma~\ref{lem1.6} is in fact a generalization of \cite[Theorem~4.38]{GH} from flows to semiflows. However, our proof presented here is self-contained and transparent.
\end{note}
\end{lem}

\begin{proof}
$(1)\Leftrightarrow(2)$: Let $\mathrm{Equi}\,(T,X)=X$. For $\varepsilon\in\mathscr{U}_X$ and $x\in X$, there is a $\delta_x\in\mathscr{U}_X$ with $T(\delta_x[x],x)\in\varepsilon/2$. Since $X$ is compact, by the Lebesgue covering lemma there exists a $\delta\in\mathscr{U}_X$ such that for all $y\in X$, $\delta[y]\subseteq\delta_x[x]$ for some $x\in X$. So by triangle inequality, $T(\delta[y],y)\subset\varepsilon$. Since $\varepsilon$ and $y$ are arbitrary, $(T,X)$ is equicontinuous. $(1)\Rightarrow(2)$ is obvious.

$(1)\Rightarrow(3)$: Let $\alpha\in\mathscr{U}_X$. There is $\delta\in\mathscr{U}_X$ such that $T\delta\subseteq\alpha$. Since $X$ is compact, we can take some $\beta\in\mathscr{U}_X$ such that $\beta[x]\times\beta[x]\subseteq\delta$ for all $x\in X$. Thus $t(\beta[x])\times t(\beta[x])\subseteq\alpha$ for all $t\in T$ and $x\in X$. This implies (3).

$(3)\Rightarrow(4)$: Let $\alpha\in\mathscr{U}_X$ and let $\varepsilon\in\mathscr{U}_X$ such that $\varepsilon[x]\times\varepsilon[x]\subseteq\alpha$ for all $x\in X$. Then there exists a finite partition $\mathscr{X}$ of $X$ such that $tA\times tA\subseteq\frac{\varepsilon}{3}$ for all $t\in T$ and $A\in\mathscr{X}$.
Select a finite subset $X_0$ of $X$ such that $X=\bigcup_{x_0\in X_0}\frac{\varepsilon}{3}[x_0]$. If $A\in\mathscr{X}$ and if $t\in T$, then there exists $x_0\in X_0$ such that $tA\cap\frac{\varepsilon}{3}[x_0]\not=\emptyset$ whence $tA\subseteq\varepsilon[x_0]$. Each $t\in T$ thus determines a mapping $t^*\colon\mathscr{X}\rightarrow X_0$ such that $tA\subseteq\varepsilon[t^*A]$ for all $A\in\mathscr{X}$. Since $X_0^\mathscr{X}$ is finite, there exists a finite subset $K$ of $T$ for which $\{t^*\,|\,t\in T\}=\{k^*\,|\,k\in K\}$. Now for any $t\in T$, there is some $k\in K$ with $t^*=k^*$ such that to each $x\in X$,
$(tx,kx)\in(tA,kA)\subseteq\varepsilon[k^*A]\times\varepsilon[k^*A]\subseteq\alpha$
for some $A\in\mathscr{X}$ with $x\in A$. Thus $(T,X)$ is totally bounded.

$(4)\Rightarrow(1)$: Let $\varepsilon\in\mathscr{U}_X$. Then there is a finite subset $K$ of $T$ such that for every $t\in T$, $(tx,kx)\in\frac{\varepsilon}{3}\ \forall x\in X$, for some $k\in K$. Since for each $t\in T$, $x\mapsto tx$ is uniformly continuous, so there is $\delta\in\mathscr{U}_X$ such that $(x,y)\in\delta$ implies $(kx,ky)\in\frac{\varepsilon}{3}$ for all $k\in K$. Then by triangle inequality, $T\delta\subseteq\varepsilon$. Thus (1) holds.

$(1)\Rightarrow(5)$: We first note that $C(X,X)$ is complete in its space-index uniformity. Then by (4), $E:=\mathrm{cls}_{C(X,X)}T$ is compact. Thus, $(p,x)\mapsto p(x)$ of $E\times X$ to $X$ is continuous whence uniformly continuous. This implies (5).

$(5)\Rightarrow(6)$: Let $\alpha\in\mathscr{U}_X$. By changing the roles of $t$ and $x$ in the above proof of ``$(3)\Rightarrow(4)$'' we can obtain that:
There exists a finite subset $K$ of $X$, for which to every $x\in X$ there corresponds $x_k\in K$ such that $(tx,tx_k)\in\alpha$ for all $t\in T$.
Thus (6) holds.

$(6)\Rightarrow(4)$: Let $\alpha\in\mathscr{U}_X$. By condition (6), we can define a finite partition $\mathscr{X}=\{A_k\,|\,k\in K\}$ of $X$, where $A_k=\{x\in X\,|\,(tx,tx_k)\in\frac{\alpha}{3}\,\forall t\in T\}$, such that if $A\in\mathscr{X}$ then there is some $x_k\in K$ with $(tA,tx_k)\in\frac{\alpha}{3}$ so that $tA\times tA\subseteq\alpha$ for all $t\in T$.

The proof of Lemma~\ref{lem1.6} is thus completed.
\end{proof}

\subsubsection{Minimality}\label{sec1.1.2}
\begin{description}
\item[(c)] A subset $A$ of $X$ is \textit{invariant} if $Tx\subseteq A$ for all $x\in X$, or equivalently, $TA\subseteq A$. It is \textit{negatively invariant} if $t^{-1}x\subseteq A$ for all $x\in A$ and $t\in T$.

\item[(d)] A subset $\Lambda$ of $X$ is referred to as \textit{minimal} if $\Lambda$ is a non-empty, closed, and invariant set containing no proper subsets with those properties.
If $X$ itself is minimal, then we call $(T,X)$ a \textit{minimal semiflow}.
\item[(e)] An $x\in X$ is called a \textit{minimal point} if $\textrm{cls}_X{Tx}$ is a minimal set of $(T,X)$. If every point of $X$ is minimal, then $(T,X)$ is called \textit{pointwise minimal}.
\end{description}

Clearly, the minimality is also independent of the topology of $T$; and $\Lambda$ is minimal if and only if it is exactly the orbit closure of each of its points.

Moreover, if $(T,X)$ is minimal, then $\textrm{cls}_X(T\setminus\{e\})X=X$ but there is no the property that $tX=X\ \forall t\in T$ in general.
Let's see such a counterexample, which is motivated by Furstenberg's \cite[p.~40]{Fur} for the case that $\alpha=1/2$.

\begin{exa}\label{ex1.7}
Let $X=[0,1]$ be the unit interval with the usual topology and for each $\alpha$ with $0<\alpha<1$, define two injective mappings of $X$ into itself as follows:
\begin{gather*}
f_\alpha\colon X\rightarrow X,\ x\mapsto\alpha x\quad{\textrm{and}}\quad
g_\alpha\colon X\rightarrow X,\  x\mapsto 1-\alpha(1-x).
\end{gather*}
Now let $T=\langle f_\alpha,g_\alpha\,|\,0<\alpha<1\rangle_+$ be the discrete free semigroup generated by $\{f_\alpha,g_\alpha\,|\,0<\alpha<1\}$. It is easy to see that
each $t\in T$ is injective and $(T,X)$ is equicontinuous minimal with $\textrm{cls}_X(T\setminus\{e\})X=X$, but each $t\in T\setminus\{e\}$ is not surjective.
\end{exa}

However, we will show that this is actually in the affirmative for some special phase semigroups; see Propositions~\ref{prop3.7B} and \ref{prop3.16B} and Corollary~\ref{cor3.18B}.
In addition minimality will be equivalently described by `almost periodicity'; see Lemma~\ref{lem2.6} in $\S\ref{sec2.2}$.

\subsubsection{Distality, proximity and regional proximity}\label{sec1.1.3}
The concept of ``distality'' has been proved to be a very fruitful one for topological dynamics of flows, giving rise to a rather extensive theory; see \cite{E69, G76, Aus}. In fact it is also important in semiflows.
\begin{description}
\item[(f)] We say that $x\in X$ is \textit{proximal} to $y\in X$, write $(x,y)\in P(T,X)$ or $P(X)$ or $y\in P[x]$, if there exist a net $\{t_n\}$ in $T$ and a point $z\in X$ with $t_n(x,y)\to(z,z)$.
By definition, it holds that
\begin{gather*}P(X)={\bigcap}_{\alpha\in\mathscr{U}_X}{\bigcup}_{t\in T}t^{-1}\alpha.\end{gather*}

\item[(g)] $(T,X)$ is called \textit{distal} if for all $x,y\in X$ with $x\not=y$, one can find some $\alpha\in\mathscr{U}_X$ with $t(x,y)\not\in\alpha$ for every $t\in T$.
\item[(h)] An $x\in X$ is called a \textit{distal point} of $(T,X)$ if there exists no point other than itself in $\textrm{cls}_XTx$ to be proximal to it under $(T,X)$.

\item[(i)] $(T,X)$ is called a \textit{point-distal semiflow} if there exists a point $x\in X$ such that $x$ is a distal point of $(T,X)$ and $Tx$ is dense in $X$ (cf.~Veech~\cite{V70}).
\end{description}

Clearly, if $x_i$ is a distal point of $(T,X_i)$, then $(x_i)$ is a distal point of $(T,\prod X_i)$. If $(T,X)$ is a point-distal \textit{flow} with $x$ a distal point, then each of $Tx$ is a distal point. This is also true in surjective semiflows as follows:

\begin{lem}\label{lem1.8}
Let $(T,X)$ be a point-distal semiflow with a distal point $x\in X$. Then $(T,X)$ is surjective if and only if $tx$ is a distal point of $(T,X)$ for all $t\in T$.
\end{lem}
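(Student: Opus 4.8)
The plan is to prove the two implications separately. The forward direction (``surjective $\Rightarrow$ the distal points are dense'') is elementary and generalises the flow fact quoted just above; the converse needs the algebra of the enveloping (Ellis) semigroup $E=E(T,X)=\textrm{cls}_{X^X}T$ (closure in the product topology), which is a compact right-topological semigroup under composition, and in which left translation $p\mapsto tp$ by a fixed $t\in T$ is continuous because $t\colon X\to X$ is, so that $tE=\{t\circ p\mid p\in E\}$ is a \emph{closed} right ideal of $E$. Throughout I use that $(x,y)\in P(T,X)$ iff $px=py$ for some $p\in E$, and that $Ed\subseteq\textrm{cls}_XTd$ for every $d\in X$; both are immediate from $E=\textrm{cls}_{X^X}T$.

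For ``$\Rightarrow$'': since $(T,X)$ is point-distal, fix a distal point $x_0$ with $\textrm{cls}_XTx_0=X$. It suffices to show that $tx_0$ is a distal point for every $t\in T$, for then the dense set $Tx_0$ consists of distal points. So let $t\in T$ and suppose $y\in\textrm{cls}_XT(tx_0)$ with $(tx_0,y)\in P(T,X)$; I must get $y=tx_0$. By surjectivity $y=ty'$ for some $y'\in X$, and $(tx_0,ty')\in P(T,X)$ gives a net $\{s_n\}$ in $T$ with $s_n(tx_0)\to z$ and $s_n(ty')\to z$; since $s_nt\in T$ this says precisely that $(x_0,y')\in P(T,X)$. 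As $y'\in X=\textrm{cls}_XTx_0$, distality of the point $x_0$ forces $y'=x_0$, hence $y=tx_0$.

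For ``$\Leftarrow$'' the key claim is: \emph{every distal point $d$ of $(T,X)$ belongs to $tX$ for every $t\in T$}. Granting it, the set $D$ of distal points satisfies $D\subseteq\bigcap_{t\in T}tX$; since each $tX$ is closed and $D$ is dense, $\bigcap_{t\in T}tX=X$, i.e.\ $tX=X$ for all $t$. To prove the claim, fix $t\in T$ and invoke the structure theory of $E$: it has a smallest two-sided ideal $K(E)$, which is the union of the minimal right ideals of $E$, and every minimal right ideal contains a minimal idempotent (see, e.g.,~\cite{E69,Aus}). Pick a minimal idempotent $u\in K(E)$; then $tu\in K(E)\cap tE$, so $tu$ lies in some minimal right ideal $R$, and by minimality $R=(tu)E\subseteq tE$. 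Choose a minimal idempotent $v\in R$; since $v\in tE$ we may write $v=t\circ w$ with $w\in E$, so $vX=t(wX)\subseteq tX$. On the other hand $v$ is a minimal idempotent, hence $vd=d$: indeed $(d,vd)\in P(T,X)$ because $v(vd)=vd$, while $vd\in Ed\subseteq\textrm{cls}_XTd$, and $d$ being a distal point means the only point of $\textrm{cls}_XTd$ proximal to $d$ is $d$ itself. Thus $d=vd\in vX\subseteq tX$, proving the claim.

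The real obstacle is the converse, specifically the ``wrong-order'' difficulty in the key claim: a naive compactness argument only produces nets $\{s_nt\}$ with $s_nt\,d$ accumulating at $d$, which places the unknown on the \emph{inside} of $t$ and does not show $d\in tX$. It is precisely the fact that $tE$ is a \emph{closed} right ideal of $E$ — so it meets the kernel $K(E)$ in a minimal right ideal and therefore contains a minimal idempotent of the form $t\circ w$ — that supplies the needed copy of $t$ on the outside; coupled with the short observation that minimal idempotents fix every distal point, this closes the gap. (Note that the converse does not even use point-distality, only that the distal points are dense, whereas the forward direction is purely elementary and uses nothing but the fact that $T$ is a semigroup.)
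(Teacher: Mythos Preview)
Your proof is correct, and the forward direction is essentially identical to the paper's. For the converse you take a different but closely related route through the Ellis semigroup.

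The paper's ``if'' part (postponed to \S\ref{sec6.1}) works inside a minimal \emph{left} ideal: for $t\in T$ and a minimal left ideal $I$, one has $tI\subseteq I$, and the group structure of $uI$ (Lemma~\ref{lem6.3}) produces an idempotent $v\in tI$, so $v=tw$ with $w\in I$ and $d=v(d)\in tX$ by Lemma~\ref{lem6.4}. You instead pass through $K(E)$ and minimal \emph{right} ideals, exploiting that $tE$ is closed (because left translation by a continuous $t$ is continuous) to trap a minimal right ideal $R=(tu)E\subseteq tE$ and extract an idempotent $v\in R$. Both arguments arrive at the same endpoint: an idempotent of the form $v=tw$, together with the observation that every idempotent fixes every distal point. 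Your route could in fact be shortened: since $tE$ is a \emph{closed subsemigroup} of the compact right-topological semigroup $E$, the Ellis--Numakura lemma already gives an idempotent $v\in tE$ directly, with no need to invoke $K(E)$ or minimal right ideals at all. Either way, your remark that the converse does not use point-distality is correct and matches the paper.
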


\begin{proof}
The ``only if'' part. First $s^{-1}(sx)=x$ for all $s\in T$. This implies that $sx$ is distal for $(T,X)$, for all $s\in T$. Indeed, otherwise, there is some $y\in\textrm{cls}_XTsx$ with $y\not=sx$ such that $(y,sx)\in P(X)$; so $(z,x)\in P(X)$ and $z\not=x$, for all $z\in s^{-1}y\not=\emptyset$; this is a contradiction.
We will postpone the proof of the ``if'' part of Lemma~\ref{lem1.8} in $\S\ref{sec6.1}$ using Ellis' semigroup.
\end{proof}

It should be mentioned that a distal point $x$ does not satisfy that every $y\in X\setminus\textrm{cls}_XTx$ is not proximal to $x$ under $(T,X)$ unless $(T,X)$ is pointwise minimal. For example, let $f\colon I\rightarrow I$, $x\mapsto x^2$ where $I=[0,1]$; then $x=0$ is a distal point but $(x,y)\in P(X)$ for all $0<y<1$.
Moreover, it is evident that
\begin{quote}
{\it $(T,X)$ is distal \textit{iff} $P(X)=\varDelta_X$} (cf.~\cite[Lemma~5.12]{E69} for $T$ in groups).
\end{quote}
Thus a minimal semiflow is distal at a point $x$ if and only if the proximal cell $P[x]=\{x\}$.

By using Ellis' semigroup (cf.~\cite{Fur, Aus}), it is a well-known fact that {\it a distal point is a minimal point for any semiflow}. Thus:
    \begin{quote}
    {\it If $(T,X)$ is point-distal it is minimal. Moreover, $(T,X)$ is distal iff every point of $X$ is distal for $(T,X)$.}
    \end{quote}

Using ``IP$^*$-recurrence'' and the ``central set'' notion, Furstenberg characterized distality of a point by product minimality as follows:
\begin{quote}
{\it A point $x$ is distal for $(T,X)$ if and only if for every minimal point $y$ of any semiflow $(T,Y)$, $(x,y)$ is a minimal point of $(T,X\times Y)$} (cf.~Furstenberg \cite[(i) and (iv) of Theorem~9.11]{Fur}).
\end{quote}

In fact, by using a purely topological proof\,---\,maximal set of almost periodic points independently of Furstenberg's theorem and Ellis' semigroup, we can characterize distal point by the product minimality as follows:

\begin{quote}
{\it An $x\in X$ is a distal point of $(T,X)$ if and only if $(x,x^\prime)$ is a minimal point of $(T,X\times X)$ for all minimal point $x^\prime$ of $(T,X)$} (cf.~Theorem~\ref{thm5.1}).
\end{quote}

Comparing with Furstenberg's,  our characterization does not need to utilize other semiflow $(T,Y)$. This theorem will be proved in $\S\ref{sec5}$ in terms of almost periodic points.

Notice that in general the entourage $\alpha$ in $\S\ref{sec1.1.3}$\,(\textbf{g}) depends on the pair $(x,y)$. In view of this, we now introduce the concept of ``uniformly distal'' as follows:

\begin{sn}[{cf.~\cite{SS} for $T=\mathbb{\mathbb{R}}$}]\label{def1.9}
\begin{enumerate}
\item $(T,X)$ is \textit{uniformly distal at a point $x\in X$} if given $\varepsilon\in\mathscr{U}_X$ there exists a $\delta\in\mathscr{U}_X$ such that if $y\in X$ with $y\not\in\varepsilon[x]$ then $ty\not\in\delta[tx]$ for all $t\in T$.
\item We say $(T,X)$ is \textit{uniformly distal} if given $\varepsilon\in\mathscr{U}_X$ there exists a $\delta\in\mathscr{U}_X$ such that if $(x,y)\not\in\varepsilon$ then $(tx,ty)\cap\delta=\emptyset$, for all $x,y\in X$ and $t\in T$.
\end{enumerate}
We will show that when $(T,X)$ is minimal, then $(T,X)$ is uniformly distal at every point of $X$ iff it is uniformly distal; see Lemma~\ref{lem1.10} below.
\begin{note}
This notion may be defined on non-compact uniform spaces by the same means. Moreover, it can be extended to \textit{set-valued} semiflows; that is, $tx$ is a subset of $X$ for $t\in T$ and $x\in X$.
\end{note}
\end{sn}

Clearly, a uniformly distal point is a distal point on any uniform $T_2$ space, because if for $x,y\in X$ with $x\not=y$ there is an $\varepsilon\in\mathscr{U}_X$ such that $(x,y)\not\in\varepsilon$. So uniformly distal is distal.
In fact, it is easy to verify that
\begin{description}
\item[($\star$)] {\it Let $X$ be a uniform space not necessarily compact. Then $\pi\colon T\times X\rightarrow X$ s.t. $(t,x)\mapsto tx$ is equicontinuous iff $\pi^{-1}\colon X\times T\rightarrow X$ s.t. $(x,t)\mapsto xt:=t^{-1}x$ is uniformly distal. 
    
    Thus a flow $(T,X)$ is equicontinuous iff it is uniformly distal.}
\end{description}

\begin{proof}
Let $\pi\colon T\times X\rightarrow X$ be  equicontinuous with $\varepsilon$-$\delta$ as in $\S\ref{sec1.1.1}$\,(\textbf{a}). Then if $(x,y)\not\in\varepsilon$, then $(xt,yt)=(t^{-1}x,t^{-1}y)$ is disjoint with $\delta$ for all $t\in T$. So $X\times T\rightarrow X$ is uniformly distal.

Conversely, assume $\pi^{-1}\colon X\times T\rightarrow X$ is uniformly distal with $\varepsilon$-$\delta$ as in Definition~\ref{def1.9}. It is obvious that $(x,y)\in\delta$ implies $(tx,ty)\in\varepsilon$; since otherwise $(x,y)\in(tx,ty)t\cap\delta=\emptyset$. Thus $T\times X\rightarrow X$ is equicontinuous.

The second part of ($\star$) follows at once from the fact that $T=T^{-1}$ for $T$ is a group in the flow $(T,X)$. The proof is completed.
\end{proof}

Notice that the group structure of $T$ plays a role in both of the ``if'' and ``only if'' parts of the second part of ($\star$). However, since there is no $T=T^{-1}$ for a general semiflow with $T$ not a group, hence according to Example~\ref{ex1.7} ``Equicontinuous $\Leftrightarrow$ Uniformly distal'' is not obvious for semiflows with which we will be mainly concerned. See Theorem~\ref{thm1.14} below.

Let $(T,X)$ be an arbitrary semiflow. Next we will introduce another important relation which is weaker than proximity on $X$.
\begin{description}
\item[(j)] We say that $x\in X$ is \textit{regionally proximal} to $y\in X$, denoted $(x,y)\in Q(T,X)$ or $Q(X)$, if there are nets $\{x_n\}, \{y_n\}$ in $X$ and $\{t_n\}$ in $T$ such that $t_n(x_n,y_n)\to(z,z)$ for some $z\in X$.
\end{description}
Clearly,
\begin{gather*}
Q(X)={\bigcap}_{\alpha\in\mathscr{U}_X}\mathrm{cls}_{X\times X}{\bigcup}_{t\in T}t^{-1}\alpha
\end{gather*}
is a closed symmetric reflexive relation on $X$.
It is clear that $P(X)\subseteq Q(X)$ and so if $Q(X)=\varDelta_X$, then $(T,X)$ is distal.

It is already known that if the proximal cell $P[x]=\{x\}$ then $x$ is a distal point of $(T,X)$; if $P(X)=\varDelta_X$ then $(T,X)$ is distal. \textit{What can we say for $Q[x]=\{x\}$ and $Q(X)=\varDelta_X$?}
We can then obtain the following facts:

\begin{lem}\label{lem1.10}
Let $(T,X)$ be any semiflow with phase map $(t,x)\rightarrow tx$ and $x_0\in X$; then the following statements hold:
\begin{enumerate}
\item[$(1)$] If $Q(T,X)=\varDelta_X$, then $X\times T\rightarrow X$ with phase map $(x,t)\mapsto xt:=t^{-1}x$ is equicontinuous.
\item[$(2)$] $Q(T,X)=\varDelta_X$ iff $(T,X)$ is uniformly distal.
\item[$(3)$] Let $A[x_0]=\{y\in X\,|\,\exists\, t_n\in T\textrm{ and }y_n\to y\textrm{ s.t. }\lim_nt_n(x,y_n)\in\varDelta_X\}$. Then $A[x_0]=\{x_0\}$ iff $(T,X)$ is uniformly distal at $x_0$.
\item[$(4)$] Let $(T,X)$ be minimal. Then $(T,X)$ is uniformly distal iff it is uniformly distal at every point of $X$.
\end{enumerate}
\end{lem}

\begin{proof}
(1). Given $\varepsilon\in\mathscr{U}_X$, by $Q(T,X)\subset\varepsilon$ and by the finite intersection property of $X$, there is some $\delta\in\mathscr{U}_X$ such that $\mathrm{cls}_{X\times X}{\bigcup}_{t\in T}t^{-1}\delta\subseteq\varepsilon$. Thus $\delta t\subseteq\varepsilon$ for all $t\in T$. This shows that $X\times T\rightarrow X$ is equicontinuous.

(2). This statement follows easily from the foregoing ($\star$) and (1) of Lemma~\ref{lem1.10}.

(3). Assume $A[x_0]=\{x_0\}$. If $(T,X)$ were not uniformly distal at $x_0$, then there would exist $\varepsilon\in\mathscr{U}_X$, $x_n\not\in\varepsilon[x_0]$ and $t_n\in T$ such that $\lim_n(t_nx_n,t_nx_0)\in\varDelta_X$. As $X$ is compact, let $x_n\to y$ by passing to a subnet of $\{x_n\}$ if necessary; then $y\in A[x_0]$ with $x_0\not=y$, a contradiction. Conversely, if $(T,X)$ is uniformly distal at $x_0$ and $y\in A[x_0]$, then $y=x_0$. Thus $A[x_0]=\{x_0\}$.

(4). The necessity is obvious. Now suppose $(T,X)$ is uniformly distal at every point of $X$. Then $(T,X)$ is distal so that $P(T,X)=\varDelta_X$ and by (3) of Lemma~\ref{lem1.10}, $A[x]=\{x\}$ for all $x\in X$.
Thus $(T,X)$ is pointwise almost automorphic\footnote{Let $(T,X)$ be invertible. An $x\in X$ is an almost automorphic point~\cite{V77} if $t_nx\to y$ and $t_n^{-1}y\to x^\prime$ implies that $x=x^\prime$. Since $(t_nx,t_n^{-1}y)\to(y,x^\prime)$ implies that $x^\prime\in A[x]$, hence if $A[x]=\{x\}$ then $x$ is almost automorphic.} so that it is equicontinuous (cf.~\cite[Lemma~5.2 and Proposition~5.5]{DX}). Hence $Q(T,X)=P(T,X)=\varDelta_X$. Finally (2) of Lemma~\ref{lem1.10} follows that $(T,X)$ is uniformly distal.

The proof of Lemma~\ref{lem1.10} is thus completed.
\end{proof}

It should be noticed that although $P(X)$ and $Q(X)$ both are reflexive symmetric relations on $X$, yet if $T$ is a non-abelian semigroup they need not be invariant in our semigroup setting. In view of this, even if $P(X)$ and $Q(X)$ are closed equivalence relations on $X$, $(T,X/P)$ and $(T,X/Q)$ do not need to make sense in general semiflows.

Of course there always exist minimal invariant closed equivalence relations $S_d$ and $S_{eq}$ on $X$ containing $P(X)$ and $Q(X)$, respectively, so that $(T,X/S_d)$ and $(T,X/S_{eq})$ are respectively the maximal distal and equicontinuous factors of $(T,X)$.

\subsubsection{Amenability and \textit{C}-semigroup}\label{sec1.1.4}
It is known that the structure of a topological semigroup is closely related to some dynamics of its actions; see e.g. \cite{DG}. We will consider here two kinds of phase semigroups as follows.
\begin{description}
\item[(k)] A discrete semigroup $T$ is called \textit{amenable} if every semiflow $(T,Y)$ with the phase semigroup $T$ permits an invariant Borel probability measure, i.e., there is a Borel probability measure $\mu$ on $Y$ such that $\mu(B)=\mu(t^{-1}B)\ \forall t\in T$ for all Borel subset $B\subseteq Y$ (cf.~\cite{Dix,Day}).

\item[(l)] Let $T$ be a topological semigroup, not necessarily discrete; then $T$ is called a \textit{C-semigroup} if $T\setminus sT$ and $T\setminus Ts$ are relatively compact in $T$ for all $s\in T$ (cf.~\cite{KM}).
\end{description}

Then it is easy to verify that
\begin{quote}
If $(T,X)$ is an invertible semiflow with $T$ a right \textit{C}-semigroup, then $P(X)$ and $Q(X)$ both are invariant reflexive symmetric relations in $X$.
\end{quote}

In particular each abelian semigroup is amenable by the classical Markov-Kakutani fixed-point theorem.
If $T$ is a topological group, then $sT=Ts=T$ for all $s\in T$ so it is a \textit{C}-semigroup. Clearly, $T=(\mathbb{Z}_+,+)$ is a \textit{C}-semigroup. In addition, under the usual non-discrete topology, $T=(\mathbb{R}_+,+)$ is a \textit{C}-semigroup, but not under the discrete topology.

\subsubsection{Ellis enveloping semigroups}\label{sec1.1.5}
Let $X^X$ be the set of all functions from $X$ to itself, continuous or not. In contrast to the space-index uniformity on $X^X$, the topology of pointwise convergence for $X^X$ is defined as follows:
A net $\{f_n\}$ in $X^X$ converges to $f$ if and only if $f_n(x)\to f(x)$ for each $x\in X$ (cf.~\cite{Kel}).
A subbase of this topology is the family of all subsets of the form $\{f\,|\,f(x)\in U\}$, where $x$ is a point of $X$ and $U$ is open in $X$.
Then we recall several notions based on $(T,X)$ as follows:
\begin{description}
\item[(m)] By $E(T,X)$ or simply $E(X)$, we denote the \textit{Ellis semigroup} of $(T,X)$; that is, $E(X)$ is the closure of $T$, precisely $\{\pi^t\colon X\rightarrow X\,|\,t\in T\}$, in $X^X$ in the sense of the pointwise topology (cf.,~e.g., \cite{E69, Fur, Aus}).

\item[(n)] An element $u\in E(X)$ is called an \textit{idempotent} in $E(X)$, denoted $u\in J(E(X))$, if $u^2=u$.

\item[(o)] $I\not=\emptyset$ is called a \textit{minimal left ideal} in $E(X)$ if $E(X)I\subseteq I$ and no proper non-empty subset of $I$ has this property.
\begin{itemize}
\item Since $E(X)$ is a compact \textit{right-topological} semigroup (i.e., $E(X)$ is a semigroup and a compact $T_2$-space with $R_q\colon p\mapsto pq$ continuous, for all $q\in E(X)$), there always exists an idempotent in each minimal left ideal in $E(X)$ (cf.~\cite{E69, Aus}).
\item Moreover, $(x,x^\prime)\in P(X)$ iff $\exists\,p\in E(X)$ with $p(x)=p(x^\prime)$ iff there is a minimal left ideal $I$ in $E(X)$ such that $p(x)=p(y)\ \forall p\in I$.
\end{itemize}
\end{description}

Clearly $E(X)$ associated to $(T,X)$ is independent of the topology of the phase semigroup $T$. When $(T,X)$ is a flow we will consider whether or not $T$ is a topological group under the pointwise topology in $\S\ref{sec9}$.

The proof of the following basic lemma is taken nearly word-for-word from \cite[2. of Proposition~5.16]{E69}. We will postpone the details in $\S\ref{sec6.1}$ following a preliminary result---Lemma~\ref{lem6.3}.

\begin{lem}[{cf.~\cite{E69,Aus} for $T$ in groups}]\label{lem1.11}
Given any semiflow $(T,X)$, $P(X)$ is an equivalence relation on $X$ iff there is only one minimal left ideal in $E(X)$.
\end{lem}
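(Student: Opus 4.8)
The plan is to argue entirely inside the Ellis semigroup $E(X)$, using only elementary properties of compact right-topological semigroups together with the characterization recalled just after item (o): $(x,y)\in P(X)$ if and only if there is a minimal left ideal $I$ of $E(X)$ with $px=py$ for all $p\in I$. Writing $P_I=\{(x,y)\in X\times X\mid px=py\text{ for every }p\in I\}$, that characterization says precisely $P(X)=\bigcup_I P_I$, the union being over all minimal left ideals $I$; and each $P_I$ is visibly an equivalence relation, being the relation ``$x$ and $y$ are identified by every member of $I$''. So the content of the lemma is that $P(X)$ collapses to a single $P_I$ exactly when there is only one $I$. Before the main argument I would record three routine facts. (i) Since $e=\mathrm{id}_X\in E(X)$, the set $E(X)p$ is a nonempty left ideal for every $p\in E(X)$; consequently any two minimal left ideals of $E(X)$ are either equal or disjoint. (ii) If $u$ is an idempotent of a minimal left ideal $I$, then $E(X)u$ is a nonempty left ideal contained in $I$, so $I=E(X)u$; in particular $qu=q$ for every $q\in I$. (iii) For any minimal left ideal $L$ and any $p\in E(X)$, the set $Lp$ is again a minimal left ideal. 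Only (iii) needs an argument: $Lp$ is clearly a nonempty left ideal, and if $\emptyset\ne L'\subseteq Lp$ is a left ideal, then $\{q\in L\mid qp\in L'\}$ is a nonempty left ideal contained in $L$, hence equal to $L$ by minimality, which forces $L'=Lp$.

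The ``if'' direction is then immediate: if $E(X)$ has a unique minimal left ideal $I$, then $P(X)=P_I$, which is an equivalence relation.

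For the ``only if'' direction, assume $P(X)$ is an equivalence relation, fix an arbitrary minimal left ideal $I$, and pick an idempotent $u\in I$. The first step is to prove $P(X)=P_I$. The inclusion $P_I\subseteq P(X)$ is built in. For the reverse, note that by (ii) we have $q(ux)=(qu)x=qx$ for all $q\in I$ and $x\in X$, so $(x,ux)\in P_I\subseteq P(X)$ for every $x\in X$. Hence, given $(x,y)\in P(X)$, transitivity of $P(X)$ applied to the triple $(ux,x)$, $(x,y)$, $(y,uy)$ yields $(ux,uy)\in P(X)$, so $p(ux)=p(uy)$ for all $p$ in some minimal left ideal $L$. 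By (iii) the set $Lu$ is a minimal left ideal, and $Lu\subseteq I$ because $L\subseteq E(X)$, $u\in I$ and $I$ is a left ideal; therefore $Lu=I$. Consequently every $q\in I$ can be written $q=pu$ with $p\in L$, and then $qx=p(ux)=p(uy)=qy$; that is, $(x,y)\in P_I$. This proves $P(X)=P_I$, and since $I$ was an arbitrary minimal left ideal, $P_I=P_K$ for any two minimal left ideals $I,K$. Finally, pick idempotents $u\in I$ and $v\in K$. Since $(x,ux)\in P_I=P_K$ for all $x$, taking $p=v$ in the defining condition of $P_K$ gives $vx=v(ux)=(vu)x$ for every $x$, so $vu=v$ as elements of $E(X)$. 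But $vu\in E(X)u=I$, so $v\in I\cap K$, and by (i) this forces $I=K$. Thus the minimal left ideal is unique.

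I expect the inclusion $P(X)\subseteq P_I$ to be the crux: it is the only place where transitivity of $P(X)$ is genuinely consumed, and it works exactly because the bookkeeping identity $Lu=I$ from (iii), combined with the absorption identity $qu=q$ from (ii), lets one upgrade ``proximal via some minimal left ideal'' to ``proximal via the distinguished ideal $I$''. Everything else is either formal or a citation of standard structure theory for compact right-topological semigroups — in particular the existence of an idempotent in each minimal left ideal, which the excerpt has already recorded — so no further difficulty is anticipated; this is, in essence, the route of \cite{E69}.
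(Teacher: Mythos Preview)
Your proof is correct, and the argument is clean. It differs from the paper's route for the ``only if'' direction in a way worth noting. The paper invokes item (10) of Lemma~\ref{lem6.3} to choose idempotents $u\in J(I)$, $v\in J(K)$ with $uv=v$ and $vu=u$, then uses transitivity to get $(ux,vx)\in P(X)$, observes that $v(ux,vx)=(ux,vx)$ makes this pair almost periodic in $X\times X$, and concludes $ux=vx$ via Lemma~\ref{lem2.7} (a proximal pair cannot be almost periodic unless trivial). Your argument is purely algebraic: you first prove the stronger intermediate fact $P(X)=P_I$ for \emph{every} minimal left ideal $I$ by the $Lu=I$ trick, then read off $vu=v$ directly from $P_I=P_K$ and finish with $v=vu\in I\cap K$. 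This avoids both the idempotent-pairing lemma and the dynamical input of Lemma~\ref{lem2.7}, at the cost of a slightly longer setup; the paper's version is shorter but leans on more machinery. Both are valid in the semiflow setting since $e=\textit{id}_X\in E(X)$, which you correctly flag.
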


When $(T,X)$ is equicontinuous, $E(X)$ consists of continuous maps of $X$ into itself, that is, $E(X)\subset C(X,X)$. The converse is obviously false. Then
\begin{description}
\item[(p)] $(T,X)$ is called \textit{weakly equicontinuous} if $E(X)\subset C(X,X)$.
\end{description}

This notion will be characterized by means of weakly almost periodic (w.a.p.) functions on $X$ with respect to (w.r.t.) $(T,X)$ viz Proposition~\ref{u8.8} in $\S\ref{uap}$. Particularly by the later Lemma~\ref{lem6.7} and Corollary~\ref{cor6.11}, it follows easily that
\begin{quote}
\textbf{Theorem} (cf.~Ellis~\cite[Theorem~3]{E59})\textbf{.}\,~{\it A distal semiflow is equicontinuous iff it is weakly equicontinuous.}
\end{quote}
In fact, Ellis's theorem will be generalized from distal semiflows to point-distal ones; see (3) of Corollary~\ref{cor6.8} in $\S\ref{sec6.1}$.

\subsection{Main theorems}
Although its proof is very easy (cf., e.g.,~\cite{E69,Aus,F63}), yet it is a very useful important fact in topological dynamics that
\begin{quote}
{\textbf{Theorem} (Gottschalk-Ellis)\textbf{.}~\it If $(T,X)$ is an equicontinuous flow, then it is distal} (cf.~\cite[Proposition~4.4 and Corollary~5.4]{E69}).
\end{quote}

In fact, an equicontinuous flow is uniformly distal by $\S\ref{sec1.1.3}\,(\star)$. We note that the group structure of $T$ plays a role in its various proofs available in the literature (cf.~\cite{E69,Aus,F63}). Moreover, if $T$ is only a semigroup, the above important result need not be true.
For instance, Example~\ref{ex1.7} is equicontinuous but not distal with $P(X)=X\times X\not=\varDelta_X$.

Let us see a more simple counterexample with abelian phase semigroup, which together with Examples~\ref{ex1.4} shows that some dynamics in semiflows are very different with that in the flows.

\begin{exa}\label{ex1.12}
Let $X=\{a,b,c\}$ be a discrete space and we let
$f\colon X\rightarrow X$ by $a\mapsto b\mapsto c\mapsto c$.
Then the cascade $(f,X)$ with phase semigroup $\mathbb{Z}_+$ is equicontinuous but it is not distal.
\end{exa}

Here $T$ is abelian and $(T,X)$ is not minimal. Of course, if $(T,X)$ is minimal equicontinuous with $T$ an abelian (or more general, amenable) semigroup, then we shall show it is uniformly distal (cf.~Corollary~\ref{cor3.3B}).

In both of Examples~\ref{ex1.7} and \ref{ex1.12}, each $t\in T$ is not onto. In view of this, recently Ethan Akin and Xiangdong Ye have independently suggested (in personal communications) the following:

\begin{quote}
\textbf{Akin-Ye Assertion.}\ ~\textit{If $(T,X)$ is an equicontinuous surjective semiflow on a compact metric space with $T$ an abelian semigroup, then $(T,X)$ is distal.}
\end{quote}

In fact, by constructing an equivalent isometric metric $d_T$ on $X$, Akin's \cite[(d) of Proposition~2.4]{Aki} implies that $(T,X)$ is distal if $X$ is a compact metric space with phase semigroup $T=\mathbb{Z}_+$.
In $\S\ref{sec2}$, using several different approaches, we shall be able to prove the Akin--Ye assertion without the metric condition on the phase space $X$ and with no the abelian hypothesis on the phase semigroup $T$.

Precisely speaking we shall prove the following theorem, which consists of Theorem~\ref{thm2.1}, Lemma~\ref{lem6.7}, (1) of Proposition~\ref{prop6.10}, and Theorem~\ref{thm6.12}.

\begin{thm}\label{thm1.13}
Let $(T,X)$ be a semiflow; then the following statements are satisfied:
\begin{enumerate}
\item[$(1)$] If $(T,X)$ is equicontinuous surjective, then it is distal.
\item[$(2)$] If $(T,X)$ is distal, then it is invertible.
\item[$(3)$] If $(T,X)$ is distal, then so is $(\langle T\rangle, X)$.
\item[$(4)$] If $(T,X)$ is invertible equicontinuous, then $(\langle T\rangle, X)$ is an equicontinuous flow.
\end{enumerate}
\begin{note}
By $\S\ref{sec1.1.3}$\,(\textbf{g}), $(T,X)$ being distal means that for each $t\in T$, $x\mapsto tx$ is 1-1. However, (2) of Theorem~\ref{thm1.13} tell us more. So (2) is of interest in semiflows.

Since $\langle T\rangle$ is in general much more bigger than $T$, (3) and (4) of Theorem~\ref{thm1.13} are non-trivial. They are useful in our later applications.
\end{note}
\end{thm}

By (1) and (2) of Theorem~\ref{thm1.13}, an equicontinuous semiflow is surjective iff it is distal (cf.~Corollary~\ref{cor6.8}). However, `surjective' is naturally satisfied in many interesting cases such as `homogeneous' semiflow (cf.~Proposition~\ref{prop3.2B}), minimal semiflow with `amenable' phase semigroup (cf.~Proposition~\ref{prop3.7B}), and $\ell$-recurrent semiflow with phase semigroup $T$ a \textit{C}-semigroup (cf.~Corollary~\ref{cor3.17B}).

Recall that `equicontinuity' asserts that if two initial points $x,y$ are sufficiently close, then their orbits $Tx$ and $Ty$ are synchronously close. And `uniform distality' asserts that if two initial points $x,y$ are sufficiently far away, then their orbits $Tx$ and $Ty$ are synchronously far away.

Thus, ``equicontinuity'' looks very different with ``uniform distality''. However, as a result of Theorem~\ref{thm1.13} we can obtain that they are in fact equivalent to each other.

\begin{thm}\label{thm1.14}
A semiflow $(T,X)$ is uniformly distal if and only if it is equicontinuous surjective.
\end{thm}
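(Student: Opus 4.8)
The plan is to deduce Theorem~\ref{thm1.14} directly from Theorem~\ref{thm1.13} together with the two elementary lemmas already in hand, Lemma~\ref{lem1.6} and Lemma~\ref{lem1.10}, plus the observation ($\star$) for flows. First I would dispose of the easy implication. Suppose $(T,X)$ is equicontinuous surjective. By (1) of Theorem~\ref{thm1.13} it is distal, hence by (2) it is invertible, and then by (4) the flow $(\langle T\rangle,X)$ is equicontinuous. By ($\star$), $(\langle T\rangle,X)$ is uniformly distal: given $\varepsilon$ there is $\delta$ with $(x,y)\notin\varepsilon\Rightarrow(sx,sy)\notin\delta$ for all $s\in\langle T\rangle$. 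Since $T\subseteq\langle T\rangle$, the same $\delta$ witnesses uniform distality of $(T,X)$. So equicontinuous surjective $\Rightarrow$ uniformly distal.

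The substantive direction is: uniformly distal $\Rightarrow$ equicontinuous surjective. Assume $(T,X)$ is uniformly distal. Since uniformly distal implies distal, $P(X)=\varDelta_X$, and a fortiori $Q(X)=\varDelta_X$ — but this needs care, since $Q(X)\supseteq P(X)$ in general, so I must argue the reverse from uniform distality directly: if $(x,y)\in Q(X)$ with $x\neq y$, pick $\varepsilon$ with $(x,y)\notin\varepsilon$, get the corresponding $\delta$ from Definition~\ref{def1.9}, and note that $t_n(x_n,y_n)\to(z,z)$ forces $(t_nx_n,t_ny_n)\in\delta$ eventually; but for $x_n,y_n$ close enough to $x,y$ we cannot conclude $(x_n,y_n)\notin\varepsilon$, so the clean move is instead: uniform distality says $(tx,ty)\in\delta\Rightarrow(x,y)\in\varepsilon$ for all $t$, i.e. $t^{-1}\delta\subseteq\varepsilon$ for every $t\in T$, hence $\bigcup_{t\in T}t^{-1}\delta\subseteq\varepsilon$, and since $\varepsilon$ is closed (or shrink to a closed one) we get $\mathrm{cls}_{X\times X}\bigcup_{t\in T}t^{-1}\delta\subseteq\varepsilon$. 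As $\varepsilon$ ranges over $\mathscr{U}_X$ this yields $Q(X)=\varDelta_X$. Then Lemma~\ref{lem1.10} gives that $(x,t)\mapsto t^{-1}x$ is equicontinuous — but I should be cautious that this map is a priori multivalued unless each $t$ is injective; uniform distality does give injectivity of each $t\in T$ (if $tx=ty$ then $(tx,ty)\in\delta$ for all $\delta$, forcing $(x,y)\in\varepsilon$ for all $\varepsilon$, so $x=y$), which is what makes $t^{-1}x$ single-valued on its domain.

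The remaining gap, and I expect this to be the main obstacle, is \emph{surjectivity}: from uniform distality I get each $t\in T$ injective and the backward map equicontinuous on its domain, but I still must show $tX=X$ for every $t$. The natural route is to run Theorem~\ref{thm1.13} in reverse on the flow picture: once I know the ``inverse'' map $X\times T\to X$, $(x,t)\mapsto t^{-1}x$, is equicontinuous, and uniform distality of $(T,X)$ translates into a statement that looks like equicontinuity ``in the $t$ direction'', I would argue that $T$ acts by a family with relatively compact closure in $X^X$ consisting of injective maps whose restrictions to the image remain uniformly equicontinuous; a standard argument then shows the image $tX$ is both open and closed (using equicontinuity of the inverse to get openness, compactness of $X$ to get closedness), and connectedness is not available, so instead I would use that $E(X)$ is a group of surjections in the distal case. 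Concretely: uniform distality forces $(T,X)$ distal, so $E(X)$ is a group and every element of $E(X)$ is a bijection of $X$; since $T\subseteq E(X)$ and the elements of $E(X)$ near a given $t$ are bijections with $t$ itself being a \emph{continuous} bijection of the compact Hausdorff $X$, $t$ is a homeomorphism, in particular surjective. Hence $(T,X)$ is invertible, and being distal and invertible with equicontinuous inverse action, Lemma~\ref{lem1.6} (applied after checking $\mathrm{Equi}(T,X)=X$, which follows from the equicontinuity of the inverse map together with distality via the two-sided bound $(x,y)\in\delta\Leftrightarrow(tx,ty)\in\varepsilon'$) upgrades pointwise equicontinuity to equicontinuity of $(T,X)$. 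Combining, $(T,X)$ is equicontinuous and surjective, completing the equivalence. The one place demanding real attention is reconciling the asymmetry of $T$ with the symmetric-looking $\varepsilon$–$\delta$ pairs: I will need to chain a uniform-distality pair and the Lemma~\ref{lem1.10} pair to get genuine equicontinuity of the forward action, rather than only of the inverse action.
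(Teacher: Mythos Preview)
Your ``if'' direction is correct and matches the paper exactly: equicontinuous surjective $\Rightarrow$ distal $\Rightarrow$ invertible $\Rightarrow$ $(\langle T\rangle,X)$ is an equicontinuous flow $\Rightarrow$ uniformly distal by ($\star$), and restrict to $T$.

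For the ``only if'' direction you correctly obtain invertibility (uniform distality $\Rightarrow$ distal $\Rightarrow$ invertible, via (2) of Theorem~\ref{thm1.13}; your $E(X)$-is-a-group remark is exactly the content of that item). You also correctly arrive at the fact that the \emph{inverse} action is equicontinuous. Your route through $Q(X)=\varDelta_X$ and Lemma~\ref{lem1.10} is valid, though once invertibility is in hand it is more direct simply to read Definition~\ref{def1.9} contrapositively: $(tx,ty)\in\delta\Rightarrow(x,y)\in\varepsilon$, and since $t$ is a bijection this says exactly that $(u,v)\in\delta\Rightarrow(t^{-1}u,t^{-1}v)\in\varepsilon$ for all $t\in T$, i.e.\ $T^{-1}$ acts equicontinuously on $X$. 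This is what the paper does.

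The genuine gap is your final step: getting from equicontinuity of $T^{-1}$ to equicontinuity of $T$. Your ``two-sided bound $(x,y)\in\delta\Leftrightarrow(tx,ty)\in\varepsilon'$'' is circular --- the forward implication \emph{is} the equicontinuity you are trying to prove. And ``chaining a uniform-distality pair with a Lemma~\ref{lem1.10} pair'' cannot work: both inequalities you possess point the same way (they bound the preimage distance in terms of the image distance), so no composition of them reverses the direction. The paper's clean resolution is the one tool you did not reach for: apply (4) of Theorem~\ref{thm1.13} to the semigroup $T^{-1}$ of self-homeomorphisms of $X$. Since $T^{-1}$ acts equicontinuously, $(\langle T^{-1}\rangle,X)$ is an equicontinuous flow; but $\langle T^{-1}\rangle=\langle T\rangle\supseteq T$, so $(T,X)$ is equicontinuous. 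Together with invertibility this gives equicontinuous surjective.
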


\begin{proof}
(1) The ``if'' part: By Theorem~\ref{thm1.13}, $(T,X)$ is invertible and further $(\langle T\rangle, X)$ is an equicontinuous flow. Then by $\S\ref{sec1.1.3}\,(\star)$, it follows that $(T,X)$ is uniformly distal.

(2) The ``only if'' part: First by Theorem~\ref{thm1.13}, $(T,X)$ is invertible. Then by Definition~\ref{def1.9}, it follows that given $\varepsilon\in\mathscr{U}_X$, there is a $\delta\in\mathscr{U}_X$ such that if $(x,y)\in\delta$ then $(t^{-1}x,t^{-1}y)\in\varepsilon$ for all $t\in T$ and $x,y\in X$. Thus $T^{-1}$ acts equicontinuously on $X$. Whence by (4) of Theorem~\ref{thm1.13}, $(\langle T^{-1}\rangle,X)$ is equicontinuous so that $(T,X)$ is invertible equicontinuous. This proves Theorem~\ref{thm1.14}.
\end{proof}

Let $(T,X)$ be a semiflow and $M$ a closed invariant subset of $X$. If $(T,X)$ is equicontinuous, then so is $(T,M)$. However, if $(T,X)$ is merely surjective (even though invertible), $(T,M)$ need not be surjective. For instance, 1. of Examples~\ref{ex1.4}. We can here construct a more simple example as follows. Let $f\colon [0,1]\rightarrow[0,1], x\mapsto x^2$ and $M=[0,1/2]$; then $f(M)\subsetneqq M$.

As a consequence of Theorem~\ref{thm1.14}, we can easily obtain the following, which is evident if $T$ is a group because $tM\subseteq M\ \forall t\in T$ with $T=T^{-1}$ implies that $M\subseteq t^{-1}M\subseteq M\ \forall t\in T$ so that $tM=M\ \forall t\in T$. However, for a semigroup $T$, it becomes non-trivial.

\begin{cor}\label{cor1.15}
Let $M$ be a closed invariant subset of a semiflow $(T,X)$. If $(T,X)$ is equicontinuous surjective, then $(T,M)$ is also equicontinuous surjective.
\end{cor}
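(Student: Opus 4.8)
The plan is to reduce the surjectivity of $(T,M)$ to a statement about the invertible flow $(\langle T\rangle, X)$, and then exploit the equicontinuity of that flow together with the fact that an equicontinuous minimal-type argument forces orbit closures to behave well. First I would invoke Theorem~\ref{thm1.14}: since $(T,X)$ is equicontinuous surjective, it is uniformly distal; hence by Theorem~\ref{thm1.13}(2) it is invertible, and by Theorem~\ref{thm1.13}(4) the induced flow $(\langle T\rangle, X)$ is an equicontinuous flow. The restriction $(T,M)$ is trivially equicontinuous (subspace of an equicontinuous semiflow), so the only thing to prove is that $tM=M$ for every $t\in T$.

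Fix $t\in T$ and $y\in M$; I must produce $x\in M$ with $tx=y$. Because $(T,X)$ is invertible, there is a unique $x\in X$ with $tx=y$, namely $x=t^{-1}y$ computed in $\langle T\rangle$; the task is to show this $x$ already lies in $M$. Here is where equicontinuity of the flow $(\langle T\rangle,X)$ enters. Since every compact subset of $T$ is finite (the standing assumption that $T$ is discrete), $T$ is countable-or-larger but the point is that the element $t^{-1}$ of $\langle T\rangle$ can be approximated by elements of $T$ in the uniformity-controlled sense: in an equicontinuous flow the Ellis semigroup $E(\langle T\rangle, X)$ consists of the uniform closure of $\langle T\rangle$ acting by uniformly equicontinuous homeomorphisms, and $\langle T\rangle$ itself is a group of isometries for a suitable compatible metric/uniformity $\varrho$ built from $\mathscr{U}_X$ by averaging over the (equicontinuous) action. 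Working with that invariant uniformity, $t\colon X\to X$ is a $\varrho$-isometry, hence $t(M)$ is a closed (indeed compact) set; moreover $t(M)\subseteq M$ by invariance of $M$. The key step is then to show a $\varrho$-isometric self-embedding of a compact metric (uniform) space onto a proper closed subset is impossible, which for compact $X$ — and hence for the compact invariant $M$ with the restricted invariant uniformity — forces $t(M)=M$. Concretely: if $t(M)\subsetneq M$, pick $p\in M\setminus t(M)$ at $\varrho$-distance $r>0$ from the compact set $t(M)$; iterating, $t^n(M)$ is a strictly decreasing (since $t$ is injective) nested sequence of compact sets, and one extracts from $\{t^n(p)\}$ a pair of indices $m<n$ with $\varrho(t^m(p),t^n(p))<r$; applying the isometry $t^{-m}$ gives $\varrho(p, t^{n-m}(p))<r$ with $t^{n-m}(p)\in t(M)$, contradicting the choice of $r$. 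Hence $tM=M$.

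Alternatively, and perhaps more cleanly, one avoids building the invariant uniformity explicitly: by Theorem~\ref{thm1.13}(3)–(4) the flow $(\langle T\rangle, X)$ is equicontinuous, so by $\S\ref{sec1.1.3}(\star)$ it is uniformly distal; restrict to $M$, which is $\langle T\rangle$-invariant? — no, $M$ is only $T$-invariant, so this is exactly where care is needed. I would instead argue: $(T,M)$ is equicontinuous, so if I can show $(T,M)$ is surjective I am done, and by Theorem~\ref{thm1.14} applied to $(T,M)$ it suffices to show $(T,M)$ is uniformly distal — but uniform distality of $(T,M)$ follows immediately from uniform distality of $(T,X)$ by simply restricting the $\varepsilon$–$\delta$ pair to points of $M\times M$. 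Wait: uniform distality of $(T,M)$ gives, via Theorem~\ref{thm1.14}, that $(T,M)$ is equicontinuous \emph{surjective}, which is precisely the conclusion. So the whole proof collapses to: $(T,X)$ equicontinuous surjective $\Rightarrow$ $(T,X)$ uniformly distal (Theorem~\ref{thm1.14}) $\Rightarrow$ $(T,M)$ uniformly distal (trivial restriction of the $\varepsilon$–$\delta$ condition, which is a universally quantified statement over all points and all $t\in T$) $\Rightarrow$ $(T,M)$ equicontinuous surjective (Theorem~\ref{thm1.14} again, now applied to the semiflow $(T,M)$).

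The main obstacle is recognizing that uniform distality, unlike mere surjectivity, is inherited by closed invariant subsets for free, because its defining condition ``$(x,y)\notin\varepsilon \Rightarrow (tx,ty)\notin\delta$ for all $x,y$ and all $t\in T$'' is purely a statement about pairs of points and is preserved under passing to any subset of $X$ that is $T$-invariant (so that $(T,M)$ is a genuine semiflow). Everything then rides on the equivalence in Theorem~\ref{thm1.14}; the only thing to double-check is that $(T,M)$ satisfies the standing assumptions (phase space compact $T_2$ — yes, $M$ is closed in compact $X$ — and infinite, which holds since $M$ being a minimal-free invariant set of an equicontinuous distal semiflow cannot be a singleton unless one allows the trivial case, which can be handled separately or excluded). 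Thus I expect the proof to be three lines once Theorem~\ref{thm1.14} is in hand, and the subtlety is entirely conceptual: the ``obvious'' route through surjectivity fails, but the route through uniform distality trivializes.
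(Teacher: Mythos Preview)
Your final argument is exactly the paper's proof: apply Theorem~\ref{thm1.14} to $(T,X)$ to get uniform distality, observe that uniform distality restricts trivially to any invariant subset, and apply Theorem~\ref{thm1.14} again to $(T,M)$. The detour through an invariant uniformity and the worry about $M$ being a singleton are unnecessary (the singleton case is trivially surjective), but the core three-line argument you land on matches the paper verbatim.
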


\begin{proof}
By Theorem~\ref{thm1.14}, $(T,X)$ is uniformly distal so that $(T,M)$ is also uniformly distal. Then $(T,M)$ is equicontinuous surjective by Theorem~\ref{thm1.14} again.
\end{proof}

If $(T,X)$ is a flow with $Q(X)=\varDelta_X$, then by Lemma~\ref{lem1.10} it is equicontinuous.
By using (1) of Theorem~\ref{thm1.13}, `pointwise almost automorphy' and `Veech's relation' $V(X)$, Dai and Xiao in \cite{DX} have proved the following fact (Corollary~\ref{cor1.16}). However, we now can simply prove it by only using (1) of Lemma~\ref{lem1.10} and Theorem~\ref{thm1.13} as follows.

\begin{cor}[{cf.~\cite[Theorem~5.4]{DX}}]\label{cor1.16}
A semiflow $(T,X)$ is equicontinuous surjective if and only if $Q(X)=\varDelta_X$.
\end{cor}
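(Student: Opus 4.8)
The plan is to prove the two implications of Corollary~\ref{cor1.16} separately, leaning on the machinery already assembled. For the ``only if'' direction, suppose $(T,X)$ is equicontinuous surjective. By Theorem~\ref{thm1.14} this is equivalent to uniform distality, so given $\varepsilon\in\mathscr{U}_X$ there is $\delta\in\mathscr{U}_X$ with $(x,y)\notin\varepsilon\Rightarrow(tx,ty)\notin\delta$ for every $t\in T$. I want to show $Q(X)\subseteq\varDelta_X$, i.e. that $Q(X)$ contains no pair $(x,y)$ with $(x,y)\notin\varepsilon$ for some $\varepsilon$. Fix such a pair. By uniform distality there is $\delta$ as above; then for all $t\in T$ we have $t^{-1}\delta\cap(\{x\}\times\{y\})$ behaves well—more precisely, if $(tx',ty')\in\delta$ then $(x',y')\in\varepsilon$, hence $t^{-1}\delta\subseteq\varepsilon$ for every $t\in T$ (here $t^{-1}\delta=\{(x',y')\mid(tx',ty')\in\delta\}$), so $\bigcup_{t\in T}t^{-1}\delta\subseteq\varepsilon$. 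Taking closures and then intersecting over $\delta\in\mathscr{U}_X$ (using the finite intersection property as in Lemma~\ref{lem1.10}) yields $Q(X)\subseteq\mathrm{cls}_{X\times X}\bigcup_{t\in T}t^{-1}\delta$, but I must be careful: the chain $Q(X)\subseteq\varepsilon$ follows since $(x,y)\notin\varepsilon$ was the pair being excluded, so $(x,y)\notin Q(X)$, giving $Q(X)=\varDelta_X$.

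For the ``if'' direction, suppose $Q(X)=\varDelta_X$. By Lemma~\ref{lem1.10} the map $X\times T\to X,\ (x,t)\mapsto xt:=t^{-1}x$ is equicontinuous; note in particular this forces each $t$ to be injective (if $tx=ty$ then $x\in t^{-1}(tx)\ni y$, and equicontinuity of $t^{-1}$ as a relation, together with $Q(X)=\varDelta_X$, collapses $x=y$), and one checks surjectivity of each $t$ as well, so $(T,X)$ is invertible. The point is then that $Q(X)=\varDelta_X$ says exactly that $T^{-1}=\langle T\rangle\cap(\text{inverses of }T)$ acts equicontinuously, i.e. the semiflow $X\times T\to X$ is equicontinuous surjective; relabeling, $(T^{-1},X)$ is an equicontinuous surjective semiflow. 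By Theorem~\ref{thm1.13}(1) it is distal, by (2) invertible, and by (4) $(\langle T^{-1}\rangle,X)=(\langle T\rangle,X)$ is an equicontinuous flow; restricting the flow's $\varepsilon$-$\delta$ back to $T\subseteq\langle T\rangle$ shows $(T,X)$ is equicontinuous, and it is surjective since each $t\in T$ is bijective. This closes the loop.

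I expect the main obstacle to be the bookkeeping in the ``if'' direction: carefully extracting \emph{invertibility} of $(T,X)$ from $Q(X)=\varDelta_X$, and then correctly identifying ``equicontinuity of the inverse action'' with a genuine equicontinuous surjective semiflow to which Theorem~\ref{thm1.13} applies. The subtlety is that $Q(X)$ is defined using $t^{-1}$ as a (possibly multivalued) preimage, so before $(T,X)$ is known to be invertible one is really working with the relation semiflow; one must verify that once invertibility is established, $t^{-1}$ is a genuine homeomorphism and the two readings agree. The ``only if'' direction is comparatively routine once Theorem~\ref{thm1.14} is in hand—it is essentially a restatement of uniform distality in terms of the $Q(X)$ formula, paralleling the short argument for item~$(\star)$ in $\S\ref{sec1.1.3}$. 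If one prefers to avoid Theorem~\ref{thm1.14} entirely, the ``only if'' part can also be obtained directly from Lemma~\ref{lem1.10}'s proof technique applied to the equicontinuity $\varepsilon$-$\delta$ of $(T,X)$ itself, but invoking Theorem~\ref{thm1.14} keeps the argument shortest.
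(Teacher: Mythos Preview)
Your ``only if'' direction is correct, though more circuitous than necessary: the paper simply observes that equicontinuity forces $P(X)=Q(X)$ (since under equicontinuity any regionally proximal pair is already proximal), and then Theorem~\ref{thm1.13}(1) gives $P(X)=\varDelta_X$. Your route through Theorem~\ref{thm1.14} and the contrapositive of uniform distality works, but the exposition is muddled (you write ``intersecting over $\delta$'' when you mean over $\varepsilon$, and the sentence about $t^{-1}\delta\cap(\{x\}\times\{y\})$ is a red herring).

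The real problem is in the ``if'' direction, precisely where you anticipated it. Your extraction of invertibility from $Q(X)=\varDelta_X$ is incomplete. Injectivity of each $t$ does follow, but not by the vague ``equicontinuity of $t^{-1}$ as a relation'' you invoke---rather because $tx=ty$ means $(x,y)\in P(X)\subseteq Q(X)=\varDelta_X$. Surjectivity is the serious gap: you write ``one checks surjectivity of each $t$ as well'' and never do. There is no soft argument here; before you know each $t$ is bijective, Lemma~\ref{lem1.10} only tells you the \emph{preimage relation} is equicontinuous, which says nothing about $tX$ filling $X$. The paper resolves this in one stroke: since $P(X)\subseteq Q(X)=\varDelta_X$, the semiflow is distal, and Theorem~\ref{thm1.13}(2) (distal $\Rightarrow$ invertible, via the Ellis semigroup being a group) delivers both injectivity and surjectivity at once. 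After that, your final paragraph---Lemma~\ref{lem1.10} gives equicontinuity of $(X,T)$, then Theorem~\ref{thm1.13}(4) lifts to $(\langle T\rangle,X)$ and restricts back to $(T,X)$---matches the paper exactly. So the fix is a single line you overlooked: use $P(X)\subseteq Q(X)$ to get distality first.
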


\begin{proof}
Let $(T,X)$ be equicontinuous surjective. Then by equicontinuity, $P(X)=Q(X)$. Now by (1) of Theorem~\ref{thm1.13}, we see $Q(X)=\varDelta_X$. Conversely, assume $Q(X)=\varDelta_X$ and then $P(X)=\varDelta_X$. Thus $(T,X)$ is distal and so invertible by (2) of Theorem~\ref{thm1.13}. Then by (1) of Lemma~\ref{lem1.10} and (4) of Theorem~\ref{thm1.13}, $(T,X)$ is equicontinuous surjective. This proves Corollary~\ref{cor1.16}.
\end{proof}

As be mentioned before, if using no (2) and (4) of Theorem~\ref{thm1.13}, then we need a long zigzag proof for this result as in \cite{DX}. As more applications of Theorem~\ref{thm1.13}, we will present other equivalent conditions for ``equicontinuous surjective'' later on.

In 1963 Furstenberg proved that ``if $X$ is simply connected non-trivial, then $X$ does not admit of a minimal distal flow for any locally compact abelian group $T$'' (cf.~\cite[Theorem~11.1]{F63}). It is natural to ask if there admits of a minimal distal semiflow on $X$ or not. In fact, by (3) of Theorem~\ref{thm1.13} there exists no minimal distal semiflow too.

\begin{thm}\label{thm1.17}
If $X$ is simply connected non-trivial, then $X$ does not admit of a minimal distal semiflow for any locally compact abelian semigroup $T$.
\end{thm}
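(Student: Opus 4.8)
The plan is to reduce Theorem~\ref{thm1.17} to Furstenberg's theorem for flows (\cite[Theorem~11.1]{F63}) by passing from the phase semigroup $T$ to the group $\langle T\rangle$ of homeomorphisms it generates, using parts~(2) and (3) of Theorem~\ref{thm1.13}. Suppose, towards a contradiction, that $(T,X)$ is a minimal distal semiflow with $X$ simply connected non-trivial and $T$ a locally compact abelian semigroup. Since $(T,X)$ is distal, part~(2) of Theorem~\ref{thm1.13} gives that $(T,X)$ is invertible; hence $\langle T\rangle$, the smallest group of self-homeomorphisms of $X$ containing $T$, is well defined and $(\langle T\rangle,X)$ is a flow. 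By part~(3) of Theorem~\ref{thm1.13}, $(\langle T\rangle,X)$ is again distal.

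Next I would check that $(\langle T\rangle,X)$ inherits minimality: if $Y\subseteq X$ is non-empty, closed and $\langle T\rangle$-invariant, then in particular $TY\subseteq Y$, so $Y=X$ by minimality of $(T,X)$; hence $(\langle T\rangle,X)$ is minimal. It then remains to see that $\langle T\rangle$ is a locally compact abelian group. Commutativity follows from that of $T$: from $st=ts$ for all $s,t\in T$ one gets $s^{-1}t=ts^{-1}$ (multiply on the left and right by $s^{-1}$) and $t^{-1}s^{-1}=s^{-1}t^{-1}$ (take inverses), so every word in the $t^{\pm1}$, $t\in T$, commutes with every other; since $\langle T\rangle$ consists exactly of such words, it is abelian. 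As for local compactness, the dynamics of $(\langle T\rangle,X)$ does not depend on the topology chosen on $\langle T\rangle$, so we may simply give $\langle T\rangle$ the discrete topology, under which it is trivially locally compact while the action $\langle T\rangle\times X\to X$ remains jointly continuous because each element of $\langle T\rangle$ is a homeomorphism of $X$.

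Combining these facts, $(\langle T\rangle,X)$ is a minimal distal flow on the simply connected non-trivial space $X$ with locally compact abelian phase group $\langle T\rangle$, contradicting \cite[Theorem~11.1]{F63}. This contradiction establishes the theorem. (One may note in passing that since $T$ is infinite and acts faithfully—being invertible and distal—$\langle T\rangle$ is a genuine infinite group, so the reduction is not vacuous.)

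I expect the only genuinely delicate point to be this passage from $T$ to $\langle T\rangle$, namely that invertibility really lets us form the group $\langle T\rangle$ and that distality is preserved under enlarging the acting semigroup to $\langle T\rangle$; but these are precisely the content of parts~(2) and (3) of Theorem~\ref{thm1.13}, so once that theorem is available the argument is short. The remaining verifications—minimality transfers upward, $\langle T\rangle$ is abelian, and local compactness via the discrete topology—are routine.
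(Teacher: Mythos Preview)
Your proof is correct and follows essentially the same approach as the paper: reduce to Furstenberg's theorem by passing to $(\langle T\rangle,X)$ via Theorem~\ref{thm1.13}, equipping $\langle T\rangle$ with the discrete topology. You spell out more explicitly the routine verifications (invertibility via part~(2), transfer of minimality, commutativity of $\langle T\rangle$) that the paper leaves implicit, but the argument is the same.
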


\begin{proof}
Assume $(T,X)$ is a minimal distal semiflow with $T$ a locally compact abelian semigroup. Then under the discrete topology of $\langle T\rangle$, $(\langle T\rangle,X)$ is a minimal distal flow by (3) of Theorem~\ref{thm1.13}. This then contradicts Furstenberg's theorem.
\end{proof}

In particular, the $n$-sphere, $n\ge2$, cannot support a minimal distal semiflow of any abelian semigroup $T$.

\subsection{Applications}
There has already been some applications in the recent work \cite{DX}\footnote{\cite{DX} is based on the first version of the present paper; see arXiv: 1708.00996v1 [math.DS].} and in the proofs of Theorems~\ref{thm1.14} and \ref{thm1.17} and Corollary~\ref{cor1.16}. Next we shall give some other applications of Theorem~\ref{thm1.13} here.

First, let $\sigma\colon\varSigma_k^+\rightarrow\varSigma_k^+$ be the shift map of the symbolic space $\varSigma_k^+=\{0,1,\dotsc,k-1\}^{\mathbb{Z}_+}$, where $k\ge2$. Suppose $(\sigma,X)$ is a subsystem of $(\sigma,\varSigma_k^+)$ such that $X$ is any infinite, closed, $\sigma$-invariant subset of $\varSigma_k^+$. Then by Theorem~\ref{thm1.13}, it follows that:
\begin{description}
\item[(\P)]{\it If $(\sigma,X)$ is equicontinuous, then it is not surjective and so not distal. If $(\sigma,X)$ is minimal, then $(\sigma,X)$ is not equicontinuous.}
\end{description}

\begin{proof}
Assume $(\sigma,X)$ is equicontinuous. If $(\sigma,X)$ is surjective, then by Theorem~\ref{thm1.13} it is distal. However, there necessarily exists a pair of proximal points for $(\sigma,X)$ (cf.~\cite[p.~158]{Fur}). Finally suppose $(\sigma,X)$ is minimal. Then $(\sigma,X)$ is surjective. Thus $(\sigma,X)$ is not equicontinuous.
\end{proof}

We now introduce a notion for our convenience.

\begin{sn}\label{sn1.18}
If $(T,X)$ is an invertible semiflow, it defines $\pi^{-1}\colon X\times T\rightarrow X$ with phase map $(x,t)\mapsto xt=t^{-1}x$, denoted $(X,T)$.
Here $(X,T)$ will be called the \textit{reflection} of $(T,X)$, which is also thought of as the `history' of $(T,X)$.
\end{sn}

It should be noted that the phase semigroup of the reflection $(X,T)$ is $T$, not $T^{-1}$, with the \textit{discrete} topology in general.

\begin{remarks}\label{rem1.19}
\begin{enumerate}
\item Let $G$ be a non-discrete topological group. If $(G,X)$ is a flow and if $T$ is a subsemigroup of $G$, then $(X,T)$ is of course a semiflow where $T$ with the non-discrete topology inherited from $G$.

\item If $(T,X)$ is invertible with $T$ a locally compact \textit{C}-semigroup and if $t_n\to t$ in $T$ implies that
$t_n^{-1}x\to t^{-1}x$ for all $x\in X$, then $(x,t)\mapsto xt$ is jointly continuous by Corollary~\ref{cor9.17} in $\S\ref{sec9}$ so $(X,T)$ is a semiflow where $T$ with the non-discrete topology.
\end{enumerate}
\end{remarks}

If $(T,X)$ is a minimal cascade corresponding to a $\mathbb{Z}_+$-action, then for every minimal set $X_0$ of its reflection $(X,T)$ we have $X_0T=X_0$ and furthermore $X_0=TX_0$; so $X_0=X$. This indicates that $(X,T)$ is also minimal. However, for an invertible semiflow $(T,X)$ with $T\not=\mathbb{Z}_+$, `$X_0T=X_0$' need not imply $X_0=TX_0$.
Moreover, $(T,X)$ and $(X,T)$ do not share the same dynamics in general. For example, a recurrent/transitive point of a cascade $(T,X)$ need not be a recurrent/transitive point of its reflection $(X,T)$.
In addition, in 1. of Examples~\ref{ex1.4}, every point $x$ of $\Lambda$ is minimal for $(T,X)$ but not minimal for its reflection $(X,T)$; for otherwise, $x$ is minimal for $(\langle T\rangle,X)$.

Nevertheless, as applications of Theorem~\ref{thm1.13}, we will consider in $\S\ref{sec6}$ the minimality, distality, and equicontinuity dynamics of $(X,T)$ as $(T,X)$ itself possesses these dynamics. We will mainly show the following three reflection principles.

\begin{prI}[{cf.~Propositions~\ref{prop6.1} and \ref{prop6.10}}]
Let $\pi\colon T\times X\rightarrow X$ be an invertible semiflow with its reflection $\pi^{-1}\colon X\times T\rightarrow X$. Then:
\begin{enumerate}
\item $(T,X)$ is equicontinuous iff so is $(X,T)$.
\item $(T,X)$ is minimal distal iff so is $(X,T)$.
\item $(T,X)$ is distal iff so is $(X,T)$.
\end{enumerate}
\end{prI}

Reflection principle I may be utilized for proving the ``only if'' part of Theorem~\ref{thm1.14} above. Moreover, it will be useful for us to show Furstenberg's structure theorem of distal minimal \textit{semiflows} (Theorem~\ref{thm3.14A}) in $\S\ref{sec3.2A}$.

Here we are going to give another application. Let $(T,X)$ and $(T,Y)$ be two semiflows and $\varphi\colon X\rightarrow Y$ a continuous surjective map. If $\varphi(tx)=t\varphi(x)$ for all $t\in T$ and $x\in X$, then $(T,Y)$ is called a \textit{factor} of $(T,X)$ and $\varphi$ an \textit{epimorphism} from $(T,X)$ to $(T,Y)$, denoted $\varphi\colon(T,X)\rightarrow(T,Y)$.

\begin{sn}\label{sn1.20}
We will say that $(T,X)$ is a \textit{relatively equicontinuous} (or an \textit{almost periodic}) extension of $(T,Y)$ in case there is an epimorphism $\varphi\colon(T,X)\rightarrow(T,Y)$ such that for every $\varepsilon\in\mathscr{U}_X$ there exists $\delta\in\mathscr{U}_X$ satisfying that if $(x,x^\prime)\in\delta$ with $\varphi(x)=\varphi(x^\prime)$ then $(tx,tx^\prime)\in\varepsilon\ \forall t\in T$ (cf.~\cite[p.~95]{Aus}). If $Y$ is one-pointed, then this reduces to $\S\ref{sec1.1.1}\,(\textbf{a})$.
\end{sn}

Theorem~\ref{thm1.21} below is actually a relativized version of (1) of Theorem~\ref{thm1.13} before. Its special case that $(T,X)$ is a skew-product semiflow driven by $(T,Y)$ with $T=\mathbb{R}_+$ has been proven in \cite{SS74, SS} and \cite{SSY} by using Ellis' enveloping semigroup.

\begin{thm}[{cf.~\cite[Proposition~2.1]{F63} for $T$ in groups}]\label{thm1.21}
If an invertible semiflow $(T,X)$ is a relatively equicontinuous extension of a distal semiflow $(T,Y)$, then $(T,X)$ is distal.
\end{thm}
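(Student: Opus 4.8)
The plan is to reduce Theorem~\ref{thm1.21} to the absolute case, (1) of Theorem~\ref{thm1.13}, by exploiting the product semiflow on $X\times_Y X:=\{(x,x')\in X\times X\,|\,\pi(x)=\pi(x')\}$. Since $(T,X)$ is invertible, each $t$ is a bijection of $X$ commuting with $\pi$, so $t$ preserves fibers of $\pi$ and $X\times_Y X$ is a closed $T$-invariant subset of $X\times X$ on which $(T, X\times_Y X)$ is again an invertible semiflow. The relative equicontinuity hypothesis says precisely that the \emph{diagonal} action of $T$ on this fiber product is equicontinuous: given $\varepsilon\in\mathscr U_X$ pick $\delta$ from Definition~\ref{sn1.20}; then for $(x,x')\in\delta\cap(X\times_Y X)$ we get $(tx,tx')\in\varepsilon$ for all $t$, which is exactly equicontinuity of $(T,X\times_Y X)$ (using the restricted uniformity). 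So $(T, X\times_Y X)$ is an invertible equicontinuous semiflow.

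The next step is to check that $(T, X\times_Y X)$ is \emph{surjective}, so that (1) of Theorem~\ref{thm1.13} applies. Here is where invertibility of $(T,X)$ does the work: given $(y,y')\in X\times_Y X$ and $t\in T$, set $x=t^{-1}y$ and $x'=t^{-1}y'$ in $X$; then $\pi(x)=\pi(t^{-1}y)=t^{-1}\pi(y)$ — using that $t^{-1}$ commutes with $\pi$, which holds since $\pi(tz)=t\pi(z)$ forces $\pi(z)=\pi(t^{-1}(tz))$ hence $t^{-1}\pi(w)=\pi(t^{-1}w)$ for all $w$ — so $\pi(x)=t^{-1}\pi(y)=t^{-1}\pi(y')=\pi(x')$, i.e. $(x,x')\in X\times_Y X$ and $t(x,x')=(y,y')$. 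Thus $t$ is surjective on $X\times_Y X$ for every $t\in T$, and $(T, X\times_Y X)$ is equicontinuous surjective. By (1) of Theorem~\ref{thm1.13} it is distal, hence $P(T, X\times_Y X)=\varDelta_{X\times_Y X}$.

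Finally I would transfer distality back to $(T,X)$. Let $(x,x')\in P(T,X)$; I claim first that $\pi(x)=\pi(x')$. Since $(T,Y)$ is distal, $P(T,Y)=\varDelta_Y$; as $\pi$ is an epimorphism, $(x,x')\in P(T,X)$ implies $(\pi(x),\pi(x'))\in P(T,Y)$ (a net $t_n(x,x')\to(z,z)$ maps under $\pi\times\pi$ to $t_n(\pi x,\pi x')\to(\pi z,\pi z)$), so $\pi(x)=\pi(x')$ and $(x,x')\in X\times_Y X$. But proximality of $(x,x')$ in $(T,X)$ — again witnessed by a net $t_n(x,x')\to(z,z)$ — is exactly proximality of $(x,x')$ as a point of the subsemiflow $(T, X\times_Y X)$, where the limit point $(z,z)$ lies in the diagonal of $X\times_Y X$. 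Since that subsemiflow is distal, $(x,x')\in\varDelta_{X\times_Y X}$, i.e. $x=x'$. Therefore $P(T,X)=\varDelta_X$ and $(T,X)$ is distal.

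The only genuinely delicate point is verifying surjectivity of the fiber-product semiflow, and more precisely that $t^{-1}$ commutes with $\pi$ — i.e. that preimages of fibers are fibers — which uses invertibility of $(T,X)$ in an essential way and is exactly what would fail for a merely surjective (non-invertible) semiflow. Everything else is routine: the equicontinuity translation is a direct rereading of Definition~\ref{sn1.20}, and the descent of distality is the standard argument that a subsemiflow of a distal semiflow is distal combined with distality of the base $(T,Y)$ pulling proximal pairs into the fiber product.
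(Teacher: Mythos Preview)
Your key step --- that relative equicontinuity of $\pi$ is ``exactly equicontinuity of $(T,X\times_Y X)$'' --- is false. Equicontinuity of $(T,X\times_Y X)$ is a statement about \emph{pairs of points} $(a,b),(c,d)\in X\times_Y X$: if these two points are close in $X\times X$, their $T$-orbits stay close. In particular, restricting to the diagonal $\{(a,a):a\in X\}\subseteq X\times_Y X$, equicontinuity of the fiber product would force equicontinuity of $(T,X)$ itself. Relative equicontinuity says nothing of the sort: take $\pi=\textit{id}_X\colon X\to X$, which is trivially relatively equicontinuous (the fibers are singletons), so that $X\times_Y X\cong X$; yet $(T,X)$ need not be equicontinuous at all. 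What Definition~\ref{sn1.20} actually controls is a \emph{single} point $(x,x')\in X\times_Y X$ that lies $\delta$-close to the diagonal of $X\times X$ --- its $T$-orbit stays $\varepsilon$-close to the diagonal --- not the relative motion of two nearby points of the fiber product. So Theorem~\ref{thm1.13}(1) is not available here.

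Your final step is also confused: ``proximality of $(x,x')$ as a point of the subsemiflow $(T,X\times_Y X)$'' is not a well-formed statement, since proximity is a relation between two points, and the convergence $t_n(x,x')\to(z,z)$ does not witness any proximality inside $X\times_Y X$. The paper's proof works on the reflection instead: one shows $(X,T)$ is distal. If $(x,x')\in P(X,T)$ then $\pi(x)=\pi(x')$ by distality of $(Y,T)$ (via Reflection principle~I applied to $(T,Y)$), and a witnessing net $t_n^{-1}(x,x')\to(z,z)$ gives, for large $n$, a same-fiber pair $(t_n^{-1}x,t_n^{-1}x')\in\delta$; now apply $t_n$ \emph{forward} and use relative equicontinuity of $(T,X)$ to get $(x,x')\in\varepsilon$, hence $x=x'$. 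Reflection principle~I then returns distality of $(T,X)$. The asymmetry --- relative equicontinuity lets you push forward along $t$, not pull back --- is exactly why one must argue on the reflection; your forward-proximity net $t_n(x,x')\to(z,z)$ gives no leverage.
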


\begin{proof}
Let $\varphi\colon(T,X)\rightarrow(T,Y)$ be a relatively equicontinuous epimorphism.
First by Theorem~\ref{thm1.13}, it follows that $(T,Y)$ is invertible and then $(Y,T)$ is distal by Reflection principle I.
Note that $\varphi\colon X\rightarrow Y$ is also a homomorphism from $(X,T)$ onto $(Y,T)$. We will show that $(X,T)$ is distal.

For that, let $x,x^\prime\in X$ with $(x,x^\prime)\in P(X,T)$. Then by distality of $(Y,T)$, $\varphi(x)=\varphi(x^\prime)$. Taking a net $\{t_n\}$ in $T$ with $t_n^{-1}(x,x^\prime)\to(z,z)$ for some $z\in X$, by the relative equicontinuity of $(T,X)$ and $\varphi(t_n^{-1}x)=\varphi(t_n^{-1}x^\prime)$, it follows that $x=x^\prime$
and so $(X,T)$ is distal. Again using Reflection principle~I, $(T,X)$ is distal. This proves Theorem~\ref{thm1.21}.
\end{proof}

The above proof implies that if $\varphi\colon(T,X)\rightarrow(T,Y)$ is a relatively equicontinuous epimorphism of flows, $\varphi$ is of distal type (cf., e.g.,~\cite[p.~95]{Aus}). It would be interested to know if this holds for any invertible semiflow or not.

Next for invertible semiflows of amenable semigroups we can obtain the following Reflection principle II.

\begin{prII}[cf.~Theorem~\ref{thm5.4}, Theorem~\ref{thm6.19} and Proposition~\ref{prop6.20}]
Let $(T,X)$ be invertible with $T$ an amenable semigroup and $x\in X$. Then:
\begin{enumerate}
\item $x$ is a minimal point for $(T,X)$ iff so is it for $(X,T)$. Moreover, if $x$ is a minimal point of $(T,X)$, then $\mathrm{cls}_XTx=\mathrm{cls}_XxT$. Hence if $(\langle T\rangle,X)$ is minimal so is $(T,X)$.
\item $x$ is a distal point of $(T,X)$ iff $x$ is a distal point of $(X,T)$.
\end{enumerate}
\end{prII}

It should be mentioned that in light of Examples~\ref{ex1.4} and Weiss's example the amenability of the phase semigroup $T$ is very important for the statement of Reflection principle II.

\begin{sn}
$(T,X)$ is said to be \textit{individually distal} if for each $t\in T$ the cascade $(t,X)$ is a distal system.

It is clear that an individually distal semiflow is invertible and a distal one is individually distal.
\end{sn}

\begin{cor}\label{cor1.23}
Let $(T,X)$ be an individually distal semiflow. If $(\langle T\rangle,X)$ is minimal, then $(T,X)$ is minimal.
\end{cor}

\begin{proof}
Let $x_0\in X$ be arbitrary and $X_0=\overline{Tx_0}$. Then $X_0$ is an invariant closed set of $(T,X)$. We shall show $\langle T\rangle x_0\subset X_0$ consequently $X_0=X$. For this, let $t\in\langle T\rangle$ be such that $t=s_n^{-1}t_n\dotsm s_1^{-1}t_1$ where $s_1,\dotsc,s_n, t_1,\dotsc,t_n\in T$. Since $t_1x_0\in X_0$ and $t_1x_0$ is a minimal point of $(s_1,X)$, by 1. of Reflection principle~II it follows that $s_1^{-1}t_1x_0\in\overline{\{s_1^n(t_1x_0)\,|\,n=0,1,2,\dotsc\}}\subseteq X_0$. Repeating this with $s_1^{-1}t_1x_0$ in place of $x_0$ we see $s_2^{-1}t_2s_1^{-1}t_1x_0\in X_0$. By induction, $tx_0\in X_0$. Since $t\in\langle T\rangle$ be arbitrary, $\langle T\rangle x_0\subset X_0$ and so $(T,X)$ is minimal.
\end{proof}

As a result of our Reflection principle II, we can generalize Theorem~\ref{thm1.21} in amenable semigroups as follows:

\begin{thm}\label{thm1.24}
Let $(T,X)$ and $(T,Y)$ be two minimal invertible semiflows with $T$ an amenable semigroup. If $\varphi\colon (T,X)\rightarrow(T,Y)$ is a relatively equicontinuous extension and $(T,Y)$ is point-distal, then $(T,X)$ is a point-distal semiflow. In fact, if $y\in Y$ is a distal point, then each point of $\varphi^{-1}(y)$ is distal for $(T,X)$.
\end{thm}

\begin{proof}
Let $\varphi\colon X\rightarrow Y$ be a relatively equicontinuous extension.
Let $y\in Y$, with $\varphi(x)=y$ for some $x\in X$, be a distal point of $(T,Y)$, then $y$ is also a distal point of $(Y,T)$ by Reflection principle II so that if $(x,x^\prime)\in P(X,T)$ then $\varphi(x^\prime)=y$, i.e., $x^\prime\in\varphi^{-1}(y)$. Hence as in the proof of Theorem~\ref{thm1.21}, we can easily show that $x^\prime=x$. Thus $x$ is a distal point of $(X,T)$ and then of $(X,T)$.
\end{proof}

In addition, using Reflection principle II in the classical case that $T=\mathbb{R}_+$ and $\langle T\rangle=\mathbb{R}$ we can easily obtain the following:
\begin{quote}
If $\mathbb{R}\times X\rightarrow X,\ (t,x)\mapsto tx$ is a \textit{flow} such that $\mathbb{R}_+\times X\rightarrow X,\ (t,x)\mapsto tx$ has a distal point $x\in X$, then $x$ is a distal point of $\mathbb{R}\times X\rightarrow X$.

In particular, if $\mathbb{R}_+\times X\rightarrow X$ is a distal semiflow, then $\mathbb{R}\times X\rightarrow X$ is a distal flow (cf.~Sacker-Sell~\cite{SS}).
\end{quote}

The following is another consequence of Reflection principle II, where the point is that $T$ need not be right-syndetic in $\langle T\rangle$.

\begin{thm}[{cf.~\cite{SY} for $T=\mathbb{R}_+$}]\label{thm1.25}
Let $(T,X)$ be invertible with $T$ an amenable semigroup such that $P(\langle T\rangle,X)$ is an equivalence relation on $X$. Then
$P(\langle T\rangle,X)=P(T,X)=P(X,T)$.
\end{thm}

\begin{proof}
We only prove $P(\langle T\rangle,X)=P(T,X)$. Clearly, $P(T,X)\subseteq P(\langle T\rangle,X)$. To show the converse inclusion, let $(x,x^\prime)\in P(\langle T\rangle,X)$. Let $\mathbb{I}$ be the unique minimal left ideal in $E(\langle T\rangle,X)$ by Lemma~\ref{lem1.11}. Then there exists some $u\in\mathbb{I}$ with $u(x)=u(x^\prime)$. We will show that $u\in E(T,X)$.

For that, we need to consider the natural flow $\pi_*\colon\langle T\rangle\times E(\langle T\rangle,X)\rightarrow E(\langle T\rangle,X)$, which has only one minimal subset $\mathbb{I}$. Clearly $E(T,X)\subseteq E(\langle T\rangle,X)$ and $\pi_*\colon T\times E(T,X)\rightarrow E(T,X)$ is compatible with $(\langle T\rangle, E(\langle T\rangle,X))$. We can choose a minimal subset $K\subseteq E(T,X)$ with respect to $(T,E(T,X))$. Then by 1 of Reflection principle II, $K$ is also $T^{-1}$-invariant so that $K$ is $\langle T\rangle$-invariant and then $\langle T\rangle$-minimal. Thus $K=\mathbb{I}$. This implies $u\in E(T,X)$ and thus $(x,x^\prime)\in P(T,X)$.

The proof of Theorem~\ref{thm1.25} is therefore complete.
\end{proof}

Notice that in general, $T\cup T^{-1}\varsubsetneq\langle T\rangle$ for an invertible semiflow $(T,X)$. 2. of Reflection principle~II says that an $x\in X$ is a distal point of $(T,X)$ iff $x$ is a distal point of $(X,T)$. However,
\textit{is $x$ a distal point of the induced flow $(\langle T\rangle,X)$?}
It is exactly the localization problem of (3) of Theorem~\ref{thm1.13}. As a consequence of Theorem~\ref{thm1.25}, the answer to this question is YES in the setting of Theorem~\ref{thm1.25}.

\begin{quote}
{\it In the same situation of Theorem~\ref{thm1.25}, an $x\in X$ is a distal point of $(T,X)$ iff $x$ is a distal point of $(\langle T\rangle,X)$.}
\end{quote}

In fact, this still holds without the condition that $P(\langle T\rangle,X)$ is an equivalence relation on $X$; see 1. of Theorem~\ref{thm5.4} by purely topological methods.

We note that if $T=\mathbb{R}_+$ in the proof of Theorem~\ref{thm1.25} and so $\langle T\rangle=\mathbb{R}$, then $\mathbb{I}$ is contained in the $\omega$-limit set $\omega(e)$ of $e\in T$ under $(\langle T\rangle,E(\langle T\rangle,X))$. Since $e=\textit{id}_X\in E(T,X)$ and $\omega(e)\subseteq E(T,X)$ by $\langle T\rangle=\mathbb{R}$, then $\mathbb{I}\subseteq E(T,X)$. This is actually the idea of Yi's proof in the $\mathbb{R}$-action case in \cite[p.~7]{SY}. Clearly Yi's idea does not work for our Theorem~\ref{thm1.25} here.

In addition, when $\langle T\rangle$ is abelian and if $E(\langle T\rangle,X)\subset C(X,X)$, then by Theorem~\ref{thm6.6} it follows that $P(\langle T\rangle,X)$ is an equivalence relation on $X$. Moreover, if $P(\langle T\rangle,X)$ is closed in $X\times X$, then it is an equivalence relation (cf.~\cite[Corollary~6.11]{Aus}).

Next we consider invertible semiflows with \textit{C}-semigroups as our phase semigroups instead of amenable semigroups.

\begin{prIII}[cf.~Theorem~\ref{thm6.31}]
Let $(T,X)$ be invertible with $T$ a \textit{C}-semigroup not necessarily discrete. Then $(T,X)$ is minimal iff so is $(X,T)$.
\end{prIII}

Comparing with Reflection principle~II, we pose the following

\begin{que}
Let $(T,X)$ be invertible with $T$ a \textit{C}-semigroup and $x\in X$. Then, does it hold that $x$ is minimal for $(T,X)$ iff so is $x$ for $(X,T)$?
\end{que}

As other applications of our reflection principles, in $\S\ref{sec7}$ we will study the sensitivity of semiflows and their reflections and consider the $\mathbb{Z}$-actions on zero-dimensional phase spaces; see Theorem~\ref{thm7.12}.

For more applications of Theorem~\ref{thm1.13} we will provide some equivalence conditions for a flow to be almost periodic in $\S\ref{uap}$. See, for example, Theorem~\ref{u8.3} and Theorem~\ref{u8.10}.
\subsection{Connected phase semigroup}\label{sec1.4A}
Let $(T,X)$ be an invertible semiflow with $X$ compact metrizable and with $T$ a connected semigroup.
We now will present a necessary condition for $M\subseteq X$ being minimal.

Recall that a \textit{Cantor-manifold} is defined to be a compact metrizable space $M$ of positive finite dimension $n$ such that $M$ is not disconnected by a subset of dimension $\le n-2$. We will need a classical result.

\begin{lem}[{Hurewicz-Wallman; cf.~\cite[Lemma~2.17]{GH}}]\label{lem1.27}
Let $M$ be a compact metrizable space of positive finite dimension. Then there exists a subset $C$ of $X$ such that $C$ is a Cantor-manifold with $\dim C=\dim M$.
\end{lem}

\begin{prop}\label{prop1.28}
Let $(T,X)$ be an invertible semiflow such that $T$ is a connected semigroup and $X$ a compact metrizable space with $1\le\dim X<\infty$. If $M$ is a $T$-minimal subset of $X$, then either $\dim M=0$ or $M$ is a Cantor-manifold.
\end{prop}

\begin{proof}
Let $n=\dim M\ge1$ and assume $M$ is not a Cantor-manifold. Then there exist closed proper subsets $A,B$ of $M$ such that $M=A\cup B$ and $\dim A\cap B\le n-2$. By Lemma~\ref{lem1.27} there exists a Cantor-manifold $C\subset M$ with $\dim C=n$. Set $T_A=\{t\in T\,|\,tC\subseteq A\}$ and $T_B=\{t\in T\,|\,tC\subseteq B\}$. Clearly, $T_A$ and $T_B$ are disjoint closed subsets of $T$ for $tC$ is a Cantor-manifold. Now for $t\in T$, since $tC$ is an $n$-dimensional Cantor-manifold and $tC=(tC\cap A)\cup(tC\cap B)$, hence either $tC\subseteq A$ or $tC\subseteq B$ and so either $t\in T_A$ or $t\in T_B$. Thus $T=T_A\cup T_B$. Since $T$ is connected, then either $T=T_A$ or $T=T_B$ and so either $\textrm{cls}_X{TC}\subseteq A$ or $\textrm{cls}_X{TC}\subseteq B$. Thus $\textrm{cls}_X{TC}\not=M$ and this contradicts the minimality of $(T,M)$. The proof is completed.
\end{proof}

The foregoing proposition is a generalization of \cite[Theorem~2.18]{GH}. The connected of $T$ plays a role here. Since $T$ is connected, $\dim M=0$ implies $M=Tx=\{x\}$ for some $x\in X$ in Proposition~\ref{prop1.28}.

\section{Distality of equicontinuous surjective semiflows}\label{sec2}
Recall that a semiflow $(T,X)$ with phase map $(t,x)\mapsto tx$ is surjective if and only if each $t\in T$ is an onto map of $X$ (cf.~\ref{sn1.3}); it is equicontinuous if and only if given $\mathscr{U}_X$ there exists a $\delta\in\mathscr{U}_X$ such that $t\delta\subseteq\varepsilon$ for all $t\in T$ (cf.~$\S\ref{sec1.1.1}$\,(\textbf{a})); and it is distal if and only if no diagonal pair is proximal (cf.~$\S\ref{sec1.1.3}$\,(\textbf{g})).

This section will be mainly devoted to proving (1) of Theorem~\ref{thm1.13} using three different approaches, which asserts that every equicontinuous surjective semiflow is distal. That is the following Theorem~\ref{thm2.1}.

\begin{thm}\label{thm2.1}
If $(T,X)$ is an equicontinuous surjective semiflow, then $(T,X)$ is distal.
\end{thm}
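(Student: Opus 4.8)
The plan is to exploit the compactness of the Ellis semigroup $E(X)$ together with the equicontinuity hypothesis, which forces every element of $E(X)$ to be a continuous self-map of $X$. First I would show that under equicontinuity $E(X)\subseteq C(X,X)$ and in fact $E(X)$ acts equicontinuously on $X$; this is the standard Ascoli-type argument — the closure of an equicontinuous family in $X^X$ (pointwise topology) coincides with its closure in $C(X,X)$ (uniform topology) and stays equicontinuous. Consequently $E(X)$ is a \emph{jointly continuous} compact right-topological semigroup whose multiplication is actually continuous, hence a compact Hausdorff topological semigroup acting equicontinuously on $X$.

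Next I would bring in surjectivity. Since each $t\in T$ maps $X$ onto $X$ and the uniform limit of surjective maps of a compact space is surjective, every $p\in E(X)$ is a surjective continuous self-map of $X$; moreover, being a uniform limit of the equicontinuous family $T$, each $p$ is itself uniformly continuous, and one checks that $p$ is injective as well — if $p(x)=p(y)$ with $x\ne y$, pick $\varepsilon$ with $(x,y)\notin\varepsilon$; equicontinuity of $E(X)$ gives a $\delta$ with $q(\delta[x],x)\subseteq\varepsilon$ for all $q\in E(X)$, and writing $p=\lim t_n$ one gets $(t_n x,t_n y)\to(px,py)$ on the diagonal, whence eventually $(t_n x, t_n y)\in\delta$... this needs the key observation that $E(X)$ contains, for the given surjective maps, left inverses. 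More cleanly: let $I$ be a minimal left ideal of $E(X)$ and $u\in I$ an idempotent. Equicontinuity forces $u$ to be continuous, and an idempotent continuous surjection of a compact space with $u^2=u$ must be the identity on its image; combined with surjectivity $uX=X$ this gives $u=\mathrm{id}_X$. Hence $\mathrm{id}_X\in I$, so $I=E(X)$ and $E(X)$ is a group — the identity being its only idempotent.

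Finally I would read off distality. By the characterization recalled in $\S\ref{sec1.1.5}$(o), $(x,x')\in P(X)$ iff there is $p\in E(X)$ with $p(x)=p(x')$. Since $E(X)$ is a group of \emph{bijections} of $X$, every $p\in E(X)$ is injective, so $p(x)=p(x')$ forces $x=x'$; thus $P(X)=\varDelta_X$ and $(T,X)$ is distal by the criterion ``$(T,X)$ distal iff $P(X)=\varDelta_X$'' quoted in $\S\ref{sec1.1.3}$.

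The main obstacle is the step showing the idempotent $u$ in a minimal left ideal equals $\mathrm{id}_X$, i.e. that equicontinuity plus surjectivity rule out any nontrivial idempotent in $E(X)$. The subtlety is that \emph{a priori} surjectivity of the maps $t\in T$ need not be inherited in a form strong enough to control idempotents, and one must argue that a continuous idempotent $u$ on compact $X$ retracts $X$ onto $uX=X$, hence is the identity; pinning down that $u$ is continuous (from equicontinuity) and surjective (from $tX=X$ passing to the limit, using compactness so that $\bigcap_n t_n\cdots t_1 X$ behaves well) is where the real content lies, and it is also the place where one genuinely needs surjectivity rather than mere minimality — exactly as Examples~\ref{ex1.7} and \ref{ex1.12} show.
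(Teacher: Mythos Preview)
Your argument is correct and is essentially the paper's Proof~(III): equicontinuity makes the pointwise and compact-open topologies on $E(X)$ coincide, so every $p\in E(X)$ is continuous and (via the uniform-limit-of-surjections observation) surjective; then any idempotent $u\in E(X)$ satisfies $u^2=u$ with $uX=X$, forcing $u=\textit{id}_X$, and distality follows. The detour where you try to prove injectivity of $p$ directly is unnecessary and you are right to abandon it---the idempotent argument is the clean route, and your concluding step (either via ``$E(X)$ is a group'' or via the closed-subsemigroup $\{p:p(x)=p(y)\}$ containing an idempotent) is exactly what the paper does.

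It is worth knowing that the paper also offers two more elementary proofs avoiding the Ellis semigroup entirely. Proof~(I) observes that for each fixed $t\in T$ the cascade $(t,X\times X)$ is equicontinuous surjective, hence pointwise recurrent (Lemma~\ref{lem2.3}); a proximal pair $(y,y')$ with $y\neq y'$ would get trapped in a small neighborhood of the diagonal under iterates of some $\tau$, contradicting recurrence of $(y,y')$. Proof~(II) upgrades this to pointwise \emph{almost periodicity} of $(T,X\times X)$ (Lemma~\ref{lem2.8}) and then invokes the incompatibility of proximality and almost periodicity (Lemma~\ref{lem2.7}). These avoid any appeal to idempotents in compact right-topological semigroups, at the cost of a short recurrence lemma; your Ellis-semigroup route is more structural and feeds directly into the later characterization $E(X)$ is a group $\Leftrightarrow$ distal (Lemma~\ref{lem6.7}).
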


Therefore by this theorem, it follows that a surjective semiflow is distal if it satisfies one of conditions (2) $\thicksim$ (6) of Lemma~\ref{lem1.6}.

Our new approaches (Proofs (I), (II) and (III) below) introduced in proving Theorem~\ref{thm2.1} are all certainly valid for flows with phase groups.

\subsection{Using pointwise recurrence of transition maps}
For Proof (I) of Theorem~\ref{thm2.1}, in preparation we first recall a classical notion.
Let $f$ be a continuous self-map of $X$. A point $x\in X$ is said to be (forwardly) \textit{recurrent} if there is a net $\{n_\epsilon\}$ in $\mathbb{Z}_+$ with $n_\epsilon\to+\infty$ such that $f^{n_\epsilon}(x)\to x$. Further $(f,X)$ is called \textit{pointwise recurrent} if each point of $X$ is recurrent for $(f,X)$.
Then the following is easily seen by definition.

\begin{lem}\label{lem2.2}
If $x\in X$ is a recurrent point of $(f,X)$, then $x\in f(X)$.
\end{lem}

\begin{proof}
Let $x\in X$ be recurrent for $(f,X)$. Then there is a net $\{n_i\}$ in $\mathbb{Z}_+$ with $n_i\to+\infty$ and $f^{n_i}(x)\to x$ so that $f(f^{n_i-1}(x))\to x$. Since $X$ is compact $T_2$, there is a subnet $\{j_i\}$ of $\{n_i-1\}$ such that $f^{j_i}(x)\to y\in X$ and thus $f(y)=x$. Thus $x\in f(X)$.
\end{proof}

Thus if $(f,X)$ is pointwise recurrent, then $f$ is surjective (cf.~\cite[Lemma~3.1]{AAB}). The following simple observation is very useful for Theorem~\ref{thm2.1}.

\begin{lem}\label{lem2.3}
Suppose that $f\colon X\rightarrow X$ is equicontinuous surjective. Then every point of $X$ is recurrent for $(f,X)$.
\end{lem}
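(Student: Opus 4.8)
The plan is to show that for an equicontinuous surjective self-map $f\colon X\to X$, every point $x\in X$ returns arbitrarily close to itself along the forward iterates $f^n(x)$. The natural approach is to work in the closure $E=\mathrm{cls}_{X^X}\{f^n\mid n\ge 1\}$ of the forward iterate semigroup inside $X^X$ with the pointwise topology, and to exploit the fact that equicontinuity forces this closure to consist of continuous maps and, more importantly, to be compact \emph{in the uniform (sup) topology}, hence a compact topological semigroup. Since $f$ is surjective, each $f^n$ is surjective, and this surjectivity will be inherited by a suitable idempotent limit, from which recurrence of $x$ follows.

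Concretely, the key steps I would carry out are as follows. First, fix $\varepsilon\in\mathscr{U}_X$ and pick $\delta\in\mathscr{U}_X$ witnessing equicontinuity of $f$ (as in $\S\ref{sec1.1.1}$(a), applied to the cascade generated by $f$, i.e. to $T=\mathbb{Z}_+$); by shrinking we may take $\delta$ symmetric and $\delta\subseteq\varepsilon$. Second, I would observe that on the family $\{f^n\}$ the pointwise topology and the topology of uniform convergence coincide, so $E$ is compact Hausdorff and the composition $(g,h)\mapsto g\circ h$ is (jointly) continuous on $E$; thus $E$ is a compact topological semigroup. Third, invoke the Ellis–Namioka fact that a compact (even right-topological) semigroup contains an idempotent: pick $u\in E$ with $u^2=u$. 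Fourth, I would show $u$ is surjective: each $f^n$ maps $X$ onto $X$, and a uniform limit of surjective maps of a compact space onto itself is again surjective (given $y\in X$, choose $x_n$ with $f^{n}(x_n)=y$ along the net converging to $u$, pass to a convergent subnet $x_n\to x$, and use uniform convergence to get $u(x)=y$). Then, since $u=u^2=u\circ u$ and $u$ is surjective, for every $z\in X$ we have $u(z)=u(w)$ for some $w$ with $u(w)=z$—more usefully, applying $u$ to the relation $u=u^2$ shows $u$ restricted to its image is the identity; surjectivity then gives $u=\mathrm{id}_X$. Fifth, since $\mathrm{id}_X\in E=\mathrm{cls}_{X^X}\{f^n\}$ and the pointwise topology controls finitely many coordinates, there is a net $n_\iota\to+\infty$ (the net cannot be bounded, since $E\setminus\{f^n\mid n\le N\}$ still has $\mathrm{id}_X$ in its closure when $f$ is not eventually the identity; and if $f^n=\mathrm{id}_X$ for some $n$ the claim is trivial) with $f^{n_\iota}(x)\to x$, which is exactly recurrence of $x$.

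The main obstacle I anticipate is the step identifying the idempotent $u$ with $\mathrm{id}_X$, i.e. turning "surjective plus idempotent" into "identity." The clean argument is: from $u\circ u=u$ we get $u(u(x))=u(x)$ for all $x$, so $u$ fixes every point of its image $u(X)$; since $u(X)=X$ by surjectivity, $u=\mathrm{id}_X$. One must be a little careful that this uses only surjectivity, not injectivity, and that surjectivity of $u$ genuinely survives the limit—this is where compactness of $X$ and uniform (not merely pointwise) convergence on $E$ are essential, and it is the reason Lemma~\ref{lem2.2} alone (which only guarantees $x\in f(X)$, not a genuine return) is insufficient. A secondary, more bookkeeping-level point is justifying that the returning net $n_\iota$ can be taken with $n_\iota\to+\infty$: if $f$ has no fixed behavior making $\mathrm{id}_X$ an actual iterate, every neighborhood of $\mathrm{id}_X$ in $X^X$ must contain $f^n$ for arbitrarily large $n$, which gives the diverging net; the degenerate case $f^n=\mathrm{id}_X$ makes $x$ periodic, hence trivially recurrent. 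Once these two points are handled, the conclusion "every point of $X$ is recurrent for $(f,X)$" is immediate, and the lemma feeds directly into Proof (I) of Theorem~\ref{thm2.1} via Lemma~\ref{lem2.2} applied to the transition maps $t\in T$.
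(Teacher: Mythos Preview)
Your argument is correct, but it takes a genuinely different and much heavier route than the paper's. The paper gives a three-line elementary proof: use surjectivity to choose a backward orbit $x_0=x,\,f(x_1)=x,\,f(x_2)=x_1,\dotsc$ so that $f^n(x_n)=x$ for all $n$; by compactness of $X$ find $n$ and $s>0$ with $(x_n,x_{n+s})\in\delta$; then equicontinuity gives $(f^{n+s}(x_n),f^{n+s}(x_{n+s}))=(f^s(x),x)\in\varepsilon$, and since $\varepsilon$ was arbitrary, $x$ is recurrent. No semigroup machinery, no Ascoli, no idempotents.

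Your approach instead builds the enveloping semigroup $E=\mathrm{cls}_{X^X}\{f^n\}$, uses equicontinuity to upgrade pointwise to uniform convergence, invokes Ellis--Numakura to produce an idempotent $u$, shows $u$ is surjective as a uniform limit of surjections, deduces $u=\mathrm{id}_X$, and reads off recurrence. This is essentially the single-map specialization of the paper's Proof~(III) of Theorem~\ref{thm2.1}; it actually yields the stronger conclusion that $(f,X)$ is \emph{rigid} (i.e., $\mathrm{id}_X$ is a uniform limit of iterates), not merely pointwise recurrent. That extra strength is nice, but in the architecture of the paper Lemma~\ref{lem2.3} is meant to be the elementary engine behind the \emph{first}, enveloping-semigroup-free, proof of Theorem~\ref{thm2.1}, so your route somewhat collapses Proof~(I) into Proof~(III). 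Your handling of the bookkeeping point $n_\iota\to\infty$ is adequate (in a $T_1$ space any neighborhood of an accumulation point of $\{f^n\}$ meets the sequence in infinitely many indices, and the periodic case is trivial), though the paper's argument sidesteps this entirely by producing an explicit $s\ge 1$ for each $\varepsilon$.
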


\begin{proof}
Let $x\in X$ and define $\{x_n\}$ inductively by
$f(x_1)=x$, $f(x_2)=x_1$, $\dotsc$, $f(x_n)=x_{n-1}$, $\dotsc$.
Let $\varepsilon\in\mathscr{U}_X$ and let $\delta$ correspond to $\varepsilon$ in the definition of equicontinuity. Let $n>0$ and $s>0$ be integers such that $(x_n,x_{n+s})\in\delta$, so $(f^{n+s}(x_n),f^{n+s}(x_{n+s}))\in\varepsilon$. Then $(x,f^s(x))\in\varepsilon$ and thus $x$ is (forwardly) recurrent for $(f,X)$.
\end{proof}

Now we can prove Theorem~\ref{thm2.1} by using the pointwise recurrence as follows:

\begin{proof}[\textbf{Proof (I) of Theorem~\ref{thm2.1}}]
For $t\in T$, $(t,X\times X)$ is equicontinuous surjective and thus by Lemma~\ref{lem2.3} it is pointwise recurrent.
Suppose $(y,y^\prime)\in P(T,X)$ with $y\not=y^\prime$. Let $\varepsilon\in\mathscr{U}_X$ such that $(y,y^\prime)\not\in\varepsilon$. Let $\delta\in\mathscr{U}_X$ correspond to $\varepsilon/3$ as in $\S\ref{sec1.1.1}$\,(\textbf{a}).
Since $y$ is proximal to $y^\prime$, we now can take $\tau\in T$ such that $(\tau y, \tau y^\prime)\in\delta$, so $(\tau^ny,\tau^ny^\prime)\in\varepsilon/3$ for all $n>0$. Then there cannot be $n_i$ with $\tau^{n_i}(y,y^\prime)\to(y,y^\prime)$. But this contradicts the pointwise recurrence.
The proof of Theorem~\ref{thm2.1} is thus completed.
\end{proof}

\subsection{Using almost periodicity}\label{sec2.2}
Let $(T,X)$ be a semiflow with $T$ a topological semigroup not necessarily discrete here. We will first recall the concept of ``almost periodicity'' due to Gottschalk.

\begin{sn}[\cite{G, Fur}]\label{sn2.4}
\begin{enumerate}
\item[(i)] A subset $A$ of $T$ is said to be \textit{right-thick} in $T$ if for all compact subset $K$ of $T$ one can find some $s\in T$ such that $Ks\subseteq A$.

\item[(ii)] A subset $A$ of $T$ is called \textit{right-syndetic} in $T$ if there is a compact subset $K$ of $T$ with $Kt\cap A\not=\emptyset$ for every $t\in T$.
We could, of course, have defined the \textit{left-syndetic set} in $T$.

\item[(iii)] A point $x\in X$ is called \textit{almost periodic} (a.p.) if
$N_T(x,U)$ is right-syndetic in $T$ for all neighborhood $U$ of $x$ in $X$; that is, there exists a compact set $K\subseteq T$ with $Ktx\cap U\not=\emptyset$ for all $t\in T$.

\item[(iv)] If every point of $X$ is a.p. for $(T,X)$, then $(T,X)$ is called \textit{pointwise almost periodic}.
\end{enumerate}
Here ``right-thick set'' is weaker than the notion---\textit{replete set} \cite[Definition~3.37]{GH} that requires containing some bilateral translate $Ks\cup sK$ of each compact subset $K$ of $T$.

Given $k\in T$, let $L_k\colon T\rightarrow T, t\mapsto kt$ be the left translation mapping of $T$. Then for subsets $K,A$ of $T$, we simply write
\begin{gather*}
K^{-1}A\index{$K^{-1}A$}={\bigcup}_{k\in K}L_k^{-1}[A],\quad \textrm{where }L_k^{-1}[A]=\{t\in T\,|\,kt\in A\}.
\end{gather*}
Since here $T$ is only a semigroup, $K^{-1}A$ is possibly empty. If $e\in K$ then $A\subseteq K^{-1}A$.
Thus a subset $A$ of $T$ being right-syndetic in $T$ can be equivalently described as follows:
\begin{quote}
{\it $A$ is right-syndetic in $T$ if and only if there exists a compact subset $K$ of $T$ with $T=K^{-1}A$}.
\end{quote}
\end{sn}

\begin{note2.4}
In fact, the a.p. point may be defined for any topological space $X$~\cite{G}. Strengthening the topology on $T$ strengthens the notion of a.p. point, the strongest type of a.p. occurring when $T$ is provided with the discrete topology.
Moreover, for $(T,X)$ in flows with any phase spaces $X$, whether or not $x$ is an a.p. point does not depend on the topology on $T$ (cf.~\cite{Dai}), while this is uncertain for $(T,X)$ in general semiflows. See Lemma~\ref{lem2.6} below for $X$ in compact spaces.
\end{note2.4}

\begin{note2.4}
In some literature, an a.p. point is defined as $A$ is ``right-syndetic'' in the sense that there is a compact subset $K$ of $T$ with $T=KA$.
However, ``$T=K^{-1}N_T(x,U)$'' in (iii) of Definition~\ref{sn2.4} is not permitted to be replaced by ``$T=KN_T(x,U)$'' in semiflows here unless $T=\mathbb{Z}_+$ or $\mathbb{R}_+$; see \cite[Proposition~4.8]{CD} for a counterexample which says that there is a semiflow on a compact metric space such that it has an a.p. point in the sense of (iii) of Definition~\ref{sn2.4} but has no ``almost periodic'' points in the latter sense.
\end{note2.4}

The following two equivalent conditions will be very useful for our later arguments involving almost periodicity.

\begin{lem}[{cf.~\cite{Fur} for $T=\mathbb{Z}_+$}]\label{lem2.5}
A subset $S$ of $T$ is right-syndetic in $T$ if and only if $S\cap R\not=\emptyset$
for each right-thick set $R$ in $T$.
\end{lem}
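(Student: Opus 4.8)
The plan is to prove both directions by a direct translation between the "syndetic" and "thick" conditions, using the characterizations already recorded in Definition~\ref{sn2.4}: a set $S\subseteq T$ is syndetic iff $T=K^{-1}S$ for some compact (hence finite) $K\subseteq T$, and $R\subseteq T$ is thick iff for every finite $F\subseteq T$ there is $s\in T$ with $Fs\subseteq R$. The underlying idea is the standard duality: thick sets are exactly the sets meeting every syndetic set, and syndetic sets are exactly the sets meeting every thick set; we only need the second equivalence here, but both fall out of the same computation.

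For the "only if" direction, suppose $S$ is syndetic, so $T=K^{-1}S$ for some finite $K\ni e$, i.e. for every $t\in T$ there is $k\in K$ with $kt\in S$. Let $R$ be any thick set. Applying thickness to the finite set $K$, pick $s\in T$ with $Ks\subseteq R$. Now since $T=K^{-1}S$, there is $k\in K$ with $ks\in S$; but $ks\in Ks\subseteq R$ as well, so $ks\in S\cap R$, whence $S\cap R\neq\emptyset$.

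For the "if" direction I would argue by contraposition: assume $S$ is not syndetic and produce a thick set $R$ disjoint from $S$. Non-syndeticity means that for every finite $F\subseteq T$ we have $T\neq F^{-1}S$, i.e. there exists $t_F\in T$ with $Ft_F\cap S=\emptyset$ (equivalently $k t_F\notin S$ for all $k\in F$). Set
\begin{gather*}
R=\{t\in T\,|\,Ft\cap S=\emptyset\ \text{for some finite }F\ni t\text{ containing }e\},
\end{gather*}
or more cleanly let $R=T\setminus\bigl(\bigcup_{s\in S}\{t\in T\,|\,\exists k\in T,\ kt=s\}\bigr)$ only after checking this is still thick; the cleanest route is to define $R$ directly as the set of $t$ such that $t\notin S$ and to verify thickness from the witnesses $t_F$. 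Concretely: given a finite $K\subseteq T$, enlarge it to $K'=K\cup\{e\}$ and take $s=t_{K'}$ from the non-syndeticity witness, so that $K's\cap S=\emptyset$; in particular $Ks\subseteq T\setminus S$. Thus $T\setminus S$ already contains a translate $Ks$ of every finite $K$, so $R:=T\setminus S$ is itself thick, and plainly $S\cap R=\emptyset$. This contradicts the hypothesis that $S$ meets every thick set, so $S$ must be syndetic.

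The only mild subtlety — and the step I would treat most carefully — is the translation of "$T\neq F^{-1}S$" into the existence of a single element $t_F$ with $Ft_F\cap S=\emptyset$: by definition $F^{-1}S=\bigcup_{k\in F}\{t\,|\,kt\in S\}$, so $t\notin F^{-1}S$ says precisely $kt\notin S$ for every $k\in F$, i.e. $Ft\cap S=\emptyset$; picking any $t=t_F$ outside $F^{-1}S$ does the job. Once this is in hand, the argument that $T\setminus S$ is thick is immediate (take $F=K\cup\{e\}$ and use $e\in F$ to get $s=t_F\notin S$ too, though we only need $Ks\subseteq T\setminus S$). No appeal to the group $\langle T\rangle$, to amenability, or to Ellis' semigroup is needed; the whole proof is a finite-combinatorial manipulation valid for any discrete semigroup with identity, exactly as the statement requires.
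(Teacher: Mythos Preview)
Your proof is correct and follows essentially the same strategy as the paper's: direct for the ``only if'' direction (syndetic $S$ meets every thick $R$ via a translate $Ks\subseteq R$), and contrapositive for the ``if'' direction (non-syndetic $S$ admits a disjoint thick set). The one genuine difference is the choice of thick witness in the contrapositive: the paper sets $R=\bigcup_{K}Kt_K$, a union of the translates $Kt_K$ disjoint from $S$, whereas you observe more simply that $T\setminus S$ itself is already thick. Your version is marginally cleaner, since it avoids building an auxiliary set; the paper's version has the small advantage that thickness of $R$ is immediate from its definition without needing to adjoin $e$ to $K$.

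Two minor remarks. First, in this subsection the paper explicitly allows $T$ to be a non-discrete topological semigroup, so ``compact'' should replace ``finite'' throughout; your argument goes through verbatim under that substitution. Second, the meandering paragraph where you first propose a complicated $R$ and then abandon it for $T\setminus S$ should be pruned---just go straight to $R=T\setminus S$.
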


\begin{proof}
Let $S$ be right-syndetic in $T$ and let $K$ a compact subset of $T$ defined by right-syndeticity of $S$. Then for each right-thick set $R$ in $T$, there is some $t_0\in T$ with $Kt_0\subseteq R$. Since $(Kt_0)\cap S\not=\emptyset$, hence $R\cap S\not=\emptyset$.
Conversely, let $S\cap R\not=\emptyset$ for all right-thick set $R$ in $T$. If $S$ is not right-syndetic, then for each compact subset $K$ of $T$ there is $t_K\in T$ such that $Kt_K\cap S=\emptyset$. Set $R=\bigcup_{K\in\mathscr{K}}Kt_K$ where $\mathscr{K}$ is the set of all non-empty compact subsets of $T$. Clearly $R$ is right-thick in $T$, but $S\cap R=\emptyset$, a contradiction. This proves Lemma~\ref{lem2.5}.
\end{proof}

It should be noticed that when $S$ is a \textbf{left}-syndetic subset of $T$; then for a \textbf{right}-thick subset $R$ of $T$, $S\cap R$ need not be non-empty.

Since our phase space $X$ is a compact $T_2$-space, every orbit closure contains a minimal set by Zorn's lemma. So it contains an a.p. point by the following basic result.

\begin{lem}[{cf.~\cite{GH} and \cite[Proposition~2.5]{E69} for $T$ in groups; \cite{G,Fur,CD}}]\label{lem2.6}
Let $(T,X)$ be a semiflow where $T$ not necessarily discrete; then $x\in X$ is a.p. iff $\textrm{cls}_XTx$ is a compact minimal subset of $X$.
\end{lem}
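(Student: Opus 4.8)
The plan is to prove both implications using the basic structure theory available for semiflows on compact Hausdorff spaces, with Zorn's lemma supplying minimal sets and Lemma~\ref{lem2.5} translating ``syndetic'' into a statement about intersecting thick sets. For the easy direction, suppose $x$ is a minimal point, so $M:=\mathrm{cls}_X Tx$ is a minimal set. Fix a neighborhood $U$ of $x$; I want to show $N_T(x,U)$ is syndetic. Using Lemma~\ref{lem2.5}, it suffices to show $N_T(x,U)$ meets every thick set $R$ in $T$. Suppose not: some thick $R$ misses $N_T(x,U)$, i.e. $tx\notin U$ for all $t\in R$. The thickness of $R$ means that for every compact (hence, by the Standing Assumption, finite) $K\subseteq T$ there is $s\in T$ with $Ks\subseteq R$, so $K(sx)\cap U=\emptyset$. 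Taking a net of such $s$ and passing to a cluster point $y$ of $\{sx\}$ in $M$, one gets $Ky\cap U=\emptyset$ for the given $K$; letting $K$ exhaust $T$ and using compactness of $X$ to extract a further cluster point, I obtain a point $y\in M$ with $Ty\cap U=\emptyset$, so $x\notin\mathrm{cls}_X Ty$. But then $\mathrm{cls}_X Ty$ is a nonempty closed invariant proper subset of $M$, contradicting minimality of $M$. Hence $N_T(x,U)$ is syndetic, and $x$ is almost periodic.

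For the converse, suppose $x$ is almost periodic; I want $M:=\mathrm{cls}_X Tx$ to be minimal. It is nonempty, closed, and invariant, so by Zorn's lemma it contains a minimal set $M_0$. It suffices to show $x\in M_0$, for then $\mathrm{cls}_X Tx\subseteq M_0\subseteq M$, forcing $M=M_0$. Pick $y\in M_0$; since $M_0=\mathrm{cls}_X Ty$ there is a net $t_n\in T$ with $t_ny\to x$. Fix an arbitrary neighborhood $U$ of $x$ and a symmetric $\varepsilon\in\mathscr U_X$ with $\varepsilon^2[x]\subseteq U$; I will show $\varepsilon[x]\cap M_0\neq\emptyset$, which by closedness of $M_0$ and arbitrariness of $\varepsilon$ gives $x\in M_0$. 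By almost periodicity, $A:=N_T(x,\varepsilon[x])$ is syndetic, so there is a finite $K\subseteq T$ with $T=K^{-1}A$. Now for each $n$, write $t_n\in K^{-1}A$, so there is $k_n\in K$ with $k_nt_n\in A$, i.e. $k_nt_nx\in\varepsilon[x]$. Since $K$ is finite, pass to a subnet with $k_n\equiv k$ fixed. Meanwhile $t_ny\to x$ and $k$ is continuous, so $kt_ny\to kx$; and $kt_nx\in\varepsilon[x]$ for all $n$ in the subnet. The tricky point is that $t_ny\to x$ but we only know $t_nx$ is being controlled, not $t_ny$ versus $t_nx$ — so instead I argue directly on $M_0$: the points $kt_ny$ lie in $M_0$ (invariance) and converge to $kx$, hence $kx\in M_0$; on the other hand I must relate $kx$ to $\varepsilon[x]$. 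Here is where I use that $t_ny\to x$ together with joint continuity of the action: the map $(t,z)\mapsto tz$ is continuous, but $t_n$ need not converge in $T$, so I instead use equicontinuity-free uniform reasoning — actually the cleanest route is: since $t_ny\to x$, for large $n$ in the subnet $(t_ny, t_n x)$... this still needs $t_n$ to nearly preserve closeness, which fails in general. I would therefore reorganize: choose the net so that $t_ny\to x$, and note $t_ny\in M_0$ for all $n$, hence $x\in M_0$ directly by closedness of $M_0$. This shows $x\in M_0$ without invoking almost periodicity at all — which cannot be right, so almost periodicity must be what guarantees a single orbit closure argument closes; the correct use is: almost periodicity of $x$ gives that $x$ lies in $\mathrm{cls}_X Ty$ for \emph{every} $y\in M$ (by the syndeticity-versus-thickness argument applied with roles reversed, exactly as in the first paragraph), and in particular $x\in\mathrm{cls}_X Ty=M_0$.

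So the genuine converse argument is: for $y\in M_0$ and any neighborhood $U$ of $x$, almost periodicity makes $N_T(x,U)$ syndetic; I must show $\mathrm{cls}_X Ty$ meets $U$. If it did not, then $N_T(y,V)$ would avoid translates... — more precisely, one shows the complement structure forces $\{t\,|\,ty\notin \overline{U}\}$ to contain a thick set (using that $t_ny\to x$ for some net, so $t_n$ pushes $y$ into every neighborhood of $x$, and the translates $k t_n$ for $k$ ranging over any finite $K$ cluster into $\mathrm{cls}_X Tx=M$), and a thick set must meet the syndetic set $N_T(x,U)$ by Lemma~\ref{lem2.5}, producing $t$ with $tx\in U$ and $ty$ close to $tx$... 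The main obstacle, as the above vacillation shows, is exactly the passage ``$t_ny\to x$ implies $t_n$ maps a fixed small neighborhood of $y$ into a small neighborhood of $x$'' — which is false without equicontinuity. The standard resolution (and what I expect the authors do) is to avoid it entirely by the following clean packaging: show $x$ almost periodic $\iff$ $x\in\mathrm{cls}_X Ty$ for all $y\in\mathrm{cls}_X Tx$, where the forward direction is the thick/syndetic argument of paragraph one and the reverse direction is immediate (take $y=x$, or note that $x\in\mathrm{cls}_X Ty$ for all $y$ in the orbit closure means the orbit closure is minimal, hence any almost periodic point it contains — and it contains one by the forward direction applied to that point... ). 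I would structure the final writeup as: (1) reduce ``almost periodic'' to ``$N_T(x,U)$ meets every thick set'' via Lemma~\ref{lem2.5}; (2) prove minimal point $\Rightarrow$ this meeting property by the cluster-point-in-$M$ argument; (3) prove the meeting property $\Rightarrow$ $x\in\mathrm{cls}_X Ty$ for every $y\in\mathrm{cls}_X Tx$, by fixing $y$, taking a net $s_nx\to y$, observing thickness of $\bigcup_{K\ \mathrm{finite}}\{t: Kt x \text{ tracks } y\}$-type set, intersecting with the syndetic return-time set, and extracting a cluster point landing in $U\cap \mathrm{cls}_X Ty$; (4) conclude $\mathrm{cls}_X Tx$ is minimal. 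The one real subtlety to handle carefully in step (3) is the compactness/cluster-point bookkeeping over the directed set of finite subsets of $T$ crossed with the approximating nets, which is routine but must be set up so that a single diagonal cluster point works.
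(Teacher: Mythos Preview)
Your argument for ``minimal $\Rightarrow$ almost periodic'' can be made to work, but it is more complicated than necessary: the paper bypasses Lemma~\ref{lem2.5} entirely here and uses a direct finite-cover argument. Since $\textrm{cls}_X Tx$ is minimal, $\{t^{-1}U : t\in T\}$ is an open cover of the compact set $\textrm{cls}_X Tx$; extract a finite subcover $k_1^{-1}U,\dots,k_n^{-1}U$ and note that for any $t\in T$ one has $tx\in k_i^{-1}U$ for some $i$, i.e.\ $k_i t\in N_T(x,U)$. This gives syndeticity with $K=\{k_1,\dots,k_n\}$ immediately, with no cluster-point bookkeeping.

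For ``almost periodic $\Rightarrow$ minimal'' there is a genuine gap: you circle the right idea but never land it, your step~(3) is too vague to count as a proof, and your earlier attempt contains the circular step of assuming a net $t_ny\to x$ for $y\in M_0$, which is exactly the conclusion $x\in M_0$ you are after. The missing mechanism is clean and requires no diagonal extraction. Let $\Lambda\subseteq\textrm{cls}_X Tx$ be minimal and suppose $x\notin\Lambda$; separate them by disjoint open sets $U\ni x$ and $V\supseteq\Lambda$. For any compact $K\subseteq T$ and any $y_0\in\Lambda$, joint continuity of the action together with compactness of $K$ gives $\delta\in\mathscr U_X$ with $K(\delta[y_0])\subset V$ --- this is the step you kept reaching for: it uses only that $K y_0\subseteq\Lambda\subset V$ and continuity of each $k\in K$, not equicontinuity. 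Since $y_0\in\textrm{cls}_X Tx$, pick $t_0\in T$ with $t_0x\in\delta[y_0]$; then $Kt_0\subseteq N_T(x,V)$, so $N_T(x,V)$ is thick. But $N_T(x,U)$ is syndetic and $U\cap V=\emptyset$ forces $N_T(x,U)\cap N_T(x,V)=\emptyset$, contradicting Lemma~\ref{lem2.5}.
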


\begin{note2.6}
Although the notion of a right-syndetic set and hence that of being an a.p. point depends upon the topology on $T$, yet this lemma shows that whether or not $x$ is an a.p. point does not depend on the topology on the phase semigroup $T$.
\end{note2.6}

\begin{note2.6}
The result of Lemma~\ref{lem2.6} is false for general semiflows with non-compact phase spaces. See (2) of Remarks~\ref{rem3.12B} for a counterexample. However, if $(T,X)$ is a semiflow with $T=\mathbb{Z}_+$ or $\mathbb{R}_+$, each provided with its natural topology, $X$ locally compact $T_2$, and $x\in X$, then $x$ is an a.p. point iff $\textrm{cls}_XTx$ is minimal iff $\textrm{cls}_XTx$ is minimal compact (cf.~\cite[Theorem~2.12]{CD}).
\end{note2.6}

\begin{note2.6}\label{n2.6.3}
If $(T,X)$ is \textit{invertible} with $T$ a \textit{discrete} semigroup and $X$ locally compact $T_2$, then $x\in X$ is an a.p. point iff $\textrm{cls}_XTx$ is a compact minimal subset of $X$.
\end{note2.6}

\begin{proof}[Proof of Lemma~\ref{lem2.6}]
Let $x$ be an a.p. point of $(T,X)$; and if $\Lambda$ is a minimal subset of $\textrm{cls}_X{Tx}$ with $x\not\in\Lambda$ there are neighborhoods $U$ of $x$ and $V$ of $\Lambda$ such that $U\cap V=\emptyset$. For every compact subset $K$ of $T$ and $y_0\in\Lambda$, there is a $\delta\in\mathscr{U}_X$ so small that $K(\delta[y_0])\subset V$. Since $t_0x\in\delta[y_0]$ for some $t_0\in T$, then $Kt_0x\subset V$ so $Kt_0\subset N_T(x,V)$. Thus $N_T(x,V)$ is right-thick in $T$. But $N_T(x,U)$ is right-syndetic in $T$, we conclude a contradiction $N_T(x,V)\cap N_T(x,U)\not=\emptyset$.

Conversely, let $\textrm{cls}_XTx$ be compact minimal and let $U$ be an open neighborhood of $x$. Since $Ty$ is dense in $\textrm{cls}_XTx$ for all $y\in\textrm{cls}_XTx$, hence $\{t^{-1}U\,|\,t\in T\}$ is an open cover of the compact subspace $\textrm{cls}_XTx$. Thus one can find a finite subset $K$ of $T$ such that $\textrm{cls}_X{Tx}\subseteq \bigcup_{k\in K}k^{-1}U$; thus for any $t\in T$, $tx\in k^{-1}U$ and so $ktx\in U$ for some $k\in K$; this implies that $N_T(x,U)$ is right-syndetic in $T$; therefore $x$ is an a.p. point of $(T,X)$.
\end{proof}

\begin{proof}[Proof of Note~\ref{n2.6.3}]
The sufficiency follows from Lemma~\ref{lem2.6}. Now let $x$ be an a.p. point. Since $X$ is regular, $\textrm{cls}_XTx$ is a minimal subset of $X$. The rest is to show $\textrm{cls}_XTx$ compact. For this, let $U$ be a compact neighborhood of $x$. Then there exists a finite subset $K$ of $T$ such that $Kt\cap N_T(x,U)\not=\emptyset$ for all $t\in T$. Thus $Tx\subset\bigcup_{k\in K}k^{-1}U$ and so $\textrm{cls}_XTx\subseteq\bigcup_{k\in K}k^{-1}U$ is compact.
\end{proof}

\begin{lem}\label{lem2.7}
If $(x,y)\in P(T,X)$ with $x\not=y$, then $(x,y)$ is not an a.p. point of $(T,X\times X)$.
\end{lem}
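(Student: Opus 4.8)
If $(x,y)\in P(T,X)$, then $(x,y)$ is not almost periodic for $(T,X\times X)$ — unless $x=y$, which is the content we must read in (since $(x,x)$ is always minimal when $x$ is minimal, one should understand the statement as: a proximal pair $(x,y)$ with $x\neq y$ cannot be an almost periodic point of the product).

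The plan is to argue by contradiction, combining the characterization of almost periodic points as minimal points (Lemma~\ref{lem2.6}) with the fact that proximality forces the product orbit to approach the diagonal. Suppose $(x,y)\in P(T,X)$ with $x\neq y$, and suppose for contradiction that $(x,y)$ is almost periodic for $(T,X\times X)$, hence a minimal point by Lemma~\ref{lem2.6}. Choose $\varepsilon\in\mathscr{U}_X$ with $(x,y)\notin\varepsilon$; then the open set $U=(X\times X)\setminus\varepsilon$ (viewed as a subset of $X\times X$, which is open since $\varepsilon$ is a neighborhood of the diagonal — more precisely, take $U$ to be an open neighborhood of $(x,y)$ in $X\times X$ whose closure misses the diagonal $\varDelta_X$, using that $(x,y)\notin\varDelta_X$ and $X\times X$ is compact $T_2$). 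By minimality of $(x,y)$, equivalently by Lemma~\ref{lem2.6}, $N_T((x,y),U)=\{t\in T: t(x,y)\in U\}$ is syndetic in $T$.

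On the other hand, since $(x,y)\in P(T,X)$ there is a net $\{t_n\}$ in $T$ and $z\in X$ with $t_n(x,y)\to(z,z)\in\varDelta_X$. I want to extract from this a thick set $R$ in $T$ disjoint from $N_T((x,y),U)$, which will contradict Lemma~\ref{lem2.5}. The key observation is the one already used in Proof~(I) of Theorem~\ref{thm2.1}: once $t(x,y)$ is very close to the diagonal, say $t(x,y)\in\delta$ for a suitably small $\delta$, then for the whole compact (hence finite) family of translates $Ks$ one can use equicontinuity-free reasoning… but here we do not have equicontinuity. Instead the clean route is: because $(x,y)$ is assumed minimal, $(z,z)\in\mathrm{cls}_XT(x,y)$, and by minimality $(x,y)\in\mathrm{cls}_XT(z,z)\subseteq\varDelta_X$ (the diagonal is closed and invariant, so $\mathrm{cls}_XT(z,z)\subseteq\varDelta_X$), forcing $x=y$, a contradiction. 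This is in fact the cleanest argument and avoids thick/syndetic machinery altogether.

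Let me therefore record the intended proof in its streamlined form. Assume $(x,y)$ is almost periodic, hence minimal, for $(T,X\times X)$; then $\mathrm{cls}_{X\times X}T(x,y)$ is a minimal set. Since $(x,y)\in P(T,X)$, pick a net with $t_n(x,y)\to(z,z)$; then $(z,z)\in\mathrm{cls}_{X\times X}T(x,y)$, and by minimality $\mathrm{cls}_{X\times X}T(x,y)=\mathrm{cls}_{X\times X}T(z,z)$. But $\varDelta_X$ is closed and $T$-invariant in $X\times X$ and contains $(z,z)$, so $\mathrm{cls}_{X\times X}T(z,z)\subseteq\varDelta_X$; hence $(x,y)\in\varDelta_X$, i.e. $x=y$. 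This proves the contrapositive: if $x\neq y$, the proximal pair $(x,y)$ is not almost periodic for the product. I expect the only subtlety to be the bookkeeping about what "not almost periodic" should mean in the degenerate case $x=y$ (it is vacuous there), and the observation that $\varDelta_X$ is closed and invariant — both of which are immediate — so there is no real obstacle; the work is entirely in invoking Lemma~\ref{lem2.6} and the closedness/invariance of the diagonal.
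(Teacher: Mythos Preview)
Your streamlined argument is correct: if $(x,y)$ with $x\neq y$ were almost periodic, then by Lemma~\ref{lem2.6} its orbit closure is minimal, but proximality puts a diagonal point $(z,z)$ in that orbit closure, and the closed invariant set $\varDelta_X$ then swallows the whole minimal set, forcing $x=y$.

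The paper's proof takes a different route. It shows directly that for every $\varepsilon\in\mathscr{U}_X$ the set $\{t\in T: t(x,y)\in\varepsilon\}$ is thick, and then (implicitly) invokes Lemma~\ref{lem2.5}: if $(x,y)$ were almost periodic with $x\neq y$, the return times to a neighborhood of $(x,y)$ bounded away from $\varDelta_X$ would be syndetic yet disjoint from that thick set. The thickness claim is exactly the step you abandoned, and your worry about lacking equicontinuity was unfounded: since $T$ is discrete, any compact $K\subset T$ is finite, and each $k\in K$ is uniformly continuous on the compact $X$, so one can choose $\delta$ with $k\delta\subset\varepsilon$ for all $k\in K$; proximality then supplies $s$ with $s(x,y)\in\delta$, giving $Ks\subseteq\{t:t(x,y)\in\varepsilon\}$. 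Your minimal-set argument is arguably cleaner and bypasses the combinatorics of Lemma~\ref{lem2.5}, while the paper's version makes the thick/syndetic tension explicit, which fits the surrounding development and is reusable in contexts where one only knows the orbit visits near the diagonal without knowing the limit point lies in the orbit closure.
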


\begin{proof}
By the joint continuity of $tx$, if $(x,y)\in P(X)$, then for every $\varepsilon\in\mathscr{U}_X$, $\{t\in T\,|\,t(x,y)\in\varepsilon\}$ is a right-thick subset of $T$. Thus we can obtain the conclusion.
\end{proof}

The following lemma is a generalization as well as strengthening of Lemma~\ref{lem2.3}. See \cite[Lemma~2.3]{Aus} for $T$ in groups.

\begin{lem}\label{lem2.8}
If $(T,X)$ is equicontinuous surjective, then every point of $X$ is an a.p. point of $(T,X)$.
\end{lem}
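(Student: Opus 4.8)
The plan is to verify, for each $x\in X$ and each open neighborhood $U$ of $x$, that the return-time set $N_T(x,U)$ is syndetic in $T$; by Lemma~\ref{lem2.6} this is exactly the statement that $x$ is a minimal point, i.e.\ an almost periodic point. So I would fix $x$ and $U$, pick a symmetric $\varepsilon\in\mathscr{U}_X$ with $\varepsilon[x]\subseteq U$, and pick $\delta\in\mathscr{U}_X$ corresponding to $\varepsilon/3$ in the definition of equicontinuity ($\S\ref{sec1.1.1}$(a)), so that $(a,b)\in\delta$ forces $(ra,rb)\in\varepsilon/3$ for \emph{every} $r\in T$ simultaneously. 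The goal is then to manufacture one finite set $K\subseteq T$ with $Ks\cap N_T(x,U)\neq\emptyset$ for all $s\in T$.

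To build $K$, I would use compactness of $\mathrm{cls}_XTx$ together with density of $Tx$ in it to cover $\mathrm{cls}_XTx$ by finitely many balls $\delta[t_1x],\dots,\delta[t_nx]$ \emph{centered at orbit points} $t_jx$. (Taking the cover centered at points of the orbit, rather than at arbitrary points of $X$, is what makes the next step work, and is the one place where a little uniform-space care is needed.) For each $j$ the cascade $(t_j,X)$ is equicontinuous and surjective, since $(T,X)$ is, so Lemma~\ref{lem2.3} applies and $x$ is recurrent for $(t_j,X)$: there is $m_j\ge1$ with $(x,t_j^{m_j}x)\in\varepsilon/3$. Set $r_j:=t_j^{\,m_j-1}\in T$, so that $r_j(t_jx)=t_j^{m_j}x\in(\varepsilon/3)[x]$, and let $K:=\{r_1,\dots,r_n\}$.

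Now, given an arbitrary $s\in T$, choose $j$ with $sx\in\delta[t_jx]$, i.e.\ $(t_jx,sx)\in\delta$; applying the transition map $r_j$ and using equicontinuity gives $(r_jt_jx,r_jsx)\in\varepsilon/3$, and chaining this with $(x,r_jt_jx)\in\varepsilon/3$ (padding by $\varDelta_X\subseteq\varepsilon/3$, so that the two-step chain sits inside $(\varepsilon/3)^3\subseteq\varepsilon$) yields $(x,r_jsx)\in\varepsilon$, hence $r_jsx\in\varepsilon[x]\subseteq U$. Thus $r_js\in N_T(x,U)$, so $Ks\cap N_T(x,U)\neq\emptyset$. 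Since $s$ was arbitrary, $N_T(x,U)$ is syndetic; since $U$ and $x$ were arbitrary, every point of $X$ is almost periodic for $(T,X)$.

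I expect the only genuinely substantive step to be the synthesis in the middle: recognizing that a single finite $\delta$-cover of the orbit closure reduces the problem to finitely many cases, and that for each case Lemma~\ref{lem2.3} (pointwise recurrence of the \emph{single} map $t_j$) supplies an element of $T$ pulling the corresponding center back near $x$. Everything else is routine uniformity bookkeeping — the care points being to center the cover at orbit points and to keep track of the factors of $\varepsilon/3$. (If one prefers, the same conclusion follows by passing to $E(T,X)$: equicontinuity makes its elements continuous and surjectivity makes them onto, so its only idempotent is $\mathrm{id}_X$; then $E(T,X)$ coincides with its unique minimal left ideal and, possessing an identity, is a group, whence $(T,X)$ is pointwise minimal.)
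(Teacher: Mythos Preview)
Your proof is correct, but it takes a genuinely different route from the paper's. The paper argues by contradiction via minimal sets: it picks a minimal subset $M\subseteq\mathrm{cls}_XTx$, assumes $x\notin M$, chooses a single $t\in T$ with $tx$ close to some $y\in M$, and then invokes Lemma~\ref{lem2.3} for that one $t$ to get $t^{n_k}x\to x$; equicontinuity then forces $t^{n_k}x$ to stay close to $t^{n_k-1}y\in M$, so $x$ lies in $\varepsilon[M]$, a contradiction. Minimality of $x$ then follows from Lemma~\ref{lem2.6}.

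Your argument, by contrast, is direct and constructive: you manufacture an explicit finite $K=\{r_1,\dots,r_n\}$ witnessing syndeticity of $N_T(x,U)$, by covering the orbit closure with $\delta$-balls at orbit points and using Lemma~\ref{lem2.3} once per cover element to produce the $r_j$'s. Both proofs rest on exactly the same two ingredients (Lemma~\ref{lem2.3} plus equicontinuity), but yours avoids Zorn's lemma and the auxiliary minimal set, at the cost of slightly more bookkeeping with the $\varepsilon/3$ chain. The parenthetical Ellis-semigroup remark at the end is also valid and is essentially the paper's Proof~(III) of Theorem~\ref{thm2.1} specialized to this conclusion.
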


\begin{proof}
Let $x\in X$ and let $M$ be a minimal subset of $\textrm{cls}_X{Tx}$. If $x\not\in M$, then there is an $\varepsilon\in\mathscr{U}_X$ with $x\not\in\varepsilon[M]$. Let $tx$ be arbitrarily close to some $y\in M$. Since $x$ is a recurrent point for $(t,X)$ by Lemma~\ref{lem2.3}, there is a net $\{n_k\}$ in $\mathbb{N}$ with $n_k\to\infty$ and $t^{n_k}x\to x$. Then by equicontinuity, it follows that $t^{n_k}x$ is arbitrarily close to $t^{n_k-1}y\in M$ and so $x$ is arbitrarily close to $M$, contradicting $x\not\in\varepsilon[M]$. Thus $x\in M$ and so every point of $X$ is a.p. by Lemma~\ref{lem2.6}.
\end{proof}

Note that in view of Example~\ref{ex1.12} the `surjective' condition is important for Lemma~\ref{lem2.8}. However, it is not a necessary condition for almost periodicity; for instance, Example~\ref{ex1.7}.

Now, based on Lemma~\ref{lem2.7} and Lemma~\ref{lem2.8}, we can present another concise proof of Theorem~\ref{thm2.1} as follows.

\begin{proof}[\textbf{Proof (II) of Theorem~\ref{thm2.1}}]
Let $(T,X)$ be equicontinuous surjective. Then $(T,X\times X)$ is equicontinuous surjective and so by Lemma~\ref{lem2.8}, $(T,X\times X)$ is pointwise a.p.. Thus $(T,X)$ is distal by Lemma~\ref{lem2.7}.
\end{proof}

Although ``distality $\Rightarrow$ almost periodicity'' may be localized (cf.~Theorem~\ref{thm5.1} in $\S\ref{sec5}$), yet it is interesting to notice that ``equiconinuous + surjective $\Rightarrow$ distal'' (Theorem~\ref{thm2.1}) and ``equiconinuous + surjective $\Rightarrow$ almost periodic'' (Lemma~\ref{lem2.8}) can not be localized.
In fact we can easily construct a counterexample on the unit interval $I=[0,1]$ with the usual topology as follows.

\begin{exa}\label{exa2.9}
Let $f\colon I\rightarrow I$ be defined by $x\mapsto x^2$. Then $0$ and $1$ are the only recurrent (fixed) points of $(f,I)$. Moreover, $(f,I)$, as a flow with phase group $\mathbb{Z}$, is equicontinuous at each $x\in(0,1)$ but $x\in(0,1)$ is neither an a.p. point and nor a distal point of $(f,I)$.
\end{exa}

\subsection{Using Ellis' enveloping semigroup}
Based on Ellis' semigroup (cf.~$\S\ref{sec1.1.5}$), the following another short proof of Theorem~\ref{thm2.1} without using the pointwise recurrence of an equicontinuous surjection is the other important idea of this paper.

\begin{proof}[\textbf{Proof (III) of Theorem~\ref{thm2.1}}]
Since $(T,X)$ is equicontinuous, then $(p,x)\mapsto p(x)$ of $E(X)\times X$ to $X$ is jointly continuous and hence the topology of pointwise convergence coincides with the compact-open topology for $E(X)$ (cf.~\cite[Theorem~7.15]{Kel}). It follows easily from equicontinuity and surjectivity of each $t\in T$ that all $p\in E(X)$ are surjective. Indeed, let $T\ni t_n\to p\in E(X)$ and $p(X)\not=X$; then there is an $\varepsilon\in\mathscr{U}_X$ so small that $U=\varepsilon[p(X)]\not=X$. Since $p(X)\subset U$ and $t_n\to p$ in the sense of compact-open topology, $t_n(X)\subseteq U$ as $n$ sufficiently large. This contradicts that $tX=X$ for all $t\in T$.
Now for every idempotent $u$ in $E(X)$, since $u(u(x))=u(x)$ for all $x\in X$ and $u(X)=X$, thus $u=\textit{id}_X$. So if $(x,y)\in P(X)$, then $\{p\,|\,p(x)=p(y)\}$ is a non-empty closed subsemigroup of $E(X)$ and so there is an idempotent $u$ in $E(X)$ with $u(x)=u(y)$ and so $x=y$. This proves Theorem~\ref{thm2.1}.
\end{proof}

\section{When is a semiflow surjective?}\label{sec3A}
In light of Examples~\ref{ex1.7} and \ref{ex1.12}, the ``surjective'' condition is essential for our assertion of Theorem~\ref{thm2.1}. In this section we will now introduce some sufficient conditions for that `each $t\in T$ is surjective' for a semiflow $(T,X)$ with some special phase semigroups $T$.

\subsection{Homogeneity condition}
Let $(T,X)$ be a semiflow. Since $X$ is compact by our convention, each $(t,X)$ must have a.p. points by Lemma~\ref{lem2.6} and so it has (forwardly) recurrent points. This point is very useful for us to justify the surjectiveness of a semiflow by the so-called ``homogeneity'' condition as follows.

\begin{sn}[\cite{Fur}]\label{sn3.1}
We say that $(T,X)$ is \textit{homogeneous} if there exists a minimal semiflow $(G,X)$ such that $tgx=gtx$ for all $t\in T,g\in G$ and $x\in X$. Here we do not require $(G,X)$ to be a flow.
\end{sn}

\begin{prop}\label{prop3.2B}
Let $(T,X)$ be a homogeneous semiflow. Then
$(T,X)$ is surjective. Hence if $(T,X)$ is in addition equicontinuous it is distal.
\end{prop}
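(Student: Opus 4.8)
\textbf{Proof proposal for Proposition~\ref{prop3.2B}.}

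The plan is to show first that $(T,X)$ is surjective, and then the final clause follows immediately from Theorem~\ref{thm2.1}. The key point is the remark made just before the statement: since $X$ is compact, every map $t\in T$, viewed as generating a cascade $(t,X)$, admits an almost periodic point, hence a (forwardly) recurrent point by Lemma~\ref{lem2.6}; and by Lemma~\ref{lem2.2} a recurrent point of $(t,X)$ lies in $t(X)$. So for each fixed $t\in T$ I would pick a recurrent point $x_0$ of $(t,X)$. The goal is to upgrade ``$x_0\in t(X)$'' to ``$t(X)=X$'' using homogeneity.

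The main step exploits the commuting minimal action $(G,X)$ from Definition~\ref{sn3.1}. Fix $t\in T$. I claim $t(X)$ is a $G$-invariant subset of $X$: indeed if $y=t(z)\in t(X)$ and $g\in G$, then $gy = g(tz) = t(gz) \in t(X)$ by the relation $tgx=gtx$. Moreover $t(X)$ is closed, being the continuous image of the compact space $X$. Now take the recurrent point $x_0$ of $(t,X)$; by Lemma~\ref{lem2.2}, $x_0\in t(X)$, so $\mathrm{cls}_X Gx_0 \subseteq t(X)$. But $(G,X)$ is minimal, so $\mathrm{cls}_X Gx_0 = X$, forcing $t(X)=X$. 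Since $t\in T$ was arbitrary, $(T,X)$ is surjective. The final sentence is then immediate: if $(T,X)$ is in addition equicontinuous, it is equicontinuous surjective, hence distal by Theorem~\ref{thm2.1}.

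I do not expect a serious obstacle here; the only things to be careful about are (a) checking that the fixed $t$ really does generate a genuine cascade to which Lemma~\ref{lem2.6} (via Zorn's lemma, minimal sets, almost periodic points) and Lemma~\ref{lem2.2} apply — this is fine because $(t,X)$ is just the $\mathbb{Z}_+$-action generated by the continuous self-map $t$ of the compact $T_2$-space $X$ — and (b) that the commutation identity $tgx = gtx$ is used in the correct direction to get $G$-invariance of $t(X)$ rather than of $t^{-1}(X)$. Everything else is routine. One could alternatively phrase the surjectivity argument purely in terms of ``$t(X)$ is a nonempty closed $G$-invariant set, hence equals $X$ by minimality, once we know it is nonempty'' — but nonemptiness is trivial, so the recurrence input is what pins down that $t(X)$ actually contains a full $G$-orbit; without it one only knows $t(X)\neq\emptyset$, which does not suffice since $t(X)$ need not a priori be $G$-invariant unless we already have a point guaranteed to be in it together with its orbit. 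Hence the recurrence lemma is the crux, and it is already available in the excerpt.
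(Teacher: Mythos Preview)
Your main argument is correct, and in fact it is cleaner than you give it credit for. Once you have shown that $t(X)$ is closed and $G$-invariant (via $g(tz)=t(gz)$), the recurrence input is entirely superfluous: $t(X)$ is trivially nonempty, so minimality of $(G,X)$ already forces $t(X)=X$. Your last paragraph gets tangled on exactly this point --- you correctly state the shortcut, then talk yourself out of it by saying ``$t(X)$ need not a priori be $G$-invariant unless we already have a point guaranteed to be in it together with its orbit.'' But you \emph{proved} $G$-invariance of $t(X)$ directly from the commutation identity, independent of any particular point. So drop the recurrence detour and the confused coda; the proof is then three lines.

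For comparison, the paper's proof takes a slightly different route: it argues that the recurrent points of $(t,X)$ are \emph{dense} in $X$ (one recurrent point exists by compactness, and its $G$-orbit consists of recurrent points by commutation and is dense by minimality), each recurrent point lies in $tX$, and $tX$ is closed, hence $tX=X$. So the paper pushes the homogeneity through the recurrent-point set rather than through $tX$ itself. Your approach --- observing directly that $tX$ is a nonempty closed $G$-invariant set --- is the more economical of the two, once the unnecessary recurrence step is removed.
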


\begin{proof}
Let $t\in T$. Since $(T,X)$ is homogeneous, then the (forwardly) recurrent points are dense in $X$ for $(t,X)$. Because if $x$ is recurrent for $(t,X)$ it is such that $x\in\textrm{cls}_{X^X}{\{t^nx\,|\,n\ge1\}}\subseteq tX$ and $tX$ is closed,
it follows that $t$ is a self-surjection of $X$ for each $t\in T$. Thus $(T,X)$ surjective, and then it is distal by Theorem~\ref{thm2.1} if it is equicontinuous.
\end{proof}

Particularly, if $(T,X)$ is minimal with $T$ abelian, then it is homogeneous and thus $(T,X)$ is surjective by Proposition~\ref{prop3.2B}. Here we will present a more simple independent proof for this as follows.

\begin{cor}\label{cor3.3B}
Let $(T,X)$ be a minimal semiflow with $T$ abelian. Then $(T,X)$ is surjective and hence if $(T,X)$ is in addition equicontinuous it is (uniformly) distal.
\end{cor}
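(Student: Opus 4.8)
*Let $(T,X)$ be a minimal semiflow with $T$ abelian. Then $(T,X)$ is surjective and hence if $(T,X)$ is in addition equicontinuous it is (uniformly) distal.*\textbf{Proof proposal.} The plan is to prove surjectivity of each $t\in T$ directly, without invoking the homogeneity machinery of Proposition~\ref{prop3.2B}, and then to deduce distality from Theorem~\ref{thm2.1} (for the ``uniformly distal'' strengthening one applies Theorem~\ref{thm1.14}). Fix $t\in T$ and set $Y=tX$. The key observation is that $Y$ is a \emph{closed} subset of $X$: indeed $X$ is compact and $t\colon X\to X$ is continuous, so $tX$ is compact, hence closed in the $T_2$-space $X$. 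Thus it suffices to show $tX$ is dense in $X$, equivalently that $tX$ meets every non-empty open set, equivalently that $tX=X$ once we know $tX$ is closed and non-empty.

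The heart of the argument is to show $tX$ is invariant and non-empty, so that minimality forces $tX=X$. Since $T$ is abelian, for every $s\in T$ we have $s(tX)=(st)X=(ts)X=t(sX)\subseteq tX$, so $tX$ is a non-empty closed invariant subset of $X$; by minimality of $(T,X)$, $tX=X$. (Here is where abelianness is essential: without commutativity one only gets $s(tX)=(st)X$, which need not sit inside $tX$.) This already gives that $(T,X)$ is surjective. I would phrase this as the first, main step.

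For the ``hence'' clause: if in addition $(T,X)$ is equicontinuous, then since we have just shown it is surjective, Theorem~\ref{thm2.1} (equivalently, (1) of Theorem~\ref{thm1.13}) gives that $(T,X)$ is distal. To upgrade ``distal'' to ``uniformly distal'' as parenthetically claimed, note that $(T,X)$ is equicontinuous and surjective, so by Theorem~\ref{thm1.14} it is uniformly distal. This completes the proof. I expect no serious obstacle here: the only subtlety is the clean use of the abelian hypothesis to close $tX$ up under the whole action, and the routine but necessary remark that continuous images of the compact space $X$ are closed (so that an invariant dense set is automatically everything). The passage to (uniform) distality is then immediate by citing the already-established Theorems~\ref{thm2.1} and \ref{thm1.14}.
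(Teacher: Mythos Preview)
Your proof is correct and takes essentially the same approach as the paper: fix $t\in T$, observe $tX$ is closed (as a continuous image of the compact $X$ in a $T_2$-space) and $T$-invariant by commutativity, then invoke minimality to get $tX=X$, and finally cite Theorem~\ref{thm2.1} (and Theorem~\ref{thm1.14} for the parenthetical ``uniformly'').
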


\begin{proof}
Let $Z=tX$ for all $t\in T$. Then $Z$ is closed and since $T$ abelian $Z$ is $T$-invariant. Thus $Z=X$. This completes the proof by Theorem~\ref{thm2.1} (and Theorem~\ref{thm1.14}).
\end{proof}

It should be noticed here that in view of Example~\ref{ex1.7} the abelian condition in Corollary~\ref{cor3.3B}, which guarantees the homogeneity, is essential. This result will be generalized to amenable semigroups by Proposition~\ref{prop3.7B} in $\S\ref{sec3.2B}$, using ergodic theory.

Given any integer $d\ge1$, as a consequence of Proposition~\ref{prop3.2B} and Theorem~\ref{thm2.1}, the following corollary seems to be non-trivial because it is beyond Ellis' joint continuity theorem.

\begin{cor}\label{cor3.4B}
Let $\mathbb{R}_+^d\times X\rightarrow X$ be a separately continuous semiflow, where $(\mathbb{R}_+^d,+)$ is under the usual Euclidean topology.
If $(\mathbb{R}_+^d,X)$ is minimal equicontinuous, then it is distal.
\end{cor}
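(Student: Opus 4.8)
The plan is to reduce the statement to Corollary~\ref{cor3.3B} (equivalently, to Proposition~\ref{prop3.2B}). The key point is that every ingredient needed from the earlier sections\,---\,the $\varepsilon$-$\delta$ notion of equicontinuity, minimality, distality, and the surjectivity argument\,---\,uses nothing about the phase semigroup beyond the fact that each $t\in\mathbb{R}_+^d$ is a \emph{continuous} self-map of $X$, which is exactly what separate continuity supplies. In particular joint continuity of the action, and with it Ellis' joint continuity theorem, is never invoked.

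First I would prove surjectivity, exactly as in the proof of Corollary~\ref{cor3.3B}. Fix $t\in\mathbb{R}_+^d$ and set $Z=tX$. Since $t\colon X\to X$ is continuous and $X$ is compact, $Z$ is compact, hence closed in $X$. Because $\mathbb{R}_+^d$ is abelian, for every $s\in\mathbb{R}_+^d$ we have $sZ=(st)X=(ts)X=t(sX)\subseteq tX=Z$, so $Z$ is a non-empty closed invariant subset of $X$; minimality forces $Z=X$. Thus every $t\in\mathbb{R}_+^d$ is surjective, i.e. $(\mathbb{R}_+^d,X)$ is an equicontinuous surjective semiflow, and therefore distal by Theorem~\ref{thm2.1} (and in fact uniformly distal by Theorem~\ref{thm1.14}).

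If one prefers to remain within the strictly jointly continuous notion of semiflow used throughout the paper, I would first observe that separate continuity together with equicontinuity already forces joint continuity: given $\varepsilon\in\mathscr{U}_X$, pick a symmetric $\varepsilon_1\in\mathscr{U}_X$ with $\varepsilon_1\circ\varepsilon_1\subseteq\varepsilon$, let $\delta\in\mathscr{U}_X$ be the entourage furnished by equicontinuity for $\varepsilon_1$, and for a fixed $(t_0,x_0)$ use continuity of $t\mapsto tx_0$ to find a neighbourhood $W$ of $t_0$ with $tx_0\in\varepsilon_1[t_0x_0]$ whenever $t\in W$; then for $(t,x)\in W\times\delta[x_0]$ equicontinuity gives $(tx,tx_0)\in\varepsilon_1$ and hence $(tx,t_0x_0)\in\varepsilon$. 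A minimal action with abelian phase semigroup is homogeneous in the sense of Definition~\ref{sn3.1} (take $G=\mathbb{R}_+^d$ acting on itself, commutativity giving $tgx=gtx$), so Proposition~\ref{prop3.2B} applies and delivers distality once more.

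I do not expect a genuine obstacle here; the only point requiring attention is verifying that the separate-continuity hypothesis really suffices for the cited results, and this is immediate, since their proofs rely solely on continuity of the individual transition maps $x\mapsto tx$ together with the topology-free $\varepsilon$-$\delta$ formulation of equicontinuity. The remark preceding the statement\,---\,that the corollary lies ``beyond Ellis' joint continuity theorem''\,---\,reflects precisely the fact that $\mathbb{R}_+^d$ is only a semigroup, so Ellis' theorem (which concerns group actions) does not even apply to produce joint continuity; the route above circumvents this issue altogether.
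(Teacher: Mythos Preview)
Your proof is correct and follows essentially the same route as the paper: establish surjectivity of each $t$ via the abelian-minimal argument (this is exactly the content of Corollary~\ref{cor3.3B}, which you have inlined), then invoke Theorem~\ref{thm2.1}. The only cosmetic difference is how the joint-continuity technicality is dispatched: the paper simply replaces the Euclidean topology on $\mathbb{R}_+^d$ by the discrete one (so that the standing assumption is met and Corollary~\ref{cor3.3B} and Theorem~\ref{thm2.1} apply verbatim), whereas you either observe that those proofs need only continuity of each transition map, or derive joint continuity directly from equicontinuity plus separate continuity---both of which are valid and slightly more informative than the paper's one-line reduction.
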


\begin{proof}
Write $T=\mathbb{R}_+^d$, which is an additive abelian semigroup. First, under the discrete topology of $T$, $(T,X)$ is a minimal semiflow. Then by Corollary~\ref{cor3.3B}, it follows that for each $t\in T$, $x\mapsto tx$ is a continuous surjection of $X$. Therefore, $(\mathbb{R}_+^d,X)$ is distal by Theorem~\ref{thm2.1}.
\end{proof}

Let $\mathrm{Aut}_T(X)$ be the group of automorphisms of $(T,X)$; i.e., $\varphi\in\mathrm{Aut}_T(X)$ iff $\varphi\colon X\rightarrow X$ is 1-1 onto continuous such that $\varphi t=t\varphi$ for all $t\in T$. Then $\mathrm{Aut}_T(X)$ is called \textit{algebraically transitive}~\cite{Aus} iff $\mathrm{Aut}_T(X)x=X$ for some $x\in X$.

Thus by Proposition~\ref{prop3.2B} with $G=\mathrm{Aut}_T(X)$, it follows that if $(T,X)$ is equicontinuous and $\mathrm{Aut}_T(X)$ is algebraically transitive, then $(T,X)$ is distal.

\subsection{Amenable semigroup condition}\label{sec3.2B}
More general than the case of abelian phase semigroup, now we will consider amenable one (cf.~$\S\ref{sec1.1.4}$\,(\textbf{k})).

\begin{snAus}
Let $\mu$ be a Borel probability measure on the compact $T_2$-space $X$. Then:
\begin{enumerate}
\item $\mu$ is called \textit{quasi-regular} if it is ``outer-regular'' for all Borel subsets of $X$ (i.e. for all Borel set $B$ and $\varepsilon>0$ one can find an open set $U$ with $B\subseteq U$ and $\mu(U\setminus B)<\varepsilon$) and each open subset of $X$ is ``inner regular'' for $\mu$ (i.e. for any open set $U$ and $\varepsilon>0$ one can find a compact set $K$ with $K\subset U$ and $\mu(U\setminus K)<\varepsilon$).

\item By $\textrm{supp}\,(\mu)$ we mean the \textit{support} of the Borel probability measure $\mu$ in $X$; i.e., $x\in\textrm{supp}\,(\mu)$ iff every open neighborhood of $x$ has positive $\mu$-measure. Every point of $\textrm{supp}\,(\mu)$ is also called a density point of $\mu$.
\end{enumerate}
\end{snAus}

\begin{lem}\label{lem3.6B}
Let $\mu$ be an invariant quasi-regular Borel probability measure of $(T,X)$; then $\mathrm{supp}\,(\mu)$ is a closed set of $\mu$-measure $1$ such that $t[\mathrm{supp}\,(\mu)]=\mathrm{supp}\,(\mu)$ for all $t\in T$.
\end{lem}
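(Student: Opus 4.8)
The plan is to establish three things in order: that $\mathrm{supp}\,(\mu)$ is closed, that $\mu(\mathrm{supp}\,(\mu))=1$, and finally that each $t\in T$ maps $\mathrm{supp}\,(\mu)$ onto itself. The first is immediate from the definition: the complement of $\mathrm{supp}\,(\mu)$ is the union of all open sets of $\mu$-measure zero, hence open, so $\mathrm{supp}\,(\mu)$ is closed (and, by compactness of $X$, compact). For the second, write $U=X\setminus\mathrm{supp}\,(\mu)$; every point of $U$ has an open neighborhood of $\mu$-measure zero, so $U$ is covered by a family of null open sets. Since $X$ is compact and $U$ is open, I would invoke quasi-regularity: $U$ is inner regular for $\mu$, so $\mu(U)=\sup\{\mu(K)\,:\,K\subseteq U\text{ compact}\}$; each such compact $K$ is covered by finitely many null open sets, whence $\mu(K)=0$, and therefore $\mu(U)=0$, i.e. $\mu(\mathrm{supp}\,(\mu))=1$.

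For the invariance statement, fix $t\in T$ and recall that invariance of $\mu$ means $\mu(B)=\mu(t^{-1}B)$ for every Borel set $B$. First I would show $t[\mathrm{supp}\,(\mu)]\subseteq\mathrm{supp}\,(\mu)$: if $x\in\mathrm{supp}\,(\mu)$, then for any open neighborhood $V$ of $tx$, the set $t^{-1}V$ is an open neighborhood of $x$ (here $t\colon X\to X$ is continuous), so $\mu(t^{-1}V)>0$, hence $\mu(V)=\mu(t^{-1}V)>0$ by invariance; thus $tx\in\mathrm{supp}\,(\mu)$. For the reverse inclusion, note that $S:=\mathrm{cls}_X\,t[\mathrm{supp}\,(\mu)]$ is closed and, by the previous step, $S\subseteq\mathrm{supp}\,(\mu)$. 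Since $t$ is continuous and $\mathrm{supp}\,(\mu)$ is compact, $t[\mathrm{supp}\,(\mu)]$ is already compact, so $S=t[\mathrm{supp}\,(\mu)]$. Now $t^{-1}S\supseteq\mathrm{supp}\,(\mu)$, so $\mu(t^{-1}S)\ge\mu(\mathrm{supp}\,(\mu))=1$; by invariance $\mu(S)=\mu(t^{-1}S)=1$. Since $S$ is a compact subset of $\mathrm{supp}\,(\mu)$ with full measure, and $\mathrm{supp}\,(\mu)\setminus S$ is relatively open in $\mathrm{supp}\,(\mu)$ of measure zero, the definition of support forces $\mathrm{supp}\,(\mu)\setminus S=\emptyset$, i.e. $t[\mathrm{supp}\,(\mu)]=\mathrm{supp}\,(\mu)$.

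I expect the only genuinely delicate point to be the last step: from $\mu(S)=1$ with $S\subseteq\mathrm{supp}\,(\mu)$ closed, concluding $S=\mathrm{supp}\,(\mu)$. This is where the definition of support must be used carefully — it is exactly the statement that $\mathrm{supp}\,(\mu)$ contains no nonempty relatively open null set, applied to $\mathrm{supp}\,(\mu)\setminus S$, which is open in $\mathrm{supp}\,(\mu)$ and has measure $\mu(\mathrm{supp}\,(\mu))-\mu(S)=0$; if it were nonempty it would contain a point $x$ of $\mathrm{supp}\,(\mu)$ with a neighborhood (in $X$, intersected with $\mathrm{supp}\,(\mu)$) of zero measure, contradicting $x\in\mathrm{supp}\,(\mu)$. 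The role of quasi-regularity is confined to the measure-one claim for $\mathrm{supp}\,(\mu)$ and is not needed for the invariance argument, which uses only continuity of each $t$, compactness of $X$, and $T$-invariance of $\mu$.
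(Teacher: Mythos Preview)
Your proof is correct and follows essentially the same approach as the paper's: both use quasi-regularity (via inner regularity of open sets) to get $\mu(\mathrm{supp}\,(\mu))=1$, and both obtain $t[\mathrm{supp}\,(\mu)]=\mathrm{supp}\,(\mu)$ by combining the preimage argument (your $t^{-1}V$ open with positive measure) for one inclusion with the observation that $t[\mathrm{supp}\,(\mu)]$ is a closed set of full measure for the other. The only cosmetic difference is that you prove $tS\subseteq S$ first and the paper proves $tS\supseteq S$ first; your write-up is in fact slightly cleaner, since you explicitly note that $tS$ is compact (the paper just calls it Borel) and you spell out the ``no nonempty relatively open null set'' step more carefully.
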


\begin{proof}
Set $S=\textrm{supp}\,(\mu)$. By definition, it easily follows that $S$ is closed; and moreover, $S$ is of $\mu$-measure $1$. Otherwise, by the quasi-regularity of $\mu$ there exists a compact subset $K$ of $X$ with $K\cap S=\emptyset$ such that $\mu(K)>0$; then $K$ contains at least one point of $S$. For, if not, then there is an open neighborhood $V_x$ of any $x\in K$ with $\mu(V_x)=0$ and so by the compactness of $K$, $\mu(K)=0$ contradicting $\mu(K)>0$.

Now given $t\in T$, since $tS$ is a Borel set and $\mu(tS)=1$, we can easily obtain that $tS=S$. Indeed, $tS\supseteq S$ is obvious. (If $S\setminus tS\not=\emptyset$, then $X\setminus tS$ is an open set containing points of $S$ so that $\mu(X\setminus tS)>0$, a contradiction to $\mu(tS)=1$.)
Next assume $tS\supsetneqq S$ and then we can choose some $y\in tS-S$ and $x\in tS$ such that $tx=y$. Now we can pick an open set $U$ with $y\in U\subset X-S$. Hence $0=\mu(U)=\mu(t^{-1}U)$. This contradicts that $x\in t^{-1}U, x\in S$ and $t^{-1}U$ is an open set.

This thus completes the proof of Lemma~\ref{lem3.6B}.
\end{proof}

Now we can easily conclude the following by Theorem~\ref{thm2.1} together with Lemma~\ref{lem3.6B}, which generalizes Corollary~\ref{cor3.3B}.

\begin{prop}\label{prop3.7B}
Let $(T,X)$ be a semiflow with $T$ an amenable semigroup and with a dense set of a.p. points. Then
$(T,X)$ is surjective; and hence if $(T,X)$ is in addition equicontinuous it is distal.
\end{prop}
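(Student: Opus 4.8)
The plan is to exploit the amenability of $T$ to produce an invariant Borel probability measure whose support is a closed invariant set, and then combine this with the density of almost periodic points to force every $t \in T$ to be surjective. Since $T$ is amenable, the semiflow $(T,X)$ admits an invariant Borel probability measure $\mu$; by regularizing (every Borel probability measure on a compact $T_2$-space can be taken quasi-regular, or one passes to a quasi-regular invariant one) we may assume $\mu$ is quasi-regular. Then Lemma~\ref{lem3.6B} applies and gives $S := \mathrm{supp}(\mu)$ a nonempty closed set with $tS = S$ for all $t \in T$; in particular $S$ is a closed invariant set, hence contains a minimal set, and $\mu(S) = 1$.

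The next step is to upgrade ``$tS = S$ for the particular closed set $S$'' to ``$tX = X$ for all $t$''. Here is where the density hypothesis enters. Fix $t \in T$. The image $tX$ is closed (continuous image of a compact set in a $T_2$-space), so it suffices to show $tX$ is dense. Let $x \in X$ be an almost periodic point; then $x \in \mathrm{cls}_X Tx$ and in fact $x$ lies in the minimal set $\mathrm{cls}_X Tx$, so $x$ is a (forward) recurrent point of the cascade $(t, X)$ — more precisely, since $x$ is a minimal point for $(T,X)$, it is a minimal (hence almost periodic, hence recurrent) point for the subaction generated by the single element $t$, by Lemma~\ref{lem2.6} applied to the semigroup $\{t^n : n \ge 0\}$. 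By Lemma~\ref{lem2.2}, a recurrent point of $(t,X)$ lies in $tX$. Thus every almost periodic point belongs to $tX$; since these are dense in $X$ and $tX$ is closed, $tX = X$. As $t$ was arbitrary, $(T,X)$ is surjective. The final sentence is then immediate: if in addition $(T,X)$ is equicontinuous, Theorem~\ref{thm2.1} gives distality.

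The main obstacle I anticipate is the measure-theoretic preliminary: showing that one may assume the invariant measure supplied by amenability is quasi-regular, so that Lemma~\ref{lem3.6B} is applicable. On a compact $T_2$ (not necessarily metrizable) space this requires a little care — one wants to appeal to the standard fact that Radon (quasi-regular) probability measures are weak-$*$ dense, or equivalently that the amenability fixed-point argument can be run inside the space of quasi-regular probability measures — but it is routine rather than deep. Everything after that is a short assembly of Lemmas~\ref{lem3.6B}, \ref{lem2.2}, \ref{lem2.6} and Theorem~\ref{thm2.1}. An alternative route that sidesteps the measure entirely: use the density of almost periodic points directly, observing that each almost periodic point is recurrent for each $(t,X)$ and hence in $tX$ by Lemma~\ref{lem2.2}, so $tX \supseteq \overline{\{\text{a.p. points}\}} = X$ — in fact this argument does not even use amenability, which suggests that in the actual proof amenability is invoked precisely to arrange the dense set of almost periodic points in the applications downstream, while the surjectivity step itself is the Lemma~\ref{lem2.2} argument.
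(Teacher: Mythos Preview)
There is a genuine gap. The step ``$x$ almost periodic for $(T,X)$ $\Rightarrow$ $x$ recurrent for the single cascade $(t,X)$'' is false in general, and Lemma~\ref{lem2.6} does not give it: that lemma only says almost periodic $\Leftrightarrow$ minimal \emph{for the same phase semigroup}, it says nothing about passing to a one-generator subsemigroup. Example~\ref{ex1.7} is a direct counterexample: there $(T,X)$ is minimal (so every point is almost periodic for $T$), yet for each $t\neq e$ the map $t$ is not surjective, hence by Lemma~\ref{lem2.2} there are points of $X$ that are \emph{not} recurrent for $(t,X)$. Your ``alternative route'' fails for exactly this reason, and your correct instinct that it would make amenability superfluous should have been a warning sign --- amenability is genuinely needed, precisely because Example~\ref{ex1.7} (with its non-amenable free phase semigroup) satisfies the density hypothesis but is not surjective.

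The paper's fix is to apply amenability not to $(T,X)$ globally but to each minimal orbit closure $X_x=\mathrm{cls}_XTx$ separately. On the minimal subsystem $(T,X_x)$ the invariant quasi-regular measure has support equal to all of $X_x$ (by minimality), so Lemma~\ref{lem3.6B} yields $tX_x=X_x$ and in particular $x\in tX$. Now density of almost periodic points and closedness of $tX$ finish as you wrote. Your global-measure opening only gives $t\,\mathrm{supp}(\mu)=\mathrm{supp}(\mu)$, which controls a possibly proper closed invariant subset and does not by itself force $tX=X$; localizing to each minimal set is the missing idea.
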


\begin{note}
In view of Lemma~\ref{lem3.6B}, the statement of Proposition~\ref{prop3.7B} is still true if $(T,X)$ is only a general minimal semiflow admitting an invariant Borel probability measure.
\end{note}

\begin{proof}
Let $x\in X$ be an a.p. point of $(T,X)$ and write $X_x=\textrm{cls}_XTx$. Then $(T,X_x)$ is a minimal
subsemiflow of $(T,X)$. Since $T$ is amenable by hypothesis, hence by amenability and Riesz's theorem there exists an invariant \textit{quasi-regular} Borel probability measure $\mu$ for $(T,X_x)$. Moreover, since $(T,X_x)$ is minimal and $\textrm{supp}\,(\mu)\subseteq X_x$ is $T$-invariant, thus $\textrm{supp}\,(\mu)=X_x$. Then by Lemma~\ref{lem3.6B}, it follows that each $t\in T$ restricted to $X_x$ is a surjection of $X_x$. Thus $x\in tX$ for all $t\in T$.
This shows that $tX=X$ for all $t\in T$, because a.p. points are dense in $X$ and $tX$ is closed.
Finally by Theorem~\ref{thm2.1}, it follows that $(T,X)$ is distal, if it is equicontinuous. This therefore proves Proposition~\ref{prop3.7B}.
\end{proof}

\subsection{\textit{C}-semigroup condition and $\ell$-recurrence}
It was already known that if $x$ is a recurrent point of a continuous self-map $f$ of $X$ then $x\in f(X)$ (by Lemma~\ref{lem2.2}). However, even for a minimal semiflow $(T,X)$, $X\not=tX$ in general; see Example~\ref{ex1.7}. Now we will generalize Lemma~\ref{lem2.2} to semiflows with a kind of special phase semigroups.

\begin{sn}[\cite{KM}]\label{sn3.8B}
Let $T$ be a topological semigroup, which is not necessarily discrete. Then:
\begin{enumerate}
\item $T$ is called a \textit{right C-semigroup} if $Ts$ is relatively co-compact in $T$, i.e., $\textrm{cls}_T(T\setminus{Ts})$ is compact, for each $s\in T$.

\item We could define \textit{left C-semigroup} in a similar way.
\end{enumerate}

When $T$ is right and left \textit{C}-semigroup, it is called a \textit{C-semigroup} as in Definition~(\textbf{l}) in $\S\ref{sec1.1.4}$.
For example, let $T=[1,\infty)$ with $e=1$; then $T$ is a multiplicative \textit{C}-semigroup under the usual topology.
\end{sn}

Next we need the notion---recurrent point---for a semiflow with general phase semigroup beyond $T=\mathbb{Z}_+$.

\begin{sn}\label{sn3.9B}
Let $(T,X)$ be any semiflow, where $T$ is a non-compact topological semigroup with $e\in T$, not necessarily discrete. By $\mathfrak{N}_{\textit{cpt},e}$ we will denote the family of all compact neighborhoods of $e$. Then:
\begin{enumerate}
\item $T$ is called \textit{locally compact} if $e$ has a compact neighborhood in $T$, i.e., $\mathfrak{N}_{\textit{cpt},e}\not=\emptyset$.

\item Given $x\in X$, $y\in X$ is called a \textit{limit point} of $x$, denoted by $y\in\ell_T(x)$, if $y\in\bigcap_{K\in\mathfrak{N}_{\textit{cpt},e}}\textrm{cls}_XK^cx$, where $K^c$ is the complement of $K$ in $T$.

\item If $x\in\ell_T(x)$, then $x$ is called an \textit{$\ell$-recurrent point} of $(T,X)$; if every point of $X$ is $\ell$-recurrent, then $(T,X)$ is called \textit{pointwise $\ell$-recurrent}. See \cite[Definition~2.11]{DT}.
\end{enumerate}

Of course, even if $T=\mathbb{Z}$, an $\ell$-recurrent point need not be an a.p. point for a general semiflow. For instance, every point of $X$ is $\ell$-recurrent for $(\langle T\rangle,X)$ in 1 of Examples~\ref{ex1.4}, but it is not a.p. except the two ends $-1$ and $2$ of $X$.
\end{sn}

\begin{lem}\label{lem3.10B}
Let $(T,X)$ be a semiflow with a locally compact phase semigroup $T$ and $x\in X$. Then $x$ is an $\ell$-recurrent point of $(T,X)$ if and only if there is a net $\{t_n\,|\,n\in D\}$ in $T$ such that
\begin{enumerate}
\item[$(1)$] $t_nx\to x$,
\item[$(2)$] for every $K\in\mathfrak{N}_{\textit{cpt},e}$, there is some $n_K\in D$ with $t_n\in K^c$ for all $n\ge n_K$.
\end{enumerate}
\begin{note}
If a net $\{t_n\}$ in $T$ satisfies condition (2), then we shall say $t_n\to\infty$.
\end{note}
\end{lem}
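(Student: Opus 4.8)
The plan is to unwind both directions directly from the definitions, the only slightly delicate point being how the net $\{t_n\}$ sits inside $T$ relative to the filter $\mathscr{K}_e$ of compact neighborhoods of $e$. Recall that $y\in\ell_T(x)$ means $y\in\bigcap_{K\in\mathscr{K}_e}\mathrm{cls}_X K^cx$, so $x$ being $\ell$-recurrent says that for every $K\in\mathscr{K}_e$ the point $x$ lies in $\mathrm{cls}_X K^c x$; i.e. every neighborhood of $x$ is met by $K^cx$.

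For the ``only if'' direction I would build the net $\{t_n\}$ indexed by the directed set $\mathscr{U}_X\times\mathscr{K}_e$, ordered by $(\varepsilon,K)\le(\varepsilon',K')$ iff $\varepsilon\supseteq\varepsilon'$ and $K\subseteq K'$ (note $\mathscr{K}_e$ is downward directed under reverse inclusion since the intersection of two compact neighborhoods of $e$ is again one, and $\mathscr{U}_X$ is directed by reverse inclusion). For each pair $(\varepsilon,K)$, $\ell$-recurrence gives $x\in\mathrm{cls}_X K^c x$, so $\varepsilon[x]\cap K^cx\neq\emptyset$, hence there is $t_{(\varepsilon,K)}\in K^c$ with $t_{(\varepsilon,K)}x\in\varepsilon[x]$. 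Then condition (1), $t_nx\to x$, is immediate from the $\varepsilon$-coordinate; and condition (2) holds because given any fixed $K_0\in\mathscr{K}_e$, for all indices $(\varepsilon,K)\ge(\varepsilon,K_0)$ we have $K\supseteq K_0$, so $t_{(\varepsilon,K)}\in K^c\subseteq K_0^c$. Thus $t_n\to\infty$ in the stated sense.

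For the ``if'' direction, suppose such a net $\{t_n\}$ exists. Fix $K\in\mathscr{K}_e$; by (2) there is $n_K$ with $t_n\in K^c$ for all $n\ge n_K$, so the tail $\{t_nx:n\ge n_K\}$ lies in $K^cx$, and by (1) this tail converges to $x$, whence $x\in\mathrm{cls}_X K^c x$. Since $K\in\mathscr{K}_e$ was arbitrary, $x\in\bigcap_{K\in\mathscr{K}_e}\mathrm{cls}_X K^cx=\ell_T(x)$, i.e. $x$ is $\ell$-recurrent.

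The main obstacle, such as it is, lies in the ``only if'' direction: one must choose the right index set so that a single net simultaneously witnesses convergence to $x$ (a statement about $\mathscr{U}_X$) and escape to infinity (a statement about $\mathscr{K}_e$), and one must check that $\mathscr{K}_e$ is genuinely directed downward — this uses that a finite intersection of compact neighborhoods of $e$ is again a compact neighborhood of $e$, which holds in any topological semigroup since compactness and being a neighborhood are both preserved by finite intersections. Everything else is a routine unwinding of the closure/net definitions in the compact Hausdorff space $X$.
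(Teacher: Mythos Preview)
Your argument is essentially the same as the paper's: the paper also indexes the net by the product of the neighborhood filter at $x$ with $\mathscr{K}_e$, ordered so that going up means shrinking the neighborhood and enlarging the compact set, and then chooses $t_{(\varepsilon,K)}\in K^c$ with $t_{(\varepsilon,K)}x\in\varepsilon[x]$. One small slip: to make $\mathscr{K}_e$ directed in the way your order requires (larger index means \emph{larger} $K$), you need the \emph{union} $K_1\cup K_2$ of two compact neighborhoods of $e$ to again lie in $\mathscr{K}_e$, not the intersection; this is of course immediate, so the proof stands once that word is corrected.
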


\begin{proof}
The sufficiency is obvious; so we only need to prove the necessity. For this, assume $x$ is an $\ell$-recurrent point of $(T,X)$.

Let $\mathscr{U}_x$ be the neighborhoods filter of $x$. Define a binary relation $\ge$ on $\mathscr{U}_x\times\mathfrak{N}_{\textit{cpt},e}$ as follows:
$(U,K)\ge(U^\prime,K^\prime)\Leftrightarrow U\subseteq U^\prime\textrm{ and }K\supseteq K^\prime$. Then,
\begin{enumerate}
\item[(a)] if $(U,K)\ge(U^\prime,K^\prime)$ and $(U^\prime,K^\prime)\ge(U^{\prime\prime},K^{\prime\prime})$, then $(U,K)\ge(U^{\prime\prime},K^{\prime\prime})$;
\item[(b)] if $(U,K)\in\mathscr{U}_x\times\mathfrak{N}_{\textit{cpt},e}$, then $(U,K)\ge(U,K)$;
\item[(c)] if $(U,K),(U^\prime,K^\prime)\in\mathscr{U}_x\times\mathfrak{N}_{\textit{cpt},e}$, then there is $(U^{\prime\prime},K^{\prime\prime})\in\mathscr{U}_x\times\mathfrak{N}_{\textit{cpt},e}$ such that $(U^{\prime\prime},K^{\prime\prime})\ge(U,K)$ and $(U^{\prime\prime},K^{\prime\prime})\ge(U^{\prime},K^{\prime})$.
\end{enumerate}
Thus $(\mathscr{U}_x\times\mathfrak{N}_{\textit{cpt},e},\ge)$ is a directed set. Now for every $(U,K)\in\mathscr{U}_x\times\mathfrak{N}_{\textit{cpt},e}$, we can take some $t_{U,K}\in T$ such that
$t_{U,K}x\in U$ and $t_{U,K}\in K^c$. It is easy to see that $\{t_{U,K}\}$ is a net in $T$ satisfies conditions (1) and (2). This proves Lemma~\ref{lem3.10B}.
\end{proof}

\begin{remarks}\label{rem3.11B}
Suppose $(T,X)$ is a semiflow where $T$ is a locally compact non-compact topological semigroup.
\begin{enumerate}
\item[(a)] An $x$ of $X$ is not necessarily an $\ell$-recurrent point if there is only an infinite sequence $\{t_n\}$ in $T$ with $t_nx\to x$.

For example, let $X=\mathbb{R}\cup\{\infty\}$ be the one-point compactification of the $1$-dimensional Euclidean space $\mathbb{R}$ (so $X$ is homeomorphic with the unit circle) and let $T=(\mathbb{R},+)$ with the usual topology.
Define a flow on $X$ with the phase group $T$ as follows:
\begin{gather*}
T\times X\rightarrow X,\quad (t,x)\mapsto t+x.
\end{gather*}
If $t_n\to0$ in $T$, then $t_nx\to x$ for each $x\in X$. But $\ell_T(x)=\{\infty\}$ for all $x\in X$.

\item[(b)] When $T$ is a group, an a.p. point is always an $\ell$-recurrent point. Thus any minimal flow is pointwise $\ell$-recurrent.
\begin{proof}
If $A$ is a right-syndetic subset of $T$, then $A$ is never contained in any $K\in\mathfrak{N}_{\textit{cpt},e}$ for $T$ is non-compact.
\end{proof}

\item[(c)] More generally than the above (b), let $T$ be such that each right-syndetic set is not relatively compact in $T$. Then every a.p. point is $\ell$-recurrent for $(T,X)$.
\end{enumerate}
\end{remarks}

\begin{remarks}\label{rem3.12B}
The almost periodicity is a strong form of recurrence, yet (b) of Remark~\ref{rem3.11B} is false in general if $T$ is \textit{not} a group, even for semiflows on compact metric spaces with no isolated points. Let's construct such an example as follows.
\begin{enumerate}
\item[(1)] Let $Y$ be a locally compact, non-compact, Polish space with no isolated points like $Y=\mathbb{R}^d$; and let $T=\{e\}\cup Y$, where $e=id_Y\colon y\mapsto y$ is the identity self-map of $Y$ and for every $t\in T$ with $t\not=e$ let $t\colon y\mapsto t$ of $Y$ into $Y$ be the constant map. Then $T$ is a locally compact, non-compact, $\sigma$-compact (in fact separable), and non-abelian topological subsemigroup of $C(Y,Y)$ under the topology defined by the way: for every net $\{t_n\}$ in $T$,
    \begin{equation*}
     t_n\to t\textrm{ in }T \Leftrightarrow t_ny\to ty\ \forall y\in Y.
     \end{equation*}
     In this case, $e$ is an isolated point of $T$ and $T\setminus\{e\}$ is homeomorphic with $Y$, i.e., $t_n\to t$ in $T$ iff $t_n\to t$ in $Y$.

\item[(2)] We now consider the naturally induced semiflow on $Y$ with the phase semigroup $T$ as follows:
\begin{equation*}
T\times Y\rightarrow Y,\ (t,y)\mapsto ty\quad \textrm{where }\ ty=\begin{cases}y &\textrm{ if }t=e,\\ t &\textrm{ if }t\not=e.\end{cases}
\end{equation*}
Given $y_0\in Y$ set $S_{y_0}=\{t\in T\,|\,ty_0=y_0\}=\{e,t_{y_0}\}$ where $t_{y_0}y=y_0\ \forall y\in Y$. Clearly $S_{y_0}$ is a right-syndetic subsemigroup of $T$ by (ii) of Definition~\ref{sn2.4} so every point of $Y$ is a \textit{periodic point} of $(T,Y)$. However $Ty=\textrm{cls}_YTy=Y$, for all $y\in Y$, is not compact.

\item[{}] (Notice that it is a well-known fact that
\begin{quote}
{\it Let $(G,X)$ be a flow with $G$ a locally compact separable group and $X$ a locally compact $T_2$-space, and $x\in X$. Then:}
\begin{enumerate}
\item {\it If $x$ is an a.p. point, $\mathrm{cls}_XGx$ is compact} (cf.~\cite[Proposition~2.5]{E69} and \cite[Lemma~1.6]{Aus});
\item {\it $x$ is periodic if and only if $Gx$ is compact} (cf.~\cite[Theorem~1.5]{Aus}).
\end{enumerate}
\end{quote}
But $(T,Y)$ shows that these statements need not be true in general semiflows.)

\item[(3)] Further based on (1) and (2), define $X=Y\cup\{\infty\}$ to be the one-point compactification of $Y$.
We now consider the naturally induced semiflow on $X$
\begin{equation*}
T\times X\rightarrow X,\ (t,x)\mapsto tx\quad \textrm{where }\ tx=\begin{cases}x &\textrm{ if }t=e,\\t &\textrm{ if }t\not=e.\end{cases}
\end{equation*}
For every $x\in X$ and all neighborhood $U$ of $x$, $N_T(x,U)$ is right-syndetic in $T$. (In fact, take $K\subseteq U$ a compact subset of $T$ and let $t\in T$, then $Ktx\subset U$ so $Kt\cap N_T(x,U)\not=\emptyset$. Thus $N_T(x,U)$ is right-syndetic, which is left-thick but not right-thick.) Clearly, $Tx=Y$ dense in $X$ for all $x\in Y$ and $T\infty=X$. This shows that
    \begin{itemize}
    \item {\it $(T,X)$ is minimal, pointwise a.p. and equicontinuous, but not distal.}
    \end{itemize}
    Nevertheless,
     \begin{itemize}
    \item {\it $x\not\in\ell_T(x)\ \forall x\not=\infty$; in fact, $\ell_T(x)=\{\infty\}$ for all $x\in X$. That is, $\infty$ is the unique $\ell$-recurrent point of $(T,X)$.}
    \begin{proof}
     For every $x\in X, y\in Y$, and taking a compact neighborhood $K$ of $y$ in $Y$, $\{e\}\cup K\in\mathfrak{N}_{\textit{cpt},e}$ such that $y\not\in\textrm{cls}_X(\{e\}\cup K)^cx$ so $y\not\in\ell_T(x)$. Moreover, $\ell_T(x)=\{\infty\}$ is obvious.
     \end{proof}
    \end{itemize}
\item[{}] (Note here that $T$ is neither an amenable semigroup nor a \textit{C}-semigroup.)
\end{enumerate}
\end{remarks}

\begin{remarks}\label{rem3.13B}
Let $T$ be a locally compact, $\sigma$-compact, and non-compact topological semigroup with an increasing sequence $\{K_n\}$ of compact neighborhoods of $e$ such that $T=\bigcup_nK_n$ and let $(T,X)$ be a semiflow. Then:
\begin{enumerate}
\item[(1)] $\ell_T(x)=\bigcap_n\textrm{cls}_XK_n^cx$ for all $x\in X$. Thus, if $X$ is a metric space, then $y\in\ell_T(x)$ if and only if $\exists\,t_n\in K_n^c$ with $t_nx\to y$ as $n\to\infty$.
\item[(2)] If $s^{-1}K$ is relatively compact in $T$ for all $s\in T$ and $K\in\mathfrak{N}_{\textit{cpt},e}$, then $\ell_T(x)$ is invariant for $(T,X)$ with $X$ a metric space. Thus $\ell_T(x)$, for $x\in X$, is an invariant closed non-empty set if $X$ is a compact metric space.
\begin{proof}
Indeed, for all $y\in\ell_T(x)$ and $s\in T$, let $t_n\in K_n^c$ with $t_nx\to y$. Then $st_nx\to sy$. For every compact subset $K$ of $T$, there is some $n_0>0$ such that $st_n\not\in K$ as $n>n_0$. This shows that we can select out a subsequence $\{\tau_n\}$ from $\{st_n\}$ with $\tau_n\in K_n^c$ such that $\tau_nx\to sy$. Thus $\ell_T(x)$ is invariant for all $x\in X$.
\end{proof}
\end{enumerate}

We notice that the classical topological semigroups $T=\mathbb{R}_+^d$ and $\mathbb{Z}_+^d$ both are locally compact non-compact $\sigma$-compact.
\end{remarks}

\begin{remarks}\label{rem3.14B}
Let $(T,X)$ be a semiflow on a uniform $T_2$-space $(X,\mathscr{U}_X)$ not necessarily compact with phase semigroup $T$. Then:
\begin{enumerate}
\item[(a)] A point $x\in X$ is called \textit{Birkhoff recurrent} if for every $\varepsilon\in\mathscr{U}_X$ one can find a compact subset $K$ of $T$ such that $Tx\subseteq\varepsilon[Ktx]\ \forall t\in T$ or equivalently $\overline{Tx}\subseteq\varepsilon[Ky]\ \forall y\in \overline{Tx}$; see \cite[Definition~V7.05]{NS} for $T=\mathbb{R}$ and \cite[Definition~3.1]{CD} for $T$ in general topological semigroups.

\item[(b)] By (a), a Birkhoff recurrent point must be an a.p. point. In fact it has been proved that
\begin{quote}
{\it If $(T,X)$ is a semiflow with $X$ a compact $T_2$ space, then a point $x$ of $X$ is a.p. if and only if it is Birkhoff recurrent} (\cite[Theorem~4.1]{CD}).
\end{quote}
Whenever $T$ is a \textbf{group} and $X$ is a \textit{locally compact $T_2$} space instead of a compact $T_2$ space, this statement still holds (cf.~\cite[Corollary~4.2]{CD}). In view of this, the following question is natural:
\begin{quote}
{\it Does the statement of \cite[Theorem~4.1]{CD} still hold if $(T,X)$ is a \textbf{semiflow} on a locally compact $T_2$ space $X$?} (cf.~\cite[Question~4.9]{CD})
\end{quote}
\item[(c)] Now in the same situation of (2) of Remark~\ref{rem3.12B}, $Y$ is a locally compact, non-compact, Polish space. If $y\in Y$ were Birkhoff recurrent for $(T,Y)$, then $\textrm{cls}_YTy=Y$ would be compact by \cite[Lemma~3.4]{CD}. Therefore, every point of $Y$ is a.p. but not Birkhoff recurrent. \textit{This thus gives us a negative solution to} \cite[Question~4.9]{CD}.
\end{enumerate}
\end{remarks}

\begin{remark}\label{rem3.15B}
Let $(T,X)$ be a semiflow with $T$ a locally compact non-compact semigroup and $x\in X$. If $Tx$ is dense in $X$ such that $\textrm{Int}_XTx=\emptyset$, then $X=\ell_T(x)$; particularly, $x$ is $\ell$-recurrent.
\begin{proof}
Given $y\in X$, let $U$ be an arbitrary neighborhood of $y$ and $K\in\mathfrak{N}_{\textit{cpt},e}$. Then $U\nsubseteq Kx$; otherwise, $\textrm{Int}_XTx\not=\emptyset$. Then $tx\in U$ for some $t\in K^c$. Thus $y\in\ell_T(x)$.
\end{proof}
\end{remark}

Now we can generalize Lemma~\ref{lem2.2} from the special case $T=\mathbb{Z}_+$ to every left \textit{C}-semigroup (cf.~2. of Definition~\ref{sn3.8B}) as follows:

\begin{prop}\label{prop3.16B}
Let $(T,X)$ be a semiflow with $T$ a locally compact, non-compact, left \textit{C}-semigroup and $x\in X$. If $y\in\ell_T(x)$, then $y\in t\mathrm{cls}_XTx$ for every $t\in T$. Hence $\ell_T(x)\subseteq tX$ for all $t\in T$.
\end{prop}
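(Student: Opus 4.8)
The plan is to leverage the defining property of a left \textit{C}-semigroup --- that $\mathrm{cls}_T(T\setminus tT)$ is compact for every $t\in T$ --- so as to collapse the intersection over all $K\in\mathscr{K}_e$ appearing in the definition of $\ell_T(x)$ to a single, carefully chosen $K$ whose complement $K^c=T\setminus K$ is already contained in $tT$. Once that is in place, the hypothesis $y\in\ell_T(x)$ automatically produces a net of the form $\{ts_\alpha x\}$ converging to $y$ with all $s_\alpha\in T$, and a standard compactness argument inside $\mathrm{cls}_X Tx$ identifies $y$ as $tz$ for some $z\in\mathrm{cls}_X Tx$. This is the natural way to push the $T=\mathbb{Z}_+$ statement of Lemma~\ref{lem2.2} to this generality.

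Concretely, I would proceed as follows. Fix $t\in T$. Using local compactness of $T$, choose some $K_0\in\mathscr{K}_e$ and set $K=K_0\cup\mathrm{cls}_T(T\setminus tT)$. Then $K$ is compact --- a union of two compacta, the second being compact exactly by the left \textit{C}-semigroup hypothesis --- and $K$ is a neighborhood of $e$ because it contains $K_0$, so $K\in\mathscr{K}_e$; moreover $T\setminus tT\subseteq K$ forces $K^c\subseteq tT$. Next, since $y\in\ell_T(x)\subseteq\mathrm{cls}_X K^c x$, there is a net $\{t_\alpha\}$ in $K^c$ with $t_\alpha x\to y$, and since $K^c\subseteq tT$ we may write $t_\alpha=ts_\alpha$ with $s_\alpha\in T$. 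Finally, $\{s_\alpha x\}$ is a net in the compact set $\mathrm{cls}_X Tx$, so it admits a subnet $\{s_{\alpha_\beta}x\}$ with $s_{\alpha_\beta}x\to z$ for some $z\in\mathrm{cls}_X Tx$; continuity of the transition map $w\mapsto tw$ (from joint continuity of the semiflow) gives $t_{\alpha_\beta}x=t(s_{\alpha_\beta}x)\to tz$, while $t_{\alpha_\beta}x\to y$ as a subnet of a convergent net, so by Hausdorffness of $X$ we get $y=tz\in t\,\mathrm{cls}_X Tx$. The last assertion is then immediate: $t\,\mathrm{cls}_X Tx\subseteq tX$, so $y\in tX$, and since $y\in\ell_T(x)$ and $t\in T$ were arbitrary, $\ell_T(x)\subseteq tX$ for all $t\in T$.

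I do not expect a real obstacle here. The only point demanding any care is that $\mathrm{cls}_T(T\setminus tT)$ by itself need not be a neighborhood of $e$, which is precisely why one enlarges it by an arbitrary compact neighborhood $K_0$ of $e$; the availability of such a $K_0$ is exactly the hypothesis that $T$ is locally compact, and this is the feature distinguishing the present statement from its ancestor Lemma~\ref{lem2.2}. Everything else --- passing to a convergent subnet in the compact space $\mathrm{cls}_X Tx$ and invoking uniqueness of limits in a Hausdorff space --- is entirely routine, and joint continuity of the semiflow is used only through continuity of the individual maps $x\mapsto tx$.
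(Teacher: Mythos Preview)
Your proof is correct and follows essentially the same route as the paper's: use the left \textit{C}-semigroup hypothesis to force a net approaching $y$ to eventually lie in $tT$, write its terms as $ts_\alpha$, extract a convergent subnet of $\{s_\alpha x\}$ in the compact set $\mathrm{cls}_XTx$, and conclude by continuity of $w\mapsto tw$. The paper simply compresses your construction of $K=K_0\cup\mathrm{cls}_T(T\setminus tT)$ into the single phrase ``since $T\setminus tT$ is relatively compact and $y$ is a limit point of $x$, there is a net $\{t_n\}$ in $T$ with $tt_nx\to y$''; your version makes explicit why that net exists.
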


\begin{proof}
Let $t\in T$. Since $T\setminus tT$ is relatively compact in $T$ and $y$ is a limit point of $x$ (cf.~2. of Definition~\ref{sn3.9B}), there is a net $\{t_n\}$ in $T$ with $tt_nx\to y$. Take $t_nx\to z\in\textrm{cls}_XTx$ (passing to a subnet if necessary). Thus $tz=y$ so $y\in t\textrm{cls}_XTx$. This proves Proposition~\ref{prop3.16B}.
\end{proof}

The following is a simple consequence of Proposition~\ref{prop3.16B}, which generalizes \cite[Lemma~3.1]{AAB} from $T=\mathbb{Z}_+$ to a general left \textit{C}-semigroup.

\begin{cor}\label{cor3.17B}
Let $(T,X)$ be a semiflow with $T$ a locally compact, non-compact, left \textit{C}-semigroup. If $(T,X)$ is pointwise $\ell$-recurrent, i.e., $x\in\ell_T(x)\ \forall x\in X$, then $(T,X)$ is surjective.
\end{cor}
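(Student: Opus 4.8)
The statement is an essentially immediate consequence of Proposition~\ref{prop3.16B}, so the work is already done; the only task is to feed in the right point. The plan is to fix an arbitrary $t\in T$ and an arbitrary $x\in X$, invoke pointwise $\ell$-recurrence to get $x\in\ell_T(x)$, and then apply Proposition~\ref{prop3.16B} with the choice $y=x$.

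\textbf{Key steps.} First I would recall that $T$ is assumed locally compact, non-compact, and a left $C$-semigroup, so the hypotheses of Proposition~\ref{prop3.16B} are satisfied verbatim. Second, fixing $x\in X$, the pointwise $\ell$-recurrence hypothesis gives $x\in\ell_T(x)$, i.e.\ $x$ is a limit point of itself in the sense of~2 of Definition~\ref{sn3.9B}. Third, applying Proposition~\ref{prop3.16B} to the limit point $y:=x\in\ell_T(x)$ yields $x\in t\,\mathrm{cls}_XTx\subseteq tX$ for every $t\in T$. Fourth, since $x$ was arbitrary, this shows $X\subseteq tX$, and as trivially $tX\subseteq X$, we conclude $tX=X$ for all $t\in T$; that is, $(T,X)$ is surjective.

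\textbf{Main obstacle.} There is no real obstacle here: the content lies entirely in Proposition~\ref{prop3.16B}, whose proof uses that $T\setminus tT$ is relatively compact (the left $C$-semigroup condition) to extract, from $y\in\ell_T(x)$, a net $\{t_n\}$ with $tt_nx\to y$ and then a convergent subnet $t_nx\to z\in\mathrm{cls}_XTx$ with $tz=y$. The only point worth stating explicitly in the write-up is that pointwise $\ell$-recurrence is exactly the hypothesis needed to set $y=x$ in that proposition, and that the conclusion $\ell_T(x)\subseteq tX$ then forces $x\in tX$ for every $x$, hence surjectivity of each $t$.
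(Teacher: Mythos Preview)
Your proposal is correct and takes essentially the same approach as the paper, which presents this corollary as a direct consequence of Proposition~\ref{prop3.16B} without further proof. Your write-up merely spells out the obvious application: pointwise $\ell$-recurrence gives $x\in\ell_T(x)$, and then $\ell_T(x)\subseteq tX$ from the proposition yields $x\in tX$ for all $x$ and $t$.
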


Note that an $\ell$-recurrent point need not be a minimal point. So Corollary~\ref{cor3.17B} is comparable with Proposition~\ref{prop3.7B}. Moreover, (3) of Remark~\ref{rem3.12B} shows that the left \textit{C}-semigroup condition is essential for Corollary~\ref{cor3.17B}, since $\infty\not\in tX$ for all $t\in T, t\not=e$.

\begin{cor}\label{cor3.18B}
Let $(T,X)$ be a semiflow with $T$ a locally compact, non-compact, left \textit{C}-semigroup and $x\in X$. Then:
\begin{enumerate}
\item[$(1)$] If $Tx$ is dense in $X$ with
$\mathrm{Int}_XTx=\emptyset$, then $X=tX$ for all $t\in T$.

\item[$(2)$] If $(T,X)$ is equicontinuous and $\mathrm{cls}_XTx=X$ with
$\mathrm{Int}_XTx=\emptyset$, then $(T,X)$ is a minimal surjective semiflow.
\end{enumerate}
\end{cor}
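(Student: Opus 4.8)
The plan is to obtain both statements by assembling the results already established in this section, so that no new argument is really needed.

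For part $(1)$, I would first apply Remark~\ref{rem3.15B}: the hypotheses that $Tx$ is dense in $X$ and that $\mathrm{Int}_XTx=\emptyset$ give $X=\ell_T(x)$, so in particular $x$ is an $\ell$-recurrent point of $(T,X)$. Next, since $T$ is a locally compact, non-compact, left \textit{C}-semigroup, Proposition~\ref{prop3.16B} is available and yields $\ell_T(x)\subseteq t\,\mathrm{cls}_XTx=tX$ for every $t\in T$. Chaining these, $X=\ell_T(x)\subseteq tX\subseteq X$, hence $tX=X$ for all $t\in T$, which is exactly $(1)$.

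For part $(2)$, the extra equicontinuity hypothesis together with part $(1)$ says that $(T,X)$ is equicontinuous surjective. Then Lemma~\ref{lem2.8} applies and every point of $X$ is an almost periodic point of $(T,X)$; in particular $x$ is almost periodic, so by Lemma~\ref{lem2.6} the set $\mathrm{cls}_XTx$ is minimal for $(T,X)$. Since $\mathrm{cls}_XTx=X$ by hypothesis, $(T,X)$ is a minimal semiflow. Finally, being equicontinuous surjective, $(T,X)$ is u.a.p.\ by Theorem~\ref{thm2.10}, and hence it is a minimal u.a.p.\ semiflow, as claimed. (One could instead invoke Theorem~\ref{thm2.1} to get distality and then deduce minimality from the fact that a distal semiflow is pointwise minimal, but routing through Lemma~\ref{lem2.8} is the shortest path.)

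There is no real obstacle here; the corollary is bookkeeping on top of Remark~\ref{rem3.15B}, Proposition~\ref{prop3.16B}, Lemmas~\ref{lem2.8} and \ref{lem2.6}, and Theorem~\ref{thm2.10}. The only thing deserving care is that the two structural hypotheses are used essentially and cannot be weakened: the left \textit{C}-semigroup condition is precisely what powers Proposition~\ref{prop3.16B} (and is known to be indispensable by $(3)$ of Remark~\ref{rem3.12B}), while the condition $\mathrm{Int}_XTx=\emptyset$ is what lets Remark~\ref{rem3.15B} force $X=\ell_T(x)$, without which the conclusion already fails when $Tx$ is a proper open orbit.
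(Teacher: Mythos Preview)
Your proof is correct and follows essentially the same approach as the paper. Part~(1) is identical (Remark~\ref{rem3.15B} followed by Proposition~\ref{prop3.16B}); for part~(2) the paper invokes Theorem~\ref{thm2.10} first and then reads off minimality from u.a.p.\ together with the dense orbit, whereas you obtain minimality directly via Lemmas~\ref{lem2.8} and~\ref{lem2.6} before citing Theorem~\ref{thm2.10}, but this is only a cosmetic reordering of the same ingredients.
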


\begin{proof}
(1) Let $Tx$ be dense in $X$ with
$\mathrm{Int}_XTx=\emptyset$. By Remark~\ref{rem3.15B}, $\ell_T(x)=X$. Then the assertion (1) follows at once from Proposition~\ref{prop3.16B}.

(2) Based on (1) it follows that $(T,X)$ is surjective. Then by Theorem~\ref{thm2.1}, $(T,X)$ is distal and so minimal. This proves Corollary~\ref{cor3.18B}.
\end{proof}

In view of Example~\ref{ex1.12}, the condition ``$\mathrm{Int}_XTx=\emptyset$'' is important for the assertions of Corollary~\ref{cor3.18B}.
\section{Inheritance theorems}\label{sec4A}
It is a well-known fact that for every flow $(T,X)$ and for all right-syndetic subgroup $S$ of $T$, $(T,X)$ is distal if and only if $(S,X)$ is distal (cf.~\cite[Proposition~5.14]{E69}). In fact, this kind of inheritance theorem also holds for semiflows with phase semigroups as follows:

\begin{prop}[Inheritance theorem]\label{prop4.1}
Let $(T,X)$ be a semiflow with phase semigroup $T$ not necessarily discrete, and let $S$ be a right-syndetic subsemigroup of $T$. Then:
\begin{enumerate}
\item[$(1)$] $P(T,X)=P(S,X)$;
\item[$(2)$] $(T,X)$ is distal if and only if $(S,X)$ is distal;
\item[$(3)$] $(T,X)$ is invertible if and only if so is $(S,X)$;
\item[$(4)$] If $(T,X)$ is invertible, then $Q(T,X)=Q(S,X)$.
\item[$(5)$] $(T,X)$ is equicontinuous surjective if and only if so is $(S,X)$;

\end{enumerate}
\end{prop}

\begin{note}
When $T$ is a topological group, see~\cite[Lemma~5.13]{E69} for (1) of Proposition~\ref{prop4.1}, \cite[Proposition~5.14]{E69} for (2) of Proposition~\ref{prop4.1}, \cite[Lemma~4.16]{E69} for (4) of Proposition~\ref{prop4.1}, and \cite[Proposition~4.17]{E69} for (5) of Proposition~\ref{prop4.1}. Here (3), (4) and Theorem~\ref{thm2.1} are useful for proving (5).
\end{note}

\begin{proof}
(1) Evidently $P(S,X)\subseteq P(T,X)$. On the other hand, let $(x,y)\in P(T,X)$ and let $\alpha\in\mathscr{U}_X$, then $A_\alpha:=\{t\in T\,|\,t(x,y)\in\alpha\}$ is a right-thick set of $T$. Since $S$ is right-syndetic in $T$, thus $S\cap A_\alpha\not=\emptyset$. This shows $(x,y)\in P(S,X)$. Thus $P(S,X)=P(T,X)$.

(2) Since ``distal $\Leftrightarrow$ $P=\varDelta_X$'' for every semiflow on $X$, then (2) follows at once from (1).

(3) Let $(S,X)$ be invertible. Since $S$ is right-syndetic in $T$, there is a compact subset $K$ of $T$ such that for any $t\in T$, there are $k\in K$ and $s\in S$ with $kt=s$. Let $K^\prime=\{k\in K\,|\,\exists t\in T\textit{ s.t. }kt\in S\}$; then for any $t\in T$, there is some $k^\prime\in K^\prime$ with $k^\prime t=s\in S$. This implies that every $t\in T$ is an injection of $X$ and each $k^\prime\in K^\prime$ is a surjection of $X$.
Thus each $k\in K^\prime$ is a homeomorphism of $X$ so that each $t\in T$ is a homeomorphism of $X$.

(4) Clearly $Q(T,X)\supseteq Q(S,X)$. Let $K$ be a compact subset of $T$ such that for any $t\in T$, there are $k_t\in K$ and $s_t\in S$ such that $k_tt=s_t$. Given any $\alpha\in\mathscr{U}_X$, there is some $\beta\in\mathscr{U}_X$ with $K\beta\subseteq\alpha$. Then $t^{-1}\beta=s_t^{-1}k_t\beta\subseteq s_t^{-1}\alpha$ so that $T^{-1}\beta\subseteq S^{-1}\alpha$. This shows that $Q(T,X)\subseteq Q(S,X)$.

(5) The necessity holds obviously. Now suppose $(S,X)$ is equicontinuous surjective. Then by Theorem~\ref{thm2.1}, $(S,X)$ is invertible and so is $(T,X)$ by (3). Thus by (4), $Q(T,X)=Q(S,X)$. Then $Q(T,X)=\varDelta_X$ by Theorem~\ref{thm1.14} and Lemma~\ref{lem1.10}. Thus $(T,X)$ is equicontinuous surjective by Corollary~\ref{cor1.16}.

The proof of Proposition~\ref{prop4.1} is thus completed.
\end{proof}

Note that in (5) of Proposition~\ref{prop4.1}, since the right-syndetic subsemigroup $S$ need not be dense in $T$, this statement is thus non-trivial. Moreover according to Theorem~\ref{thm1.14} and the later Theorem~\ref{u8.3}, it can be equivalently illustrated as follows:
\begin{enumerate}
\item[$(5)^\prime$] {\it $(T,X)$ is uniformly distal if and only if so is $(S,X)$.}
\end{enumerate}
or
\begin{enumerate}
\item[$(5)^{\prime\prime}$] {\it $E(T,X)$ is a group in $C(X,X)$ if and only if so is $E(S,X)$.}
\end{enumerate}

Next we can obtain a simple consequence of Proposition~\ref{prop4.1}. The following is, more or less, motivated by Clay's \cite[Theorem~9]{C63}.

\begin{prop}\label{prop4.2}
Let $(T,X)$ be a semiflow with $T$ an abelian semigroup not necessarily discrete, and let $S$ be a right-syndetic subsemigroup of $T$. If there are a point $x$ such that $Tx$ is dense in $X$ and a fixed point $p$ (i.e. $Tp=\{p\}$), then $Q(T,X)=Q(S,X)=X\times X$.
\end{prop}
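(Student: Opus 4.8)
The plan is to prove the stronger inclusion $X\times X\subseteq Q(S,X)$; since trivially $Q(S,X)\subseteq Q(T,X)\subseteq X\times X$, this forces all three sets to coincide. Recalling that $Q(S,X)=\bigcap_{\alpha\in\mathscr{U}_X}\mathrm{cls}_{X\times X}\bigcup_{s\in S}s^{-1}\alpha$, it suffices to fix an arbitrary pair $(a,b)\in X\times X$, an arbitrary $\alpha\in\mathscr{U}_X$ and an arbitrary $\beta\in\mathscr{U}_X$, and to exhibit some $s\in S$ together with $(a',b')\in\beta[a]\times\beta[b]$ with $(sa',sb')\in\alpha$.

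First I would use the transitive point: since $Tx$ is dense in $X$, choose $u,v\in T$ with $ux\in\beta[a]$ and $vx\in\beta[b]$, and put $a'=ux$, $b'=vx$. The self-maps $y\mapsto uy$ and $y\mapsto vy$ of $X$ are continuous and fix $p$ (because $up=vp=p$), so for a symmetric $\gamma\in\mathscr{U}_X$ with $\gamma\circ\gamma\subseteq\alpha$ one can pick $\eta\in\mathscr{U}_X$ with $u(\eta[p])\cup v(\eta[p])\subseteq\gamma[p]$. The role of the abelian hypothesis is that $s(ux)=u(sx)$ and $s(vx)=v(sx)$ for every $s\in T$; hence as soon as we arrange $sx\in\eta[p]$ we obtain $(sa',sb')=(u(sx),v(sx))\in\gamma[p]\times\gamma[p]\subseteq\alpha$ by symmetry of $\gamma$.

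The crux is therefore to find $s\in S$, and not merely $s\in T$, with $sx$ arbitrarily close to the fixed point $p$ (which automatically lies in $\mathrm{cls}_XTx=X$). Here I would invoke syndeticity in the form: there is a compact $K\subseteq T$ with $T=K^{-1}S$, i.e. for every $t\in T$ there is $k\in K$ with $kt\in S$. Since $kp=p$ for all $k\in K$, a tube-lemma argument applied to the jointly continuous action $T\times X\to X$ on the compact slice $K\times\{p\}$ yields a neighbourhood $V$ of $p$ with $kV\subseteq\eta[p]$ for all $k\in K$. As $p\in\mathrm{cls}_XTx$, pick $t\in T$ with $tx\in V$, then $k\in K$ with $s:=kt\in S$; this $s$ satisfies $sx=k(tx)\in kV\subseteq\eta[p]$, as needed. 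Combining with the previous paragraph gives $(a',b')\in(\beta[a]\times\beta[b])\cap\bigcup_{s\in S}s^{-1}\alpha$; letting first $\beta$ and then $\alpha$ shrink yields $(a,b)\in Q(S,X)$, completing the proof.

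The only genuinely delicate point is the tube-lemma step, which is needed precisely because $T$ is permitted to be non-discrete; if $T$ were discrete then $K$ would be finite and the uniform estimate $kV\subseteq\eta[p]$ for all $k\in K$ is immediate from continuity of each $k$ at $p$. Everything else reduces to routine bookkeeping with entourages together with the commutation identity $s(ux)=u(sx)$ coming from the abelian hypothesis.
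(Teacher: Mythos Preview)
Your argument is correct. It differs from the paper's route mainly in how the passage from $T$ to $S$ is handled. The paper first proves the stronger inclusion $Tx\times Tx\subseteq P(T,X)$: given $t,s\in T$ and $\alpha\in\mathscr{U}_X$, density of $Tx$ and continuity of $t,s$ at the fixed point $p$ produce $\tau\in T$ with $t\tau x,s\tau x\in\alpha[p]$, whence (by commutativity) $\tau(tx,sx)\in\alpha$; so $(tx,sx)\in P(T,X)$. It then invokes Proposition~\ref{prop4.1}(1), i.e.\ $P(T,X)=P(S,X)$, and finishes via $\mathrm{cls}_{X\times X}P\subseteq Q$ and density of $Tx\times Tx$. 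You instead work directly at the level of $Q(S,X)$, manufacturing an element $s\in S$ with $sx$ close to $p$ by combining syndeticity $T=K^{-1}S$ with a tube-lemma estimate on the compact slice $K\times\{p\}$. The paper's approach is shorter because it recycles the inheritance theorem and, as a bonus, yields the sharper intermediate statement that every pair in $Tx\times Tx$ is actually \emph{proximal} (not merely regionally proximal) for $T$ and for $S$; your approach is self-contained, avoids citing Proposition~\ref{prop4.1}, and makes transparent exactly where joint continuity of the action enters in the non-discrete case.
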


\begin{proof}
We first show that $Tx\times Tx\subset P(T,X)$. In fact, for all $t,s\in T$ and $\alpha\in\mathscr{U}_X$, we can find some $\tau\in T$ such that $\{t,s\}\tau x\subset\frac{\alpha}{3}[p]$. Then $\tau(tx,sx)\in\alpha$. This implies that $(tx,sx)\in P(T,X)$. Thus $Tx\times Tx\subseteq P(T,X)=P(S,X)$ by (1) of Proposition~\ref{prop4.1}. Further by $\textrm{cls}_{X\times X}{P}\subseteq Q$, it follows that $Q(T,X)=Q(S,X)=X\times X$.
This proves Proposition~\ref{prop4.2}.
\end{proof}

Now, in Theorem~\ref{thm2.1}, the condition that $(T,X)$ is surjective may be superficially relaxed by using Proposition~\ref{prop4.1} as follows:

\begin{cor}\label{cor4.3}
Let $(T,X)$ be a semiflow such that $S=\{t\in T\,|\,t\textrm{ is a self-surjection of } X\}$ is right-syndetic in $T$. If $(T,X)$ is equicontinuous, then it is distal and hence it is invertible.
\end{cor}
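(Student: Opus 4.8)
The plan is to reduce the statement to Theorem~\ref{thm2.1} and Proposition~\ref{prop4.1}. The first thing to check is that $S$ is itself a subsemigroup of $T$: the composite of two surjective self-maps of $X$ is surjective, and the neutral element $e$ is the identity map of $X$, hence surjective, so $e\in S$ and $S\cdot S\subseteq S$. Consequently $(S,X)$ is a genuine semiflow. It is surjective by the very definition of $S$, and it is equicontinuous because the restriction of an equicontinuous semiflow to any subsemigroup is equicontinuous (the same $\varepsilon$--$\delta$ of \S\ref{sec1.1.1}(a) works for all $t\in S\subseteq T$). Thus Theorem~\ref{thm2.1} applies and shows that $(S,X)$ is distal, that is, $P(S,X)=\varDelta_X$.

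Now, since $S$ is syndetic in $T$ by hypothesis, part~(1) of Proposition~\ref{prop4.1} gives $P(T,X)=P(S,X)=\varDelta_X$, so $(T,X)$ is distal; this is exactly part~(2) of Proposition~\ref{prop4.1}. For the invertibility one may simply quote part~(2) of Theorem~\ref{thm1.13}. Alternatively, here is a direct argument that stays within this section. Distality forces every $t\in T$ to be injective: if $tx=ty$ with $x\neq y$, then $t(x,y)=(tx,tx)\in\alpha$ for every $\alpha\in\mathscr{U}_X$, which contradicts the definition of distality in \S\ref{sec1.1.3}(g). For surjectivity, fix $t\in T$ and let $K$ be a (finite, by the standing assumption) subset of $T$ witnessing the syndeticity of $S$, so that $kt\in S$ for some $k\in K$. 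Then $k\bigl(t(X)\bigr)=(kt)(X)=X=k(X)$, and since $k$ is injective this yields $t(X)=X$. Hence every $t\in T$ is bijective and $(T,X)$ is invertible.

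There is no serious obstacle here; the proof is essentially bookkeeping once the two structural points are noticed, namely that $S$ is a subsemigroup (so that $(S,X)$ is a semiflow to which Theorem~\ref{thm2.1} and Proposition~\ref{prop4.1} genuinely apply) and that the finite ``span'' $K$ of $S$ inside $T$, combined with the injectivity supplied by distality, upgrades surjectivity on $S$ to surjectivity on all of $T$. The only reason the statement is not completely trivial is that a syndetic subsemigroup need not be dense in $T$, so one really does need the transfer principle of Proposition~\ref{prop4.1}(1) rather than a naive approximation argument.
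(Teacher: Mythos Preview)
Your proof is correct and follows essentially the same route as the paper: verify that $S$ is a subsemigroup, apply Theorem~\ref{thm2.1} to the equicontinuous surjective semiflow $(S,X)$, and then invoke Proposition~\ref{prop4.1} to transfer distality from $(S,X)$ to $(T,X)$. The paper's proof is terser and simply cites (2) of Theorem~\ref{thm1.13} for invertibility, whereas you additionally supply a self-contained argument; note that in your chain $k(t(X))=(kt)(X)=X=k(X)$ the last equality is justified because $X=k(t(X))\subseteq k(X)\subseteq X$, which you might make explicit.
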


\begin{proof}
Clearly $S$ is a right-syndetic subsemigroup of $T$. Thus by Theorem~\ref{thm2.1}, $(S,X)$ is distal. So $(T,X)$ is distal by (2) of Proposition~\ref{prop4.1}.
\end{proof}

Finally, we note that the compactness of the phase space $X$ is important for Theorem~\ref{thm2.1}. Otherwise, the statement is false; see \cite[Example~3.7]{DX}.
\section{Distality of points by product almost periodicity}\label{sec5}
It is well known that $(T,X)$ is distal iff $(T,X\times X)$ is pointwise a.p. (cf.~\cite[Proposition~2.5]{DX}; also see 1. and 3. of \cite[Proposition~5.9]{E69} for flows). In fact, by a purely topological proof, we can obtain the following characterization of distal points, which implies that every distal point is an a.p. point.
Here `distal point' and `a.p. point' are as in $\S\ref{sec1.1.3}$\,(\textbf{h}) and Definition~\ref{sn2.4}\,(iii) respectively.

\begin{thm}\label{thm5.1}
Let $(T,X)$ be a semiflow and $x\in X$. Then $x$ is a distal point of $(T,X)$ if and only if $(x,y)$ is an a.p. point of $(T,X\times X)$ for all a.p. point $y$ of $(T,X)$.
\end{thm}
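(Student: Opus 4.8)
The plan is to prove both implications by exploiting the characterization ``distal $\Leftrightarrow$ every orbit closure pair is almost periodic'' together with the structure of minimal sets in orbit closures. For the ``only if'' direction, suppose $x$ is a distal point and let $y$ be an almost periodic (equivalently, minimal) point of $(T,X)$. Consider the orbit closure $N=\mathrm{cls}_{X\times X}T(x,y)$ inside the product semiflow $(T,X\times X)$. By Zorn's lemma $N$ contains a minimal set $M$; pick $(x',y')\in M$. Since $M$ projects onto orbit closures inside $\mathrm{cls}_XTx$ and $\mathrm{cls}_XTy$ and both projections are epimorphisms of minimal semiflows, $x'\in\mathrm{cls}_XTx$ is a minimal point and $y'\in\mathrm{cls}_XTy$. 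First I would argue $y'=y$ after applying a suitable net: because $(x,y)\in\mathrm{cls}_{X\times X}Tx\times\{y\}$'s orbit, we can find a net $\{t_n\}$ with $t_n(x,y)\to(x',y')$; since $y$ is a minimal point there is a second net bringing $t_nx$ back near $x'$ while $t_ny\to y'$, and minimality lets us recover $y$ from $y'$ via an idempotent-type argument — this is where I expect to lean on the fact (stated in the excerpt) that a distal point is a minimal point and on the interplay of the two orbit closures. The key point is that $(x',y')$ being in the orbit closure of $(x,y)$ forces $x'$ proximal to $x$ in $\mathrm{cls}_XTx$; distality of $x$ then gives $x'=x$, and correspondingly $y'=y$, so $(x,y)\in M$ and hence $(x,y)$ is a minimal, i.e.\ almost periodic, point.

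For the ``if'' direction, suppose $(x,y)$ is almost periodic in $(T,X\times X)$ for every almost periodic $y$. To show $x$ is a distal point, take any $x'\in\mathrm{cls}_XTx$ with $(x,x')\in P(T,X)$; I must show $x'=x$. Let $\Lambda$ be a minimal subset of $\mathrm{cls}_XTx$ and pick an almost periodic point $y\in\Lambda$ lying in the proximal cell structure so that $(x,y)\in P(X)$ as well — more precisely, I would choose $y$ to be a minimal point in $\mathrm{cls}_XTx$ that is proximal to $x$ (such exists: any minimal point of $\mathrm{cls}_XTx$ is proximal to $x$ because $x$ and $\Lambda$ lie in a common orbit closure, so a net shrinking $T(x,y)$ toward the diagonal can be produced). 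By hypothesis $(x,y)$ is almost periodic in $(T,X\times X)$, hence a minimal point, hence by Lemma~\ref{lem2.7} — which says a proximal pair is never almost periodic — we must have $x=y$. Therefore $x$ itself is a minimal point. Now if $(x,x')\in P(X)$ with $x'\in\mathrm{cls}_XTx$, then $x'$ is also a minimal point (it lies in the minimal set $\mathrm{cls}_XTx$), and applying the hypothesis once more to the almost periodic point $y:=x'$ gives that $(x,x')$ is almost periodic in $(T,X\times X)$; since $(x,x')\in P(X)$, Lemma~\ref{lem2.7} forces $x=x'$. Hence $x$ has no nontrivial proximal point in its orbit closure, i.e.\ $x$ is a distal point.

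The main obstacle I anticipate is the bookkeeping in the ``only if'' direction: concluding that the minimal point $(x',y')$ in $N$ actually equals $(x,y)$ rather than merely some translate. The clean way is to avoid nets entirely and invoke Ellis' semigroup: choose a minimal left ideal $I\subseteq E(T,X)$ and an idempotent $u\in I$ with $ux=x$ (available because a distal point satisfies $p^{-1}px=x$ for $p\in E$, equivalently is fixed by some minimal idempotent), then note $u$ acts on $X\times X$ and $(ux,uy)=(x,uy)$; minimality of $y$ gives $uy$ in the same minimal set as $y$, and a short argument with $u^2=u$ pins down $(x,y)$ in a minimal subset of $N$. I would present the net-based argument in the main text and remark that the idempotent argument gives the same conclusion, since the purely topological flavour is what the paper advertises. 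The reverse direction is comparatively routine once Lemma~\ref{lem2.7} and Lemma~\ref{lem2.6} are in hand, so I expect the bulk of the writing to go into the forward implication.
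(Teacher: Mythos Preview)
Your sufficiency (``if'') argument is essentially right but takes a detour, and contains a slip: the claim that \emph{any} minimal point of $\mathrm{cls}_XTx$ is proximal to $x$ is false (in a minimal distal system every point is minimal and no two distinct points are proximal). What is true is that \emph{some} such point exists, but even that is unnecessary here. The paper simply picks any almost periodic $y_0\in X$ (one exists by compactness), applies the hypothesis to get $(x,y_0)$ almost periodic in $X\times X$, and projects to conclude $x$ is almost periodic. After that your Step~2 via Lemma~\ref{lem2.7} matches the paper exactly.

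The genuine gap is in your net-based necessity (``only if'') argument. You assert that $(x',y')\in\mathrm{cls}_{X\times X}T(x,y)$ forces $(x,x')\in P(X)$; this is false in general. Even if you upgrade to the Auslander--Ellis choice of a minimal $(x',y')$ \emph{proximal} to $(x,y)$, distality gives $x'=x$, but then returning $y'$ to $y$ via some net $s_n$ moves $x$ to an uncontrolled $x^{**}$ for which you again have no proximality to $x$. The pair-level bookkeeping does not close; this is exactly the obstacle you flagged, and it is not merely bookkeeping.

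The paper's device is to work with a \emph{maximal almost periodic set}: take $A\subseteq X$ maximal with $y\in A$ and every finite tuple from $A$ almost periodic, set $\mathbf{z}=(a)_{a\in A}\in X^A$, and find an almost periodic $(\mathbf{z},x^*)\in\mathrm{cls}\,T(\mathbf{z},x)$ (go to a minimal point, then use minimality of $\mathbf{z}$ to return the first coordinate). Maximality forces $x^*\in A$, and this is the payoff: $x^*$ is now one of the coordinates of $\mathbf{z}$, so when a net $s_n$ sends $(\mathbf{z},x)\to(\mathbf{z},x^*)$, the $x^*$-coordinate of $\mathbf{z}$ gives $s_nx^*\to x^*$ simultaneously with $s_nx\to x^*$. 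Hence $(x,x^*)\in P(X)$ with $x^*\in\mathrm{cls}_XTx$, and distality yields $x=x^*\in A$, so $(x,y)$ is almost periodic. This self-referential use of $A$ is precisely what a two-coordinate argument cannot reproduce.

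Your Ellis-idempotent alternative does work and is not circular: from $u^2=u$ one gets $(x,ux)\in P(X)$ with $ux\in\mathrm{cls}_XTx$, so distality gives $ux=x$ for every idempotent $u$; choosing $u$ minimal with $uy=y$ then yields $u(x,y)=(x,y)$. But the paper's explicit aim is a proof \emph{independent of Ellis' semigroup}, so presenting that as the main line would miss the theorem's advertised content.
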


\begin{proof}
(1) Necessity: Let $y\in X$ be any a.p. point of $(T,X)$. By Zorn's lemma, there exists a maximal subset $A$ of $X$ with $y\in A$ such that for all $a_1,\dotsc,a_k$ in $A$, $(a_1,\dotsc,a_k)$ is a.p. for $(T,X^k)$, for all $k\ge1$. Now for $\z=(z_a)_{a\in A}\in X^A$ with $\z_a=a\ \forall a\in A$, we can take an a.p. point $(\z^\prime,x^\prime)$ in $\textrm{cls}_{X^A\times X}{T(\z,x)}$ for $(T,X^A\times X)$. Since $\z$ is a.p. for $(T,X^A)$, then there is a net $\{t_n\}$ in $T$ with $t_n(\z^\prime,x^\prime)\to(\z,x^*)$ and $(\z,x^*)$ is also a.p. for $(T,X^A\times X)$. So $x^*\in A$ by maximality of $A$. Further we can select a net $\{s_n\}$ in $T$ such that $s_n(\z,x)\to(\z,x^*)$ and then $s_n(x^*,x)\to(x^*,x^*)$ with $x^*\in\textrm{cls}_XTx$. Thus $x=x^*\in A$ by distality of $(T,X)$ at $x$. Then $x,y\in A$. Therefore by definition of $A$, $(x,y)$ is a.p. for $(T,X\times X)$.

(2) Sufficiency: Since $X$ is compact, by Zorn's lemma we can choose a $y_0\in X$ which is a.p. for $(T,X)$. So $x$ is a.p. for $(T,X)$ and further every $y\in\textrm{cls}_XTx$ is a.p.. Thus, for all $y\in\textrm{cls}_XTx$, $(x,y)$ is a.p.. This implies that $x$ must be distal (by Lemma~\ref{lem2.7}).

The proof of Theorem~\ref{thm5.1} is thus completed.
\end{proof}

It should be noticed that by using IP$^*$-recurrence of a distal point and his central sets of $\mathbb{Z}_+$, Furstenberg's \cite[(i) $\Leftrightarrow$ (iv) in Theorem~9.11]{Fur} says that $x\in X$ is distal for $(T,X)$ iff for any $(T,Z)$, $(x,z)$ is a.p. for $(T,X\times Z)$ for all a.p. point $z\in Z$, in the special case $T=\mathbb{Z}_+$ with $X$ a compact metric space (cf.~\cite[Theorem~4]{DL} for general semiflows on compact $T_2$-spaces).

\begin{sn}
We say that $(T,X)$ satisfies the \textit{Bronstein condition} if the set of a.p. points of $(T,X\times X)$ is dense in $X\times X$.
\end{sn}

The Bronstein condition is a very important one in topological dynamics; see, e.g., \cite{V77}. Then as a consequence of Theorem~\ref{thm5.1} and Lemma~\ref{lem1.8}, we can easily obtain the following result, which says that the point-distal (cf.~$\S\ref{sec1.1.3}$\,(\textbf{i})) implies the Bronstein condition.

\begin{cor}\label{cor5.3}
If $(T,X)$ is a point-distal surjective semiflow, then $(T,X)$ satisfies the Bronstein condition.
\end{cor}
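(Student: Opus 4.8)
The plan is to assemble Corollary~\ref{cor5.3} directly from Lemma~\ref{lem1.8} and Theorem~\ref{thm5.1}. First I would recall that a point-distal semiflow is minimal (as noted in $\S\ref{sec1.1.3}$), so by Lemma~\ref{lem2.6} every point of $X$ is an almost periodic point of $(T,X)$. Next, since $(T,X)$ is point-distal \emph{and} surjective, Lemma~\ref{lem1.8} yields that the set $D$ of distal points of $(T,X)$ is dense in $X$.

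Then I would fix an arbitrary $x\in D$ and an arbitrary $y\in X$. Because $y$ is almost periodic for $(T,X)$ and $x$ is a distal point, Theorem~\ref{thm5.1} applies and tells us that $(x,y)$ is an almost periodic point of $(T,X\times X)$. Since $x\in D$ and $y\in X$ were arbitrary, this shows $D\times X$ is contained in the set of almost periodic points of $(T,X\times X)$. As $D$ is dense in $X$, the set $D\times X$ is dense in $X\times X$ in the product topology, and hence the almost periodic points of $(T,X\times X)$ are dense in $X\times X$; that is, $(T,X)$ satisfies the Bronstein condition.

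I do not expect a genuine obstacle here: the argument is a short composition of the two cited results, and the only steps needing a word of justification are that point-distality forces minimality (so that the point $y$ is automatically almost periodic, which is precisely the hypothesis required to invoke Theorem~\ref{thm5.1}) and the elementary fact that density of $D$ in $X$ upgrades to density of $D\times X$ in $X\times X$. If one preferred not to quote ``point-distal $\Rightarrow$ minimal'' one could instead let $y$ range only over the almost periodic points of $(T,X)$ and observe that these are already dense (each orbit closure meets a minimal set), running the same product-density argument on $D$ times that dense set.
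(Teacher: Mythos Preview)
Your proposal is correct and follows essentially the same route as the paper: invoke Lemma~\ref{lem1.8} to get a dense set of distal points, then apply Theorem~\ref{thm5.1} to each such point paired with any $y\in X$, and conclude by density. Your write-up is slightly more explicit than the paper's in justifying why every $y\in X$ is almost periodic (via point-distal $\Rightarrow$ minimal), which is exactly the hypothesis Theorem~\ref{thm5.1} needs, but this is the same argument.
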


\begin{proof}
Since $(T,X)$ is surjective point-distal, then by Lemma~\ref{lem1.8} it follows that the distal points are dense in $X$. Then by Theorem~\ref{thm5.1}, for all distal point $x\in X$ and every $y\in X$, $(x,y)$ is a.p. for $(T,X\times X)$. This proves Corollary~\ref{cor5.3}.
\end{proof}

If $(T,X)$ is invertible point-distal with $T$ an amenable semigroup, then we shall show later on that its reflection $(X,T)$ is point-distal (cf.~Proposition~\ref{prop6.20}). Here, based on Theorem~\ref{thm5.1}, we can first prove that $(\langle T\rangle,X)$ is point-distal.

\begin{thm}\label{thm5.4}
Let $(T,X)$ be invertible with $T$ an amenable semigroup and $x\in X$. Then:
\begin{enumerate}
\item $x$ is a distal point of $(T,X)$ iff $x$ is a distal point of $(\langle T\rangle,X)$.
\item $(T,X)$ is point-distal iff $(\langle T\rangle,X)$ is a point-distal flow.
\item $(X,T)$ is point-distal iff $(\langle T\rangle,X)$ is a point-distal flow.
\end{enumerate}
\begin{note}
In fact the sufficiency of 1. and 2. does not need the amenability of $T$.
\end{note}
\end{thm}

\begin{proof}
(1). Clearly if $x$ is a distal point of $(\langle T\rangle,X)$, then it is a distal point of $(T,X)$. Conversely, let $x$ be a distal point of $(T,X)$; we will show $x$ is also a distal point of $(\langle T\rangle,X)$. Given $y\in\textrm{cls}_XTx$, by Theorem~\ref{thm5.1}, $(y,x)$ is an a.p. point of $(T,X\times X)$. Then $W=\textrm{cls}_{X\times X}T(y,x)$ is a minimal subset of $(T,X\times X)$ by Lemma~\ref{lem2.6}. Since $T$ is amenable, by Proposition~\ref{prop3.7B}, it follows that $(\langle T\rangle,W)$ is a minimal subflow of $(\langle T\rangle,X\times X)$ and so $\textrm{cls}_X\langle T\rangle x=\textrm{cls}_XTx$. Thus by Lemma~\ref{lem2.6} again, $(y,x)$ is an a.p. point of $(\langle T\rangle,X\times X)$. Using Theorem~\ref{thm5.1} again, $x$ is a distal point of $(\langle T\rangle,X)$.

(2). In view of 1. of Theorem~\ref{thm5.4}, we only need to show that if $(\langle T\rangle,X)$ is minimal, then $(T,X)$ is minimal. In fact, when $\Lambda$ is a minimal subset of $(T,X)$, by a slight modification of the proof of Corollary~\ref{cor1.23} we can see $\Lambda=X$.

(3). This follows from (2). Thus the proof of Theorem~\ref{thm5.4} is completed.
\end{proof}

\begin{cor}
Let $(T,X)$ be an invertible semiflow with $T$ amenable and $x_0\in X$. If $x_0$ is a distal point of $(T,X)$ and $\overline{\langle T\rangle x_0}=X$, then $(T,X)$ is point-distal.
\end{cor}

\begin{proof}
First by Theorem~\ref{thm5.4}, $x_0$ is a distal point of $(\langle T\rangle,X)$; so $(\langle T\rangle,X)$ is point-distal and moreover $\langle T\rangle x_0$ consists of distal points.
\end{proof}

Next we will present another application of Theorem~\ref{thm5.4}. In 1970 \cite{V70} Veech proved the following theorem:
\begin{quote}
{\it If $(T,X)$ is a point-distal flow on a non-trivial compact metric space $X$, then $(T,X)$ has a non-trivial equicontinuous factor} (cf.~\cite[Theorem~6.1]{V70})
\end{quote}

Next by 2. of Theorem~\ref{thm5.4} and Veech's theorem we can easily obtain an invertible semiflow version of Veech's theorem as follows:

\begin{cor}\label{cor5.6}
Let $(T,X)$ be point-distal invertible with $T$ an amenable semigroup and with $X$ a non-trivial compact metric space. Then $(T,X)$ has a non-trivial equicontinuous (invertible) factor.
\end{cor}

\begin{que}\label{q5.7}
{\it Let $(T,X)$ be a point-distal invertible semiflow with $T$ an any phase semigroup and with $X$ a non-trivial compact $T_2$ space. Does it have a non-trivial equicontinuous factor?} This is also open in point-distal flows (cf.~Veech \cite[p.~802]{V77}).
\end{que}

\begin{que}\label{q5.8}
Let $T$ be a locally compact non-compact topological semigroup, $(T,X)$ a semiflow and $x\in X$. If $(x,y)$ is an $\ell$-recurrent point of $(T,X\times Y)$ in the sense of Definition~\ref{sn3.9B}.3 for every $\ell$-recurrent point $y$ of any $(T,Y)$, is $x$ a distal point of $(T,X)$? (See \cite[(i) $\Leftrightarrow$ (iii) of Theorem~9.11]{Fur} for $T=\mathbb{Z}_+$.)
\end{que}

Our later Theorem~\ref{u8.12} will provide us with sufficient and necessary conditions for any point-distal flow to have non-trivial equicontinuous factors.
\section{Dynamics of reflections of invertible semiflows}\label{sec6}
This section will be mainly devoted to proving (2), (3) and (4) of Theorem~\ref{thm1.13} and our Reflection principles I, II and III using Theorem~\ref{thm2.1}. As applications of our reflection principles, we will prove Furstenberg's structure theorem of minimal distal semiflows in $\S\ref{sec3.2A}$ and we shall consider minimal non-sensitive invertible semiflows in $\S\ref{sec7}$.

Recall that a semiflow $(T,X)$ is invertible iff each $t\in T$ is bijective; and in this case, $\langle T\rangle$ denotes the group generated by $T$. Then $(\langle T\rangle,X)$ is a flow on $X$. However since $T$ is neither a right-syndetic nor a normal subsemigroup of $\langle T\rangle$ in general, the dynamics properties of $(\langle T\rangle,X)$ can not be naturally inherited to $(T,X)$ in many cases.

When $(T,X)$ is invertible, $(X,T)$ denotes its reflection or `history' defined as in Definition~\ref{sn1.18}. If $(T,X)$ had certain dynamical property $\mathfrak{P}$ in the past, i.e., $(X,T)$ has $\mathfrak{P}$, then does $(T,X)$ have $\mathfrak{P}$? This kind of dynamics is called satisfying ``reflection principle'' here.
\subsection{Distality and equicontinuity of reflections}\label{sec6.1}
Theorem~\ref{thm2.1} implies the following, for which we will present a direct proof with no uses of Ellis' joint continuity theorem (Theorem~\ref{thm9.8} in $\S\ref{sec9}$) and Ellis's semigroup (cf.~$\S\ref{sec1.1.5}$).

\begin{prop}\label{prop6.1}
Let $(T,X)$ be an invertible semiflow; then $(T,X)$ is equicontinuous if and only if so is $(X,T)$.
\end{prop}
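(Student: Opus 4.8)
The plan is to prove both directions directly from the definitions of equicontinuity, using Theorem~\ref{thm2.1} to bridge between the forward and backward actions. For the ``only if'' direction, suppose $(T,X)$ is equicontinuous. Since $(T,X)$ is invertible by hypothesis, it is in particular surjective, so Theorem~\ref{thm2.1} tells us $(T,X)$ is distal; equivalently $P(T,X)=\varDelta_X$. But for an equicontinuous semiflow one has $P(X)=Q(X)$, so $Q(T,X)=\varDelta_X$. Now Lemma~\ref{lem1.10} applies and yields exactly that $X\times T\rightarrow X$, $(x,t)\mapsto t^{-1}x$, is equicontinuous, i.e.\ $(X,T)$ is equicontinuous. (Alternatively, one argues directly: given $\varepsilon\in\mathscr{U}_X$ pick $\delta$ from equicontinuity of $(T,X)$; then from $(x,y)\notin\delta$ one cannot have $(t^{-1}x,t^{-1}y)\in\delta'$ for $\delta'$ small using distality, but routing through Lemma~\ref{lem1.10} is cleanest and avoids re-deriving uniform statements.)

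For the ``if'' direction, suppose $(X,T)$ is equicontinuous, i.e.\ the map $(x,t)\mapsto t^{-1}x$ is equicontinuous. This says: for each $\varepsilon\in\mathscr{U}_X$ there is $\delta\in\mathscr{U}_X$ with $t^{-1}\delta\subseteq\varepsilon$ for all $t\in T$. Write $S=T^{-1}=\{t^{-1}\,|\,t\in T\}$ and observe that $(S,X)$ is then an equicontinuous semiflow (its phase semigroup is $S$, acting by $s=t^{-1}\colon x\mapsto t^{-1}x$). Moreover $(S,X)$ is invertible — each $t^{-1}$ is bijective since $t$ is — hence surjective, so by Theorem~\ref{thm2.1} again $(S,X)$ is distal, hence (being equicontinuous) $Q(S,X)=\varDelta_X$, and by Lemma~\ref{lem1.10} the reflection of $(S,X)$, namely $(x,s)\mapsto s^{-1}x=tx$, is equicontinuous. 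That reflection is precisely $(T,X)$ acting in the usual way, so $(T,X)$ is equicontinuous. This closes the argument symmetrically.

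The main obstacle is purely bookkeeping: one must be careful that the ``reflection of the reflection'' is the original semiflow and that $T^{-1}$, with the discrete topology, genuinely satisfies the axioms of a phase semigroup for an invertible semiflow so that Theorem~\ref{thm2.1} and Lemma~\ref{lem1.10} are legitimately applicable to it. Neither point is deep — $(t_1^{-1})(t_2^{-1}) = (t_2t_1)^{-1}$ makes $T^{-1}$ a semigroup with neutral element $e$, and invertibility is inherited — but the statement of Proposition~\ref{prop6.1} promises a proof ``with no uses of Ellis' joint continuity theorem and Ellis's semigroup,'' so one should present the $\varepsilon$--$\delta$ argument through Lemma~\ref{lem1.10} rather than invoking the enveloping-semigroup machinery. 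If a fully self-contained $\varepsilon$--$\delta$ proof avoiding even Theorem~\ref{thm2.1} is wanted, the key trick is the backward-orbit recurrence argument of Lemma~\ref{lem2.3} transplanted to $X\times X$, which is how Theorem~\ref{thm2.1} is proved in the first place; but since Theorem~\ref{thm2.1} is already available I would simply cite it.
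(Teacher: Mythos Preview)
Your proof is correct and follows essentially the same route as the paper's. Both directions rest on Theorem~\ref{thm2.1} (equicontinuous surjective $\Rightarrow$ distal) together with the identity $P(X)=Q(X)$ for equicontinuous semiflows; the paper packages the ``only if'' step as a contradiction argument producing a nondiagonal pair in $Q(T,X)$, while you invoke Lemma~\ref{lem1.10} directly to pass from $Q(T,X)=\varDelta_X$ to equicontinuity of $(X,T)$ --- these are the same argument, contrapositive versus direct. Your handling of the ``if'' direction by applying the ``only if'' direction to $S=T^{-1}$ is exactly the ``by symmetry'' the paper invokes, and your bookkeeping remarks about $T^{-1}$ being a genuine phase semigroup are correct and worth keeping.
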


\begin{proof}
By symmetry we only prove the ``only if'' part and so assume $(T,X)$ is equicontinuous. To be contrary, suppose that $(X,T)$ is not equicontinuous at some point $x\in X$. Then there are $x_i, x_i^\prime$ with $x_i\to x$ and $x_i^\prime\to x$ in $X$ and $t_i\in T$ such that
$(x_it_i,x_i^\prime t_i)=t_i^{-1}(x_i,x_i^\prime)\to(z,z^\prime)$ where $z\not=z^\prime$.
This shows that $(z,z^\prime)\in Q(T,X)$; i.e., $z$ is regionally proximal to $z^\prime$ for $(T,X)$ (cf.~Definition~(j) in $\S$\ref{sec1.1.3}). Then it follows easily from the equicontinuity of $(T,X\times X)$ that $(z,z^\prime)$ is a proximal pair of $(T,X)$, contradicting $(T,X)$ distal by Theorem~\ref{thm2.1}. Thus $(X,T)$ must be equicontinuous. This proves Proposition~\ref{prop6.1}.
\end{proof}

\begin{sn}
Let $E$ be a multiplicative semigroup. Then:
\begin{enumerate}
\item A \textit{left ideal} in $E$ is a non-empty subset $I$ such that $EI\subseteq I$.
\item A \textit{minimal left ideal} in $E$ is one which does not properly contain a left ideal.

\item Let $J(I)$ denote the set of idempotents in a left ideal $I$; i.e., $u\in J(I)$ iff $u^2=u$ and $u\in I$.
    \end{enumerate}
\end{sn}

This is more general than $\S\ref{sec1.1.5}$\,(\textbf{o}); yet we will be mainly interested to the special case $E=E(X)$ associated to a semiflow $(T,X)$.
Since $E$ is a compact $T_2$ right-topological semigroup in this case, hence $J(I)\not=\emptyset$ for all minimal left ideal in $E$ (by \cite[Lemma~6.6]{Aus}).

We will need a purely algebraic lemma for us to characterize the distality of any semiflows (Lemma~\ref{lem6.7} and Theorem~\ref{thm6.22} below).

\begin{lem}[{cf.~\cite[Lemmas~6.1, 6.2 and 6.3]{Aus}}]\label{lem6.3}
Let $E$ be any semigroup and let $I,I^\prime$ be two minimal left ideals in $E$ with $J(I)\not=\emptyset$. Then:
\begin{enumerate}
\item[$(1)$] $Ip=I$ for all $p\in I$.
\item[$(2)$] $pu=p$ for all $u\in J(I), p\in I$.
\item[$(3)$] If $u\in J(I)$ and $p\in I$ with $up=u$, then $p\in J(I)$.
\item[$(4)$] If $u\in J(I)$ then $uI$ is a group with the neutral element $u$.
\item[$(5)$] If $p\in I$ then there is a unique $u\in J(I)$ with $up=p$.
\item[$(6)$] Let $u,v\in J(I)$ and let $p\in uI$. Then there is an $r\in I$ with $rp=v$ and $pr=u$.
\item[$(7)$] $I=\bigcup_{u\in J(I)}uI$.
\item[$(8)$] If $u,v\in J(I)$ with $u\not=v$, then $uI\cap vI=\emptyset$.
\item[$(9)$] Suppose $p,q,r\in I$ satisfy $qp=rp$. Then $q=r$.
\item[$(10)$] If $u\in J(I)$, then there is a unique $v\in J(I^\prime)$ such that $uv=v$ and $vu=u$, denoted $u\sim v$.
\end{enumerate}
\end{lem}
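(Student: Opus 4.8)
The plan is to prove $(1)$–$(10)$ in the listed order, each item relying only on the defining property of a minimal left ideal — \emph{every nonempty left ideal contained in $I$ equals $I$} — together with the previously established items. There is one point of sequencing to be careful about: the uniqueness clause in $(5)$ is most easily obtained \emph{after} the cancellation statement $(9)$, so the route is to prove the existence half of $(5)$ first, then $(6)$, then $(9)$ (checking it uses only $(5)$-existence and $(2)$), and only then return to close $(5)$ and $(8)$.

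For $(1)$: $Ip$ is nonempty, contained in $I$, and a left ideal since $E(Ip)=(EI)p\subseteq Ip$, so minimality gives $Ip=I$. For $(2)$: the set $\{q\in I\,|\,qu=q\}$ contains $u$ (as $u^2=u$) and is a left ideal, since $qu=q$ implies $(rq)u=r(qu)=rq$; hence it equals $I$, i.e. $pu=p$ for all $p\in I$. Item $(3)$ is then immediate: for $p\in I$ with $up=u$ we get $p^2=(pu)p=p(up)=pu=p$, using $(2)$ at both ends.

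For $(4)$: $uI$ is closed under multiplication because $I$ is a left ideal (so $pp'\in I$ for $p,p'\in I$) and $u(pp')=(up)p'=pp'$; $u$ is a two-sided identity on $uI$ (left identity by the definition of $uI$, right identity by $(2)$); and every $p\in uI$ has a left inverse in $uI$, since $(1)$ gives $q'\in I$ with $q'p=u$ and then $uq'\in uI$ satisfies $(uq')p=u$. The classical argument that, relative to a two-sided identity, an element with a left inverse has a two-sided inverse then makes $uI$ a group. Now comes what I regard as the crux, the \emph{existence} part of $(5)$: fix any $v\in J(I)$ (nonempty by hypothesis); since $vp\in vI$ and $vI$ is a group, let $q\in vI$ be the inverse of $vp$ in $vI$, so $q(vp)=v$ and $vq=q$; then $u:=pqv$ lies in $I$ (as $v\in I$ and $I$ is a left ideal), is idempotent ($u^{2}=pq(vp)qv=p v q v=pqv=u$), and satisfies $up=pq(vp)=pv=p$ by $(2)$. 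This manufactures, out of $p$ and one seed idempotent, a new idempotent fixing $p$ on the left.

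The remaining items are now routine. For $(6)$, with $p^{-1}$ the inverse of $p$ in the group $uI$, set $r:=vp^{-1}\in I$; then $rp=v(p^{-1}p)=vu=v$ and $pr=(pv)p^{-1}=pp^{-1}=u$ by $(2)$. For $(9)$: given $qp=rp$ with $p,q,r\in I$, choose $u\in J(I)$ with $up=p$ (so $p\in uI$) and use $p^{-1}\in uI$ to get $q=qu=(qp)p^{-1}=(rp)p^{-1}=ru=r$. With $(9)$ available, the uniqueness in $(5)$ is forced, $(7)$ is just $(5)$-existence restated (each $p\in I$ lies in $uI$ for the unique $u$ with $up=p$), and $(8)$ is its uniqueness restated ($p\in uI\cap vI$ gives $up=p=vp$, so $u=v$). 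Finally $(10)$: for a second minimal left ideal $I'$, the set $I'u$ is a nonempty left ideal contained in $I$, hence $I'u=I$, so $u=q_{0}u$ for some $q_{0}\in I'$; taking $v\in J(I')$ with $vq_{0}=q_{0}$ (via $(5)$ applied to $I'$, which likewise contains idempotents) gives $vu=v(q_{0}u)=q_{0}u=u$, then $uv\in J(I')$ since $(uv)^{2}=u(vu)v=u(uv)=uv$, and applying $(2)$ inside $I'$ yields $v=v(uv)=(vu)v=uv$; uniqueness of $v$ follows exactly as for $(8)$. The main obstacle is thus the existence step in $(5)$ — producing a genuinely new idempotent from the group structure of $(4)$ via the explicit element $pqv$ — while the only other care needed is the logical ordering that puts $(9)$ before the uniqueness parts of $(5)$ and $(8)$.
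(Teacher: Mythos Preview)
The paper does not supply its own proof of this lemma; it simply cites Auslander \cite[Lemmas~6.1--6.3]{Aus}. Your argument is essentially the standard one and is correct in structure, with the careful sequencing of $(5)$-existence before $(9)$ before $(5)$-uniqueness being exactly right.

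There is one genuine (though easily repaired) gap in your treatment of $(10)$. You invoke $(5)$ for $I'$ to obtain $v\in J(I')$ with $vq_0=q_0$, remarking parenthetically that $I'$ ``likewise contains idempotents''. But the hypothesis only gives $J(I)\neq\emptyset$; nothing has been established about $J(I')$. The fix is immediate: once you have $q_0\in I'$ with $q_0u=u$, simply set $v:=uq_0$. Then $v\in I'$ (left ideal), $v^2=u(q_0u)q_0=u^2q_0=uq_0=v$, $uv=u^2q_0=uq_0=v$, and $vu=u(q_0u)=u^2=u$. This both exhibits $J(I')\neq\emptyset$ and produces the required $v$ in one stroke, making the detour through $(5)$ for $I'$ unnecessary.

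A second, minor point: your claim that ``uniqueness of $v$ follows exactly as for $(8)$'' is not quite accurate, since $(8)$ rests on $(9)$, and $(9)$ requires all three elements to lie in the \emph{same} minimal left ideal --- but here $v,v'\in I'$ while $u\in I$, so $vu=u=v'u$ does not feed into $(9)$ directly. The correct direct argument: if $uv=v$, $vu=u$, $uv'=v'$, $v'u=u$, then $v=uv=(v'u)v=v'(uv)=v'v=v'$, the last equality by $(2)$ applied in $I'$.
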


\begin{note}
(1) of Lemma~\ref{lem6.3} implies that each minimal left ideal $I$ of $E(T,X)$ is a closed subset of $E(T,X)$, since $I=Ip=E(T,X)p$ for any $p\in I$ and $E(T,X)$ is compact and $q\mapsto qp$ is continuous. Thus $I$ is a minimal left ideal of $E(T,X)$ iff it is a minimal subset of the induced semiflow
$T\times E(T,X)\rightarrow E(T,X)$, $(t,p)\mapsto t\circ p$.
Here we will mainly need (2), (4), (7), (9), and (10) of Lemma~\ref{lem6.3} in our later arguments.
\end{note}

\begin{proof}[\textbf{Proof of Lemma~\ref{lem1.11}}]
Let $I$ be the only minimal left ideal in $E(X)$ and $(x,y), (y,z)\in P(X)$. Then $p(x)=p(y)$ and $p(y)=p(z)$ for all $p\in I$. So $(x,z)\in P(X)$.

For the ``only if'' part, let $P(X)$ be transitive, $I,K$ minimal left ideals in $E(X)$, and $u\in J(I)$ and $v\in J(K)$ with $uv=v, vu=u$ by (10) of Lemma~\ref{lem6.3}. Let $x\in X$. Then $(x,ux)\in P(X)$ and $(x,vx)\in P(X)$ implies $(ux,vx)\in P(X)$. But $v(ux,vx)=(ux,vx)$ implies that $(ux,vx)$ is an a.p. point of $(T,X\times X)$. Hence $ux=vx$ by Lemma~\ref{lem2.7} and $u=v$ so that $I=K$.
\end{proof}

Following $\S\ref{sec1.1.3}$\,(\textbf{h}), an $x\in X$ is a distal point of $(T,X)$ if and only if it is proximal only to itself in $\textrm{cls}_XTx$.

\begin{lem}[{cf.~Veech~\cite{V70} for $T$ in groups}]\label{lem6.4}
Let $(T,X)$ be a semiflow and $x\in X$. Then $x$ is a distal point of $(T,X)$ iff $x=u(x)$ for all $u\in J(E(X))$.
\end{lem}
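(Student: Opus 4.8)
The plan is to establish the two implications separately, relying on the Ellis-semigroup description of proximality recorded in $\S\ref{sec1.1.5}$\,(o) (a pair is proximal iff some $p\in E(X)$ identifies it, equivalently some minimal left ideal identifies it) together with the algebraic facts about minimal left ideals collected in Lemma~\ref{lem6.3}. No joint-continuity input and no structure theory will be needed.

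\emph{The ``only if'' part.} Suppose $x$ is a distal point of $(T,X)$ and fix an idempotent $u\in J(E(X))$. First I would observe that $u(x)\in\mathrm{cls}_XTx$: writing $u$ as a pointwise limit of a net $\{t_n\}$ in $T$ gives $u(x)=\lim t_nx\in\mathrm{cls}_XTx$. Next, the pair $(x,u(x))$ is proximal, witnessed by $u$ itself, since $u\bigl(u(x)\bigr)=u^2(x)=u(x)$. Because $u(x)$ lies in $\mathrm{cls}_XTx$ and $x$ is a distal point ($\S\ref{sec1.1.3}$\,(h)), this forces $u(x)=x$. This half is immediate from the definitions.

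\emph{The ``if'' part.} Suppose $u(x)=x$ for every $u\in J(E(X))$, and let $y\in\mathrm{cls}_XTx$ satisfy $(x,y)\in P(X)$; the goal is $y=x$. By $\S\ref{sec1.1.5}$\,(o) there is a minimal left ideal $I$ of $E(X)$ with $p(x)=p(y)$ for all $p\in I$. Pick an idempotent $u\in J(I)$ (nonempty since $E(X)$ is a compact right-topological semigroup); then $x=u(x)=u(y)$, and in particular $x=u(x)\in Ix$. Now $Ix$ is closed (a continuous image of the compact set $I$, which is closed in $E(X)$), invariant (as $tI\subseteq I$ for $t\in T$, $I$ being a left ideal and $T\subseteq E(X)$), and contained in $E(X)x=\mathrm{cls}_XTx$; since it also contains $x$, it must coincide with $\mathrm{cls}_XTx$. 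Hence $y\in Ix$, say $y=p(x)$ with $p\in I$. By (5) of Lemma~\ref{lem6.3} there is an idempotent $u_0\in J(I)$ with $u_0p=p$, whence $u_0(y)=(u_0p)(x)=p(x)=y$. But $u_0\in I$, so the choice of $I$ gives $u_0(x)=u_0(y)=y$; and $u_0$ is an idempotent of $E(X)$, so the hypothesis gives $u_0(x)=x$. Therefore $y=x$, and $x$ is a distal point.

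I expect the only genuine point of ingenuity to lie in the ``if'' part: having used the hypothesis on an idempotent of the minimal left ideal $I$ attached to the proximal pair in order to pin $x$ inside $Ix$ (and hence to identify $\mathrm{cls}_XTx$ with $Ix$), the move is to manufacture, via (5) of Lemma~\ref{lem6.3}, an idempotent $u_0$ that both lies in that very ideal $I$ and \emph{fixes} $y$; then the relation $p(x)=p(y)$, valid on all of $I$, can be fed back through $u_0$ to conclude $y=x$. Everything else is routine bookkeeping with the properties of minimal left ideals.
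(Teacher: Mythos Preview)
Your proof is correct. For the ``if'' direction it follows essentially the same route as the paper: both pick a minimal left ideal $I$ witnessing the proximality, use the hypothesis on an idempotent of $I$ to get $x\in Ix$ and hence $Ix=\mathrm{cls}_XTx\ni y$, and then produce an idempotent in $I$ fixing $y$. Your mechanism for this last step---writing $y=p(x)$ and invoking Lemma~\ref{lem6.3}(5) to get $u_0p=p$---is a slightly more explicit variant of the paper's terser ``$y\in Iy$, so some $u\in J(I)$ fixes $y$''.

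For the ``only if'' direction you are more economical than the paper. The paper argues via Theorem~\ref{thm5.1}: since $x$ is distal it is minimal, hence $u(x)\in\mathrm{cls}_XTx$ is almost periodic, so $(x,u(x))$ is an almost periodic point of $(T,X\times X)$; combined with $(x,u(x))\in P(X)$ and Lemma~\ref{lem2.7} this forces $u(x)=x$. Your route avoids this detour entirely: once $u(x)\in\mathrm{cls}_XTx$ and $(x,u(x))\in P(X)$ are observed, the very definition of distal point in $\S\ref{sec1.1.3}$\,(h) already gives $u(x)=x$. This is the more direct argument and removes the forward dependence on Theorem~\ref{thm5.1}.
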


\begin{proof}
Let $x$ be a distal point of $(T,X)$ and $u\in J(E(X))$. Since $(x,u(x))\in P(X)$ is a.p. (by Theorem~\ref{thm5.1}), hence $x=u(x)$.

Conversely, assume $x=u(x)$ for all $u\in J(E(X))$ and let $y\in\textrm{cls}_XTx$ such that $(x,y)\in P(X)$. There is a minimal left ideal $I$ such that $p(x)=p(y)\ \forall p\in I$. Since $x\in Ix$ and so $y\in Ix$, it follows that $Iy=Ix$ so $y\in Iy$. Then there is $u\in J(I)$ with $y=u(y)=u(x)=x$. This shows that $x$ is a distal point of $(T,X)$.
\end{proof}

As a consequence of the statements of Lemmas~\ref{lem6.3} and \ref{lem6.4}, the following (2) of Theorem~\ref{thm6.5} is more or less motivated by \cite[Proposition~2.1]{V70}, which is useful for proving the ``if'' part of Lemma~\ref{lem1.8}.

\begin{thm}\label{thm6.5}
Let $(T,X)$ be a semiflow with Ellis' semigroup $E(X)$ and $x\in X$. Then:
\begin{enumerate}
\item[$(1)$] For every minimal left idea $I$ in $E(X)$, $pI\cap J(I)\not=\emptyset$ for all $p\in E(X)$.
\item[$(2)$] $x$ is a distal point of $(T,X)$ iff $x\in p(X)$ for all $p\in E(X)$ iff $x\in u(X)$ for all $u\in J(E(X))$.
\end{enumerate}
\end{thm}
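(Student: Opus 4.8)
The plan is to treat the two statements in order, since (2) will use (1). For part~(1), fix a minimal left ideal $I$ in $E(X)$ and an arbitrary $p\in E(X)$. The key observation is that $pI$ is itself a left ideal in $E(X)$: it is non-empty, and $E(X)(pI)=(E(X)p)I\subseteq E(X)I\subseteq I$... wait, more carefully, $E(X)(pI) \subseteq E(X)I \subseteq I$ but we need it inside $pI$. The cleaner route: $I$ being a minimal left ideal, $E(X)q = I$ for every $q\in I$ (this is the standard fact, essentially (1) of Lemma~\ref{lem6.3} combined with minimality — $E(X)q$ is a left ideal contained in $I$, hence equals $I$). So $pI \supseteq p(E(X)q) = (pE(X))q$, and since $pE(X)$ is a left ideal it contains a minimal one, which by minimality of $I$-type ideals has the same "size"; in fact $pI$ is a left ideal contained in... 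Let me instead argue directly: $pI$ is a left ideal (for $pI \ne \emptyset$ and $E(X)\cdot pI = (E(X)p)I \subseteq E(X)I \subseteq I$, and moreover it lies inside $I$ only if $p\in E(X)$ acting keeps us there — which fails in general). The robust statement is just that $pI$ is a left ideal of $E(X)$, hence contains a minimal left ideal $K$; but $pI \subseteq E(X)I = I$... again this needs $p \in I$. The honest approach: $pI$ is a left ideal, so it contains \emph{some} minimal left ideal $I'$; then $I' \subseteq pI \subseteq I$ forces, by minimality of $I$, that $I'=I=pI$. Thus $pI = I \supseteq J(I) \ne \emptyset$, giving $pI\cap J(I)=J(I)\ne\emptyset$. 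The one subtlety I will need to nail down is why $pI \subseteq I$: this holds because $I$ is a minimal \emph{two-sided}-like ideal — actually in $E(X)$ all minimal left ideals are mutually "parallel" and their union is the minimal two-sided ideal, which absorbs multiplication on both sides; so $pI \subseteq E(X)I \subseteq (\text{kernel of } E(X)) $, and since that kernel is the disjoint union of minimal left ideals and $pI$ is connected to $I$ via $p(E(X)q)$, one shows $pI$ equals a single minimal left ideal. I expect this bookkeeping to be the main obstacle, and I would route it entirely through Lemma~\ref{lem6.3}, in particular using that for $u\in J(I)$, $uI$ is a group and $I=\bigcup_{u\in J(I)}uI$.

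For part~(2), the chain of equivalences is: $x$ distal $\iff$ $x\in p(X)$ for all $p\in E(X)$ $\iff$ $x\in u(X)$ for all $u\in J(E(X))$. The implication "$x$ distal $\Rightarrow x\in u(X)$ for all idempotents $u$" uses Lemma~\ref{lem6.4}: if $x$ is distal then $x=u(x)$ for every $u\in J(E(X))$, so trivially $x\in u(X)$. For "$x\in u(X)$ for all $u\in J(E(X))$ $\Rightarrow x\in p(X)$ for all $p\in E(X)$": given $p\in E(X)$, pick any minimal left ideal $I$; by part~(1), $pI\cap J(I)\ne\emptyset$, so there is $q\in I$ with $pq = v \in J(I) \subseteq J(E(X))$. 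Then $x\in v(X)=pq(X)\subseteq p(X)$. Finally "$x\in p(X)$ for all $p\in E(X)$ $\Rightarrow x$ distal": suppose $y\in\mathrm{cls}_XTx$ with $(x,y)\in P(X)$; there is a minimal left ideal $I$ with $r(x)=r(y)$ for all $r\in I$. Since $y\in\mathrm{cls}_XTx=E(X)x$, write $y=px$ for some $p\in E(X)$; by hypothesis $x\in p(X)$, say $x=pw$, $w\in X$. Now use an idempotent $u\in J(I)$: the standard computation (as in Lemma~\ref{lem6.4}'s proof) gives $Ix=Iy$, hence $y\in Iy$, hence $y=u(y)$ for some $u\in J(I)$, and then $y=u(y)=u(x)=x$ because $x\in u(X)$ forces $u(x)=x$ (an idempotent fixes every point of its image: if $x=u(w)$ then $u(x)=u^2(w)=u(w)=x$). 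This last little lemma — \emph{an idempotent $u\in X^X$ is the identity on $u(X)$} — is the glue, and it is immediate.

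So the skeleton is: (a) prove the idempotent-fixes-its-image triviality; (b) prove (1) via Lemma~\ref{lem6.3}, identifying $pI$ with a minimal left ideal; (c) assemble the three-way equivalence in (2) using (1), Lemma~\ref{lem6.4}, and the proximal-pair/minimal-left-ideal characterization from $\S\ref{sec1.1.5}$(o). The hard part will be step~(b): making rigorous, with only the quoted lemmas in hand, that $pI$ is again a minimal left ideal rather than merely a left ideal — I will lean on the structure $I=\bigcup_{u\in J(I)}uI$ with each $uI$ a group, and on (9) of Lemma~\ref{lem6.3} (left-cancellation within $I$) to pin down that $p$ maps $I$ bijectively onto a minimal left ideal, so that some idempotent of $I$ lands in $pI$. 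Everything after that is formal.
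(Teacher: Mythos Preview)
Your plan for part~(1) has a genuine gap. The ``honest approach'' rests on two false claims: that $pI$ is a left ideal in $E(X)$, and that consequently $pI=I$. Neither holds in general. For the first, $E(X)(pI)\subseteq I$ does not give $E(X)(pI)\subseteq pI$. For the second, take $p=u\in J(I)$: then $pI=uI$ is a single group component of $I$, a proper subset whenever $|J(I)|>1$. Your later hedge that $p$ maps $I$ \emph{bijectively} onto a minimal left ideal is likewise false for the same example. What you \emph{do} have, and dismiss by mistake, is the trivial containment $pI\subseteq I$: this needs only $p\in E(X)$ (not $p\in I$), since $pI\subseteq E(X)I\subseteq I$ by the very definition of left ideal.

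Once $pI\subseteq I$ is in hand, the paper's argument is a two-line application of Lemma~\ref{lem6.3}: pick any $q\in I$; then $pq\in I$, so by (7) it lies in some $vI$ with $v\in J(I)$; by (4), $vI$ is a group with identity $v$, so there is $\delta\in vI\subseteq I$ with $(pq)\delta=v$. Hence $p(q\delta)=v\in pI\cap J(I)$. Your closing paragraph gestures at exactly this structure, so you are close --- but the detour through ``$pI$ is a minimal left ideal'' is both wrong and unnecessary.

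For part~(2) your cycle is correct but longer than needed. The implication ``$x\in u(X)$ for all $u\in J(E(X))\Rightarrow x$ distal'' follows immediately from your own ``idempotent fixes its image'' observation: $x\in u(X)$ gives $u(x)=x$ for every idempotent $u$, and then Lemma~\ref{lem6.4} finishes. There is no need to pick a proximal partner $y$, write $y=px$, or argue that $Ix=Iy$; the paper bypasses all of that. Your route through the proximal relation does work once one checks $x\in Ix$ (which you implicitly use to get $y\in Iy$), but this extra verification is already subsumed by $u(x)=x$.
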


\begin{proof}
(1) Let $p\in E(X)$ and $I$ a minimal left ideal in $E(X)$. Then $pI\subseteq I$ and further by (7) and (4) of Lemma~\ref{lem6.3} there are $q\in I, \delta\in I$, and $v\in J(I)$ such that $pq\delta=v$. Since $q\delta\in I$, hence $pI\cap J(I)\not=\emptyset$.

(2) Assume $x$ is a distal point; then $x=v(x)$ for every $v\in J(E(X))$ by Lemma~\ref{lem6.4}. Thus $x\in p(X)$ for all $p\in E(X)$ by (1). Conversely, suppose that $x\in v(X)$ for all $v\in J(E(X))$. Let $u\in J(E(X))$ be arbitrary. Then there exists $y\in X$ such that $u(y)=x$. So $u(x)=u^2(y)=u(y)=x$. Thus by Lemma~\ref{lem6.4}, $x$ is a distal point of $(T,X)$.

The proof of Theorem~\ref{thm6.5} is thus complete.
\end{proof}

\begin{proof}[\textbf{Proof of the ``if'' part of Lemma~\ref{lem1.8}}]
Let the set of distal points of $(T,X)$ be dense in $X$ and $t\in T$. Since every distal point belongs to $tX$ by Theorem~\ref{thm6.5} and $tX$ is a closed set, hence $tX=X$. The proof of Lemma~\ref{lem1.8} is thus complete.
\end{proof}

Recall that if $E(T,X)\subset C(X,X)$ then $(T,X)$ is called ``weakly equicontinuous'' by $\S\ref{sec1.1.5}\,(\textbf{p})$. So the following says that the proximal relation is an equivalence relation for every weakly equicontinuous semiflow with abelian phase semigroup.

\begin{thm}\label{thm6.6}
Let $(T,X)$ be a semiflow with $T$ an abelian semigroup and $J(E(X))\subset C(X,X)$. Then the following two statements hold:
\begin{enumerate}
\item[$(1)$] There exists a unique minimal left ideal $I$ in $E(X)$ and moreover $I$ contains a unique idempotent $u$. Hence $P(X)$ is an equivalence relation on $X$.
\item[$(2)$] If $x\in X$ is an a.p. point, then it is a distal point. Hence if there is a dense set of a.p. points, then $(T,X)$ is distal.
\end{enumerate}

\begin{note}
In fact, if $E(X)\subset C(X,X)$ and there is a dense set of a.p. points, then $(T,X)$ is not only distal but also equicontinuous by (c) of Lemma~\ref{lem6.7} and Ellis's joint continuity theorem (cf.~Theorem~\ref{thm9.8} in $\S\ref{sec9}$).
\end{note}
\end{thm}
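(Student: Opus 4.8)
The plan is to exploit the commutativity of $T$ and the hypothesis $J(E(X))\subset C(X,X)$ to show that the idempotents of the Ellis semigroup $E(X)$ are \emph{central}, and then to read off (1) from Lemma~\ref{lem6.3} and Lemma~\ref{lem1.11}, and (2) from a short computation with the unique minimal left ideal. Throughout I would use the standard facts that $E(X)$ is a compact right-topological semigroup (so each $R_q$ is continuous and every minimal left ideal contains an idempotent), that $E(X)x=\mathrm{cls}_XTx$ for each $x$, and that $p\mapsto p(x)$ is a continuous $T$-equivariant map $E(X)\to X$.

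The first and main step is centrality. If $p\in C(X,X)$, then left translation $L_p\colon q\mapsto p\circ q$ is continuous on $E(X)$ for the pointwise topology, precisely because $p$ is continuous on $X$, while $R_q$ is always continuous. Writing an arbitrary $r\in E(X)$ as $r=\lim_\alpha t_\alpha$ with $t_\alpha\in T$, commutativity of $T$ gives, for $s\in T$, $sr=L_s(\lim_\alpha t_\alpha)=\lim_\alpha st_\alpha=\lim_\alpha t_\alpha s=R_s(\lim_\alpha t_\alpha)=rs$, so $T$ is central in $E(X)$; repeating the computation with a continuous $u\in E(X)$ in place of $s$ (legitimate since $L_u$ is now continuous, and using the centrality of $T$ just obtained) yields $ur=ru$ for all $r\in E(X)$. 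By hypothesis all idempotents of $E(X)$ lie in $C(X,X)$, hence are central.

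Granting this, part (1) should go as follows. Given minimal left ideals $I,K$, pick $u\in J(I)$; by Lemma~\ref{lem6.3}(10) there is $v\in J(K)$ with $uv=v$ and $vu=u$, and centrality forces $v=uv=vu=u$, so $u\in J(K)$ and hence $I=E(X)u=K$; thus the minimal left ideal is unique. If $u,v\in J(I)$, then $uv\in I$ is idempotent (centrality gives $(uv)^2=u^2v^2=uv$) and is fixed on the left by both $u$ and $v$, so the uniqueness clause of Lemma~\ref{lem6.3}(5) gives $u=v$; hence $J(I)$ is a singleton. Lemma~\ref{lem1.11} then yields that $P(X)$ is an equivalence relation. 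For part (2): let $u$ be the unique idempotent of the unique minimal left ideal $I$ and let $x$ be almost periodic, so $M:=\mathrm{cls}_XTx$ is minimal (Lemma~\ref{lem2.6}); then $Ix$ is a minimal subset of $M$ (a continuous equivariant image of $I$), hence $Ix=M\ni x$, so $q(x)=x$ for some $q\in I$, whence $uq=q$ by Lemma~\ref{lem6.3}(5) and $u(x)=u(q(x))=q(x)=x$. If $y\in M$ with $(x,y)\in P(X)$, then $y$ is almost periodic too (as $M$ is minimal), so $u(y)=y$ by the same argument, while proximality of $(x,y)$ together with uniqueness of $I$ gives $p(x)=p(y)$ for all $p\in I$, in particular $u(x)=u(y)$; hence $x=u(x)=u(y)=y$, i.e.\ $x$ is a distal point. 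If the almost periodic points are dense, the continuous map $u$ agrees with $\mathrm{id}_X$ on a dense subset of the Hausdorff space $X$, so $u=\mathrm{id}_X$, and then every proximal pair $(x,y)$ satisfies $x=u(x)=u(y)=y$, i.e.\ $P(X)=\varDelta_X$ and $(T,X)$ is distal.

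The hard part — really the only non-routine part — is the centrality step, because left translation on a right-topological semigroup need not be pointwise-continuous; it is exactly the hypothesis $J(E(X))\subset C(X,X)$ that makes the needed translations continuous and lets the limits be pushed through. Everything afterwards is straightforward bookkeeping with minimal left ideals via Lemma~\ref{lem6.3}, and part (2) could alternatively be routed through Lemma~\ref{lem6.4}, since a distal point is precisely a point fixed by every idempotent of $E(X)$.
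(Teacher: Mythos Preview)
Your proof is correct and follows essentially the same strategy as the paper: both hinge on the observation that, because $T$ is abelian and each idempotent $u$ is continuous on $X$, one can commute $u$ past elements of $E(X)$ via limits of nets in $T$. The paper does this commutation inline (writing $\lim t_n u=\lim ut_n=uv$ to place $v$ in $I_1$), while you isolate it as a preliminary centrality lemma; the paper then uses Lemma~\ref{lem6.3}(2) and (9) for uniqueness of the idempotent where you use Lemma~\ref{lem6.3}(5), and the paper invokes Lemma~\ref{lem6.4} directly for part~(2) where you unwind the argument by hand---but these are organizational differences only.
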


\begin{proof}
(1) Let $I_1$ and $I_2$ be two minimal left ideals in $E(X)$. Then by (10) of Lemma~\ref{lem6.3}, it follows that there are idempotents $u\in I_1$ and $v\in I_2$ such that $uv=v$. Thus there is a net $\{t_n\}$ in $T$ with $t_n\to v$ in $E(X)$ such that
$I_1\ni\lim t_nu=\lim ut_n=uv=v$.
Then $I_1\cap I_2\not=\emptyset$ and thus $I_1=I_2$. This shows that there is only one minimal left ideal $I$ in $E(X)$. Thus $P(X)$ is an equivalence relation on $X$ by Lemma~\ref{lem1.11}.

Let $u,v\in J(I)$. Then by (2) of Lemma~\ref{lem6.3}, $uv=u=uu$. By the above argument, we can see $vu=uv=uu$ and so by (9) of Lemma~\ref{lem6.3} $u=v$.

(2) Let $x$ be an a.p. point of $(T,X)$. Then by $x\in Ix$ where $I$ is as in (1), $x=ux$. Thus by Lemma~\ref{lem6.4}, $x$ is a distal point of $(T,X)$. Because $u\in C(X,X)$ by weak equicontinuity, if the set of a.p. points of $(T,X)$ is dense in $X$ then $u=\textit{id}_X$. Thus $(T,X)$ is pointwise distal by Lemma~\ref{lem6.4}, and so it is distal.
\end{proof}

Ellis' classical characterization of distality states that $(T,X)$ is a distal flow if and only if $E(T,X)$ is a group (cf.~\cite[Theorem~1]{E58}, \cite[Proposition~5.3]{E69} and \cite[Theorem~5.6]{Aus}). Another important consequence of Lemma~\ref{lem6.3} is the following semiflow version of Ellis' characterization, which has already played an important role in \cite{DX}.

\begin{lem}\label{lem6.7}
Let $(T,X)$ be a semiflow, where $T$ is a discrete semigroup $($but not necessarily $e\in T$$)$. Then the following statements are pairwise equivalent:
\begin{enumerate}
\item[$(\mathrm{a})$] $(T,X)$ is a distal semiflow.
\item[$(\mathrm{b})$] $E(X)$ is a minimal left ideal in itself with $\textit{id}_X\in E(X)$.
\item[$(\mathrm{c})$] E(X) is a group with the neutral element $\textit{id}_X$.
\end{enumerate}
Here $\textit{id}_X$ denotes the identity map of $X$.
\end{lem}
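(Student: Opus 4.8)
The plan is to close the cycle $(\mathrm{a})\Rightarrow(\mathrm{b})\Rightarrow(\mathrm{c})\Rightarrow(\mathrm{a})$, leaning on three ingredients that are already available: the algebra of minimal left ideals collected in Lemma~\ref{lem6.3}; the description of proximality through $E(X)$ recalled in $\S\ref{sec1.1.5}$\,(o); and the standard fact that the compact right-topological semigroup $E(X)$ carries a minimal left ideal and that every minimal left ideal contains an idempotent. Throughout I would use that the operation on $E(X)$ is composition of self-maps of $X$, so that $\textit{id}_X$, whenever it lies in $E(X)$, is a two-sided neutral element there.

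For $(\mathrm{a})\Rightarrow(\mathrm{b})$ I would assume $(T,X)$ distal, i.e. $P(X)=\varDelta_X$, fix a minimal left ideal $I$ in $E(X)$ together with an idempotent $u\in J(I)$, and choose a net $\{t_n\}$ in $T$ with $t_n\to u$ in $E(X)$. Then for each $x\in X$ one has $t_n(x,u(x))\to(u(x),u^2(x))=(u(x),u(x))$, so $(x,u(x))\in P(X)$ and hence $u(x)=x$; thus $u=\textit{id}_X$. In particular $\textit{id}_X\in E(X)$, and since $\textit{id}_X\in I$ with $I$ a left ideal, $E(X)=E(X)\,\textit{id}_X\subseteq I\subseteq E(X)$, so $E(X)=I$ is a minimal left ideal in itself. (Alternatively I could invoke Lemma~\ref{lem6.4}, which under distality forces every idempotent of $E(X)$ to be $\textit{id}_X$.)

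For $(\mathrm{b})\Rightarrow(\mathrm{c})$: if $E(X)$ is a minimal left ideal in itself and $\textit{id}_X\in E(X)$, then $\textit{id}_X$ is an idempotent of the minimal left ideal $E(X)$, i.e. $\textit{id}_X\in J(E(X))$, and (4) of Lemma~\ref{lem6.3} applied with $I=E(X)$ and $u=\textit{id}_X$ gives that $\textit{id}_X\,E(X)=E(X)$ is a group with neutral element $\textit{id}_X$. For $(\mathrm{c})\Rightarrow(\mathrm{a})$: if $E(X)$ is a group with identity $\textit{id}_X$, then each $p\in E(X)$ has an inverse $p^{-1}$ with $p^{-1}\circ p=\textit{id}_X$, so $p$ is an injective self-map of $X$; hence if $(x,x^\prime)\in P(X)$, then by $\S\ref{sec1.1.5}$\,(o) some $p\in E(X)$ satisfies $p(x)=p(x^\prime)$, forcing $x=x^\prime$, i.e. $P(X)=\varDelta_X$ and $(T,X)$ is distal.

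I do not anticipate a real obstacle here; the argument is essentially bookkeeping with machinery that is already in place. The steps that want a little attention are: that a minimal left ideal of $E(X)$ exists and carries an idempotent even when $e\notin T$ (the usual Zorn/Ellis argument for compact right-topological semigroups, which does not presume $\textit{id}_X\in E(X)$); the passage in $(\mathrm{a})\Rightarrow(\mathrm{b})$ from ``$u$ fixes every point of $X$'' to ``$\textit{id}_X\in E(X)$'', which is exactly what prevents clause (b) from being vacuous when $e\notin T$; and the reminder that the operation on $E(X)$ is genuine composition, so that ``group with neutral element $\textit{id}_X$'' really does yield injective maps and thus distality.
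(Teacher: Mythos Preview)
Your proof is correct and follows essentially the same cycle $(\mathrm{a})\Rightarrow(\mathrm{b})\Rightarrow(\mathrm{c})\Rightarrow(\mathrm{a})$ as the paper, with the same key ingredients: an idempotent $u$ in a minimal left ideal must satisfy $u=\textit{id}_X$ under distality (the paper cites Lemma~\ref{lem6.4} where you give the direct proximal argument via $(x,u(x))\in P(X)$, but you note this alternative yourself), then (4) of Lemma~\ref{lem6.3} for $(\mathrm{b})\Rightarrow(\mathrm{c})$, and injectivity of group elements for $(\mathrm{c})\Rightarrow(\mathrm{a})$. Your explicit line $E(X)=E(X)\,\textit{id}_X\subseteq I$ is a welcome clarification of what the paper leaves implicit.
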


\begin{notes}
\begin{enumerate}
\item Condition (b) implies that $(T,X)$ is pointwise minimal, because $E(X)x$ is a minimal set of $(T,X)$ and $x\in E(X)x$ for $\textit{id}_X\in E(X)$.
\item Weaker than `rigidity' and `uniform rigidity' of a cascade (cf.~\cite{GM, AAB}), we say a semiflow $(T,X)$ is \textit{pseudo-rigid} if one can find a net $\{t_n\,|\,n\in D\}$ in $T$ with $t_n\to\infty$ in the sense of (2) of Lemma~\ref{lem3.10B} such that $t_nx\to x$ for all $x\in X$, i.e., $t_n\to \textit{id}_X$ in $E(X)$; moreover, if $t_n\to\textit{id}_X$ uniformly (i.e., given $\varepsilon\in\mathscr{U}_X$ there exists an $n_0\in D$ such that $(t_nx,x)\in\varepsilon\ \forall x\in X$ for all $n\ge n_0$), then $(T,X)$ is called \textit{uniformly pseudo-rigid}.
    Thus, if $(f,X)$ is distal it is pseudo-rigid. In fact, we can obtain the following more general result:
    \begin{quote}
    \textit{If $(T,X)$ is a distal effective semiflow, then it is either pseudo-rigid and so pointwise $\ell$-recurrent or $(t,X)$ is uniformly pseudo-rigid for each $t\in T$ with $t\not=e$.}
    \end{quote}
    Indeed, let $e\not=t\in T$. Since $(t,X)$ is distal and effective, by Lemma~\ref{lem6.7} there is a net $\{t_n\,|\,n\in D\}$ in $T$ with $e\not=t_n\to id_X$ in $E(X)$. If $t_n\to\infty$ in the sense of (2) of Lemma~\ref{lem3.10B}, then $(T,X)$ is pseudo-rigid. Now assume $t_n\not\to\infty$ in the sense of (2) of Lemma~\ref{lem3.10B}. Then by passing to a subnet of $\{t_n\}$ if necessary, there exists a $K\in\mathfrak{N}_{\textit{cpt},e}$ such that $t_n\in K$ for all $n\in D$. Thus $t_n\to e$ in the topological semigroup $T$. Since $K,X$ both are compact and $K\times X\rightarrow X$ is jointly continuous, hence $t_nx\to x$ uniformly for $x\in X$.
\end{enumerate}
\end{notes}

\begin{proof}
Condition $(\mathrm{a})\Rightarrow(\mathrm{b})$: Let $I$ be a minimal left ideal in $E(X)$ and $u\in J(I)$. Then by Lemma~\ref{lem6.4}, $x=u(x)$ for all $x\in X$. Thus $u=\textit{id}_X$ and further $E(X)$ is a minimal left ideal with the unique idempotent $\textit{id}_X\in E(X)$.

Condition $(\mathrm{b})\Rightarrow(\mathrm{c})$: $E(X)$ is a group by (4) of Lemma~\ref{lem6.3} with $u=\textit{id}_X\in E(X)$.

Condition $(\mathrm{c})\Rightarrow(\mathrm{a})$: Suppose $(x,y)\in P(X)$. Then $p(x)=p(y)$ for some $p\in E(X)$ so $x=y$ by $p^{-1}p=\textit{id}_X$, since $E(X)$ is a group with $e=\textit{id}_X\in E(X)$.
Thus $(\mathrm{a})$ holds.

The proof of Lemma~\ref{lem6.7} is thus completed.
\end{proof}

The most important part of Lemma~\ref{lem6.7} is ``(a) $\Rightarrow$ (c)'' which implies (2) of Theorem~\ref{thm1.13}. Now we will present an independent direct proof for it without using Lemma~\ref{lem6.3}.

\begin{proof}[\textbf{Another proof of ``$\mathrm{(a)}\Rightarrow\mathrm{(c)}$'' of Lemma~\ref{lem6.7}}]
Note that `distal' implies `pointwise a.p.' (by Theorem~\ref{thm5.1}). Since $(T,X^X)$ is distal, $E:=E(T,X)$ which is the orbit closure of $\textit{id}_X$ is minimal. Now for every $p\in E$, since $Ep$ is closed $T$-invariant, $Ep=E$. This easily follows that every $p\in E$ has an inverse.
\end{proof}

This algebraic characterization of distality is very useful. Notice that if $e\not\in T$ and $(T,X)$ is distal, then either $\textit{id}_X$ is a pointwise limit point of $T$ in $E(X)$ or $tx=x\ \forall x\in X$ for some $t\in T$ by (b) of Lemma~\ref{lem6.7}.

Now by Lemma~\ref{lem6.7} or by the fact that every distal map is pointwise recurrent, we can obtain the following, (2) of which is just (2) of Theorem~\ref{thm1.13}.

\begin{cor}\label{cor6.8}
Let $(T,X)$ be a semiflow with phase semigroup $T$. Then:
\begin{enumerate}
\item[$(1)$] If $(T,X)$ is distal, then it is invertible and admits an invariant Borel probability measure.
\item[$(2)$] If $(T,X)$ is equicontinuous, then it is distal if and only if it is surjective.
\item[$(3)$] If $(T,X)$ is point-distal surjective with $E(X)\subset C(X,X)$, then $(T,X)$ is equicontinuous.
\end{enumerate}
\end{cor}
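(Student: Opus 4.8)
The plan is to treat the three parts separately, each reducing to machinery already established.

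For (1), invertibility is immediate from Lemma~\ref{lem6.7}: if $(T,X)$ is distal then $E(X)$ is a group with neutral element $\mathit{id}_X$, so every $t\in T\subseteq E(X)$ has a two-sided inverse $p\in E(X)$, and reading $t\circ p=p\circ t=\mathit{id}_X$ pointwise shows $t$ is onto and one-to-one, hence a bijection of $X$; thus $(T,X)$ is invertible. For the invariant measure I would exploit that $E(X)$ is now a compact $T_2$ right-topological group in which each $t\in T$ acts as a continuous map (hence lies in the topological centre): a Haar probability measure $\nu$ on $E(X)$ — which exists for enveloping groups of distal systems, classically via Namioka's fixed-point theorem — pushes forward under the continuous, $T$-equivariant evaluation $p\mapsto p(x_0)$ from $E(X)$ onto $\mathrm{cls}_XTx_0\subseteq X$ to a $T$-invariant Borel probability measure on $X$. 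Equivalent routes are available: pass to the distal flow $(\langle T\rangle,X)$ by (3) of Theorem~\ref{thm1.13} and quote that distal flows carry invariant measures, or restrict to a minimal subset and build the measure up the Furstenberg tower (Theorem~\ref{thm3.14A}) using Haar measures on the successive isometric extensions. I expect this measure-existence step to be the only real obstacle in (1); the rest is a one-line consequence of Lemma~\ref{lem6.7}.

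Part (2) is then essentially a bookkeeping combination. If $(T,X)$ is distal, part (1) makes it invertible, so in particular each $t\in T$ is surjective; conversely, if $(T,X)$ is equicontinuous and surjective, Theorem~\ref{thm2.1} gives distality at once. (Equicontinuity is used only in the ``if'' direction.) This is the remark recorded just after Theorem~\ref{thm1.13}.

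For (3), assume $(T,X)$ is point-distal and surjective with $E(X)\subset C(X,X)$. Since point-distal semiflows are minimal, Lemma~\ref{lem1.8} makes the set of distal points dense in $X$. Fix any $u\in J(E(X))$: by Lemma~\ref{lem6.4} each distal point $x$ satisfies $u(x)=x$, and $u$ is continuous by hypothesis, so $u=\mathit{id}_X$; thus $\mathit{id}_X$ is the only idempotent of $E(X)$. Feeding this into the description of $P(X)$ via minimal left ideals — any $(x,x^\prime)\in P(X)$ admits a minimal left ideal $I$ on which $p(x)=p(x^\prime)$, and $I$ carries an idempotent, which must be $\mathit{id}_X$, forcing $x=x^\prime$ — shows $P(X)=\varDelta_X$, i.e.\ $(T,X)$ is distal. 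By Lemma~\ref{lem6.7}, $E(X)$ is then a group, and since $E(X)\subset C(X,X)$ its elements and their inverses are homeomorphisms, so all left translations of $E(X)$ (right translations being always continuous) are continuous; Ellis' joint continuity theorem (Theorem~\ref{thmB7}) upgrades $E(X)$ to a topological group and the evaluation action $E(X)\times X\to X$ to a jointly continuous action, whence $(T,X)$ is equicontinuous. Being equicontinuous and surjective, $(T,X)$ is u.a.p.\ by Theorem~\ref{thm2.10}. This is exactly the route sketched in the Note after Theorem~\ref{thm6.6}; the only thing to watch is the repeated appeal to Ellis' joint continuity theorem, which here is legitimate since $E(X)$ is compact $T_2$ and the relevant maps are separately continuous.
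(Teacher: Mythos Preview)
Your proposal is correct and follows essentially the same route as the paper. For (1) the paper takes exactly your second alternative (pass to the distal flow $(\langle T\rangle,X)$ and invoke Furstenberg's structure theorem for the measure), while your primary suggestion via a Haar measure on the compact right-topological group $E(X)$ is a legitimate variant; for (2) and (3) your argument matches the paper's almost step for step, the only cosmetic difference being that the paper cites (4) of Lemma~\ref{lem6.3} directly to get the group structure of $E(X)$ once the sole idempotent is $\mathit{id}_X$, whereas you detour through $P(X)=\varDelta_X$ and Lemma~\ref{lem6.7} before reaching the same conclusion and the same appeal to Ellis' joint continuity (the paper compresses this last step into ``This implies that $(E(X),X)$ is equicontinuous'').
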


\begin{proof}
(1) Let $(T,X)$ be distal; then by Lemma~\ref{lem6.7}, $E(X)$ is a group with $e=\textit{id}_X$. Let $\textrm{Homeo}\,(X)$ be the group of all self-homeomorphisms of $X$. Then $T\subset\textrm{Homeo}\,(X)$ and $E(X)=\textrm{cls}_{X^X}\langle T\rangle$. Thus by Furstenberg's structure theorem of distal minimal flows~\cite{F63} (cf.~Theorem~\ref{thm3.14A} below), it follows that $(\langle T\rangle,X)$ and so $(T,X)$ admit invariant Borel probability measures.

(2) This follows easily from Lemma~\ref{lem6.7} and Theorem~\ref{thm2.1}.

(3) Let $x$ be a distal point with $\textrm{cls}_XTx=X$. By Lemma~\ref{lem1.8}, each point of $Tx$ is distal for $(T,X)$. Given any $u\in J(E(X))$, $sx=usx$ for all $s\in T$. Since $u\in C(X,X)$ and $Tx$ is dense, so $u=\textit{id}_X$. Thus $E(X)$ is a group by (4) of Lemma~\ref{lem6.3} and so $(T,X)$ is minimal distal by Lemma~\ref{lem6.7}. This implies that $(E(X),X)$ is an equicontinuous flow. Thus $(T,X)$ is equicontinuous.

The proof of Corollary~\ref{cor6.8} is thus completed.
\end{proof}

It is interesting that a distal map is always surjective, while an equicontinuous map is not by Examples~\ref{ex1.7} and~\ref{ex1.12} in $\S\ref{sec1}$.
Also this indicates that distal is the more natural concept. However, under locally (weakly) almost periodic condition, the equicontinuous is equivalent to the distal in flows (cf.~\cite{E69,AM}).

\begin{lem}\label{lem6.9}
If $(T,X)$ is minimal invertible such that for each $t\in T$, $(t^{-1},X)$ is rigid, that is, $\textit{id}_X\in \mathrm{cls}_{X^X}^{}\{t^{-n}\,|\,n=1,2,\dotsc\}$, then $(X,T)$ is minimal.
\end{lem}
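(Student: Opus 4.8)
The plan is to show that any nonempty closed subset $Y$ of $X$ which is invariant under the reflection $(X,T)$ is automatically invariant under the original semiflow $(T,X)$ as well; once this is established, minimality of $(T,X)$ forces $Y=X$, and hence $(X,T)$ is minimal.

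So I would take $Y\subseteq X$ nonempty, closed, and $(X,T)$-invariant, which by definition of the reflection means $t^{-1}Y\subseteq Y$ for every $t\in T$. Iterating this inclusion gives $t^{-n}Y\subseteq Y$ for all $t\in T$ and all integers $n\ge 1$, and trivially $t^{0}Y=Y$; thus $t^{-m}Y\subseteq Y$ for every $m\ge 0$. The claim to prove is that $tY\subseteq Y$ for every $t\in T$.

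To prove the claim, fix $t\in T$. By the rigidity hypothesis on $(t^{-1},X)$ we have $\textit{id}_X\in\mathrm{cls}_{X^X}\{t^{-n}:n\ge 1\}$, so there is a net $\{n_\lambda\}$ of positive integers with $t^{-n_\lambda}\to\textit{id}_X$ in the topology of pointwise convergence; that is, $t^{-n_\lambda}(x)\to x$ for every $x\in X$. Since $(T,X)$ is invertible and $X$ is compact $T_2$, each $t\in T$ is a homeomorphism of $X$, so applying the continuous map $t$ yields $t^{-(n_\lambda-1)}(x)=t\bigl(t^{-n_\lambda}(x)\bigr)\to tx$ for every $x\in X$. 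Now for $x\in Y$ we have $t^{-(n_\lambda-1)}(x)\in t^{-(n_\lambda-1)}Y\subseteq Y$ by the previous paragraph (with $m=n_\lambda-1\ge 0$), so $tx$ lies in the closure of $Y$, which is $Y$. Hence $tY\subseteq Y$.

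Therefore $Y$ is a nonempty closed $(T,X)$-invariant subset of $X$, and since $(T,X)$ is minimal we conclude $Y=X$; as $Y$ was an arbitrary nonempty closed $(X,T)$-invariant set, $(X,T)$ is minimal. I do not expect any real obstacle here: the only bookkeeping point is the possible value $n_\lambda=1$, in which case $t^{-(n_\lambda-1)}=\textit{id}_X$ and one uses the trivial inclusion $t^{0}Y=Y$. The single conceptual idea is that rigidity of $(t^{-1},X)$ is exactly what converts the ``backward'' invariance $t^{-1}Y\subseteq Y$ supplied by the reflection into the ``forward'' invariance $tY\subseteq Y$ needed to invoke minimality of $(T,X)$.
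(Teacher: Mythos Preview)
Your proof is correct and follows essentially the same approach as the paper's: use rigidity of $(t^{-1},X)$ to show that any nonempty closed $(X,T)$-invariant set is $(T,X)$-invariant, then invoke minimality of $(T,X)$. The only cosmetic difference is that the paper works with a minimal subset $X_0$ of $(X,T)$ obtained via Zorn's lemma rather than an arbitrary nonempty closed $(X,T)$-invariant set, but the core argument is identical.
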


\begin{proof}
Let $X_0$ be a minimal set of $(X,T)$ by Zorn's lemma, and let $t\in T, t\not=e$ be any given.
Then there exists a net $\{n_k\}$ in $\mathbb{N}$ with $t^{-n_k}\to \textit{id}_X$ in $X^X$ under the pointwise topology. Thus for every point $x_0\in X_0$,
$t^{-n_k}x_0\to x_0$ and so $t(t^{-n_k}x_0)=t^{-n_k+1}x_0\to tx_0$.
Since $-n_k+1\le 0$, then $t^{-n_k+1}x_0\in X_0$ and so $tx_0\in X_0$.
Hence $TX_0\subseteq X_0$ and then $X_0=X$ for $(T,X)$ is minimal.
\end{proof}

As another result of Lemma~\ref{lem6.7}, we can then obtain using algebraic approaches the following simple observation for distal semiflows, which are (3) of Theorem~\ref{thm1.13} and 2 and 3 of Reflection principle~I.

\begin{prop}\label{prop6.10}
Let $(T,X)$ be a semiflow with phase semigroup $T$. Then:
\begin{enumerate}
\item[$(1)$] If $(T,X)$ is distal, then so is $(\langle T\rangle,X)$.
\item[$(2)$] If $(T,X)$ is minimal distal, then $(X,T)$ and $(\langle T\rangle,X)$ both are minimal distal.
\end{enumerate}
\end{prop}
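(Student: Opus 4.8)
The statement tacitly presupposes that $(T,X)$ is invertible, but this is automatic: by Corollary~\ref{cor6.8}(1) a distal semiflow is invertible, so $\langle T\rangle$ and the reflection $(X,T)$ are well defined. My plan is to route both assertions through the algebraic characterisation of distality in Lemma~\ref{lem6.7}, using that the Ellis product on $E(X):=E(T,X)$ is nothing but composition of self-maps of $X$. For $(1)$, assume $(T,X)$ is distal, so $E(X)$ is a group whose neutral element is $\textit{id}_X$. For each $t\in T$ the homeomorphism $x\mapsto tx$ belongs to $E(X)$, hence so does its inverse in this group, namely the map $x\mapsto t^{-1}x$; since $E(X)$ is closed under composition, every element of $\langle T\rangle$ (a finite composition of maps $x\mapsto tx$ and $x\mapsto t^{-1}x$) lies in $E(X)$, i.e.\ $\langle T\rangle\subseteq E(X)$. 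As $E(X)$ is closed in $X^X$ and $T\subseteq\langle T\rangle\subseteq E(X)$, passing to closures gives $E(\langle T\rangle,X)=\textrm{cls}_{X^X}\langle T\rangle=E(X)$, still a group with neutral element $\textit{id}_X$; Lemma~\ref{lem6.7}, applied now to the flow $(\langle T\rangle,X)$, shows $(\langle T\rangle,X)$ is distal.

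For $(2)$, add the hypothesis that $(T,X)$ is minimal. Then $(\langle T\rangle,X)$ is distal by $(1)$, and it is minimal since each orbit $\langle T\rangle x$ contains the dense orbit $Tx$; hence $(\langle T\rangle,X)$ is minimal distal. For the reflection $(X,T)$, distality is immediate from $E(\langle T\rangle,X)=E(X)$: if $(x,y)\in P(X,T)$, witnessed by $t_n^{-1}(x,y)\to(z,z)$ with $t_n\in T$, then $\{t_n^{-1}\}$ is a net in $\langle T\rangle$, so $(x,y)\in P(\langle T\rangle,X)=\varDelta_X$; thus $P(X,T)=\varDelta_X$ and $(X,T)$ is distal.

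It remains to show $(X,T)$ is minimal, and here I would invoke Lemma~\ref{lem6.9}. Since $(T,X)$ is minimal invertible, it suffices to check that each $(t^{-1},X)$ is rigid, i.e.\ $\textit{id}_X\in\textrm{cls}_{X^X}\{t^{-n}\,|\,n=1,2,\dotsc\}$. But $t^{-1}\in E(X)$ and $E(X)$ is a closed subsemigroup of $(X^X,\circ)$, so $\textrm{cls}_{X^X}\{t^{-n}\,|\,n\ge1\}$ is a nonempty compact right-topological subsemigroup of $E(X)$, hence contains an idempotent; as $E(X)$ is a group its only idempotent is $\textit{id}_X$, whence $\textit{id}_X\in\textrm{cls}_{X^X}\{t^{-n}\,|\,n\ge1\}$. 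Thus each $(t^{-1},X)$ is rigid, Lemma~\ref{lem6.9} applies, and $(X,T)$ is minimal, completing $(2)$.

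The routine parts are the two minimality remarks. The one genuinely load-bearing step is the identification $E(\langle T\rangle,X)=E(X)$ — equivalently the inclusion $\langle T\rangle\subseteq E(X)$ — which depends on the Ellis product being ordinary composition on $E(X)$ and on a group absorbing its inverses; and, subordinately, the production of $\textit{id}_X$ inside $\textrm{cls}_{X^X}\{t^{-n}\,|\,n\ge1\}$ via the idempotent lemma for compact right-topological semigroups. I expect no obstacle beyond verifying these carefully.
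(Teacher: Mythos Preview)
Your proof is correct and follows essentially the same route as the paper's first proof: both use Lemma~\ref{lem6.7} to get $E(X)$ a group with identity $\textit{id}_X$, deduce $\langle T\rangle\subseteq E(X)$ (hence $E(\langle T\rangle,X)=E(X)$), and then obtain rigidity of each $(t^{-1},X)$ from the fact that the only idempotent in $E(X)$ is $\textit{id}_X$, feeding this into Lemma~\ref{lem6.9}. Your idempotent argument for rigidity is a direct unpacking of what the paper phrases as ``the distal cascade $(t^{-1},X)$ has Ellis semigroup containing $\textit{id}_X$''; the content is identical.
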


\begin{proof}
(1) Since $E(T,X)$ is a group with $e=\textit{id}_X$ by Lemma~\ref{lem6.7}, then $(T,X)$ is invertible and $T^{-1}\subseteq E(X)$. So $E(X,T)\subseteq E(X)$. If $p(x)=p(y)$ for some $p\in E(X,T)$ then by distality of $(T,X)$ we see $x=y$. Therefore, $(X,T)$ is distal. Moreover, since $\langle T\rangle\subseteq E(X)$, thus $(\langle T\rangle,X)$ is distal by Lemma~\ref{lem6.7} again.

(2) By (1), we only need prove the minimality of $(X,T)$. To this end, let $t\in T$. Since the distal cascade $(t^{-1},X)$ induces a distal semiflow $f\colon(n,x)\mapsto t^{-n}x$ of $\mathbb{N}\times X$ to $X$ where $\mathbb{N}$ is discrete additive, then by Lemma~\ref{lem6.7} the Ellis semigroup of $(f,\mathbb{N},X)$ contains $\textit{id}_X$; i.e., $(t^{-1},X)$ is rigid. Then (2) follows from Lemma~\ref{lem6.9}.

The proof of Proposition~\ref{prop6.10} is therefore completed.
\end{proof}

We note that using Ellis' semigroup (Lemma~\ref{lem6.7}) we have easily concluded Proposition~\ref{prop6.10}. However, if we make no use of this and the $\beta$-compactification of $T$, based on Theorem~\ref{thm5.1} in $\S\ref{sec5}$ and using only topological approaches we can prove it as follows.

\begin{proof}[\textbf{Proof~II of Proposition~\ref{prop6.10}}]
Let $(T,X)$ be distal. We will divide our non-enveloping semigroup proof into relatively independent four steps.

\begin{step}\label{step1}
Every point of $X$ is a.p. for $(T,X)$. Moreover, $(T,X)$ is invertible.
\end{step}

\begin{proof}
The first part of Step~\ref{step1} follows at once from Theorem~\ref{thm5.1}. Now given $t\in T$, since $(t,X)$ is pointwise a.p., then $tX=X$. This shows that $(T,X)$ is invertible.
\end{proof}

Although $(T,X)$ is pointwise a.p. by Step~\ref{step1}, yet because $T$ need not be right-syndetic in $\langle T\rangle$ and $(T,X)$ need not be minimal the following Step~\ref{step2} is non-trivial.

\begin{step}\label{step2}
{\it $(\langle T\rangle,X)$ is pointwise a.p. (cf.~Definition~\ref{sn2.4}).}
\end{step}

\begin{proof}
Let $x\in X$ be any given and write $Y_x=\textrm{cls}_X{Tx}$. Clearly by Step~\ref{step1}, $(T,Y_x)$ is minimal distal so that $\textrm{cls}_X{Ty}=Y_x$ for all $y\in Y_x$. Given $y\in Y_x$ and $t\in T$, since $y$ is a (forwardly) minimal point for $(\pi_t,Y_x)$ by Step~\ref{step1}, there is a net $\{n_k\}$ in $\mathbb{N}$ with $t^{n_k}y\to y$. So $t^{n_k-1}y\to t^{-1}y\in Y_x$, for $t^{n_k-1}y\in Y_x$ and $Y_x$ is closed. This shows $Y_xT\subseteq Y_x$.
Thus $Y_x=\textrm{cls}_XTy\subseteq\textrm{cls}_X{\langle T\rangle y}\subseteq Y_x$ for all $y\in Y_x$. This shows that each $y\in Y_x$ and so $x$ are a.p. for $(\langle T\rangle,X)$.
\end{proof}

\begin{step}\label{step3}
{\it $(T,X\times X)$ is distal and so $(\langle T\rangle,X\times X)$ is pointwise a.p..}
\end{step}

\begin{proof}
It follows easily from definition that $(T,X\times X)$ is distal. Then $(\langle T\rangle,X\times X)$ is pointwise a.p. by Steps~\ref{step1} and \ref{step2}.
\end{proof}

\begin{step}\label{step4}
\textit{Let $(T,Z)$ be a semiflow with any phase semigroup $T$. If $(T,Z\times Z)$ is pointwise a.p., then $(T,Z)$ is distal.}
\end{step}

\begin{proof}
This follows at once from Lemma~\ref{lem2.7}.
\end{proof}

Now, since $(\langle T\rangle,X\times X)$ is pointwise a.p. by Step~\ref{step3}, $(\langle T\rangle,X)$, which is minimal if so is $(T,X)$, is distal by Step~\ref{step4}. Thus $(X,T)$ is distal.

Next, assume $(T,X)$ is minimal distal. Then $(t^{-1},X)$ is pointwise a.p. (forwardly) and so every negatively-invariant closed subset of $X$ is also $\pi$-invariant. This implies the minimality of $(X,T)$.
The proof II of Proposition~\ref{prop6.10} is therefore complete.
\end{proof}

We will continue to consider the minimality of the reflection $(X,T)$ under much more weaker conditions in $\S\ref{sec6.3}$. Moreover, for an amenable phase semigroup, we will show in $\S\ref{sec6.3}$ that $(T,X)$ is distal at some point $x\in X$ if and only if so is $(X,T)$ at the same point $x$ (see Corollary~\ref{cor3.27A}).

The following result is originally due to Ellis~\cite[Theorem~3]{E57} (also see \cite[Theorem~3.3]{Aus}) in the case that $(T,X)$ is a flow.

\begin{cor}\label{cor6.11}
Let $(T,X)$ be a surjective semiflow. Then $(T,X)$ is equicontinuous if and only if $E(X)$ is a group of self-homeomorphisms of $X$.
\end{cor}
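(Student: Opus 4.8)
The plan is to handle the two implications separately: the ``only if'' part by reducing to distality via Theorem~\ref{thm2.1} together with the algebraic characterisation of distal semiflows in Lemma~\ref{lem6.7}, and the ``if'' part by passing to the induced flow $(\langle T\rangle,X)$ and invoking the classical Ellis theorem quoted just above the statement. I expect the only real obstacle to be a bookkeeping point in the converse, namely the identification $E(\langle T\rangle,X)=E(T,X)$; everything else is routine.

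\emph{The ``only if'' part.} Suppose $(T,X)$ is equicontinuous surjective. By Theorem~\ref{thm2.1} it is distal, so Lemma~\ref{lem6.7} gives that $E(X)$ is a group with neutral element $\textit{id}_X$; in particular every $p\in E(X)$ is a bijection of $X$, since it has a two-sided inverse $p^{-1}\in E(X)$. It remains to see that each $p\in E(X)$ is continuous. This follows from equicontinuity exactly as in Proof~(III) of Theorem~\ref{thm2.1}: for $\varepsilon\in\mathscr{U}_X$ choose $\delta$ as in $\S\ref{sec1.1.1}$\,(a); if $(x,y)\in\delta$ then $(tx,ty)\in\varepsilon$ for all $t\in T$, and letting $t_i\to p$ in $X^X$ yields $(px,py)\in\mathrm{cls}_{X\times X}\varepsilon$, so $p$ is (uniformly) continuous, i.e. $E(X)\subseteq C(X,X)$. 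Then $p^{-1}\in E(X)\subseteq C(X,X)$ as well, so $p$ is a continuous bijection of the compact Hausdorff space $X$ with continuous inverse, hence a self-homeomorphism; thus $E(X)$ is a group of self-homeomorphisms of $X$.

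\emph{The ``if'' part.} Assume $E(X)$ is a group of self-homeomorphisms of $X$. Since $T\subseteq E(X)$, each $t\in T$ is a self-homeomorphism, so $(T,X)$ is invertible and $\langle T\rangle$ is defined. Because $E(X)$ is a group containing $T$ and the group-inverse of $t$ taken inside $E(X)$ is the homeomorphism $t^{-1}$, $E(X)$ contains the group $\langle T\rangle$; being closed in $X^X$ it therefore contains $\mathrm{cls}_{X^X}\langle T\rangle=E(\langle T\rangle,X)$, while conversely $E(X)=\mathrm{cls}_{X^X}T\subseteq E(\langle T\rangle,X)$. Hence $E(\langle T\rangle,X)=E(X)$ is a group of self-homeomorphisms, so by Ellis' theorem for flows (\cite[Theorem~3]{E57}, \cite[Theorem~3.3]{Aus}) the flow $(\langle T\rangle,X)$ is equicontinuous; restricting the equicontinuous family $\{g\,|\,g\in\langle T\rangle\}$ to its subfamily $\{t\,|\,t\in T\}$ shows that $(T,X)$ is equicontinuous.

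The delicate step is the identity $E(\langle T\rangle,X)=E(T,X)$, i.e. making sure that $\langle T\rangle$ genuinely sits inside the abstract group $E(X)$; once this is checked, the flow version of Ellis' theorem does all the work. If one prefers an argument internal to the present paper, one can instead observe that continuity of every $p\in E(X)$ makes all left translations $q\mapsto pq$ on $E(X)$ continuous, so the compact Hausdorff group $E(X)$ is semitopological and hence a topological group by Ellis' theorem; then Ellis' joint continuity theorem (Theorem~\ref{thmB7} in $\S\ref{sec8}$) makes $E(X)\times X\to X$ jointly continuous, and compactness of $E(X)$ (a tube-lemma argument) yields equicontinuity of $(T,X)$.
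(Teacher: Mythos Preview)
Your proof is correct. The ``only if'' direction matches the paper's proof exactly (Theorem~\ref{thm2.1} plus Lemma~\ref{lem6.7}, with continuity of each $p\in E(X)$ coming from equicontinuity). For the ``if'' direction the paper applies Ellis' joint continuity theorem directly to the compact group $E(X)$ acting on $X$, which is precisely the alternative you sketch in your final paragraph; your main route via the identification $E(\langle T\rangle,X)=E(T,X)$ and the flow version of Ellis' theorem is a minor detour but perfectly valid, and the bookkeeping you flag is handled correctly.
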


\begin{proof}
First from equicontinuity, all $p$ in $E(X)$ are continuous. Then the necessity follows at once from Theorem~\ref{thm2.1} and Lemma~\ref{lem6.7}.
Conversely, if $E(X)$ is a group of homeomorphisms of $X$, then by Ellis' joint continuity theorem (cf.~\cite[Theorem~4.3]{Aus} and also Theorem~\ref{thm9.8} in Appendix), it follows that $E(X)$ and so $T$ acts equicontinuously on $X$.
This proves Corollary~\ref{cor6.11}.
\end{proof}

In Corollary~\ref{cor6.11}, it is essential that $T$ consists of surjections, and not merely a semigroup of continuous maps. Corollary~\ref{cor6.11} may follows from (3) of Corollary~\ref{cor6.8}.

Given any semigroup $T$ of bijections of $X$, $T\cup T^{-1}$ is not necessarily equal to the group $\langle T\rangle$. In addition, if $T$ acts equicontinuously on $X$, then so does $T\cup T^{-1}$ by Proposition~\ref{prop6.1}. However, since $T$ need not be abelian, the equicontinuity of $\langle T\rangle$ cannot be trivially obtained.

Nevertheless Theorem~\ref{thm2.1} together with Lemma~\ref{lem6.7} implies the following important fact, which is just (4) of Theorem~\ref{thm1.13}.

\begin{thm}\label{thm6.12}
Let $G$ be a semigroup of self-homeomorphisms of $X$. Then $G$ is equicontinuous on $X$ if and only if so is $\langle G\rangle$.
\end{thm}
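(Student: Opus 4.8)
The plan is to prove the nontrivial ``only if'' direction by routing everything through the Ellis semigroup $E(G,X)$, which under the hypotheses turns out to be a \emph{group} containing the (possibly much larger) group $\langle G\rangle$; the ``if'' direction is immediate, since any subfamily of an equicontinuous family of maps is again equicontinuous, so $\langle G\rangle$ equicontinuous forces its subfamily $G$ to be equicontinuous. Since adjoining $\textit{id}_X$ to $G$ changes neither the equicontinuity of $G$ nor the group $\langle G\rangle$, I will first reduce to the case $e=\textit{id}_X\in G$, so that $(G,X)$ is a semiflow with discrete phase semigroup $G$.

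Assume then that $G$ is equicontinuous on $X$. Each $g\in G$ is a self-homeomorphism, hence surjective, so $(G,X)$ is an equicontinuous surjective semiflow; by Theorem~\ref{thm2.1} it is distal, whence by Lemma~\ref{lem6.7} (in the form allowing $G$ discrete) the Ellis semigroup $E(G,X)$ is a group with neutral element $\textit{id}_X$. The next step is to observe that equicontinuity propagates from $G$ to its pointwise closure $E(G,X)$: given $\varepsilon\in\mathscr{U}_X$, choose $\varepsilon'\in\mathscr{U}_X$ with $\mathrm{cls}_{X\times X}\varepsilon'\subseteq\varepsilon$, and let $\delta\in\mathscr{U}_X$ be an entourage witnessing equicontinuity of $G$ for $\varepsilon'$, so that $g(x,y)\in\varepsilon'$ whenever $g\in G$ and $(x,y)\in\delta$. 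If $p\in E(G,X)$ and $(x,y)\in\delta$, take a net $g_n\to p$ in the pointwise topology; then $g_n(x,y)\to p(x,y)$ while $g_n(x,y)\in\varepsilon'$ for all $n$, so $p(x,y)\in\mathrm{cls}_{X\times X}\varepsilon'\subseteq\varepsilon$. Hence the same $\delta$ witnesses equicontinuity of the whole family $E(G,X)$; in particular every $p\in E(G,X)$ is (uniformly) continuous, and since $E(G,X)$ is a group, $p^{-1}\in E(G,X)$ is continuous too, so each $p$ is a self-homeomorphism of $X$.

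Finally, $E(G,X)$ is a group of self-homeomorphisms of $X$ containing $G$, so by the definition of $\langle G\rangle$ as the smallest such group (see \ref{sn1.3}) we get $\langle G\rangle\subseteq E(G,X)$; since $E(G,X)$ is equicontinuous and $\langle G\rangle$ is a subfamily, $\langle G\rangle$ is equicontinuous on $X$, completing the proof. The only step requiring any real care is the propagation of equicontinuity to $E(G,X)$ — equivalently, that $E(G,X)$ consists of homeomorphisms and is itself equicontinuous — since this is precisely what makes the inclusion $\langle G\rangle\subseteq E(G,X)$ useful; everything else is a direct application of Theorem~\ref{thm2.1} and Lemma~\ref{lem6.7}. (One could alternatively invoke Corollary~\ref{cor6.11} to see that $E(G,X)$ is a group of self-homeomorphisms and Ellis' joint continuity theorem for its equicontinuity, but the elementary closure argument above keeps the proof self-contained.)
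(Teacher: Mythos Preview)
Your proof is correct and follows essentially the same route as the paper's first proof: show $E(G,X)$ is a group of self-homeomorphisms of $X$ (via Theorem~\ref{thm2.1} and Lemma~\ref{lem6.7}, which the paper packages as Corollary~\ref{cor6.11}), observe that $E(G,X)$ inherits equicontinuity from $G$, and conclude by $\langle G\rangle\subseteq E(G,X)$. The only difference is cosmetic---you spell out the closure argument for equicontinuity of $E(G,X)$ explicitly, whereas the paper states it in one line; the paper also offers a second, self-contained proof using the compact-open topology and Ascoli's theorem instead of Lemma~\ref{lem6.7}.
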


\begin{proof}
It suffices to show the ``only if'' part. Let $G$ is equicontinuous on $X$. By Corollary~\ref{cor6.11}, $E(G,X)$ is a group consist of self-homeomorphisms of $X$. Further $E(G,X)$ acts equicontinuously on $X$. Since $\langle G\rangle\subseteq E(G,X)$, thus $\langle G\rangle$ is equicontonuous on $X$.
\end{proof}

Motivated by Proof (III) of Theorem~\ref{thm2.1}, we can present another self-contained topological proof of Theorem~\ref{thm6.12} without using Lemmas~\ref{lem6.3} and \ref{lem6.7}.

\begin{proof}[\textbf{Proof~II of Theorem~\ref{thm6.12}}]
We only need to show the ``only if'' part; and then assume $G$ is equicontinuous on $X$.
By $C_{\textrm{cpt-op}}(X,X)$ we denote the space $C(X,X)$ of all continuous self-maps of $X$ equipped with the compact-open topology, and let $E$ be the closure of $G$ in $C_{\textrm{cpt-op}}(X,X)$. Then by Ascoli's theorem $E$ is compact and moreover, each $p\in E$ is a surjection of $X$. We will show that $E$ is a group.

First we note that $(f,g)\mapsto fg:=f\circ g$ of $C_{\textrm{cpt-op}}(X,X)\times C_{\textrm{cpt-op}}(X,X)$ to $C_{\textrm{cpt-op}}(X,X)$ is separately continuous. This implies that $EE\subseteq E$ and thus $E$ is a compact semi-topological semigroup. Since each $p\in E$ is surjective, $E$ has the unique idempotent $\textit{id}_X$.

Given any $p\in E$, since $Ep$ is a closed subsemigroup of $E$ so that it contains an idempotent, hence $\textit{id}_X\in Ep$ and so there is some $q\in E$ such that $qp=\textit{id}_X$. This shows that $E$ is a group of self-homeomorphisms of $X$.
Finally, since $G$ and then $E$ acts equicontinuously on $X$, so does $\langle G\rangle$ because of $\langle G\rangle\subseteq E$.
\end{proof}

Finally we notice that whereas Proposition~\ref{prop6.1} may be a consequence of Theorem~\ref{thm6.12}, its direct proof is of independent interest.
\subsection{Furstenberg's structure theorem of distal minimal semiflows}\label{sec3.2A}
Let $T$ be any discrete semigroup with neutral element $e$ and let $\theta$ be some ordinal. Following Furstenberg~\cite{F63} we introduce a basic notion.

\begin{sn}\label{sn6.13}
A \textit{projective system} of minimal semiflows with phase semigroup $T$ is a collection of minimal semiflows $(T,X_\lambda)$ with compact $T_2$ phase spaces $X_\lambda$ indexed by ordinal numbers $\lambda\le\theta$, and a family of epimorphisms, $\pi_\nu^\lambda\colon(T,X_\lambda)\rightarrow(T,X_\nu)$, for $0\le\nu<\lambda\le\theta$, satisfying:
\begin{enumerate}
\item[(1)] If $0\le\nu<\lambda<\eta\le\theta$, then $\pi_\nu^\eta=\pi_\nu^\lambda\circ\pi_\lambda^\eta$.
\item[(2)] If $\mu\le\theta$ is a limit ordinal, then $X_\mu$ is the minimal subset of the Cartesian product semiflow $\left(T,\prod_{\lambda<\mu}X_\lambda\right)$ consisting of all $x=(x_\lambda)_{\lambda<\mu}\in\prod_{\lambda<\mu}X_\lambda$ with $x_\nu=\pi_\nu^\lambda(x_\lambda)$ for all $\nu<\lambda<\mu$ and then for $\lambda<\mu$,
    $\pi_\lambda^\mu\colon X_\mu\rightarrow X_\lambda$ is just the projection map. In this case, we say that $(T,X_\mu)$ is the \textit{inverse limit} of the directed family of minimal semiflows $\{(T,X_\lambda)\,|\,\lambda<\mu\}$.
\end{enumerate}
\end{sn}

Let $(T,X)$ be an invertible semiflow and let $G=\langle T\rangle$ be the discrete group of self homeomorphisms of $X$ generated by $T$ associated to $(T,X)$.

If $(T,Y)$ is another invertible semiflow and if $\pi\colon(T,X)\rightarrow(T,Y)$ is an epimorphism, then there is a natural extension
\begin{equation*}
\pi\colon(G,X)\rightarrow(G,Y)
\end{equation*}
where for all $g=\tau_1\tau_2\dotsm\tau_n\in G, \tau_i\in T\cup T^{-1}$,
\begin{gather*}
gx=\tau_1\tau_2\dotsm\tau_nx\ \forall x\in X\quad \textrm{and}\quad gy=\tau_1\tau_2\dotsm\tau_ny\ \forall y\in Y.
\end{gather*}
Since $t=t_1t_2$ relative to $(T,X)$ implies that $t=t_1t_2$ relative to $(T,Y)$, thus $\pi\colon(G,X)\rightarrow(G,Y)$ is well defined. However, it should be noticed that $G$ is defined by $(T,X)$, not by the factor $(T,Y)$.

Recall that $\pi\colon(T,X)\rightarrow(T,Y)$ is a relatively equicontinuous extension iff for all $\varepsilon\in\mathscr{U}_X$ there is $\delta\in\mathscr{U}_X$ such that whenever $(x,x^\prime)\in\delta$ with $\pi(x)=\pi(x^\prime)$, then $(tx,tx^\prime)\in\varepsilon$ for all $t\in T$ (cf.~Definition~\ref{sn1.20}).

Now based on Definitions~\ref{sn1.20} and \ref{sn6.13}, we are ready to state the Furstenberg structure theorem for minimal distal semiflows as follows:

\begin{thm}[Furstenberg's structure theorem]\label{thm3.14A}
Let $(T,X)$ and $(T,Y)$ be distal minimal semiflows and let $\pi\colon (T,X)\rightarrow(T,Y)$ be an epimorphism. Then there is a projective system of minimal semiflows $\{(T,X_\lambda)\,|\,\lambda\le\theta\}$, for some ordinal $\theta\ge1$, with $X_\theta=X$, $X_0=Y$ such that if $0\le\lambda<\theta$, then $\pi_\lambda^{\lambda+1}\colon(T,X_{\lambda+1})\rightarrow(T,X_\lambda)$ is a relatively equicontinuous extension.
\end{thm}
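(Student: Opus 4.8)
The plan is to reduce the whole statement to the classical Furstenberg structure theorem for distal minimal \emph{flows} and then restrict the resulting tower from the group $G=\langle T\rangle$ back to $T$. First I would pass to $G$. Since $(T,X)$ and $(T,Y)$ are distal minimal semiflows, Proposition~\ref{prop6.10}(2) tells us that $(G,X)$ and $(G,Y)$ are distal minimal flows, and $\pi$ extends canonically to a $G$-epimorphism $\pi\colon(G,X)\to(G,Y)$, namely the natural extension described just before the statement (well-definedness of the word-action of $G$ on $Y$ being exactly the point recorded there).

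Next I would invoke Furstenberg's structure theorem for distal minimal flows in its relative form \cite{F63}: applied to $\pi\colon(G,X)\to(G,Y)$ it produces an ordinal $\theta\ge1$, a projective system of minimal flows $(G,X_\lambda)$ for $\lambda\le\theta$ with $X_\theta=X$ and $X_0=Y$, epimorphisms $\pi_\nu^\lambda\colon(G,X_\lambda)\to(G,X_\nu)$ for $\nu<\lambda$, such that each successor map $\pi_\lambda^{\lambda+1}\colon(G,X_{\lambda+1})\to(G,X_\lambda)$ is a relatively equicontinuous (isometric) extension of flows, and such that at every limit ordinal $\mu\le\theta$ the flow $X_\mu$ is the set of coherent tuples in $\prod_{\lambda<\mu}X_\lambda$.

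Then I would carry out the descent to $T$. Each $\pi_\nu^\lambda$ is a fortiori a $T$-epimorphism, and the compatibility condition (1) of Definition~\ref{sn6.13} is purely a statement about composing these maps, hence is unaffected. Each $(T,X_\lambda)$ is a $T$-factor of the minimal semiflow $(T,X)$ (via $\pi_\lambda^\theta$), hence is itself a minimal semiflow, so $\{(T,X_\lambda)\mid\lambda\le\theta\}$ is a family of minimal semiflows with phase semigroup $T$. Relative equicontinuity of $\pi_\lambda^{\lambda+1}$ over $G$ immediately yields it over $T$: the $\delta$ produced for a given $\varepsilon$ works for every element of $G$, in particular for every $t\in T$; thus each $\pi_\lambda^{\lambda+1}\colon(T,X_{\lambda+1})\to(T,X_\lambda)$ is a relatively equicontinuous extension of semiflows. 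Finally, at a limit ordinal $\mu$ the underlying set $X_\mu=\{x=(x_\lambda)_{\lambda<\mu}\mid x_\nu=\pi_\nu^\lambda(x_\lambda)\ \forall\,\nu<\lambda<\mu\}$ does not depend on which semigroup is acting, and $(T,X_\mu)$ is again a $T$-factor of $(T,X)$, hence $T$-minimal; so $X_\mu$ is exactly the minimal subset of the product semiflow $(T,\prod_{\lambda<\mu}X_\lambda)$ described in Definition~\ref{sn6.13}(2). This checks all the conditions of the projective system, completing the argument.

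The only non-formal points are the reduction to $G$ — which rests on Proposition~\ref{prop6.10}(2) together with the well-definedness of the $G$-extension of $\pi$ noted before the statement — and the limit-ordinal step, where one must be sure the $G$-inverse limit and the $T$-inverse limit coincide as sets and that this set is $T$-minimal. I expect the limit step to need the most care; it is handled either by the factorization $\pi_\mu^\theta\colon(T,X)\to(T,X_\mu)$ just used, or, in a self-contained way, by the standard cluster-point argument (given $x\in X_\mu$, choose for each $\lambda<\mu$ a point of a $T$-minimal subset of $X_\mu$ that agrees with $x$ in coordinate $\lambda$, and take a limit point of this net, which then equals $x$). Everything else — minimality of factors of minimal semiflows, and inheritance of relative equicontinuity by a subsemigroup — is immediate and requires no computation.
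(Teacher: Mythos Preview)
Your proposal is correct and follows essentially the same route as the paper: pass to $G=\langle T\rangle$ via Proposition~\ref{prop6.10}(2), apply the classical Furstenberg structure theorem to $\pi\colon(G,X)\to(G,Y)$, and then observe that each $(T,X_\lambda)$ is minimal because it is a $T$-factor of $(T,X)$ via $\pi_\lambda^\theta$. You are in fact more thorough than the paper's own proof, which only explicitly checks the minimality of the intermediate $(T,X_\lambda)$ and leaves the inheritance of relative equicontinuity and the limit-ordinal verification implicit.
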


\begin{proof}
According to Proposition~\ref{prop6.10}, $(\langle T\rangle, X)$ and $(\langle T\rangle, Y)$ are distal minimal flows. We now write $G=\langle T\rangle$ associated to $(T,X)$. Then $\pi\colon (G,X)\rightarrow(G,Y)$ is an epimorphism of distal minimal flows with phase group $G$.

Thus by Furstenberg's structure theorem of distal minimal flows (cf.~\cite{F63} or \cite[Theorem~7.1]{Aus}), it follows that there is a projective system of minimal flows $\{(G,X_\lambda)\,|\,\lambda\le\theta\}$ with $X_\theta=X$, $X_0=Y$ such that if $0\le\lambda<\theta$, then $\pi_\lambda^{\lambda+1}\colon(G,X_{\lambda+1})\rightarrow(G,X_\lambda)$ is a relatively equicontinuous extension. In order to show that
\begin{equation*}
(T,X)=(T,X_\theta)\xrightarrow{}\dotsm\xrightarrow{}(T,X_{\lambda+1})\xrightarrow{\pi_\lambda^{\lambda+1}}(T,X_\lambda)\xrightarrow{}\dotsm \rightarrow(T,X_1)\xrightarrow{\pi_0^1}(T,X_0)=(T,Y)
\end{equation*}
is actually the desired projective system of minimal semiflows with phase semigroup $T$, it is sufficient to prove that $(T,X_\lambda)$, $0<\lambda<\theta$, is a minimal semiflow.

Indeed, given $0<\lambda<\theta$, since
$\pi_\lambda^\theta\colon(G,X_\theta)\rightarrow(G,X_\lambda)$ is an epimorphism, it follows that $\pi_\lambda^\theta\colon(T,X)\rightarrow(T,X_\lambda)$ is also an epimorphism so $(T,X_\lambda)$ is a minimal semiflow for all $\lambda<\theta$. This proves Theorem~\ref{thm3.14A}.
\end{proof}

\begin{cor}\label{cor3.15A}
If $(T,X)$ is a minimal distal semiflow, then it has a non-trivial equicontinuous surjective factor, i.e., there is an epimorphism $\pi\colon(T,X)\rightarrow(T,Y)$ such that $(T,Y)$ is a non-trivial equicontinuous surjective semiflow.
\end{cor}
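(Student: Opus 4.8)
The plan is to obtain the desired factor directly from the Furstenberg structure theorem (Theorem~\ref{thm3.14A}) applied to the trivial factor of $(T,X)$. Let $(T,Y_0)$ denote the one-point semiflow and $\pi\colon(T,X)\to(T,Y_0)$ the unique epimorphism onto it; a one-point semiflow is vacuously distal and minimal, so Theorem~\ref{thm3.14A} furnishes a projective system $\{(T,X_\lambda)\,|\,\lambda\le\theta\}$ of minimal semiflows with $X_\theta=X$, $X_0=Y_0$, and with $\pi_\lambda^{\lambda+1}\colon(T,X_{\lambda+1})\to(T,X_\lambda)$ a relatively equicontinuous extension for every $\lambda<\theta$. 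Since $X$ is non-trivial, $\theta\ge1$.

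Next I would single out the first level of the tower whose phase space is non-trivial. Let $\lambda_0$ be the least ordinal with $X_{\lambda_0}$ non-trivial; it exists because $X_\theta=X$ is non-trivial, and $\lambda_0\ge1$ since $X_0$ is a single point. I claim $\lambda_0$ is a successor ordinal. If it were a limit ordinal, then by (2) of Definition~\ref{sn6.13} the space $X_{\lambda_0}$ would be the minimal set of the product semiflow $(T,\prod_{\nu<\lambda_0}X_\nu)$; but each $X_\nu$ with $\nu<\lambda_0$ is a single point by the choice of $\lambda_0$, so that product, and hence $X_{\lambda_0}$, would be a single point --- a contradiction. Thus $\lambda_0=\mu+1$ for some $\mu$, with $X_\mu$ one-pointed and $X_{\mu+1}$ non-trivial.

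It then remains to verify that $(T,X_{\mu+1})$ is equicontinuous surjective and a factor of $(T,X)$. It is a factor of $(T,X)$ via the epimorphism $\pi_{\mu+1}^\theta$, and hence minimal. It is distal: the proof of Theorem~\ref{thm3.14A} exhibits $(\langle T\rangle,X_{\mu+1})$ as a minimal flow which is a factor of the distal flow $(\langle T\rangle,X)$ and hence distal, so $P(T,X_{\mu+1})\subseteq P(\langle T\rangle,X_{\mu+1})=\varDelta_{X_{\mu+1}}$. Moreover $\pi_\mu^{\mu+1}\colon(T,X_{\mu+1})\to(T,X_\mu)$ is a relatively equicontinuous extension over the one-point space $X_\mu$, which by Definition~\ref{sn1.20} is exactly equicontinuity of $(T,X_{\mu+1})$. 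Being equicontinuous and distal, $(T,X_{\mu+1})$ is surjective by (2) of Corollary~\ref{cor6.8}. Therefore $\pi_{\mu+1}^\theta\colon(T,X)\to(T,X_{\mu+1})$ is the required epimorphism onto a non-trivial equicontinuous surjective semiflow.

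I expect the only genuinely delicate point to be the ordinal argument of the second paragraph: it is what guarantees that the abstract tower produced by Theorem~\ref{thm3.14A} really contains a non-trivial equicontinuous stage, rather than merely ``becoming non-trivial'' at a limit ordinal. Everything else is bookkeeping with the relevant definitions together with the implication ``equicontinuous and distal $\Rightarrow$ surjective'' from Corollary~\ref{cor6.8}. An alternative route avoiding the tower is to note that $(\langle T\rangle,X)$ is a non-trivial distal minimal flow by Proposition~\ref{prop6.10}, that its maximal equicontinuous factor $(\langle T\rangle,X_{\mathrm{eq}})$ is then non-trivial, and that restricting this factor to the action of $T$ yields a non-trivial equicontinuous --- hence, being distal, surjective --- factor $(T,X_{\mathrm{eq}})$ of $(T,X)$.
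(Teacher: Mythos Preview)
Your argument is correct. The paper gives no proof of this corollary, simply recording it as an immediate consequence of Theorem~\ref{thm3.14A}; your proposal supplies exactly the details that derivation requires, including the (necessary) ordinal argument that the first non-trivial stage of the tower is a successor level, and the observation that this stage, being equicontinuous and distal, is surjective by Corollary~\ref{cor6.8}(2).
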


Of course if $T$ is amenable and $X$ is metric, then the statement of Corollary~\ref{cor3.15A} can follow from Corollary~\ref{cor5.6}.
\subsection{Minimality of reflections}\label{sec6.3}
If $(T,X)$ is a flow, $x\in X$, and $U$ a neighborhood of $x$, then $(N_T(x,U))^{-1}=N_{(X,T)}(x,U)$ is left-syndetic in $T$ by $T=T^{-1}$. So if $x$ is a.p. for $(T,X)$, then it is also a.p. for the reflection $(X,T)$. But if $(T,X)$ is only an invertible semiflow, then $(N_T(x,U))^{-1}$ need not be a left-syndetic subset of $T$ so that $x$ need not be a.p. for $(X,T)$; see 1. of Examples~\ref{ex1.4}. However, we will be concerned with questions or reflection principles as follows:
\begin{quote}
{\it Let $(T,X)$ be invertible. If $(T,X)$ is minimal, is $(X,T)$ minimal too? If $x$ is an a.p. point of $(T,X)$, is it an a.p. point of $(X,T)$?}
\end{quote}
In this subsection, we shall show that this question is in the affirmative if $T$ is an amenable semigroup (cf.~$\S\ref{sec1.1.4}$\,(\textbf{k})) or if $T$ is a right \textit{C}-semigroup (cf.~Definition~\ref{sn3.8B}).

\subsubsection{Abelian phase semigroup}
First of all, whereas the following observation is simple, it might be useful for our later proof of Proposition~\ref{prop6.17}.

\begin{lem}\label{lem6.16}
Let $f\colon X\rightarrow X$ be a homeomorphism and let $x\in X$ be a (forwardly) recurrent point for $f ^{-1}$. Then $f(x)$ belongs to $\mathrm{cls}_X{\{f^{-n}(x)\,|\,n=0,1,2,\dotsc\}}$.
\end{lem}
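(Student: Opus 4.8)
We have a homeomorphism $f\colon X\to X$ on a compact $T_2$ space and a point $x$ that is forwardly recurrent for $f^{-1}$, i.e. there is a net $n_i\to+\infty$ in $\mathbb Z_+$ with $f^{-n_i}(x)\to x$. We want $f(x)\in\mathrm{cls}_X\{f^{-n}(x)\mid n=0,1,2,\dots\}$.

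The plan is to push the recurrence hypothesis forward by one step under the continuous map $f$ and then read off the conclusion directly from the definition of closure.

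First I would unwind the hypothesis: since $x$ is a (forwardly) recurrent point for $f^{-1}$, by definition there is a net $\{n_i\}$ in $\mathbb{Z}_+$ with $n_i\to+\infty$ such that $f^{-n_i}(x)\to x$. Applying the continuous map $f$ to this convergent net yields $f\big(f^{-n_i}(x)\big)\to f(x)$, that is, $f^{-(n_i-1)}(x)\to f(x)$.

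Next I would observe that, since $n_i\to+\infty$, there is an index $i_0$ with $n_i\ge 1$ for all $i\ge i_0$; hence $n_i-1\in\{0,1,2,\dots\}$ for those $i$, so each term $f^{-(n_i-1)}(x)$ with $i\ge i_0$ lies in the set $S:=\{f^{-n}(x)\,|\,n=0,1,2,\dots\}$. Thus $f(x)$ is the limit of a net that eventually lies in $S$, and therefore $f(x)\in\mathrm{cls}_X{S}$, as claimed.

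I do not expect any genuine obstacle here; the only points requiring (minimal) care are the use of continuity of $f$ to transport the net and the harmless index shift $n\mapsto n-1$, which is legitimate precisely because the small indices are eventually excluded by $n_i\to+\infty$. Note that neither compactness nor the Hausdorff hypothesis on $X$ is actually needed for this particular statement, though both are used freely in the surrounding results.
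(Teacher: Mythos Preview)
Your proof is correct and is precisely the natural argument the paper has in mind; in fact the paper does not even write out a proof of this lemma, calling it a ``simple observation,'' and your index-shift via continuity of $f$ is exactly the intended one-liner.
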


Motivated by Proposition~\ref{prop6.10} stated in $\S\ref{sec6.1}$, we can easily obtain the following result using Lemma~\ref{lem6.16}.

\begin{prop}\label{prop6.17}
If $(T,X)$ is minimal invertible with $T$ an abelian semigroup, then $(X,T)$ is minimal.
\end{prop}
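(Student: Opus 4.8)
The plan is to show that every minimal subset $X_0$ of the reflection $(X,T)$ coincides with $X$; since such an $X_0$ exists by Zorn's lemma ($X$ being compact), this means $X$ is itself a minimal set of $(X,T)$, i.e.\ $(X,T)$ is minimal. So fix a minimal set $X_0$ of $(X,T)$ and an element $t\in T$. It suffices to prove that $tX_0\subseteq X_0$: once this holds for every $t\in T$, the set $X_0$ is a nonempty closed $T$-invariant subset of $X$, and the minimality of $(T,X)$ forces $X_0=X$.

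Since $X_0$ is invariant for $(X,T)$ we have $X_0T\subseteq X_0$, hence $t^{-1}X_0\subseteq X_0$, so $t^{-1}$ restricts to a continuous self-map of the nonempty compact space $X_0$. This cascade has forwardly recurrent points, so the set $R=\{z\in X_0 : t^{-n_k}z\to z\textrm{ for some net }n_k\to+\infty\}$ is nonempty. The key step is that $R$ is dense in $X_0$, and this is where commutativity of $T$ enters: if $z\in R$ with $t^{-n_k}z\to z$ and $s\in T$, then applying the continuous map $s^{-1}$ and using $s^{-1}t^{-n_k}=t^{-n_k}s^{-1}$ gives $t^{-n_k}(zs)\to zs$, where $zs=s^{-1}z\in X_0$ because $X_0T\subseteq X_0$; thus $zs\in R$, so $R$ is invariant under $(X,T)$. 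Consequently $\mathrm{cls}_XR$ is a nonempty closed $(X,T)$-invariant subset of the minimal set $X_0$, whence $\mathrm{cls}_XR=X_0$.

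Now I apply Lemma~\ref{lem6.16} with $f=t$, which is a self-homeomorphism of $X$ since $(T,X)$ is invertible: each $z\in R$ is forwardly recurrent for $f^{-1}=t^{-1}$, so $tz\in\mathrm{cls}_X\{t^{-n}z : n\ge 0\}$. But $t^{-n}z=zt^n\in zT$ and $\mathrm{cls}_X(zT)\subseteq X_0$ (as $z\in X_0$ and $X_0$ is closed and $(X,T)$-invariant), so $tz\in X_0$. Hence $tR\subseteq X_0$, and since $t$ is a homeomorphism of $X$ and $R$ is dense in $X_0$ we conclude $tX_0=t(\mathrm{cls}_XR)=\mathrm{cls}_X(tR)\subseteq X_0$, which is exactly what was needed.

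The main obstacle is the density of $R$: unlike the distal situation of Proposition~\ref{prop6.10}, here the cascades $(t^{-1},X)$ need not be pointwise recurrent on all of $X$, so Lemma~\ref{lem6.16} cannot be applied directly at an arbitrary point of $X_0$; one must first produce recurrent points inside $X_0$ and then use abelianness together with the minimality of $(X,T)$ on $X_0$ to spread them to a dense set, after which a closure argument upgrades $tR\subseteq X_0$ to $tX_0\subseteq X_0$. (One can alternatively bypass Lemma~\ref{lem6.16} entirely: for a minimal set $X_0$ of $(X,T)$, commutativity shows $t^{-1}X_0$ is again a nonempty closed $(X,T)$-invariant subset of $X_0$, hence $t^{-1}X_0=X_0$, and applying the bijection $t$ yields $tX_0=X_0$; but the argument above is closer in spirit to Proposition~\ref{prop6.10} and to the more general statements that follow.)
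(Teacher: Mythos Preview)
Your proof is correct and follows essentially the same approach as the paper's first proof: take a minimal set $X_0$ of $(X,T)$, use Lemma~\ref{lem6.16} together with commutativity to show $tX_0\subseteq X_0$ for every $t\in T$, and conclude $X_0=X$ by minimality of $(T,X)$. The only organizational difference is that the paper picks a \emph{single} minimal point $x_0\in X_0$ for the cascade $(t^{-1},X_0)$, applies Lemma~\ref{lem6.16} once to get $tx_0\in X_0$, and then uses commutativity \emph{after} this step via $tT^{-1}x_0=T^{-1}(tx_0)\subseteq X_0$ (plus density of $T^{-1}x_0$ in $X_0$); you instead use commutativity \emph{first} to show the recurrent set $R$ is $(X,T)$-invariant and hence dense, and then apply Lemma~\ref{lem6.16} pointwise on $R$. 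These are equivalent maneuvers.

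Your parenthetical alternative is also valid and close in spirit to the paper's second proof (Proof~II), which observes that $tX_0$ is again minimal for $(X,T)$ by commutativity and meets $X_0$, hence $tX_0=X_0$; your version that $t^{-1}X_0$ is a closed $(X,T)$-invariant subset of $X_0$ is an equally clean variant.
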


\begin{proof}
Let $X_0$ be a minimal set of $(X,T)$ and $t\in T$. Let $x_0\in X_0$ be a minimal point for $(t^{-1},X_0)$ with phase semigroup $\mathbb{Z}_+$. As $x_0$ is recurrent for $t^{-1}$, it follows from Lemma~\ref{lem6.16} that $tx_0\in X_0$. Then by commutativity of $T$,
$tT^{-1}x_0=T^{-1}tx_0\subseteq X_0$ so $tX_0\subseteq X_0$. Whence $X_0$ is invariant for $(T,X)$.
This proves Proposition~\ref{prop6.17}.
\end{proof}

In fact, Proposition~\ref{prop6.17} can be differently proved as follows:

\begin{proof}[\textbf{Proof~II of Proposition~\ref{prop6.17}}]
Let $X_0$ be a minimal set of $(X,T)$. Then if $t\in T$, then $X_0\cap tX_0\not=\emptyset$ (since $tt^{-1}x=x$ by Corollary~\ref{cor3.3B}). But since $T$ is abelian, then $tX_0$ is minimal for $(X,T)$ so $tX_0=X_0$. This shows that $X_0=X$.
\end{proof}

The lighting point of Proposition~\ref{prop6.17} is that $T$ is not necessarily a right-syndetic subsemigroup of the group $\langle T\rangle$ of homeomorphisms of $X$ generated by $T$.

Let $N$ be a non-empty closed invariant set of $(X,T)$ and $t\in T$; then for every $x\in N$, its the $\alpha$-limit points set $\alpha_t(x)$ under $(t,X)$ is such that $t^n\alpha_t(x)\subseteq N$ for all $n\in\mathbb{Z}_+$. More generally, we can obtain the following.

\begin{cor}\label{cor3.18A}
Let $(T,X)$ be an invertible semiflow and $N_-$ an invariant closed non-empty subset of its reflection $(X,T)$. Then for every abelian subsemigroup $S\subseteq T$, there exists some point $x\in N_-$ such that $Sx\in N_-$.
\end{cor}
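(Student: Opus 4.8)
The plan is to pass to the reflection $(X,T)$, cut down to a minimal piece inside $N_-$, and use commutativity to show that $S$ acts invertibly on that piece. We may assume $e\in S$ (replace $S$ by $S\cup\{e\}$; this is harmless since the point we produce will lie in $N_-$ anyway). Since $N_-$ is closed in $X$ it is compact, and invariance of $N_-$ for $(X,T)$ says $t^{-1}N_-\subseteq N_-$ for all $t\in T$; hence the restriction of the reflection action, $(N_-,S)$ with $x\cdot s=s^{-1}x$, is a semiflow on the compact space $N_-$ with (discrete) phase semigroup $S$. By Zorn's lemma choose a minimal subset $N_0$ of $(N_-,S)$: a non-empty closed set with $s^{-1}N_0\subseteq N_0$ for every $s\in S$, such that any non-empty closed subset of $N_0$ invariant under this reflection action equals $N_0$.

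For a fixed $s\in S$ put $A_s=\{x\in N_0\,|\,sx\in N_0\}=N_0\cap s^{-1}(N_0)$, which is closed in $N_0$ because $s$ is a homeomorphism of $X$. I claim $A_s\neq\emptyset$. The cascade $(s^{-1},N_0)$ on the compact space $N_0$ has a minimal point by Lemma~\ref{lem2.6}, and minimal points of a cascade are (forwardly) recurrent, so there is $x_0\in N_0$ recurrent for $s^{-1}$. Applying Lemma~\ref{lem6.16} to the self-homeomorphism $s$ of $X$ gives $s(x_0)\in\mathrm{cls}_X\{s^{-n}(x_0)\,|\,n\ge0\}$, and this closure is contained in $N_0$ since $s^{-n}(x_0)\in N_0$ for all $n\ge0$ and $N_0$ is closed. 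Hence $x_0\in A_s$.

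The crucial point, and the only place the abelian hypothesis enters, is that $A_s$ is invariant under the reflection action of $S$. Indeed, let $x\in A_s$ and $t\in S$. Since $s$ and $t$ commute in $\langle T\rangle$, so do $s$ and $t^{-1}$ (from $st=ts$ one gets $t^{-1}st=s$, hence $t^{-1}s=st^{-1}$), so $s(t^{-1}x)=t^{-1}(sx)\in N_0$ because $sx\in N_0$ and $N_0$ is reflection-invariant; together with $t^{-1}x\in N_0$ this gives $t^{-1}x\in A_s$. Thus $A_s$ is a non-empty closed reflection-invariant subset of the minimal set $N_0$, so $A_s=N_0$, i.e. $sN_0\subseteq N_0$. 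Letting $s$ range over $S$ and combining with $N_0\subseteq sN_0$ (which is just $s^{-1}N_0\subseteq N_0$), we obtain $sN_0=N_0$ for all $s\in S$, so $SN_0\subseteq N_0$. Any $x\in N_0$ then satisfies $Sx\subseteq N_0\subseteq N_-$, which proves the corollary.

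I expect the main obstacle to be exactly the step of upgrading the single-homeomorphism recurrence of Lemma~\ref{lem6.16} to one point that is good for all of $S$ at once; the device is that the ``good set'' $A_s$ attached to each individual $s$ is itself invariant under the abelian reflection semigroup, so minimality of $N_0$ forces $A_s$ to be all of $N_0$. The remaining ingredients — compactness of $N_0$, existence of a minimal set by Zorn, closedness of $A_s$, and the commutation $st^{-1}=t^{-1}s$ — are routine.
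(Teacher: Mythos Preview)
Your proof is correct and follows essentially the same route as the paper. The paper's argument is a two-line citation of Proposition~\ref{prop6.17}: pick a minimal set $N_0$ for $(N_-,S)$, then Proposition~\ref{prop6.17} (minimal invertible abelian $\Rightarrow$ reflection minimal) gives that $N_0$ is minimal for $(S,X)$, hence $SN_0=N_0\subseteq N_-$. What you have done is unfold the proof of Proposition~\ref{prop6.17} in place --- your recurrence step via Lemma~\ref{lem6.16} and your commutativity step are exactly the two ingredients of that proposition's proof, only you package them through the auxiliary set $A_s=N_0\cap s^{-1}(N_0)$ rather than through the orbit closure $\mathrm{cls}_X\,x_0S$. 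Both packagings are equivalent and equally clean; yours is self-contained, the paper's is shorter by deferring to the earlier result.
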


\begin{proof}
Let $S$ be an abelian subsemigroup of $T$. Since $N_-$ is invariant for $(X,T)$, it is invariant for $(X,S)$. Then there is a minimal set $N_0$ for $(X,S)$ with $N_0\subseteq N_-$. By Proposition~\ref{prop6.17}, $N_0$ is a minimal set for $(S,X)$, so $SN_0=N_0\subseteq N_-$. This proves Corollary~\ref{cor3.18A}.
\end{proof}

\subsubsection{Amenable phase semigroup}
Recall that as in $\S\ref{sec1.1.4}$\,(\textbf{k}) a semigroup $T$ is said to be amenable iff every semiflow on a compact $T_2$-space with the phase semigroup $T$ admits an invariant Borel probability measure.

Since each abelian semigroup is an amenable semigroup, then the following theorem covers Proposition~\ref{prop6.17} by different ergodic approaches.

\begin{thm}\label{thm6.19}
Let $(T,X)$ be an invertible semiflow with $T$ an amenable semigroup and $x\in X$. Then $x$ is an a.p. point of $(T,X)$ if and only if $x$ is an a.p. point of $(X,T)$. Moreover, if $x$ is an a.p. point of $(T,X)$, then $\mathrm{cls}_XTx=\mathrm{cls}_XxT$.
\end{thm}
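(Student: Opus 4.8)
The plan is to reduce the statement, via Lemma~\ref{lem2.6}, to an assertion about minimal sets: almost periodicity of a point is the same as its orbit closure being a minimal set, so it suffices to show that if $M:=\mathrm{cls}_XTx$ is a minimal set of $(T,X)$, then $M$ is also a minimal set of the reflection $(X,T)$. Granting this, the ``moreover'' clause is immediate, since $x\in M$ forces $\mathrm{cls}_XxT=M=\mathrm{cls}_XTx$; the ``only if'' implication then follows from Lemma~\ref{lem2.6}; and the ``if'' implication follows by symmetry, because $(X,T)$ is again an invertible semiflow with the amenable phase semigroup $T$ and its reflection is $(T,X)$.

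First I would check that $M$ is a closed invariant subset of $(X,T)$ on which the action is surjective. Since $(T,M)$ is minimal, every point of $M$ is almost periodic, so Proposition~\ref{prop3.7B} (using amenability of $T$) gives that $(T,M)$ is surjective, i.e. $tM=M$ for every $t\in T$. As $(T,X)$ is invertible, each $t$ is injective on $X$; hence for $y\in M$ its unique preimage $t^{-1}y$ lies in $M$, and in fact $t^{-1}M=M$. Thus $M$ is invariant for $(X,T)$, and each transition map $y\mapsto t^{-1}y$ is a surjection of $M$ onto itself, so $(M,T)$ is a genuine subsemiflow of $(X,T)$.

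Next I would repeat the argument ``on the other side.'' By Zorn's lemma pick a minimal subset $M_0\subseteq M$ of the semiflow $(M,T)$. Every point of $M_0$ is almost periodic for $(M,T)$, so Proposition~\ref{prop3.7B} applies once more and $(M_0,T)$ is surjective; for the reflection this means $t^{-1}M_0=M_0$ for all $t\in T$. Applying $t$ (injective, with $t\,t^{-1}=\mathrm{id}$ on $M$) yields $tM_0=M_0$ for all $t\in T$, so $M_0$ is a non-empty closed $T$-invariant subset of the $(T,X)$-minimal set $M$; hence $M_0=M$. Therefore $M$ is minimal for $(X,T)$, which finishes the proof by the first paragraph.

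The main obstacle, and the point to get right, is the two-sided use of amenability. Amenability only produces an invariant measure and thereby \emph{surjectivity} of the transition maps on a minimal set (this is Lemma~\ref{lem3.6B}, packaged in Proposition~\ref{prop3.7B}); it is the interplay between this surjectivity on $M$ and the \emph{injectivity} coming from invertibility of $(T,X)$ that promotes $M$ to a subsystem on which the reflection is again a genuine surjective semiflow, so that the same machinery can be applied a second time to $(X,T)$. One must also be careful to interpret ``invariant'' and ``surjective'' for the reflection in terms of the preimages $t^{-1}(\cdot)$ rather than images, to avoid direction errors when transferring between $(T,X)$ and $(X,T)$.
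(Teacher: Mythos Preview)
Your proof is correct and follows essentially the same approach as the paper's: both arguments show that a minimal set $M$ of $(T,X)$ is also minimal for the reflection $(X,T)$ by first using amenability to get surjectivity of $(T,M)$ (hence $M$ is $(X,T)$-invariant), then taking an $(X,T)$-minimal subset $M_0\subseteq M$ and applying amenability a second time to force $tM_0=M_0$, contradicting minimality of $M$ unless $M_0=M$. The only cosmetic difference is that the paper invokes Lemma~\ref{lem3.6B} directly (support of an invariant measure is mapped onto itself), whereas you invoke the packaged Proposition~\ref{prop3.7B}; these are the same mechanism.
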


\begin{note}
If ``with $T$ an amenable semigroup'' is replaced by ``admitting an invariant Borel probability measure'', then the statement still holds.
\end{note}

\begin{proof}
Let $X_0$ be a minimal subset of $(T,X)$. Since $T$ is amenable, there is an invariant quasi-regular Borel probability measure $\mu$ for $(T,X_0)$ such that $\textrm{supp}\,(\mu)=X_0$. Then by Lemma~\ref{lem3.6B}, it follows that for each $t\in T$ is a surjection of $X_0$ and so is $t^{-1}$ and then all $t$ restricted to $X_0$ are self-homeomorphisms of $X_0$. This shows that $X_0$ is also a closed invariant subset of $(X,T)$. We will show that $X_0$ is also minimal for $(X,T)$.

To be contrary assume that $X_0$ is not minimal for $(X,T)$; then by Zorn's lemma, there exists a proper non-empty closed subset $Y$ of $X_0$ such that $(Y,T)$ is a minimal semiflow. Since $T$ is amenable, there is an invariant quasi-regular Borel probability measure $\nu$ for $(Y,T)$ such that $\textrm{supp}\,(\nu)=Y$. Then by Lemma~\ref{lem3.6B} again, it follows that for each $t\in T$, $t^{-1}\colon Y\rightarrow Y$ is surjective and so is $t^{-1}$ and then $t$ restricted to $Y$ is a self-homeomorphism of $Y$. This shows that $Y$ is also a closed invariant subset of $(T,X_0)$.
But this contradicts that $(T,X_0)$ is minimal.

By symmetry, we can show that every minimal set of $(X,T)$ is a minimal set of $(T,X)$.
The proof of Theorem~\ref{thm6.19} is therefore complete.
\end{proof}

In view of 1 of Examples~\ref{ex1.4}, the condition that $T$ is amenable is essential for the above proof of Theorem~\ref{thm6.19}. In fact, the key idea is that each $t\in T$ is surjective restricted to every minimal subset. Amenability just guarantees this condition.

Recall that Proposition~\ref{prop6.10} claims that if $(T,X)$ is distal, then so is $(X,T)$. However, from Theorem~\ref{thm6.19} we can obtain the following ``reflection principle of distality'' which asserts that if $x\in X$ is a distal point of $(T,X)$ and if the phase semigroup $T$ is amenable, then $x$ is also a distal point for $(X,T)$. So if $f\colon X\rightarrow X$ is a homeomorphism such that it is forwardly distal at a point $x$, then it is backwardly distal at $x$.

\begin{prop}\label{prop6.20}
Let $(T,X)$ be invertible with $T$ an amenable semigroup and $x\in X$.
If $x$ is a distal point of $(T,X)$, then $x$ is a distal point of $(X,T)$.
\end{prop}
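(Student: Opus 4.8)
The plan is to derive Proposition~\ref{prop6.20} by combining Theorem~\ref{thm5.1} (distal points are exactly those $x$ for which $(x,y)$ is almost periodic in the product whenever $y$ is almost periodic) with Theorem~\ref{thm6.19} (for an invertible semiflow with amenable phase semigroup, almost periodicity of a point is the same for the semiflow and for its reflection). First I would apply Theorem~\ref{thm5.1} to the reflection $(X,T)$: it suffices to prove that $(x,y)$ is an almost periodic point of the product semiflow $(X,T)\times(X,T)$ for every almost periodic point $y$ of $(X,T)$.

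So fix such a $y$. Since $(T,X)$ is invertible and $T$ is amenable, Theorem~\ref{thm6.19} shows that $y$, being almost periodic for $(X,T)$, is almost periodic for $(T,X)$. Because $x$ is by hypothesis a distal point of $(T,X)$, Theorem~\ref{thm5.1} then gives that $(x,y)$ is an almost periodic point of $(T,X\times X)$. The bookkeeping step is the following identification: the semiflow $(T,X\times X)$, with $t$ acting by $(a,b)\mapsto(ta,tb)$, is again invertible (each such map is bijective since $t$ is bijective on $X$) and its phase semigroup is still the amenable semigroup $T$; moreover its reflection, $(a,b)\mapsto t^{-1}(a,b)=(t^{-1}a,t^{-1}b)$, is precisely the product semiflow $(X,T)\times(X,T)$. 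Hence Theorem~\ref{thm6.19}, applied now to the invertible semiflow $(T,X\times X)$, converts the almost periodicity of $(x,y)$ for $(T,X\times X)$ into almost periodicity of $(x,y)$ for $(X,T)\times(X,T)$.

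As $y$ was an arbitrary almost periodic point of $(X,T)$, Theorem~\ref{thm5.1} applied to $(X,T)$ yields that $x$ is a distal point of $(X,T)$, which is the assertion. I do not expect a genuine obstacle here: the only points needing care are the explicit identification of the reflection of a product with the product of the reflections and the remark that Theorem~\ref{thm6.19}, although phrased for a semiflow and its reflection, applies verbatim to the invertible semiflow $(T,X\times X)$ with amenable phase semigroup. Both are immediate from the definitions, and in particular this is exactly why the amenability hypothesis on $T$ is used only through Theorem~\ref{thm6.19}.
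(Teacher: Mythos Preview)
Your proof is correct and follows essentially the same route as the paper: both arguments combine Theorem~\ref{thm5.1} with Theorem~\ref{thm6.19} applied first on $X$ and then on $X\times X$. The only cosmetic difference is that the paper unfolds the sufficiency direction of Theorem~\ref{thm5.1} by hand (taking $x'\in\mathrm{cls}_XxT$, showing $(x,x')$ is almost periodic for $(X\times X,T)$, and invoking Lemma~\ref{lem2.7}), whereas you invoke Theorem~\ref{thm5.1} for $(X,T)$ directly; your formulation is slightly cleaner for exactly this reason.
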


\begin{notes}
\begin{enumerate}
\item If ``with $T$ an amenable semigroup'' is replaced by ``admitting an invariant Borel probability measure'', then the statement still holds.
\item Proposition~\ref{prop6.20} is in fact a corollary of Theorem~\ref{thm5.4}. But we will present an independent proof here.
\end{enumerate}
\end{notes}

\begin{proof}
Let $x\in X$ be distal for $(T,X)$. Then by Theorem~\ref{thm5.1}, $x$ is minimal for $(T,X)$. By Theorem~\ref{thm6.19}, $x$ is a minimal point for $(X,T)$. Let $Z=\textrm{cls}_XxT$ corresponding to $(X,T)$. Clearly $Z=\textrm{cls}_XTx$ by Theorem~\ref{thm6.19} again. We will show that $x$ is not proximal to any $x^\prime\not=x$ in $Z$ in the sense of $(X,T)$.
In fact, if $x^\prime$ is in $Z$, then $x^\prime$ is a minimal point of $(X,T)$. Whence $x^\prime$ is also a minimal point of $(T,X)$ by Theorem~\ref{thm6.19} once more. Then by Theorem~\ref{thm5.1}, $(x,x^\prime)$ is a minimal point for $(T,X\times X)$. This implies by Theorem~\ref{thm6.19} that $(x,x^\prime)$ is a minimal point of $(X\times X,T)$. Thus, if $x$ is proximal to $x^\prime$ for $(X,T)$, then $\textrm{cls}_{X\times X}(x,x^\prime)T$ is contained in the diagonal of $X\times X$ by minimality of $(x,x^\prime)$ under $(X\times X,T)$. Thus $x=x^\prime$.
The proof of Proposition~\ref{prop6.20} is thus complete.
\end{proof}

In preparation for our next equicontinuity consequence of Proposition~\ref{prop3.7B}, we need to recall a notion for our convenience.

\begin{sn}\label{def6.21}
A subsemigroup $S$ of $X^X$ is called a \textit{semi-topological semigroup} if under the topology $\mathfrak{p}$ of pointwise convergence, $(f,g)\mapsto f\circ g$ is separately continuous.
\end{sn}

If $E(X)$ is a topological group with the pointwise topology and if $(T,X)$ is minimal, then $(T,X)$ is equicontinuous (cf.~\cite[Proposition~5.5]{DX}). However, if $E(X)$ is only a topological semigroup but $E(X)\subset C(X,X)$, then $(T,X)$ is still equicontinuous by the following.

\begin{thm}\label{thm6.22}
Let $(T,X)$ be a semiflow with $T$ an amenable semigroup and with a dense set of a.p. points. Then, $(T,X)$ is equicontinuous surjective iff $(T,X)$ is equicontinuous iff $E(T,X)$ is a topological semigroup with $E(T,X)\subset C(X,X)$.
\end{thm}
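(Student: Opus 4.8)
The plan is to establish the chain of equivalences by closing a short cycle of implications, reducing each step to results already available in the excerpt. Write $E = E(T,X)$. The easy direction is ``$(T,X)$ u.a.p. $\Rightarrow$ $(T,X)$ equicontinuous'': this is immediate from Theorem~\ref{thm2.10}, which gives u.a.p.\ $\Leftrightarrow$ equicontinuous surjective, hence in particular equicontinuous. For ``equicontinuous $\Rightarrow$ $E$ is a topological semigroup with $E\subset C(X,X)$'': when $(T,X)$ is equicontinuous, every $p\in E$ is a uniform limit (on compacta) of the equicontinuous family $T$, so $p$ is continuous, i.e.\ $E\subset C(X,X)$; moreover, as noted in Proof~(III) of Theorem~\ref{thm2.1}, equicontinuity makes $(p,x)\mapsto p(x)$ jointly continuous on $E\times X$, and joint continuity of the evaluation forces separate (indeed joint) continuity of composition $(p,q)\mapsto p\circ q$ on $E$, so $E$ is a (semi-)topological semigroup. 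This gives two of the three implications needed for the cycle, leaving the substantive step.

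The main work is ``$E\subset C(X,X)$ and $E$ a topological semigroup (and a dense set of a.p.\ points, $T$ amenable) $\Rightarrow$ $(T,X)$ is u.a.p.''. First I would show $(T,X)$ is distal. Amenability gives, on each minimal subset $X_0 = \mathrm{cls}_XTx$, an invariant quasi-regular Borel probability measure with $\mathrm{supp}(\mu) = X_0$; by Lemma~\ref{lem3.6B} each $t\in T$ restricts to a surjection of $X_0$, so (since a.p.\ points are dense and $tX$ is closed) $(T,X)$ is surjective, i.e.\ $tX=X$ for all $t$. Consequently every $p\in E$ is surjective: this is the argument in Proof~(III) of Theorem~\ref{thm2.1}, using that under the compact-open = pointwise topology a net $t_n\to p$ with each $t_nX=X$ cannot have $p(X)$ a proper closed subset. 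Now for any idempotent $u\in J(E)$: since $u$ is a continuous surjection with $u\circ u = u$, and $E$ is a topological semigroup, the standard Ellis-type argument (cf.\ Lemma~\ref{lem6.3} and the reasoning in Theorem~\ref{thm6.6}) shows there is a \emph{unique} minimal left ideal $I$ in $E$ and, because the a.p.\ points are dense and $u\in C(X,X)$ fixes each of them (every a.p.\ point $x$ satisfies $x = ux$ as $x\in Ix$), we get $u = \textit{id}_X$ on a dense set, hence $u=\textit{id}_X$. Thus $\textit{id}_X$ is the only idempotent in $E$, so by (4) of Lemma~\ref{lem6.3} $E$ is a group with neutral element $\textit{id}_X$, and by Lemma~\ref{lem6.7} $(T,X)$ is distal. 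In particular $(T,X)$ is distal \emph{and} surjective, hence equicontinuous-surjective is what we aim for: invoke Ellis' joint continuity theorem (Theorem~\ref{thmB7} in the Appendix, as already used in Corollary~\ref{cor6.11}), which applies since $E$ is a group of self-homeomorphisms of the compact space $X$ acting separately continuously, to conclude $E$ — and therefore $T$ — acts equicontinuously on $X$. Finally Theorem~\ref{thm2.10} upgrades ``equicontinuous (and surjective)'' to u.a.p., closing the cycle.

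The step I expect to be the main obstacle is extracting $u = \textit{id}_X$ from the hypotheses ``$E$ a topological semigroup, $E\subset C(X,X)$, dense a.p.\ points'' without circularity — i.e.\ showing there is a single minimal left ideal with a single idempotent. The delicate point is that abelianness of $\langle T\rangle$ is \emph{not} assumed here (unlike in Theorem~\ref{thm6.6}), so one cannot directly run the ``$t_n u = u t_n$'' argument to merge two minimal left ideals; instead one must use that composition in $E$ is separately continuous together with surjectivity of all elements of $E$ and continuity of idempotents. A clean route is: surjectivity of every $p\in E$ already forces each idempotent $u$ to be a continuous surjective retraction, and continuity plus density of the fixed a.p.\ points pins $u=\textit{id}_X$ \emph{for each} idempotent individually, so uniqueness of the minimal left ideal is not even needed — it then follows a posteriori. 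I would present it in that streamlined order to avoid the abelian detour. Everything else is routine given the cited results.
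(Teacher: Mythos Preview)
Your cycle of easy implications is fine, and you correctly identify that the substantive step is ``$E\subset C(X,X)$ topological semigroup $\Rightarrow$ u.a.p.''. However, your argument for this step has a real gap: the claim that \emph{every} $p\in E$ is surjective on $X$ is not justified. You appeal to the argument in Proof~(III) of Theorem~\ref{thm2.1}, but that argument uses equicontinuity of $(T,X)$ to identify the pointwise and compact-open topologies on $E$; here equicontinuity is precisely what you are trying to prove. In fact Example~\ref{exa6.24} shows the implication ``$(T,X)$ surjective, $E\subset C(X,X)$, $E$ a topological semigroup $\Rightarrow$ every $p\in E$ surjective on $X$'' is \emph{false}: there $f$ is a homeomorphism, $E$ is a topological semigroup contained in $C(X,X)$, yet the constant map $c\in E$ is not surjective. (The theorem's extra hypothesis of dense almost periodic points fails in that example, but your surjectivity step never invokes that hypothesis.) Your ``streamlined'' route at the end --- deducing $u=\textit{id}_X$ for each idempotent individually --- still rests on this unproved surjectivity, so it does not rescue the argument. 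And the fallback you sketch via ``$x=ux$ for a.p.\ $x$'' only gives $vx=x$ for \emph{one} idempotent $v\in J(I)$ determined by $x$ (via Lemma~\ref{lem6.3}(5)), not for every idempotent.

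The paper's proof circumvents exactly this obstacle by working on a minimal left ideal $\mathbb{I}$ rather than on $X$. Amenability is applied to the minimal semiflow $(T,\mathbb{I})$ (Proposition~\ref{prop3.7B}), giving $t\mathbb{I}=\mathbb{I}$ for each $t\in T$. Now the \emph{topological-semigroup} hypothesis --- joint continuity of composition $E\times E\to E$, which \emph{is} assumed, as opposed to joint continuity of the action $E\times X\to X$, which is \emph{not} --- lets one pass to the limit and obtain $p\mathbb{I}=\mathbb{I}$ for every $p\in\mathbb{I}$. From this one gets the nonstandard identity $up=p$ for all $u\in J(\mathbb{I})$, $p\in\mathbb{I}$ (note this is the opposite side from Lemma~\ref{lem6.3}(2)), whence uniqueness of the idempotent in $\mathbb{I}$ via Lemma~\ref{lem6.3}(9). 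Only then do dense a.p.\ points and continuity of $u$ force $u=\textit{id}_X$. The moral: use the joint continuity you actually have (on $E\times E$), not the one you wish you had (on $E\times X$).
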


\begin{proof}
The ``only if'' parts are obvious. Next we show the ``if'' parts of Theorem~\ref{thm6.22}. In fact, we only need prove that if $E(X)\subset C(X,X)$ is a topological semigroup, then $(T,X)$ is equicontinuous and surjective.
For this, we now assume that $E(X)\subset C(X,X)$ is a topological semigroup in the sense of the pointwise topology $\mathfrak{p}$.

Let $\mathbb{I}$ be a minimal left ideal in $E(X)$. Then we can first show that
\begin{description}
\item[(i)] Given any $p\in\mathbb{I}$, $p\mathbb{I}=\mathbb{I}$.
\end{description}

\begin{proof}
Indeed, applying Proposition~\ref{prop3.7B} with $T\times\mathbb{I}\rightarrow\mathbb{I}$, $(t,p)\mapsto tp$, it follows that for every $t\in T$, $t\mathbb{I}=\mathbb{I}$. Then if $T\ni t_n\xrightarrow{\mathfrak{p}}p$ and $q\in\mathbb{I}$,  there are $q_n\in\mathbb{I}$ with $t_nq_n=q$ and $q_n\xrightarrow{\mathfrak{p}}r$ for some $r\in\mathbb{I}$ so that $pr=q$ by the joint continuity of $(f,g)\mapsto f\circ g$. Thus, $p\mathbb{I}=\mathbb{I}$.
\end{proof}

Then by (i) there follows that
\begin{description}
\item[(ii)] $up=p$, for $u\in J(\mathbb{I})$ and $p\in\mathbb{I}$.
\end{description}
\begin{proof}
By (i), $u\mathbb{I}=\mathbb{I}$, so $p=uq$, for some $q\in\mathbb{I}$. Then $up=uuq=uq=p$.
\end{proof}

Next, if $u,v$ are idempotents in $\mathbb{I}$, then by (ii), it follows that $(u,v)u=(u,u)$. (9) of Lemma~\ref{lem6.3} implies that
\begin{description}
\item[(iii)] $u=v$.
\end{description}
Therefore, $\mathbb{I}$ has a unique idempotent $u$ in $\mathbb{I}$. Of course $u\in C(X,X)$. Since $(T,X)$ has a dense set of a.p. points, hence $ux=x$ for each $x\in X$. This implies by (4) of Lemma~\ref{lem6.3} that $\mathbb{I}=E(X)\subset C(X,X)$ is a group. Thus $(T,X)$ is equicontinuous by Corollary~\ref{cor6.11}.
\end{proof}

If $T$ is a topological group, then we can improve the statement of Theorem~\ref{thm6.22} by dropping the amenability condition by a completely different proof as follows:

\begin{itemize}
\item \textit{A flow $(T,X)$ is equicontinuous if and only if $E(T,X)$ is a topological semigroup with $E(T,X)\subset C(X,X)$.}\quad (See Theorem~\ref{thm9.14} in Appendix $\S\ref{sec9}$.)
\end{itemize}

It should be noticed that the `topological semigroup' condition is essential for the above theorem (cf.~Theorem~\ref{thm9.14} in $\S\ref{sec9}$) as shown by the following example.

\begin{exa}\label{exa6.23}
Let $X$ be the one-point compactification of the reals and define a homeomorphism $f\colon X\rightarrow X$ by $x\mapsto x+1$ for all $x\in X$. Then $\infty$ is the unique almost periodic point and $(f,X)$ with phase group $\mathbb{Z}$ is not equicontinuous; but $E(f,X)\subset C(X,X)$ consists of the powers of $f$ together with the constant map $c\colon x\mapsto\infty$. By Theorem~\ref{thm6.6}, $I=\{c\}$ is the unique minimal left ideal in $E(f,X)$.
Moreover, it is easy to see that $E(f,X)$ is a semi-topological semigroup but not a topological semigroup. Indeed, let $t_n=f^n$ and $s_n=f^{1-n}$ for any $n\ge1$. Clearly, $t_n\to c$ and $s_n\to c$ but $f=\lim t_ns_n\not=(\lim t_n)(\lim s_n)=c$.
\end{exa}

On the other hand, let us consider $(f,X)$ in Example~\ref{exa6.23} from the viewpoint of semiflow. It shows that the condition `with a dense set of a.p. points' is essential in Theorem~\ref{thm6.22}.

\begin{exa}\label{exa6.24}
Let $f\colon X\rightarrow X$ be same as in Example~\ref{exa6.23}. But here we now consider $(f,X)$ with phase semigroup $\mathbb{Z}_+$. Clearly $\infty$ is also the unique almost periodic point and $(f,X)$ is not equicontinuous; but $E(f,X)\subset C(X,X)$ consists of the powers $f^n$, $n\ge0$, together with the constant map $c\colon x\mapsto\infty$ of $X$ into itself. By Theorem~\ref{thm6.6}, $I=\{c\}$ is the unique minimal left ideal in $E(f,X)$.
Moreover, it is easy to see that $E(f,X)$ is a topological
semigroup but not a topological group.
\end{exa}

Recall that any subset $A$ of $X$ is called \textit{non-trivial}\index{set!non-trivial} if $A\not=\emptyset$ and moreover $A\not=X$. Then it is easy to verify that
\begin{itemize}
\item If $T$ is a group, then $(T,X)$ is minimal if and only if $X$ does not contain a non-trivial invariant open subset.
\end{itemize}
However, in our semigroup situation, this becomes a non-trivial case.
First of all, we can easily get the following simple fact for an invertible semiflow $(T,X)$.

\begin{lem}\label{lem6.25}
Let $(T,X)$ be an invertible semiflow; then the following two statements hold:
\begin{enumerate}
\item[$(1)$] $W\subset X$ is an invariant open set of $(T,X)$ iff $X\setminus{W}$ is an invariant closed set of $(X,T)$.
\item[$(2)$] $(X,T)$ is minimal iff $TU=X$ for every non-empty open set $U$.
\end{enumerate}
\end{lem}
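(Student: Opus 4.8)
The plan is to reduce both assertions to one elementary observation about invertible maps and then read off each part. Recall that a set $A\subseteq X$ is invariant for $(T,X)$ iff $tA\subseteq A$ for every $t\in T$, whereas, since the reflection acts by $xt=t^{-1}x$, a set $A$ is invariant for $(X,T)$ iff $t^{-1}A\subseteq A$ for every $t\in T$, i.e. $T^{-1}A\subseteq A$. Because $(T,X)$ is invertible, each $t\in T$ is a bijection of $X$ (indeed a self-homeomorphism, $X$ being compact $T_2$), so images and preimages under $t$ are well behaved, and this is what drives the proof.

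For part $(1)$ I would fix $t\in T$ and an arbitrary $A\subseteq X$ and record the chain of equivalences $tA\subseteq A\Leftrightarrow A\subseteq t^{-1}A\Leftrightarrow t^{-1}(X\setminus A)\subseteq X\setminus A$: the first equivalence uses injectivity of $t$ (so $t^{-1}(tA)=A$) in one direction and surjectivity of $t$ (so $t(t^{-1}A)=A$) in the other, and the second is just that taking preimages commutes with complementation, $t^{-1}(X\setminus A)=X\setminus t^{-1}A$. Taking $A=W$ and quantifying over all $t\in T$ gives $TW\subseteq W\Leftrightarrow T^{-1}(X\setminus W)\subseteq X\setminus W$, i.e. $W$ is $(T,X)$-invariant iff $X\setminus W$ is $(X,T)$-invariant; combining this with the trivial fact that $W$ is open iff $X\setminus W$ is closed yields $(1)$. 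For part $(2)$ I would deduce it from $(1)$. If $TU=X$ for every non-empty open $U$, and $W$ is any non-empty invariant open set of $(T,X)$, then $TW\subseteq W$ by invariance while $TW=X$ since $W$ is non-empty and open, so $W=X$; thus $(T,X)$ has no non-trivial invariant open set, hence by $(1)$ $(X,T)$ has no proper non-empty closed invariant set, and since $X\neq\emptyset$ this says $(X,T)$ is minimal. Conversely, if $(X,T)$ is minimal and $U$ is non-empty open, then $TU=\bigcup_{t\in T}tU$ is open (each $t$ is a homeomorphism), is invariant for $(T,X)$ (since $s(TU)=\bigcup_{t}(st)U\subseteq TU$ as $sT\subseteq T$), and is non-empty (it contains $eU=U$); so by $(1)$ the set $X\setminus TU$ is a closed invariant set of $(X,T)$, and it is proper because $TU\neq\emptyset$, whence minimality of $(X,T)$ forces $X\setminus TU=\emptyset$, i.e. $TU=X$.

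The proof is short, so I would flag not an obstacle so much as the one thing to be careful about: keeping the two notions of invariance straight, since passing to the reflection reverses the direction of the action and turns ``$tA\subseteq A$'' into ``$t^{-1}A\subseteq A$'', and using genuine bijectivity (not merely injectivity or surjectivity) of the maps $t\in T$ at exactly the right steps. As a cross-check one can also prove $(2)$ directly from the standard characterization ``minimal iff every orbit closure is the whole space'': the $(X,T)$-orbit of $x$ is $T^{-1}x$, and $T^{-1}x\cap V\neq\emptyset$ for every non-empty open $V$ is equivalent, via $t^{-1}x\in V\Leftrightarrow x\in tV$, to $x\in TV$ for every such $V$, i.e. to $TV=X$; but routing the argument through $(1)$ keeps everything self-contained within this lemma.
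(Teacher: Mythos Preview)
Your proof is correct. The paper states Lemma~\ref{lem6.25} as a ``simple fact'' and does not supply a proof, so there is nothing to compare against; your argument is exactly the natural verification the paper leaves to the reader, and your use of part~(1) to derive part~(2) anticipates precisely how the paper exploits this lemma in the subsequent Theorem~\ref{thm6.26}.
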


The following seems to be helpful for considering the minimality of the reflection $(X,T)$ with $T$ a non-abelian semigroup. See \cite[Theorem~1.1.(2)-b]{AAB} for cascades on compact metric spaces.

\begin{thm}\label{thm6.26}
Let $(T,X)$ be invertible. Then $(X,T)$ is minimal if and only if $(T,X)$ does not have a non-trivial invariant open subset of $X$. Hence, $(T,X)$ is minimal if and only if there is no non-trivial open invariant set of $(X,T)$.
\end{thm}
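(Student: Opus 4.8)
The plan is to reduce everything to Lemma~\ref{lem6.25}\,(1) together with the definition of minimality. First I would record two trivial observations: (i) by Definition~(d) in $\S\ref{sec1.1.2}$, a semiflow on the compact $T_2$-space $X$ is minimal exactly when $X$ contains no \emph{non-trivial} closed invariant subset (i.e.\ none that is both non-empty and proper), since $X$ itself is automatically non-empty, closed and invariant; and (ii) ``invariant'' for the reflection $(X,T)$ means $A\subseteq tA$ for every $t\in T$ (equivalently, $A$ is negatively invariant for $(T,X)$), while each $t\in T$ is a self-homeomorphism of $X$ because $(T,X)$ is invertible and $X$ is compact $T_2$.

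For the first equivalence I would argue both directions by contraposition, using only Lemma~\ref{lem6.25}\,(1). If $(T,X)$ has a non-trivial invariant open set $W$, then $X\setminus W$ is a non-trivial closed invariant set of $(X,T)$, so $(X,T)$ is not minimal. Conversely, if $(X,T)$ is not minimal, I would pick a non-trivial closed invariant subset $Y$ of $(X,T)$; then $X\setminus Y$ is a non-trivial invariant open subset of $(T,X)$, again by Lemma~\ref{lem6.25}\,(1). This yields ``$(X,T)$ is minimal $\iff$ $(T,X)$ has no non-trivial invariant open subset''.

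For the ``Hence'' clause I would run the same correspondence in the reversed orientation. Since each $t\in T$ is a homeomorphism of $X$, a set $V\subseteq X$ is invariant open for $(X,T)$ if and only if $X\setminus V$ is invariant closed for $(T,X)$ --- this is precisely Lemma~\ref{lem6.25}\,(1) applied to the invertible semiflow $(X,T)$, whose reflection is $(T,X)$. Plugging this into the contraposition argument of the previous paragraph, with the roles of $(T,X)$ and $(X,T)$ exchanged, gives ``$(T,X)$ is minimal $\iff$ $(X,T)$ has no non-trivial invariant open subset''. As an alternative for this clause, one could instead read the first equivalence off Lemma~\ref{lem6.25}\,(2): the absence of a non-trivial invariant open subset of $(T,X)$ is equivalent to ``$TU=X$ for every non-empty open $U\subseteq X$'' (because $TU$ is always an open invariant set containing $U$, and is proper precisely when it is not all of $X$), and the latter is exactly the content of Lemma~\ref{lem6.25}\,(2).

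I do not expect a genuine obstacle: the whole argument is packaged in Lemma~\ref{lem6.25}. The only place that needs care is the bookkeeping --- keeping straight what ``invariant'' means for $(X,T)$ versus $(T,X)$, and applying Lemma~\ref{lem6.25}\,(1) in the correct orientation --- and, for the ``Hence'' clause, justifying that taking the reflection of the invertible semiflow $(X,T)$ returns $(T,X)$ (or, equivalently, proving the complement-symmetric form of Lemma~\ref{lem6.25}\,(1) directly from the bijectivity of the maps $t$). Everything else is immediate from the definitions.
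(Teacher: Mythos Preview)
Your proposal is correct and follows essentially the same approach as the paper: both directions of the first equivalence are obtained by contraposition directly from Lemma~\ref{lem6.25}\,(1), and the ``Hence'' clause follows by the evident symmetry upon swapping the roles of $(T,X)$ and $(X,T)$. Your treatment is in fact a bit more explicit about the bookkeeping (what ``invariant'' means on each side, and why the reflection of $(X,T)$ returns $(T,X)$), but the underlying argument is the same.
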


\begin{proof}
Let $(X,T)$ be minimal and assume $U$ is a non-trivial open invariant subset of $(T,X)$. Then $X\setminus{U}$ is invariant non-empty closed for $(X,T)$ by Lemma~\ref{lem6.25} and so $X\setminus{U}=X$ contradicting $U$ non-trivial. Thus $X$ does not contain a non-trivial open invariant subset for $(T,X)$.

Conversely, let $X$ have no non-trivial open invariant subset for $(T,X)$ and assume $(X,T)$ is not minimal. Then we can find a non-trivial closed invariant subset $\Theta$ of $(X,T)$. Then $X\setminus{\Theta}$ is a non-trivial open invariant subset of $(T,X)$ by Lemma~\ref{lem6.25} again. Thus this concludes that $(X,T)$ is a minimal semiflow.
\end{proof}

It is clear that every minimal \textit{flow} admits no non-trivial open invariant set. Now, by Theorem~\ref{thm6.19} and Theorem~\ref{thm6.26}, we can easily obtain the following semigroup-action result.

\begin{cor}\label{cor3.27A}
If $(T,X)$ is minimal invertible with $T$ amenable, then there exists no non-trivial, open, and invariant set for $(T,X)$.
\end{cor}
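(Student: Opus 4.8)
The plan is to combine Theorem~\ref{thm6.19} with Theorem~\ref{thm6.26}, so the argument is essentially two lines once those are in hand. First I would note that since $(T,X)$ is minimal, every point $x\in X$ is a minimal point and hence an almost periodic point of $(T,X)$ by Lemma~\ref{lem2.6}. Because $(T,X)$ is invertible and $T$ is amenable, Theorem~\ref{thm6.19} applies and gives two conclusions for each such $x$: it is an almost periodic point of the reflection $(X,T)$, and $\mathrm{cls}_X xT=\mathrm{cls}_X Tx$. Since $(T,X)$ is minimal, $\mathrm{cls}_X Tx=X$, whence $\mathrm{cls}_X xT=X$ for every $x\in X$; that is, every $T$-orbit closure in the reflection is all of $X$, so $(X,T)$ is minimal. (Equivalently, one may invoke the conclusion of the proof of Theorem~\ref{thm6.19} that a subset is a minimal set of $(T,X)$ if and only if it is a minimal set of $(X,T)$, applied to $X$ itself.)

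The second and final step is a direct appeal to Theorem~\ref{thm6.26}: for an invertible semiflow, $(X,T)$ is minimal if and only if $(T,X)$ admits no non-trivial invariant open subset of $X$. Having just shown that $(X,T)$ is minimal, this yields immediately that there is no non-trivial, open, and invariant set for $(T,X)$, which is the assertion.

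I expect no genuine obstacle here; the content is already packaged inside Theorems~\ref{thm6.19} and \ref{thm6.26}, and the corollary is just their composition. The only point deserving a word of care is that the reflection $(X,T)$ is a bona fide semiflow to which these theorems apply, which is guaranteed by the invertibility hypothesis (Definition~\ref{sn1.18}); and the amenability hypothesis is exactly what is needed in Theorem~\ref{thm6.19} to force each $t\in T$ to act surjectively on minimal subsets, without which the statement fails (cf.\ 1 of Examples~\ref{ex1.4}).
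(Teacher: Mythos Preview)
Your proposal is correct and follows essentially the same route as the paper: both combine Theorem~\ref{thm6.19} (to conclude that $(X,T)$ is minimal) with Theorem~\ref{thm6.26} (to translate this into the absence of non-trivial open invariant sets for $(T,X)$). The paper phrases it as a two-line proof by contradiction, while you argue directly, but the content is identical.
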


\begin{proof}
If this were false, then $(X,T)$ would not be minimal by Theorem~\ref{thm6.26}. But this contradicts Theorem~\ref{thm6.19}. This completes the proof of Corollary~\ref{cor3.27A}.
\end{proof}

Another result of Theorem~\ref{thm6.19} is the following theorem, which is a generalization of a classical theorem of Tumarkin \cite[Theorem~V7.13]{NS} from the important case of $T=(\mathbb{R},+)$ to the case of general amenable semigroups.

\begin{thm}\label{thm3.28A}
Let $T$ be an amenable semigroup. If $\varLambda$ is a minimal subset of $(T,X)$ such that $\mathrm{Int}_X\varLambda\not=\emptyset$, then $\varLambda$ is clopen in $X$.
\end{thm}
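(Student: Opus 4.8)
Since a minimal set is always closed, it remains to prove that $\varLambda$ is open in $X$.

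\textbf{Step 1: reduce to surjectivity on $\varLambda$.} I would first observe that $(T,\varLambda)$ is itself a minimal semiflow, so every point of $\varLambda$ is almost periodic; since $T$ is amenable, Proposition~\ref{prop3.7B} applies to $(T,\varLambda)$ and yields that each $t\in T$ restricted to $\varLambda$ is a continuous surjection of $\varLambda$ onto itself. (Concretely: by amenability and the Riesz theorem there is an invariant quasi-regular Borel probability measure $\mu$ for $(T,\varLambda)$; $\mathrm{supp}\,\mu$ is a closed invariant subset of $\varLambda$, hence equals $\varLambda$ by minimality, and then Lemma~\ref{lem3.6B} gives $t\varLambda=\varLambda$ for all $t\in T$.)

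\textbf{Step 2: the topological reduction.} Write $U=\mathrm{Int}_X\varLambda$, which is nonempty by hypothesis and, being a subset of $\varLambda$, is a nonempty relatively open subset of $\varLambda$. By minimality of $(T,\varLambda)$ the orbit of every $y\in\varLambda$ is dense in $\varLambda$ and so meets $U$: there is $t_y\in T$ with $t_yy\in U$. Since $t_y\colon X\to X$ is continuous, $t_y^{-1}(U)$ is an \emph{open} neighbourhood of $y$ in $X$. Hence $\varLambda\subseteq\bigcup_{t\in T}t^{-1}(U)$, a union of open subsets of $X$, and the whole statement reduces to
\[
(\ast)\qquad t^{-1}(U)\subseteq\varLambda\qquad\text{for every }t\in T,
\]
since then each $t_y^{-1}(U)$ is a neighbourhood of $y$ lying inside $\varLambda$, so that $\varLambda=\mathrm{Int}_X\varLambda$. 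In words, $(\ast)$ says that no point outside $\varLambda$ can be carried by an element of $T$ into $\mathrm{Int}_X\varLambda$. Note that $\varLambda\setminus U$ is a proper closed subset of $\varLambda$, so if it happened to be $T$-invariant minimality would finish the proof at once; but in general $t$ may send a boundary point of $\varLambda$ into $\mathrm{Int}_X\varLambda$, so $(\ast)$ must be proved head-on.

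\textbf{Step 3: the main obstacle.} Suppose $(\ast)$ fails, i.e. there are $z\in X\setminus\varLambda$ and $t\in T$ with $tz\in U$. Then $\varLambda=\mathrm{cls}_XT(tz)\subseteq\mathrm{cls}_XTz$, so $\varLambda$ is properly contained in $Y:=\mathrm{cls}_XTz$, and, by the surjectivity of $t|_\varLambda$ from Step~1, there is $\lambda\in\varLambda$ with $t\lambda=tz$, whence $\lambda\neq z$ and $(z,\lambda)\in P(T,X)$ with $\lambda$ a minimal point. The hard part will be to derive a contradiction from the existence of such a ``folding-in'' point: the plan is to feed an invariant quasi-regular measure on $Y$ (supplied by amenability, and handled via Lemma~\ref{lem3.6B} as in the proof of Theorem~\ref{thm6.19}) against the minimality of $\varLambda$ and the fact that $\varLambda\setminus U$ is nowhere dense in $X$, and I expect it convenient to pass to the reflection $(X,T)$ and use Theorem~\ref{thm6.19} (in particular $\mathrm{cls}_X Tx=\mathrm{cls}_X xT$ for minimal points) so as to trade forward orbits for backward ones. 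It is exactly here that amenability is indispensable: in 1 of Examples~\ref{ex1.4}, with $T$ a non-amenable free semigroup, $\varLambda=[0,1]$ is minimal with nonempty interior but is \emph{not} open in $X=[-1,2]$, precisely because the generators $f_\alpha$ fold points of $(1,2]$ into $\mathrm{Int}_X[0,1]$, so that $(\ast)$ genuinely fails there. Once $(\ast)$ is in hand, Step~2 gives that $\varLambda$ is open, hence clopen, completing the proof.
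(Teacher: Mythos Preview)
Your Step~3 is only a plan, and the plan will not close: any $T$-invariant measure on $Y=\mathrm{cls}_XTz$ supplied by amenability may be (and in the situation you describe, will be) supported entirely on $\varLambda$, since once the orbit of $z$ enters the invariant set $\varLambda$ it never leaves. Lemma~\ref{lem3.6B} applied to such a measure therefore yields nothing beyond Step~1, and the proximality observation $(z,\lambda)\in P(T,X)$, while correct, is idle here.

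The paper's argument is different and avoids $(\ast)$ entirely. Rather than pulling the interior \emph{back} through $t^{-1}$, it pushes an open seed \emph{forward}: choose $\varepsilon[y]\subseteq\varLambda$ open in $X$, set $U:=\bigcup_{t\in T}t\,\varepsilon[y]$, and note that $U$ is open in $X$, $T$-invariant, nonempty, and contained in $\varLambda$. Then Corollary~\ref{cor3.27A}, applied to the minimal system $(T,\varLambda)$, forces $U=\varLambda$, so $\varLambda$ is open in $X$. For $U$ to be open in $X$ one needs each $t$ to be an open self-map of $X$, and Corollary~\ref{cor3.27A} needs $(T,\varLambda)$ invertible; both hold once $(T,X)$ is invertible, which is how the citation of Theorem~\ref{thm6.19}/Corollary~\ref{cor3.27A} is meant to be read (and is explicitly assumed in the corollaries~\ref{cor3.29A} and~\ref{cor3.30A} that follow).

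Under that same invertibility hypothesis your route can be completed too, but in one line rather than through measures on $Y$: by Theorem~\ref{thm6.19} the minimal set $\varLambda$ is also invariant for the reflection $(X,T)$, i.e.\ $t^{-1}\varLambda\subseteq\varLambda$ for every $t\in T$; since $U\subseteq\varLambda$ this gives $t^{-1}U\subseteq t^{-1}\varLambda\subseteq\varLambda$, which is exactly~$(\ast)$. So the missing idea in Step~3 is not a new invariant measure but simply the negative invariance of $\varLambda$.
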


\begin{proof}
Let $y\in\varLambda$ be an interior point of $X$. Then we can pick some index $\varepsilon\in\mathscr{U}_X$ such that $\varepsilon[y]\subseteq\varLambda$. Then $U:=\bigcup_{t\in T}t\varepsilon[y]$ is an open, invariant, and non-empty subset of $X$ such that $U\subseteq\varLambda$. Thus by Theorem~\ref{thm6.19} (more precisely by Corollary~\ref{cor3.27A}), it follows that $U=\varLambda$. This proves Theorem~\ref{thm3.28A}.
\end{proof}

Let $n$ be a positive integer. From Urysohn's theorem the dimension of a compact subset of an $n$-dimensional manifold which has no interior points does not exceed $n-1$ (cf.~\cite[Lemma~2.14]{GH}). Hence we have the following

\begin{cor}[{Hilmy~\cite[Theorem~7.16]{NS} for $T=\mathbb{R}$ and \cite[Theorem~2.15]{GH} for $T$ in groups}]\label{cor3.29A}
Let $(T,M^n)$ be an invertible semiflow on an $n$-dimensional manifold $M^n, n\ge1$, such that $T$ is an amenable semigroup. If $A$ is a compact minimal subset with $A\not=M^n$, then $\mathrm{Int}_{M^n}A=\emptyset$ and $\dim A\le n-1$.
\end{cor}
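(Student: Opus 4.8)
The plan is to prove this by contradiction, letting Theorem~\ref{thm3.28A} do the heavy lifting. So I would begin by assuming that $\mathrm{Int}_{M^n}A\neq\emptyset$ and aiming for a contradiction with the hypothesis $A\neq M^n$. Note first that by the standing assumption the phase space $M^n$ is a compact $T_2$-space, so $(T,M^n)$ indeed falls within the scope of Theorem~\ref{thm3.28A}, and $A$, being a minimal subset, is in particular a non-empty closed invariant set.

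The core step is then immediate: since $T$ is amenable by hypothesis and $A$ is a minimal subset of $(T,M^n)$ with $\mathrm{Int}_{M^n}A\neq\emptyset$, Theorem~\ref{thm3.28A} applies and yields that $A$ is clopen in $M^n$. As $M^n$ is connected (in accordance with the usual convention for manifolds; otherwise one first restricts to the connected component meeting $\mathrm{Int}_{M^n}A$), a non-empty clopen subset of it must be the whole of $M^n$, i.e. $A=M^n$. This contradicts $A\neq M^n$, and hence $\mathrm{Int}_{M^n}A=\emptyset$.

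It remains only to bound the dimension, and here I would invoke the dimension-theoretic fact recalled just before the statement (a consequence of Urysohn's theorem; cf.~\cite[Lemma~2.14]{GH}): a compact subset of an $n$-dimensional manifold having empty interior has covering dimension at most $n-1$. Applying this to $A$, which we have just shown satisfies $\mathrm{Int}_{M^n}A=\emptyset$, gives $\dim A\le n-1$, finishing the argument.

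As for the main obstacle: essentially all the real content is packed into Theorem~\ref{thm3.28A} (which itself rests on Theorem~\ref{thm6.19} and Corollary~\ref{cor3.27A}, i.e. on the fact that a minimal invertible semiflow with amenable phase semigroup admits no non-trivial open invariant set) together with the classical dimension theory being cited; once those are granted, the proof is a two-line deduction. The only mild point to be careful about is the connectedness of $M^n$, which is what converts ``clopen and proper'' into an outright contradiction, and which I would either assume as part of the meaning of ``manifold'' or handle by passing to a connected component.
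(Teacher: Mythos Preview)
Your proof is correct and follows essentially the same route as the paper: apply Theorem~\ref{thm3.28A} to conclude that a minimal set with non-empty interior would be clopen, derive a contradiction with $A\neq M^n$ via connectedness of $M^n$, and then invoke the Urysohn-type dimension fact to get $\dim A\le n-1$. Your treatment is in fact slightly more explicit than the paper's, which simply says ``clopen non-trivial \dots\ is a contradiction'' without spelling out the connectedness step.
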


\begin{proof}
If $\mathrm{Int}_{M^n}A=\emptyset$, then by Urysohn's theorem $\dim A\le n-1$. Now assume $\mathrm{Int}_{M^n}A\not=\emptyset$; then by Theorem~\ref{thm3.28A}, it follows that $A$ is clopen non-trivial in $M^n$. This is a contradiction.
\end{proof}

Let $(G,X)$ be a flow with phase group $G$ and $T$ a normal right-syndetic subgroup of $G$. Then it is a well-known fact that
\begin{itemize}
\item An $x\in X$ is an a.p. point of $(G,X)$ if and only if $x$ is an a.p. point of $(T,X)$ (cf., e.g., \cite[Proposition~2.8]{E69} and \cite[Theorem~1.13]{Aus}).
\end{itemize}

By Theorem~\ref{thm3.28A} we can obtain the following same flavor result using amenability instead of the normality of $T$.

\begin{cor}\label{cor3.30A}
Let $(G,X)$ be an invertible semiflow on a compact connected $T_2$-space $X$ and $T$ a discrete right-syndetic amenable subsemigroup of $G$. Then $(G,X)$ is minimal iff so is $(T,X)$.
\end{cor}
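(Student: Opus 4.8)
The plan is to dispatch the easy implication directly and then reduce the other one to Theorem~\ref{thm3.28A}, using the Baire category theorem together with the connectedness of $X$. For the ``if'' direction, suppose $(T,X)$ is minimal. If $M$ is a non-empty closed $G$-invariant subset of $X$, then $M$ is also closed and $T$-invariant since $T\subseteq G$, hence $M=X$ by minimality of $(T,X)$; thus $(G,X)$ is minimal. Equivalently, $\mathrm{cls}_XGx\supseteq\mathrm{cls}_XTx=X$ for every $x\in X$.

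For the ``only if'' direction, assume $(G,X)$ is minimal and, by Zorn's lemma, pick a minimal subset $\Lambda$ of $(T,X)$; the goal is to show $\Lambda=X$. Since $T$ is syndetic in $G$ and every compact subset of the discrete semigroup $G$ is finite, there is a finite set $K=\{k_1,\dots,k_n\}\subseteq G$ with $Kg\cap T\neq\emptyset$ for every $g\in G$; that is, for each $g\in G$ there is $k\in K$ with $kg\in T$. Because $(G,X)$ is invertible and $X$ is compact $T_2$, each element of $G$ is a self-homeomorphism of $X$, so $k$ has an inverse homeomorphism $k^{-1}$ of $X$, and the semiflow identity $k(gx)=(kg)x$ gives $g=k^{-1}\circ(kg)$ as maps of $X$. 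Since $\Lambda$ is $T$-invariant, $(kg)\Lambda\subseteq\Lambda$, hence $g\Lambda\subseteq k^{-1}\Lambda$, and therefore $G\Lambda\subseteq F:=\bigcup_{i=1}^{n}k_i^{-1}\Lambda$. Each $k_i^{-1}\Lambda$ is closed (it is the preimage of the closed set $\Lambda$ under the continuous map $k_i$), so $F$ is closed; picking $x\in\Lambda$ and using minimality of $(G,X)$ we get $X=\mathrm{cls}_XGx\subseteq F$, so $F=X$.

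Finally, by the Baire category theorem (valid since $X$ is compact $T_2$) at least one $k_i^{-1}\Lambda$ has non-empty interior, and since $k_i$ is a homeomorphism of $X$ this forces $\mathrm{Int}_X\Lambda\neq\emptyset$. Theorem~\ref{thm3.28A} then shows that $\Lambda$ is clopen in $X$, and connectedness of $X$ together with $\Lambda\neq\emptyset$ yields $\Lambda=X$; hence $(T,X)$ is minimal. The one step that needs care is the closedness of $F$, which relies on $G$ being discrete (so $K$ is finite) and on each $g\in G$ being a homeomorphism of $X$; once $F=X$ is in hand, the remaining appeal to Baire category, Theorem~\ref{thm3.28A}, and connectedness is routine.
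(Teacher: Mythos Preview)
Your proof is correct and follows essentially the same approach as the paper: both cover $X$ by the finitely many closed sets $k_i^{-1}\Lambda$, deduce that $\Lambda$ has non-empty interior, and then apply Theorem~\ref{thm3.28A} together with connectedness. You simply spell out a few steps the paper leaves implicit (the ``if'' direction, the Baire-category reason one $k_i^{-1}\Lambda$ has interior, and why invertibility makes each $k_i$ a homeomorphism).
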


\begin{proof}
We only show the ``only if'' part. Let $Y$ be a minimal subset of $(T,X)$. Let $G=K^{-1}T$ for some subset $K=\{k_1,\dotsc,k_n\}$ of $G$. Then
\begin{equation*}
X=\textrm{cls}_X{GY}=\textrm{cls}_X{K^{-1}TY}={\bigcup}_{k\in K}k^{-1}\textrm{cls}_X{TY}={\bigcup}_{k\in K}k^{-1}Y.
\end{equation*}
Thus $Y$ has non-empty interior. This implies by Theorem~\ref{thm3.28A} that $Y$ is clopen so that $Y=X$.
\end{proof}

It should be noted that if $T$ is not discrete right-syndetic, then the statement of Corollary~\ref{cor3.30A} need not be correct. For example, let $\pi\colon\mathbb{R}\times\mathbb{T}\rightarrow \mathbb{T}$ be periodic of period $1$ on the unit circle; then $\mathbb{Z}$ is right-syndetic in $\mathbb{R}$ under the usual topology but $(\pi,\mathbb{Z},\mathbb{T})$ is not minimal.

\subsubsection{\textit{C}-semigroup and an open question}\label{sec6.3.3}
We do not know if the amenability condition in Theorem~\ref{thm6.19} may be replaced by the one that $(X,T)$ is homogeneous; that is, there is a group $G$ of homeomorphisms of $X$ such that $(G,X)$ is minimal with $t^{-1}g=gt^{-1}$ for all $t\in T$ and $g\in G$. More generally, the following questions would be interesting:
\begin{quote}
\begin{enumerate}
\item {\it If $x\in X$ is a minimal point of $(T,X)$, whether or not $x$ is a minimal point of $(X,T)$, where $T$ is a non-amenable semigroup.}
\item {\it When $(T,X)$ is minimal invertible with $T$ non-amenable, is $(X,T)$ minimal?}
\end{enumerate}
\end{quote}

In view of Examples~\ref{ex1.4}, the general solution to Question~1 above is NO. However the answer to Question~2 above is in the affirmative if the phase semigroup $T$ is a \textit{C}-semigroup (cf.~Definition~\ref{sn3.8B}), as we proceed to show.

\begin{thm}\label{thm6.31}
Let $(T,X)$ be an invertible semiflow where $T$ is not necessarily discrete. Then:
\begin{enumerate}
\item[$(1)$] If $T$ is a left \textit{C}-semigroup and $(X,T)$ is minimal, then $(T,X)$ is minimal.

\item[$(2)$] If $T$ is a right \textit{C}-semigroup and $(T,X)$ is minimal, then $(X,T)$ is minimal.
\end{enumerate}
\end{thm}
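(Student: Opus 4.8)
The plan is to deduce both halves from a single recurrence argument, reducing minimality of the reflection to the invariance of a minimal set. I will describe part $(2)$ in detail; part $(1)$ is entirely symmetric (or, formally, part $(2)$ applied to the opposite semigroup $T^{\mathrm{op}}$, which is a right $C$-semigroup exactly when $T$ is a left $C$-semigroup and whose associated reflection is $(T,X)$). Assume $(T,X)$ is minimal and $T$ is a non-compact right $C$-semigroup. By Zorn's lemma fix a minimal set $X_0$ of the reflection $(X,T)$. Since $(T,X)$ is minimal, it suffices to prove $TX_0\subseteq X_0$: then $X_0$ is a non-empty closed invariant subset of the minimal semiflow $(T,X)$, so $X_0=X$ and $(X,T)$ is minimal.

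So fix $t\in T$ and $y\in X_0$; the goal is $ty\in X_0$. Because $y$ lies in the minimal set $X_0$ of $(X,T)$, we have $X_0=\mathrm{cls}_X\,T^{-1}y$ (the $(X,T)$-orbit closure of $y$), and $y$ is an almost periodic point of $(X,T)$ by Lemma~\ref{lem2.6}. Using that $T$ is a non-compact $C$-semigroup, $y$ is then an $\ell$-recurrent point of $(X,T)$ (Remark~\ref{rem3.11B} and Lemma~\ref{lem3.10B}), so there is a net $\{s_\lambda\}$ in $T$ with $s_\lambda\to\infty$ and $s_\lambda^{-1}y\to y$. Now the right $C$-semigroup hypothesis enters: since $\mathrm{cls}_T(T\setminus Tt)$ is compact and $s_\lambda\to\infty$, eventually $s_\lambda\in Tt$, say $s_\lambda=u_\lambda t$ with $u_\lambda\in T$. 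Then $t^{-1}(u_\lambda^{-1}y)=(u_\lambda t)^{-1}y=s_\lambda^{-1}y\to y$, and since $t$ is a self-homeomorphism of $X$ (the semiflow is invertible) we may apply it to conclude $u_\lambda^{-1}y\to ty$. Hence $ty\in\mathrm{cls}_X\{u^{-1}y:u\in T\}=X_0$, as required, so $TX_0\subseteq X_0$ and the proof of $(2)$ is complete. Part $(1)$ runs the same way with left and right interchanged: for a minimal set $M$ of $(T,X)$ and $x\in M$, the point $x$ is almost periodic for $(T,X)$, hence ($T$ a non-compact left $C$-semigroup) $\ell$-recurrent for $(T,X)$; writing a recurrence time $s_\lambda x\to x$, $s_\lambda\to\infty$, as $s_\lambda=tu_\lambda$ (possible since $T\setminus tT$ is relatively compact), one gets $u_\lambda x\to t^{-1}x$, so $t^{-1}x\in\mathrm{cls}_X\,Tx=M$; thus $M$ is invariant for $(X,T)$, and minimality of $(X,T)$ forces $M=X$, i.e. $(T,X)$ is minimal.

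The step that requires real care — the main obstacle — is the passage from ``$y$ is almost periodic'' to ``$y$ is $\ell$-recurrent'', i.e. the claim that the recurrence net $\{s_\lambda\}$ can be chosen to escape every compact subset of $T$; equivalently, that a syndetic subset of a non-compact $C$-semigroup is never relatively compact. This is immediate when $T$ is discrete (compact $=$ finite, and the return-time set $N_T(y,U)$, being syndetic, is infinite, using left-cancellation of $T$ inside $\mathrm{Homeo}(X)$), and it is Remark~\ref{rem3.11B}(b) when $T$ is a topological group; in the general non-discrete $C$-semigroup case one must verify that the preimage of a compact set under a one-sided translation of $T$ is relatively compact, which is the point at which the co-compactness built into the definition of $C$-semigroup must be used. (If $T$ itself is compact the $C$-semigroup condition is vacuous and a separate, easier argument is needed; the interesting case is $T$ non-compact.) An alternative packaging of the same content is available through Lemma~\ref{lem6.25} and Theorem~\ref{thm6.26}, reducing minimality of $(X,T)$ to the absence of a non-trivial open invariant set for $(T,X)$, but the recurrence argument above seems the most transparent route and uses the $C$-semigroup hypothesis most directly.
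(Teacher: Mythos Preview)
Your argument is correct and rests on the same core mechanism as the paper's --- using the $C$-semigroup hypothesis to factor a recurrence time through the given element $t$ --- but the two proofs are organized dually. The paper defines, for \emph{every} point $x$, the limit set $\omega_T(x)=\bigcap_{F\text{ finite}}\mathrm{cls}_X F^c x$ (resp.\ $\alpha_T(x)$ for part~(2)) and shows directly that this set is invariant for the reflection; minimality of the reflection then forces $\omega_T(x)=X$, hence $\mathrm{cls}_X Tx=X$. You instead fix a minimal set $X_0$ of the reflection, use $\ell$-recurrence of a point $y\in X_0$ to push $ty$ back into $X_0$, and invoke minimality of the original semiflow. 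The paper's route is slightly more streamlined because it sidesteps the passage ``almost periodic $\Rightarrow$ $\ell$-recurrent'' that you correctly flag as the delicate step; on the other hand, your version makes the recurrence content more transparent. Both proofs explicitly reduce to the discrete case (the paper with the same parenthetical hand-wave toward general topology that you give), and in that setting your obstacle dissolves via cancellation in $\mathrm{Homeo}(X)$ exactly as you note.
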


\begin{proof}
First of all, note that if $T$ is a compact topological semigroup, then the statements are evidently true. Indeed, let $(T,X)$ be minimal and then for all $x,y\in X$, $Ty=\textrm{cls}_X{Ty}=X$ and so $ty=x$ for some $t\in T$. This implies that for all $x,y\in X$, $y=t^{-1}x=xt$ for some $t\in T$ and thus $\textrm{cls}_X{xT}=xT=X$ for every $x\in X$. Hence $(X,T)$ is minimal. Analogously, $(T,X)$ is minimal if so is $(X,T)$.

We now then suppose that $T$ is a non-compact semigroup. Since minimality is independent of the topology of the phase semigroup $T$, we assume $T$ is an infinite discrete semigroup without loss of generality. (Note that \textit{C}-semigroup relies on topology of $T$, but the general case can be analogously proved.)

(1) Let $(X,T)$ be minimal with $T$ a left \textit{C}-semigroup. We now proceed to show that $(T,X)$ is minimal. To this end, for every $x\in X$, define the \textit{$\omega$-limit set} of $x$ with respect to $(T,X)$ as follows:
\begin{equation*}
\omega_T(x)={\bigcap}_{F\in\mathscr{F}}\textrm{cls}_XF^cx
\end{equation*}
where $\mathscr{F}$ is the collection of finite subsets of $T$ and $F^c$ is the complement of $F$ in $T$. Clearly, $\omega_T(x)$ is closed non-empty by the ``finite intersection property'' (noting that $T$ is non-compact and $X$ is compact by hypothesis) and $\omega_T(x)\subseteq\textrm{cls}_XTx$.

We will show that $\omega_T(x)$ is an invariant set of $(X,T)$. For this, let $y\in\omega_T(x)$ and $s\in T$ be arbitrarily given. Let $F\in\mathscr{F}$ be arbitrary. Since $K:=sF\cup\textrm{cls}_T(T\setminus sT)$ is finite and $sF^c\supseteq K^c$ (for $K^c\subseteq sT=s(F\cup F^c)$), then $y\in\textrm{cls}_X sF^cx$ so there is a net $\{t_n\}$ in $F^c$ such that
$st_nx\to y$ and $t_nx\to z$.
Thus $sz=y$ and $z\in\textrm{cls}_XF^cx$. This shows that $ys=s^{-1}y\in\omega_T(x)$. Thus $\omega_T(x)T\subseteq\omega_T(x)$, i.e., $\omega_T(x)$ is an invariant closed set of $(X,T)$.

However, since $(X,T)$ is minimal by hypothesis, then $\omega_T(x)=X$ for all $x\in X$. Therefore, $\textrm{cls}_XTx=X$ for all $x\in X$ and further $(T,X)$ is minimal.

(2) By symmetry and using the $\alpha$-limit set $\alpha_T(x)$ of $x$, we can easily show that $(X,T)$ is minimal if so is $(T,X)$, whenever $T$ is a right \textit{C}-semigroup.

This thus proves Theorem~\ref{thm6.31}.
\end{proof}

Since $T$ is only a topological semigroup and $L_{s^{-1}}\colon T\rightarrow T$, $t\mapsto s^{-1}t$ need not be well defined, we are not sure that $\omega_T(x)$ is invariant for $(T,X)$ in the above proof of Theorem~\ref{thm6.31}.

\begin{cor}\label{cor6.32}
Let $(T,X)$ be minimal invertible such that $Tt$ is co-finite for all $t\in T$. Then $(X,T)$ is minimal.
\end{cor}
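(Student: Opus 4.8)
The plan is to obtain Corollary~\ref{cor6.32} as an immediate consequence of part~(2) of Theorem~\ref{thm6.31}, the only point to verify being that the hypothesis ``$Tt$ is co-finite for all $t\in T$'' is a reformulation of ``$T$ is a right \textit{C}-semigroup''. First I would invoke the standing assumption that the phase semigroup $T$ carries the discrete topology; then every subset of $T$ is closed and a subset of $T$ is relatively compact if and only if it is finite. Hence, for each $t\in T$, the set $\mathrm{cls}_T(T\setminus Tt)=T\setminus Tt$ is compact if and only if it is finite, i.e.\ if and only if $Tt$ is co-finite. This is precisely condition~(1) of Definition~\ref{sn3.8B}, so $T$ is a right \textit{C}-semigroup.

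With this identification in place, the conclusion follows at once: $(T,X)$ is minimal and invertible and $T$ is a right \textit{C}-semigroup, so (2) of Theorem~\ref{thm6.31} gives that the reflection $(X,T)$ is minimal. There is therefore no genuine obstacle here; all the substance is already carried by Theorem~\ref{thm6.31}, whose proof of part~(2) in turn rests on the $\alpha$-limit-set construction (the right-handed mirror of the $\omega$-limit-set argument spelled out there for left \textit{C}-semigroups) together with the minimality of $(T,X)$.

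If one preferred a self-contained argument rather than a citation, one would transcribe that $\alpha$-limit computation into the present setting. Concretely, for $x\in X$ set $\alpha_T(x)=\bigcap_{F\in\mathscr{F}}\mathrm{cls}_X xF^c$, where $\mathscr{F}$ denotes the family of finite subsets of $T$, $F^c=T\setminus F$, and $xF^c=\{xt\,|\,t\in F^c\}=\{t^{-1}x\,|\,t\in F^c\}$. Since $T$ is infinite and $X$ is compact, the finite intersection property shows that $\alpha_T(x)$ is a non-empty closed subset of $\mathrm{cls}_X xT$. The key step is to check that $\alpha_T(x)$ is invariant under $(T,X)$: given $y\in\alpha_T(x)$, $s\in T$ and $F\in\mathscr{F}$, the finite set $K=Fs\cup(T\setminus Ts)$ satisfies $K^c\subseteq F^c s$ (because $K^c\subseteq Ts$, and if $z=ts\in K^c$ with $t\in F$ then $z\in Fs\subseteq K$, a contradiction), so $y\in\mathrm{cls}_X xK^c\subseteq\mathrm{cls}_X x(F^c s)$; choosing a net $t_n\in F^c$ with $x(t_ns)=s^{-1}(t_n^{-1}x)\to y$ and passing to a subnet with $t_n^{-1}x\to z\in\mathrm{cls}_X xF^c$, continuity of the homeomorphism $s^{-1}$ forces $s^{-1}z=y$, i.e.\ $sy=z\in\mathrm{cls}_X xF^c$; since $F$ was arbitrary, this yields $sy\in\alpha_T(x)$. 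Thus $\alpha_T(x)$ is a non-empty closed $(T,X)$-invariant set, so by minimality of $(T,X)$ it equals $X$; as $\alpha_T(x)\subseteq\mathrm{cls}_X xT$, this forces $\mathrm{cls}_X xT=X$ for every $x\in X$, i.e.\ $(X,T)$ is minimal.
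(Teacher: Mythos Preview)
Your proposal is correct and matches the paper's intent: Corollary~\ref{cor6.32} is stated without proof precisely because it is immediate from Theorem~\ref{thm6.31}(2), and your observation that ``$Tt$ co-finite'' is exactly the discrete-topology instance of ``$T$ is a right \textit{C}-semigroup'' (Definition~\ref{sn3.8B}(1)) is the only thing to check. Your optional self-contained $\alpha$-limit argument is also correct and faithfully mirrors the paper's $\omega$-limit proof of part~(1) of Theorem~\ref{thm6.31}.
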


\section{Non-sensitivity of invertible semiflows and w.a.p. $\mathbb{Z}$-flows}\label{sec7}
In this section, we will give simple applications of our Reflection principles I, II and III to chaotic dynamics and $\mathbb{Z}$-flows here.
\subsection{Non-sensitivity of invertible semiflows}
First, we need to introduce and recall some basic notions for self-closeness.

\begin{sn}\label{sn7.1}
Let $(T,X)$ be a semiflow with phase semigroup $T$ with phase map $(t,x)\mapsto tx$. Then:
\begin{enumerate}
\item $(T,X)$ is \textit{sensitive} in case there exists an $\varepsilon\in\mathscr{U}_X$ such that for all $x\in X$ and $\delta\in\mathscr{U}_X$, one can find some $y\in\delta[x]$ and some $t\in T$ with
    $t(x,y)\not\in\varepsilon$, or equivalently, $T(\delta[x],x)\not\subseteq\varepsilon$ (cf.~\cite{G03,KM,DT}).
\item If $(T,X)$ is not sensitive, then it is called \textit{non-sensitive}.

\item Given $\varepsilon\in\mathscr{U}_X$ and $x\in X$, we say $x\in\mathrm{Equi}_\varepsilon(T,X)$ if one can find some $\delta\in\mathscr{U}_X$ such that $T(\delta[x],x)\subseteq\varepsilon$.

\item We say $x\in\mathrm{Tran}^-(T,X)$ if and only if $\mathrm{cls}_XT^{-1}x=X$. Similarly we could define $\mathrm{Tran}^+(T,X)$. If $\mathrm{Tran}^+(T,X)\not=\emptyset$, then $(T,X)$ is said to be \textit{point-transitive} (cf.~\cite{KM, DT}).
\end{enumerate}
By Definitions~(\textbf{a}) and (\textbf{b}) in $\S\ref{sec1.1.1}$, it is easy to check the following statements:
\begin{itemize}
\item $\mathrm{Equi}\,(T,X)=\bigcap_{\varepsilon\in\mathscr{U}_X}\mathrm{Equi}_\varepsilon(T,X)$.
\item If $(T,X)$ is \textit{expansive}, i.e., $\exists\,\varepsilon\in\mathscr{U}_X$ s.t. $x,y\in X$ with $x\not=y$ implies $\exists\, t\in T$ with $t(x,y)\not\in\varepsilon$; then it is sensitive.
\item $(T,X)$ is non-sensitive iff $\mathrm{Equi}_\varepsilon(T,X)\not=\emptyset\ \forall \varepsilon\in\mathscr{U}_X$. Thus, if $\mathrm{Equi}\,(T,X)\not=\emptyset$ then $(T,X)$ is non-sensitive.
\end{itemize}
\end{sn}

It is well known that
\begin{quote}
If $(T,X)$ is a \textit{flow} with $\mathrm{Equi}\,(T,X)\not=\emptyset$ and $\mathrm{Tran}\,(T,X)\not=\emptyset$, then it holds that $\mathrm{Equi}\,(T,X)=\mathrm{Tran}\,(T,X)$ (cf.~\cite[Proposition~1.35]{G03} for the case that $X$ is a compact metric space).
\end{quote}

Moreover, the following more general result holds for semiflows with each $t\in T$ an open self-map of $X$.

\begin{lem}\label{lem7.2}
Let $(T,X)$ be a semiflow with each $t\in T$ an open self-map of $X$. If $(T,X)$ is non-sensitive and $\mathrm{Tran}^-(T,X)\not=\emptyset$, then $\mathrm{Tran}^-(T,X)\subseteq\mathrm{Equi}\,(T,X)$.
\end{lem}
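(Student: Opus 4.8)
The plan is to fix an arbitrary $\varepsilon\in\mathscr{U}_X$ and prove that every $x_0\in\mathrm{Tran}^-(T,X)$ belongs to $\mathrm{Equi}_\varepsilon(T,X)$; since $\varepsilon$ is arbitrary and $\mathrm{Equi}\,(T,X)=\bigcap_{\varepsilon\in\mathscr{U}_X}\mathrm{Equi}_\varepsilon(T,X)$ (recorded in $\S\ref{sec7}$), this yields $\mathrm{Tran}^-(T,X)\subseteq\mathrm{Equi}\,(T,X)$. First I would pick a symmetric entourage $\delta_0\in\mathscr{U}_X$ with $\delta_0^2\subseteq\varepsilon$ (for instance the symmetrization of $\varepsilon/2$). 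By non-sensitivity we have $\mathrm{Equi}_{\delta_0}(T,X)\not=\emptyset$, so fix $z\in\mathrm{Equi}_{\delta_0}(T,X)$ together with $\eta\in\mathscr{U}_X$ such that $T(\eta[z],z)\subseteq\delta_0$; concretely this means $(t u,t z)\in\delta_0$ for every $t\in T$ and every $u\in\eta[z]$.

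Next I would bring in transitivity of $x_0$. Since $\mathrm{cls}_X T^{-1}x_0=X$ and $\mathrm{Int}_X\eta[z]$ is a non-empty open set (it contains $z$), we get $T^{-1}x_0\cap\eta[z]\not=\emptyset$, so there exist $s\in T$ and $w\in\eta[z]$ with $sw=x_0$. Here is where openness of the transition maps enters: because $s\colon X\to X$ is open, the set $V:=s(\eta[z])$ is open, and it contains $x_0=sw$, hence $V$ is a genuine neighbourhood of $x_0$; choose $\delta\in\mathscr{U}_X$ with $\delta[x_0]\subseteq V$. I then claim $T(\delta[x_0],x_0)\subseteq\varepsilon$, which is exactly $x_0\in\mathrm{Equi}_\varepsilon(T,X)$. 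Indeed, take any $y\in\delta[x_0]\subseteq V$, write $y=su$ with $u\in\eta[z]$, and let $t\in T$ be arbitrary. Applying the estimate $T(\eta[z],z)\subseteq\delta_0$ with $ts\in T$ in place of $t$, once to $u$ and once to $w$, gives $\bigl((ts)u,(ts)z\bigr)\in\delta_0$ and $\bigl((ts)w,(ts)z\bigr)\in\delta_0$; by symmetry of $\delta_0$ and composition, $\bigl((ts)u,(ts)w\bigr)\in\delta_0^2\subseteq\varepsilon$. Since $ty=t(su)=(ts)u$ and $tx_0=t(sw)=(ts)w$ by the semiflow identity, this says $(ty,tx_0)\in\varepsilon$, as required.

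I do not expect a serious obstacle: the argument is the familiar "transitive points are equicontinuity points" mechanism, and the two hypotheses are used exactly at the two natural spots — non-sensitivity supplies the single equicontinuity point $z$ for the fixed $\delta_0$, and openness of each $t\in T$ is what upgrades $s(\eta[z])$ from "a set containing the image point $x_0$" to "a neighbourhood of $x_0$"; without openness one can pull $x_0$ back into the good region but cannot push a whole neighbourhood of $x_0$ there. The only care needed is the bookkeeping of entourages, ensuring that exactly two applications of the symmetric $\delta_0$ compose into $\varepsilon$ and that $w$ is genuinely chosen in $\mathrm{Int}_X\eta[z]$ so that density of $T^{-1}x_0$ legitimately applies; both are routine once the skeleton above is fixed.
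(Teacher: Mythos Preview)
Your proof is correct and follows essentially the same route as the paper's: pick an equicontinuity point supplied by non-sensitivity, pull a negatively transitive $x_0$ back into its good neighbourhood via some $s\in T$, and use openness of $s$ to push a full neighbourhood of $x_0$ into $s(\eta[z])$. The only difference is cosmetic---you make the $\delta_0^2\subseteq\varepsilon$ composition explicit, whereas the paper directly asserts an $\eta$ with $t(y,z)\in\varepsilon$ for all $y,z\in\eta[x]$, which tacitly uses the same halving; your version is actually the cleaner bookkeeping.
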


\begin{proof}
Let $x_0\in\mathrm{Tran}^-(T,X)$ and $x\in\mathrm{Equi}_\varepsilon(T,X)$ both be any given points. We then need to verify $x_0\in\mathrm{Equi}_\varepsilon(T,X)$. For this, let $\eta\in\mathscr{U}_X$ such that if $y,z\in\eta[x]$ then $t(y,z)\in\varepsilon$ for all $t\in T$. Since $x_0$ is negatively transitive, there is an $s\in T$ such that $s^{-1}x_0\cap\eta[x]\not=\emptyset$. Then by the openness of $s$, there exists a $\delta\in\mathscr{U}_X$ such that $\delta[x_0]\subseteq s(\eta[x])$. Now for $y,z\in\delta[x_0]$ and $t\in T$,
$t(y,z)=ts(y^\prime,z^\prime)\in\varepsilon$ for some $y^\prime,z^\prime\in\eta[x]$ with $sy^\prime=y, sz^\prime=z$.
This shows that $x_0\in\mathrm{Equi}_\varepsilon(T,X)$ for all $\varepsilon\in\mathscr{U}_X$.
\end{proof}

\begin{prop}\label{prop7.3}
Let $(T,X)$ be a minimal non-sensitive semiflow with $T$ not necessarily discrete. If $(T,X)$ is invertible with $T$ an amenable semigroup or if $T$ is a right \textit{C}-semigroup, then $(T,X)$ is equicontinuous.
\end{prop}

\begin{proof}
(1) Let $(T,X)$ be invertible with $T$ an amenable semigroup. By Theorem~\ref{thm6.19}, $\textrm{cls}_XxT=X$ for all $x\in X$. Thus, $\mathrm{Tran}^{-}(T,X)=X$. Then by Lemma~\ref{lem7.2}, $\mathrm{Equi}\,(T,X)=X$ and so $(T,X)$ is equicontinuous by Lemma~\ref{lem1.6}.

(2) Assume $T$ is a right \textit{C}-semigroup. Given $\varepsilon\in\mathscr{U}_X$, there are $x_0\in X$ and $\delta^\prime\in\mathscr{U}_X$ such that $T(\delta^\prime[x_0],x_0)\subset\frac{\varepsilon}{3}$ by non-sensitivity. Now since $(T,X)$ is minimal, for every $x\in X$ there are $s\in T$ and $\delta\subseteq\delta^\prime$ such that $s(\delta[x])\subseteq\delta^\prime[x_0]$. In addition, since $T\setminus Ts$ is relatively compact in $T$, we can take an $\eta\in\mathscr{U}_X$ with $\eta\subseteq \delta$ so small that $t(\eta[x],x)\subset\frac{\varepsilon}{3}$ for all $t\in T\setminus Ts$. Thus $T(\eta[x],x)\subseteq\varepsilon$. Since $\varepsilon$ is arbitrary, $\mathrm{Equi}\,(T,X)=X$ and so $(T,X)$ is equicontinuous by Lemma~\ref{lem1.6}.
\end{proof}

\begin{cor}\label{cor7.4}
Let $(T,X)$ be a minimal semiflow with $T$ not necessarily discrete. Suppose that (1) $T$ is a right \textit{C}-semigroup or (2) $(T,X)$ is invertible with $T$ amenable.
Then $(T,X)$ is either sensitive or equicontinuous.
\end{cor}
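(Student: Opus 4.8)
The plan is to observe that Corollary~\ref{cor7.4} is an immediate consequence of Proposition~\ref{prop7.3} together with the tautological dichotomy that any semiflow is either sensitive or non-sensitive. By Definition~\ref{sn7.1}\,(2) these two alternatives are complementary and exhaustive, so it suffices to treat the non-sensitive case and show that, under either hypothesis, non-sensitivity forces equicontinuity.

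First I would fix a minimal semiflow $(T,X)$ satisfying hypothesis (1) or hypothesis (2) and split into two cases. If $(T,X)$ is sensitive, there is nothing to prove. Otherwise $(T,X)$ is non-sensitive. Under hypothesis (2)\,---\,$(T,X)$ invertible with $T$ an amenable semigroup\,---\,I would apply part (1) of Proposition~\ref{prop7.3} to conclude that $(T,X)$ is equicontinuous. Under hypothesis (1)\,---\,$T$ a right \textit{C}-semigroup\,---\,I would instead invoke part (2) of Proposition~\ref{prop7.3}, which yields equicontinuity directly from minimality and non-sensitivity. In either case the dichotomy ``sensitive or equicontinuous'' follows.

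There is essentially no obstacle here: the real content is carried by Proposition~\ref{prop7.3}, which itself rests on Theorem~\ref{thm6.19} (identifying $\mathrm{Tran}^-(T,X)=X$ in the amenable invertible case), on Lemma~\ref{lem7.2} and Lemma~\ref{lem1.6} (the comparison of equicontinuity points with negatively transitive points, and the passage from pointwise equicontinuity on all of $X$ to equicontinuity), and, in the \textit{C}-semigroup case, on the relative compactness of $T\setminus Ts$. The only point worth recording explicitly is that ``non-sensitive'' is by definition precisely the negation of ``sensitive'', so the two-case analysis is exhaustive, and the corollary is merely the combined statement of the two parts of Proposition~\ref{prop7.3}.
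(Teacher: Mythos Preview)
Your proposal is correct and follows essentially the same approach as the paper: split into the sensitive and non-sensitive cases, and in the latter invoke Proposition~\ref{prop7.3} (parts (1) and (2) according to the hypothesis) to conclude equicontinuity. The paper additionally remarks that sensitivity precludes equicontinuity, making the dichotomy exclusive, but the core argument is identical.
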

\begin{proof}
If $(T,X)$ is sensitive, then it evidently not equicontinuous. Now if $(T,X)$ is non-sensitive, then it is equicontinuous by Proposition~\ref{prop7.3}.
\end{proof}

Since $\mathbb{Z}_+$ is a right \textit{C}-semigroup, hence the case (1) of Corollary~\ref{cor7.4} is a generalization of the Auslander-Yorke dichotomy theorem~\cite{AY}.

\begin{cor}\label{cor7.5}
Let $(T,X)$ be a minimal semiflow with $T$ not necessarily discrete such that (1) $T$ is a right \textit{C}-semigroup or (2) $(T,X)$ is invertible with $T$ amenable. Then $(T,X)$ is equicontinuous if and only if $\mathrm{Equi}\,(T,X)\not=\emptyset$.
\end{cor}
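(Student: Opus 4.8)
The plan is to read this off directly from Proposition~\ref{prop7.3} and the elementary bookkeeping recorded in Definition~\ref{sn7.1}; no new idea is needed. The ``only if'' direction is immediate: if $(T,X)$ is equicontinuous then every point of $X$ lies in $\mathrm{Equi}\,(T,X)$, so $\mathrm{Equi}\,(T,X)=X$, which is non-empty since our phase space is by convention a non-singleton compact $T_2$-space.

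For the ``if'' direction I would proceed as follows. First, assume $\mathrm{Equi}\,(T,X)\neq\emptyset$ and pick $x\in\mathrm{Equi}\,(T,X)$. By the last bulleted observation in Definition~\ref{sn7.1}, the existence of a single equicontinuity point gives $\mathrm{Equi}_\varepsilon(T,X)\neq\emptyset$ for every $\varepsilon\in\mathscr{U}_X$ (indeed $x\in\mathrm{Equi}_\varepsilon(T,X)$ for all $\varepsilon$), and hence $(T,X)$ is non-sensitive. Now $(T,X)$ is a minimal non-sensitive semiflow satisfying hypothesis (1) or (2), so the matching clause of Proposition~\ref{prop7.3} applies verbatim and yields that $(T,X)$ is equicontinuous. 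Combining the two directions proves Corollary~\ref{cor7.5}.

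There is essentially no obstacle: the substantive work has already been carried out inside Proposition~\ref{prop7.3}, which uses Theorem~\ref{thm6.19} (case (2), amenable $T$) or the relative co-compactness of $T\setminus Ts$ (case (1), right $C$-semigroup) to propagate equicontinuity from a single point to all of $X$ via minimality. The only points requiring a moment of care are to invoke the clause of Proposition~\ref{prop7.3} corresponding to the hypothesis actually assumed, and to observe that ``$\mathrm{Equi}\,(T,X)\neq\emptyset$'' is precisely what delivers the hypothesis ``non-sensitive'' that Proposition~\ref{prop7.3} requires.
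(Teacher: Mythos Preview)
Your proposal is correct and follows essentially the same route as the paper's own proof: observe that $\mathrm{Equi}\,(T,X)\neq\emptyset$ forces non-sensitivity (via the last bullet of Definition~\ref{sn7.1}) and then invoke Proposition~\ref{prop7.3}. The paper omits the trivial ``only if'' direction and the intermediate unpacking of $\mathrm{Equi}_\varepsilon$, but the argument is the same.
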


\begin{proof}
If $\mathrm{Equi}\,(T,X)\not=\emptyset$, then $(T,X)$ is non-sensitive and so it is equicontinuous by Proposition~\ref{prop7.3}. This proves Corollary~\ref{cor7.5}.
\end{proof}

The following corollary is a reflection principle on sensitivity of invertible semiflows in amenable semigroups or \textit{C}-semigroups.

\begin{cor}\label{cor7.6}
Let $(T,X)$ be a minimal invertible semiflow with $T$ not necessarily discrete such that $T$ is either a \textit{C}-semigroup or an amenable semigroup. Then $(T,X)$ is sensitive if and only if so is $(X,T)$.
\end{cor}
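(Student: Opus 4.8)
The plan is to obtain Corollary~\ref{cor7.6} by feeding the dichotomy of Corollary~\ref{cor7.4} into Reflection principle~I, once we know that the reflection $(X,T)$ is itself a minimal invertible semiflow of the same type. So the first step is to verify that $(X,T)$ is minimal. If $T$ is a \textit{C}-semigroup this is precisely part~(2) of Theorem~\ref{thm6.31} (Reflection principle~III). If instead $T$ is amenable, then since $(T,X)$ is minimal every point of $X$ is a minimal point of $(T,X)$, and by Theorem~\ref{thm6.19} (Reflection principle~II) every minimal point of $(T,X)$ is a minimal point of $(X,T)$; hence $(X,T)$ is minimal. In either case $(X,T)$ is invertible, since each $t\in T$ acts on $X$ as the self-homeomorphism $x\mapsto t^{-1}x$, and its phase semigroup is again $T$, which remains a \textit{C}-semigroup, resp.\ an amenable semigroup. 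Thus both $(T,X)$ and $(X,T)$ satisfy the hypotheses of Corollary~\ref{cor7.4}.

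Next I would invoke Corollary~\ref{cor7.4}: each of $(T,X)$ and $(X,T)$ is \emph{either} sensitive \emph{or} equicontinuous. Moreover, since $X$ is non-singleton, a sensitive semiflow is never equicontinuous (an $\varepsilon$ witnessing sensitivity together with any $\delta$ witnessing equicontinuity for that $\varepsilon$ would already contradict each other at an arbitrary point of $X$, as already noted in the proof of Corollary~\ref{cor7.4}). Hence, for each of the two semiflows, ``sensitive'' is exactly the negation of ``equicontinuous''. Now apply part~(1) of Reflection principle~I (Proposition~\ref{prop6.1}): $(T,X)$ is equicontinuous if and only if $(X,T)$ is equicontinuous. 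Taking negations and using the previous sentence, $(T,X)$ is sensitive if and only if $(X,T)$ is sensitive, which is the desired conclusion.

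The only genuinely substantial inputs are Reflection principles~I and III together with Theorem~\ref{thm6.19}, all of which are already established; the remaining steps are elementary dichotomy bookkeeping. The point that requires the most care is the case analysis needed to see that $(X,T)$ is minimal and of the correct type: in the amenable case one must use Theorem~\ref{thm6.19} rather than Theorem~\ref{thm6.31}, and conversely in the \textit{C}-semigroup case one must use Theorem~\ref{thm6.31}, because an amenable semigroup need not be a \textit{C}-semigroup and vice versa. Since both reflection principles are available, no real difficulty arises; and since sensitivity, equicontinuity and minimality are all independent of the topology of the phase semigroup, the ``not necessarily discrete'' hypothesis causes no trouble either.
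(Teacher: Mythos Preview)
Your proof is correct and follows essentially the same approach as the paper: establish minimality of $(X,T)$ via Theorems~\ref{thm6.19} and~\ref{thm6.31}, invoke the sensitive/equicontinuous dichotomy (Corollary~\ref{cor7.4}, which rests on Proposition~\ref{prop7.3}) for both semiflows, and then transfer equicontinuity via Proposition~\ref{prop6.1}. The paper packages this as a short contradiction argument in each direction, whereas you phrase it as a direct chain of equivalences, but the content is the same.
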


\begin{proof}
Assume $(T,X)$ is sensitive. If $(X,T)$ were not sensitive, then it would be non-sensitive minimal by Theorems~\ref{thm6.19} and \ref{thm6.31} and so equicontinuous by Proposition~\ref{prop7.3}. Moreover, by Reflection principle I (Proposition~\ref{prop6.1}), it follows that $(T,X)$ would be equicontinuous. This is a contradiction. Conversely, if $(X,T)$ is sensitive, then we could similarly prove that $(T,X)$ is sensitive.
\end{proof}

Then by Theorem~\ref{thm2.1} and Corollary~\ref{cor7.4}, we can easily obtain the following.

\begin{cor}\label{cor7.7}
Let $(T,X)$ be minimal surjective with $T$ a right \textit{C}-semigroup not necessarily discrete.
If there is some $t\in T$ non-invertible, then $(T,X)$ is sensitive.
\end{cor}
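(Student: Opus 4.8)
The plan is to argue by contradiction, chaining together the sensitivity dichotomy of Corollary~\ref{cor7.4}, the implication ``equicontinuous surjective $\Rightarrow$ distal'' of Theorem~\ref{thm2.1}, and the fact that distality forces invertibility.

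First I would suppose that $(T,X)$ is \emph{not} sensitive, i.e.\ non-sensitive. Since $(T,X)$ is minimal by hypothesis and $T$ is a right \textit{C}-semigroup, case~(1) of Corollary~\ref{cor7.4} applies and shows that $(T,X)$ is equicontinuous. Because $(T,X)$ is moreover surjective, Theorem~\ref{thm2.1} then yields that $(T,X)$ is distal. Finally, by (1) of Corollary~\ref{cor6.8} (equivalently, (2) of Theorem~\ref{thm1.13}, which rests on the Ellis-semigroup characterization of distality in Lemma~\ref{lem6.7}), a distal semiflow is invertible; that is, every $t\in T$ is a bijection---indeed a self-homeomorphism---of $X$. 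This contradicts the hypothesis that some $t\in T$ is non-invertible, and hence $(T,X)$ must be sensitive.

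There is no substantial obstacle here: the write-up is essentially the three-step chain ``non-sensitive $\Rightarrow$ equicontinuous $\Rightarrow$ distal $\Rightarrow$ invertible,'' contradicting the hypothesis. The only things worth checking carefully are that the \textit{right} \textit{C}-semigroup assumption is precisely what case~(1) of Corollary~\ref{cor7.4} demands, and that ``distal $\Rightarrow$ invertible'' is available in the needed generality (it is, via Lemma~\ref{lem6.7} / Corollary~\ref{cor6.8}, neither requiring metrizability of $X$ nor commutativity of $T$).
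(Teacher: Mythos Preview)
Your proof is correct and matches the paper's own argument essentially verbatim: assume non-sensitive, apply Corollary~\ref{cor7.4} to get equicontinuity, then Theorem~\ref{thm2.1} to get distality, and derive a contradiction. The only difference is that you make the final step ``distal $\Rightarrow$ invertible'' explicit via Corollary~\ref{cor6.8}(1), whereas the paper leaves it implicit.
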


\begin{proof}
If $(T,X)$ were non-sensitive, then by Corollary~\ref{cor7.4} $(T,X)$ would be equicontinuous and so distal by Theorem~\ref{thm2.1}. This is a contradiction.
\end{proof}

Now by using distality instead of amenability and \textit{C}-semigroup, we can obtain the following dichotomy.

\begin{prop}\label{prop7.8}
Let $(T,X)$ be a minimal distal semiflow. Then $(T,X)$ is equicontinuous iff $(T,X)$ is non-sensitive.
\end{prop}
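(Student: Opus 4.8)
The plan is to dispatch the easy implication first and then focus on the substantive one. If $(T,X)$ is equicontinuous then $\mathrm{Equi}\,(T,X)=X\neq\emptyset$, so $(T,X)$ is non-sensitive by the last bullet of Definition~\ref{sn7.1}. Thus the work is entirely in proving that a minimal distal non-sensitive semiflow is equicontinuous.

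So assume $(T,X)$ is minimal, distal and non-sensitive. First I would record that distality forces invertibility: by (1) of Corollary~\ref{cor6.8} (equivalently Lemma~\ref{lem6.7}), $E(X)$ is a group with neutral element $\textit{id}_X$, so each $t\in T$ is a self-homeomorphism of $X$; in particular every $t\in T$ is an open self-map of $X$. Next I would pass to the reflection $(X,T)$: by (2) of Proposition~\ref{prop6.10}, $(X,T)$ is again minimal (and distal). Minimality of $(X,T)$ says precisely that $\mathrm{cls}_X xT=X$ for every $x\in X$, and since $xT=\{t^{-1}x\,|\,t\in T\}=T^{-1}x$, this reads $\mathrm{cls}_X T^{-1}x=X$ for all $x$, i.e. $\mathrm{Tran}^{-}(T,X)=X$.

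Now all the hypotheses of Lemma~\ref{lem7.2} are met: each $t\in T$ is open, $(T,X)$ is non-sensitive, and $\mathrm{Tran}^{-}(T,X)\neq\emptyset$. Hence $\mathrm{Tran}^{-}(T,X)\subseteq\mathrm{Equi}\,(T,X)$, and since $\mathrm{Tran}^{-}(T,X)=X$ we conclude $\mathrm{Equi}\,(T,X)=X$. Finally Lemma~\ref{lem1.6} upgrades this pointwise equicontinuity everywhere to the (uniform) equicontinuity of $(T,X)$, finishing the proof.

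The main obstacle is not any computation but rather assembling the right earlier results in the right order; the one step that genuinely needs care is the identification $\mathrm{Tran}^{-}(T,X)=X$, which is exactly the minimality of the reflection $(X,T)$. It is precisely here that distality is indispensable, since for a merely minimal invertible semiflow the reflection need not be minimal (Examples~\ref{ex1.4}); distality enters only through Proposition~\ref{prop6.10}, and everything else is bookkeeping with Lemma~\ref{lem1.6} and Lemma~\ref{lem7.2}.
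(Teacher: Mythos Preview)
Your proof is correct and follows essentially the same route as the paper's: use distality to get invertibility (hence openness of each $t$), invoke Proposition~\ref{prop6.10}(2) (the paper cites it as Reflection principle~I) to get $\mathrm{Tran}^{-}(T,X)=X$, then apply Lemma~\ref{lem7.2} and Lemma~\ref{lem1.6}. Your write-up is simply more explicit about checking the openness hypothesis of Lemma~\ref{lem7.2} and about the final appeal to Lemma~\ref{lem1.6}, both of which the paper leaves implicit.
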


\begin{proof}
Let $(T,X)$ be non-sensitive. By Lemma~\ref{lem7.2}, $\mathrm{Tran}^-(T,X)\subseteq\mathrm{Equi}\,(T,X)$. Further by Reflection principle I, $\mathrm{Tran}^-(T,X)=X$ so $(T,X)$ is equicontinuous. The other side implication is evident. Thus the proof is complete.
\end{proof}

This shows that the dynamics of any non-equicontinuous minimal distal semiflow could not be predictable.
\subsection{Weakly almost periodic $\mathbb{Z}$-flows on zero-dimensional spaces}\label{sec7.2}
Recall that in \cite{E59, EEN} a flow $(T,X)$ is called weakly almost periodic (named weakly equicontinuous by $\S\ref{sec1.1.5}$\,(\textbf{p})) if for all $f\in C(X)$, $fT$ is relatively compact in $C(X)$ provided with the pointwise topology.

However, following Gottschalk \cite{G,GH} the notion of weakly almost periodic has completely different explanation as follows.

\begin{defn}[\cite{G,GH}]\label{d7.9}
We say that $(T,X)$ is \textit{weakly almost periodic} (w.a.p.) provided that if $\alpha\in\mathscr{U}_X$, then there exists a compact subset $K$ of $T$ such that $Ktx\cap\alpha[x]\not=\emptyset$ for all $x\in X$ and all $t\in T$.
\end{defn}

It follows easily from the definition that every factor of a w.a.p. semiflow is also w.a.p.. 
When $(T,X)$ is a.p. (or equivalently equicontinuous surjective, cf.~Definition~\ref{u8.1}), then it is w.a.p. obviously. That the converse is not true is shown by the fact that every minimal semiflow is w.a.p. (\cite{G,GH}).

\begin{defn}[\cite{G,GH}]\label{d7.10}
Let $f\colon X\rightarrow X$ be a homeomorphism. It naturally induces the flow $\pi\colon \mathbb{Z}\times X\rightarrow X$ with phase map $(t,x)\mapsto f^t(x)$ and semiflow $\pi_+\colon \mathbb{Z}_+\times X\rightarrow X$ and its reflection $\pi_-\colon X\times\mathbb{Z}_+\rightarrow X$ with phase map $(x,t)\mapsto f^{-t}(x)$. Then:
\begin{enumerate}
\item $(f,X)$ is called \textit{positively recurrent} at $x\in X$ if there is a net $t_n\to\infty$ such that $f^{t_n}(x)\to x$; or equivalently, $x\in\overline{\pi([1,\infty),x)}$. That is to say, $\pi_+\colon \mathbb{Z}_+\times X\rightarrow X$ is recurrent at $x$.

\item $(f,X)$ is called \textit{negatively recurrent} at $x\in X$ if there is a net $t_n\to\infty$ such that $f^{-t_n}(x)\to x$; or equivalently, $x\in\overline{\pi((\infty,-1],x)}$. That is to say, $\pi_-\colon X\times \mathbb{Z}_+\rightarrow X$ is recurrent at $x$.
\item We say that $(f,X)$ is \textit{recurrent at} $x$ if it is both positively and negatively recurrent at $x$. When each point of $X$ is recurrent, we say $(f,X)$ is \textit{pointwise recurrent}.
\end{enumerate}

Now we consider the reflection question: \textit{If $(f,X)$ is pointwise positively recurrent, is it pointwise negatively recurrent? If $(f,X)$ is positively w.a.p., is it negatively w.a.p.?}
\end{defn}

When the phase space $X$ is a compact zero-dimensional space like a closed subset of the classical symbolic space, we can then present a confirmative solution to this by using our Reflection principle~II and Gottschalk's theorem.

\begin{lem}[{\cite[Theorem~6]{G}}]\label{lem7.11}
Let $f\colon X\rightarrow X$ be a homeomorphism on a compact zero-dimensional space $X$. If $(f,X)$ is pointwise positively recurrent, then $(f,X)$ is positively w.a.p. (i.e. $\pi_+\colon\mathbb{Z}_+\times X\rightarrow X$ is w.a.p. in the sense of Definition~\ref{d7.9}).
\end{lem}

From Gottschalk's proof (cf.~\cite[p.~766]{G}), readers can see that the structure of $\mathbb{Z}_+$ plays a crucial role and there is no expectation for the w.a.p. of the flow $\pi\colon\mathbb{Z}\times X\rightarrow X$.
We can, however, obtain the following results:

\begin{thm}\label{thm7.12}
Let $f\colon X\rightarrow X$ be a homeomorphism of a compact zero-dimensional space $X$. Then the following are pairwise equivalent:
\begin{enumerate}
\item[$(1)$] $\pi_+\colon \mathbb{Z}_+\times X\rightarrow X$ is pointwise recurrent.
\item[$(2)$] $\pi_-\colon X\times\mathbb{Z}_+\rightarrow X$ is pointwise recurrent.
\item[$(3)$] $\pi_+\colon\mathbb{Z}_+\times X\rightarrow X$ is w.a.p..
\item[$(4)$] $\pi_-\colon X\times\mathbb{Z}_+\rightarrow X$ is w.a.p..
\item[$(5)$] $(f,X)$, or equivalently $\pi\colon \mathbb{Z}\times X\rightarrow X$, is w.a.p..
\item[$(6)$] $(f,X)$, or equivalently $\pi\colon \mathbb{Z}\times X\rightarrow X$, is pointwise recurrent.
\end{enumerate}
Here ``w.a.p.'' is in the sense of Definition~\ref{d7.9}. 
\end{thm}

\begin{proof}
We first note that w.a.p. implies pointwise a.p. and pointwise a.p. implies pointwise recurrent for any cascade system. Thus $(1)\Rightarrow(3)$ by Lemma~\ref{lem7.11}; and $(3)\Rightarrow(2)$ by Reflection principle~II. $(2)\Rightarrow(4)$ by Lemma~\ref{lem7.11} and $(4)\Rightarrow(1)$ by Reflection principle~II again. Therefore, we have concluded that $(1)\Leftrightarrow(2)\Leftrightarrow(3)\Leftrightarrow(4)$. From this, $(1)\Leftrightarrow(6)$ follows easily.

Finally we proceed to the proof of $(1)\Leftrightarrow(5)$. Let (1) hold; and then (3) and (4) both hold. Therefore, for all $\alpha\in\mathscr{U}_X$, there are two finite subsets $K_+$ and $K_-$ of $\mathbb{Z}_+$ such that for all $x\in X$,
$$
f^{K_++t}(x)\cap\alpha[x]\not=\emptyset\quad \textrm{and}\quad f^{-K_--t}(x)\cap\alpha[x]\not=\emptyset\quad \forall t\in \mathbb{Z}_+.
$$
Now let $K=K_+\cup(-K_-)$, which is a finite subset of $\mathbb{Z}$ such that $f^{K+t}(x)\cap\alpha[x]\not=\emptyset$ for all $x\in X$ and $t\in\mathbb{Z}$. This shows that $(f,X)$ is w.a.p..

Conversely, suppose $(f,X)$ is w.a.p. and let $x\in X$. Let $U$ be a neighborhood of $x$. Then $N_T(x,U)$ where $T=\mathbb{Z}$ is relatively dense in $\mathbb{Z}$. Whence $\mathbb{Z}_+\cap N_T(x,U)$ is relatively dense in $\mathbb{Z}_+$. This thus shows that every point of $X$ is a.p. and further recurrent for $\pi_+\colon\mathbb{Z}_+\times X\rightarrow X$. Thus (1) holds.
The proof is therefore completed.
\end{proof}
\section{Almost periodic flow and w.a.p. functions}\label{uap}
Let $T$ be a topological semigroup with $e\in T$, not necessarily discrete, and $X$ a non-empty compact $T_2$ space. In this section, we will consider ``a.p.'' flows, ``weakly equicontinuous'' semiflows and ``point-distal'' flows on $X$.

We begin with recall and introduce some basic notions. $(T,X)$ is equicontinuous ($\S\ref{sec1.1.1}$\,(\textbf{a})) if and only if given $\varepsilon\in\mathscr{U}_X$ there exists $\delta\in\mathscr{U}_X$ with $T\delta\subseteq\varepsilon$; it is weakly equicontinuous ($\S\ref{sec1.1.5}$\,(\textbf{p})) if and only if $E(X)\subseteq C(X,X)$.

\begin{defn}\label{u8.1}
$(T,X)$ is called \textit{almost periodic} (a.p.) if given $\varepsilon\in\mathscr{U}_X$, there exists a right-syndetic set $A$ in $T$ such that $Ax\subseteq\varepsilon[x]$ for all $x\in X$.
\end{defn}

Here $A$ is uniformly for all point $x$ of $X$ so that we can find a compact subset $K$ of $T$ such that $Ktx\cap\varepsilon[x]\not=\emptyset$ for all $x\in X$ and $t\in T$. Thus an a.p. semiflow is w.a.p. (cf.~Definition~\ref{d7.9}). 

An a.p. $(T,X)$ is pointwise a.p. whence $X=\bigcup_{x\in X}\overline{Tx}$ is a continuous partition of $X$ into minimal sets (\cite[Theorem~5]{G}). We will show an a.p. semiflow is equicontinuous.

\begin{defn}\label{u8.2}
\begin{enumerate}
\item A subset $A$ of $T$ is called \textit{bilaterally syndetic} in $T$ if there are compact subsets $K_R$ and $K_L$ of $T$ such that $K_Rt\cap A\not=\emptyset$ and $tK_L\cap A\not=\emptyset$ for all $t\in T$.
\item If $A$ is a bilaterally syndetic subset of $T$ in Definition~\ref{u8.1}, then $(T,X)$ is called a \textit{bilaterally a.p. semiflow}.
\end{enumerate}

Clearly a bilaterally syndetic set is left- and right-syndetic (cf.~Definition~\ref{sn2.4}). When $T$ is not abelian, a right-syndetic set is generally not bilaterally syndetic even if $T$ is a topological group.
\end{defn}

For a semiflow, using our Theorem~\ref{thm2.1} as an important tool, Dai and Xiao in \cite{DX} have proved the equivalence of `almost periodic' and `equicontinuous surjective' which shows that whether or not $(T,X)$ is a.p. does not depend upon the topology on the phase semigroup $T$. Here we will present another proof by using Reflection Principles together with Ellis' enveloping semigroup.

\begin{thm}\label{u8.3}
Let $(T,X)$ be a semiflow with phase semigroup $T$. Then the following statements are pairwise equivalent:
\begin{enumerate}
\item[$(1)$] $(T,X)$ is an a.p. semiflow.
\item[$(2)$] $(T,X)$ is equicontinuous surjective.
\item[$(3)$] $E(T,X)$ is a group of homeomorphisms.

\item[$(4)$] Given $\alpha\in\mathscr{U}_X$ there is a bilaterally discretely syndetic subset $A$ of $T$ such that $Ax\subseteq\alpha[x]$ for all $x\in X$.

\item[$(5)$] $(T,X)$ is discretely a.p..
\end{enumerate}
If $T$ is a group here, then each of $(1), (2), (3), (4)$, and $(5)$ is equivalent to the following
\begin{enumerate}
\item[$(6)$] Given $\alpha\in\mathscr{U}_X$ there is a symmetric discretely syndetic subset $A$ of $T$ such that $Ax\subseteq\alpha[x]$ for all $x\in X$.
\end{enumerate}
\begin{note}
We notice that $(1)\Leftrightarrow(2)\Leftrightarrow(3)$ is exactly \cite[Proposition~4.4]{E69} for $T$ in groups. Here $A$ is symmetric iff $A=A^{-1}$.
\end{note}
\end{thm}

\begin{proof}
(1) $\Rightarrow$ (2): Assume $(T,X)$ is a.p.; then given any $\varepsilon\in\mathscr{U}_X$ there are a $\delta\in\mathscr{U}_X$ and a right-syndetic subset $A$ of $T$ such that if $(x,y)\not\in\varepsilon$ then $a(x,y)\not\in\delta$ for all $a\in A$. Let $K$ be a compact subset of $T$ such that $T=K^{-1}A$. We can assert that there is some $\alpha\in\mathscr{U}_X$ such that if $(x,y)\not\in\delta$ then $\alpha\cap t^{-1}(x,y)=\emptyset$ for all $t\in K$.

Indeed, otherwise, for all $\eta\in\mathscr{U}_X$ there are $(x_\eta,y_\eta)\not\in\delta$ and $t_\eta\in K$ such that $\eta\cap t_\eta^{-1}(x_\eta,y_\eta)\not=\emptyset$ so that $(x_\eta,y_\eta)\in t_\eta\eta$. Thus $(x_\eta,y_\eta)\in K\eta$. However, since $K\varDelta_X\subseteq\varDelta_X$ and $K\times X\times X\rightarrow X\times X$ is continuous, hence as $\eta$ is small sufficiently, $K\eta\subset\delta$ and thus $(x_\eta,y_\eta)\not\in\delta$ contradicts $(x_\eta,y_\eta)\in K\eta$. This proves our assertion.

Therefore, for all $k\in K$ and $a\in A$, if $(x,y)\not\in\varepsilon$ then $\alpha\cap k^{-1}a(x,y)=\emptyset$. Thus if $(x,y)\not\in\varepsilon$ then $t(x,y)\not\in\alpha$ for all $t\in T$. That is, $(T,X)$ is uniformly distal (cf.~Definition~\ref{def1.9}). Whence by Theorem~\ref{thm1.14}, $(T,X)$ is equicontinuous surjective.

(2) $\Rightarrow$ (3): Let $(T,X)$ be surjective equicontinuous. Then by Theorem~\ref{thm1.13} and Lemma~\ref{lem6.7}, $E(T,X)$ is a group in $C(X,X)$.

(3) $\Rightarrow$ (4): Let $E(T,X)$ be a group in $C(X,X)$ and $\alpha\in\mathscr{U}_X$. Then $E(T,X)$ is a compact topological group by Ellis' joint continuity theorem.\footnote{\textbf{Ellis' Joint Continuity Theorem}\,(\cite[Theorem~1]{E57})\textbf{.}\ {\it Let $G$ be a locally compact $T_2$ space with a group structure for which $(s,t)\mapsto st$ is separately continuous, and suppose $T$ acts on a compact $T_2$ space $X$ in a separately continuous manner. Then the action of $T$ on $X$ is jointly continuous.}

The readers may find in Appendix an independent proof of Ellis' theorem; see Theorem~\ref{thm9.8} in Appendix for its semiflow versions.}
So $(p,x)\mapsto p(x)$ of $E(T,X)\times X$ to $X$ is continuous by Ellis' joint continuity theorem again. Hence there exists a neighborhood $N$ of the identity $e$ of $E(T,X)$ such that
$Nx\subseteq\alpha[x]$ for all $x\in X$.

Since $\overline{T}=\overline{T^{-1}}=E(T,X)$ and $E(T,X)$ is compact, there exist two finite subsets $K_R$ and $K_L$ of $T$ such that $K_R^{-1}N=E(T,X)=NK_L^{-1}$. Let $A=N\cap T$ and $K=K_R\cup K_L$. If $k^{-1}p=t\in T$ or $pk^{-1}=t\in T$ for some $k\in K$ and $p\in N$, then $p=kt\in T$ or $p=tk\in T$. Thus $T=K^{-1}A=AK^{-1}$ and $Ax\subseteq\alpha[x]$ for all $x\in X$. This shows that $(T,X)$ is a discretely bilaterally a.p. semiflow.

(4) $\Rightarrow$ (5) $\Rightarrow$ (1) are obvious. Thus we have concluded that $(1)\Leftrightarrow(2)\Leftrightarrow(3)\Leftrightarrow(4)\Leftrightarrow(5)$.

Next, suppose $T$ is a topological group and then $(6)\Rightarrow(1)$ is obvious. We now proceed to the proof of $(3)\Rightarrow(6)$.
Indeed, if we take $N$ a `symmetric' neighborhood of $e$ in the above proof of $(3)\Rightarrow(4)$, then $A=A^{-1}$ whence $A$ is symmetric discretely syndetic such that $Ax\subseteq\alpha[x]$.

The proof of Theorem~\ref{u8.3} is completed.
\end{proof}

It should be noted that the compactness of $X$ is essential for Theorem~\ref{u8.3}. For example, the $C^0$-flow $\mathbb{R}\times X\rightarrow X,\ (t,x)\mapsto t+x$, where $X=\mathbb{R}$ with the usual topology, is equicontinuous but it is not a.p. with no a.p. point at all.

Moreover, if starting from Theorem~\ref{u8.3} as Proof (II) of Theorem~\ref{thm2.1}, we can easily obtain Theorem~\ref{thm2.1}. But the proof of Theorem~\ref{u8.3} is itself based on Theorem~\ref{thm2.1} in \cite{DX} and here.

Theorem~\ref{u8.3} implies the following important fact (cf.~\cite[Proposition~4.7]{E69} and \cite[Corollary~2.6]{Aus} for $T$ in groups and \cite[Proposition~3.6]{DX} for the general case by different approaches).

\begin{cor}[{cf.~\cite[Proposition~4.7]{E69} for $T$ in groups}]\label{u8.4}
Every factor of an equicontinuous surjective semiflow is always equicontinuous surjective.
\end{cor}

\begin{proof}
Let $(T,Y)$ be a factor of an equicontinuous surjective semiflow $(T,X)$ via an epimorphism $\pi\colon X\rightarrow Y$. We proceed to show $(T,Y)$ is a.p.. Given $\varepsilon\in\mathscr{U}_Y$, there is $\delta\in\mathscr{U}_X$ with $\pi\delta\subset\varepsilon$. Since $(T,X)$ is a.p. by Theorem~\ref{u8.3}, there is a right-syndetic subset $A$ of $T$ such that $Ax\subseteq\delta[x]$ for all $x\in X$. Now for each $y\in Y$, having chosen $x\in\pi^{-1}(y)$, $Ay=A\pi x=\pi Ax\subseteq\pi\delta[x]\subset\varepsilon[y]$. Thus $(T,Y)$ is a.p. and so it is equicontinuous surjective.
\end{proof}

\begin{cor}[{cf.~\cite[Proposition~4.8]{E69} for $T$ in groups}]\label{u8.5}
\begin{enumerate}
\item[$1)$] Let $(T,X)$ be a.p. and let $M$ be a closed invariant subset of $X$. Then $(T,M)$ is an a.p. subsemiflow.
\item[$2)$] Let $(T,X_i), i\in I$, be a family of semiflows and $(T,X)=(T,\prod_iX_i)$. Then $(T,X)$ is a.p. iff $(T,X_i)$ is a.p. for all $i\in I$.
\end{enumerate}
\end{cor}

\begin{proof}
1) Firstly $(T,M)$ is equicontinuous. Secondly by Theorem~\ref{thm2.1}, $(T,M)$ is distal so that $(T,M)$ is surjective by (2) of Theorem~\ref{thm1.13}. Whence $(T,M)$ is a.p. by Theorem~\ref{u8.3}.

2) Let $(T,X)$ be a.p.; then $(T,X_i)$ is a.p. by Corollary~\ref{u8.4}. Conversely, if $(T,X_i), i\in I$, is a.p., then by Theorem~\ref{u8.3} it follows that $(T,X)$ is equicontinuous surjective. So $(T,X)$ is a.p. by Theorem~\ref{u8.3} again.
\end{proof}

Although the notion `syndetic' is closely related to the topology of the phase semigroup $T$, yet Theorem~\ref{u8.3} shows that `a.p.' does not depend on it, since distality and equicontinuity do not depend on it.
In fact we can obtain more from Theorem~\ref{u8.3}.

\begin{cor}[{cf.~\cite[Remark~4.32]{GH} for $T$ in groups}]\label{u8.6}
Let $(T,X)$ be a surjective semiflow. Then $(T,X)$ is a.p. if and only if for every $\alpha\in\mathscr{U}_X$ there exists a finite subset $K$ of $T$ such that to each $t\in T$ there corresponds $k\in K$ with $tx\in\alpha[kx]$ for all $x\in X$.
\end{cor}

\begin{proof}
Assume $(T,X)$ is a.p. and let $\alpha\in\mathscr{U}_X$. Then by Theorem~\ref{u8.3}, $(T,X)$ is equicontinuous. So by Lemma~\ref{lem1.6}, there is a finite set $K\subset T$ such that to each $t\in T$ there is some $k\in K$ with $tx\in\alpha[kx]$ for all $x\in X$.

Conversely, suppose for every $\alpha\in\mathscr{U}_X$ there exists a finite subset $K$ of $T$ such that to each $t\in T$ there corresponds $k\in K$ with $tx\in\frac{\alpha}{3}[kx]$ for all $x\in X$. Since $K$ is finite and $X$ is compact, there is $\delta\in\mathscr{U}_X$ such that $K\delta\subseteq\frac{\alpha}{3}$. This implies that if $(x,y)\in\delta$ then $(tx,ty)\in\alpha$ for all $t\in T$. Thus $(T,X)$ is a.p. by Theorem~\ref{u8.3} again.
\end{proof}

If here $k$ depends upon $x$ in the foregoing corollary, it is exactly a characterization of a.p. points (cf.~\cite[Theorem~4.1]{CD} and Remark~\ref{rem3.14B}).

\medskip
Let $C(X)$ be the algebra of real-valued continuous functions on $X$; then based on a semiflow $(T,X)$, $T$ can act from the right on $C(X)$ by the means $(f,t)\mapsto f_t=ft$. This induces the canonical semiflow $C(X)\times T\rightarrow C(X)$
which is jointly continuous w.r.t. the pointwise topology of $C(X)$.
Here $ft(x)=f(tx)$ for all $x\in X$ and write $fT:=\{f_t\,|\,t\in T\}$.

\begin{defn}[\cite{Eb,E59}]\label{u8.7}
Let $f\in C(X)$, $F\subseteq C(X)$ and $x_0\in X$ be any given. Then
\begin{enumerate}
\item $f$ is an \textit{almost periodic function} (a.p.f.) on $X$ w.r.t. $(T,X)$ iff $fT$ is relatively compact in $C(X)$ provided with the uniform topology.
\item $f$ is a \textit{weakly almost periodic function} (w.a.p.f.) on $X$ w.r.t. $(T,X)$ iff $fT$ is relatively compact in $C(X)$ provided with the pointwise topology.
\item We shall say that \textit{$x_0$ is separated by} $F$ or \textit{$F$ separates $x_0$} if for every $x\in X$ such that $x_0\not=x$ there exists an $f\in F$ with $f(x_0)\not=f(x)$.
\end{enumerate}
\end{defn}

Similar to \cite[Proposition~4.15]{E69} for $T$ in groups, with the aid of Theorem~\ref{thm2.1} (precisely Theorem~\ref{u8.3}), we can characterize the a.p., equicontinuity, and weak equicontinuity of $(T,X)$ in terms of $C(X)$.

\begin{prop}\label{u8.8}
Let $(T,X)$ be a semiflow with phase semigroup $T$. Then:
\begin{enumerate}
\item $(T,X)$ is a.p. if and only if $(T,X)$ is surjective and every $f\in C(X)$ is an a.p.f. w.r.t. $(T,X)$.
\item $(T,X)$ is equicontinuous if and only if every $f\in C(X)$ is an a.p.f. w.r.t. $(T,X)$.
\item $(T,X)$ is weakly equicontinuous if and only if every $f\in C(X)$ is w.a.p. w.r.t. $(T,X)$.
\end{enumerate}
\begin{note}
The above 2. and 3. are in fact contained in Ellis' \cite[(3) of Lemma~1]{E59} and \cite[Lemma~4]{E59}, respectively. We will reprove them here from a modern perspective. In view of 3. the ``weakly equicontinuous'' is also called ``weakly almost periodic'' in \cite{E59, EN}.
\end{note}
\end{prop}

\begin{proof}
1. Let $(T,X)$ be a.p. and $f\in C(X)$. Then by Theorem~\ref{u8.3}, $fT$ and its uniform closure are equicontinuous and $(T,X)$ is surjective. Hence $fT$ is relatively compact in $C(X)$ under the supremum norm by Ascoli's theorem (cf.~\cite[Theorem~7.18 in p.~234]{Kel}) or by Lemma~\ref{lem1.6}.

Conversely assume $fT$ is relatively compact in $C(X)$ under the uniform topology, for all $f\in C(X)$; and let $\alpha\in\mathscr{U}_X$ be any given. Then there exist $\varepsilon>0$ and $F$ a finite subset of $C(X)$ such that if $|f(x)-f(y)|<\varepsilon$ for all $f\in F$, then $(x,y)\in\alpha$ (cf.~\cite[p.~47]{Aus}). By Ascoli's theorem $fT$, for each $f\in F$, is equicontinuous. Thus since $F$ is finite, there exists an index $\delta\in\mathscr{U}_X$ with
\begin{gather*}
|f_t(x)-f_t(y)|<\varepsilon\ \forall t\in T,\ f\in F\ \textrm{and}\ (x,y)\in\delta.
\end{gather*}
Hence $T\delta\subseteq\alpha$ and $(T,X)$ is equicontinuous and so a.p. by Theorem~\ref{u8.3}.

2. The same argument implies the second assertion of Proposition~\ref{u8.8} and we thus omit the details here.

3. Let $(T,X)$ be weakly equicontinuous (i.e. $E(X)\subset C(X,X)$ by $\S\ref{sec1.1.5}\,(\textbf{p})$). Let $f\in C(X)$ and define $\phi(p)=fp$ for all $p\in E(X)$. If $x_n\to x$ in $X$, then $\lim\phi(p)(x_n)=f(p(x))=\phi(p)(x)$. Thus $\phi(p)\in C(X)$. Now we claim that the map $\phi\colon E(X)\rightarrow C(X)$ is continuous under the pointwise topologies. To see this let $p_n\to p$ in $E(X)$; then
$\lim\phi(p_n)(x)=\lim f(p_n(x))=f(p(x))=\phi(p)(x)$.
Thus the image of $\phi$ is compact and it contains the set $fT=\phi(T)$. Thus each $f\in C(X)$ is w.a.p. w.r.t. $(T,X)$.

Conversely, let every $f\in C(X)$ be w.a.p. w.r.t. $(T,X)$ and $p\in E(X)$. To show $p\in C(X,X)$, it is enough to verify that for any $f\in C(X)$, $fp\colon X\rightarrow\mathbb{R}$ is continuous. For this, let $t_n\to p$ in $E(X)$; then $f{t_n}\to fp$ in pointwise topology. Then by Definition~\ref{u8.7}, there is no loss of generality in assuming that $f{t_n}\to g$, for some $g\in C(X)$, in the sense of the pointwise topology. Thus $fp=g$ is continuous and then $E(X)\subset C(X,X)$.

The proof of Proposition~\ref{u8.8} is thus completed.
\end{proof}

\begin{cor}\label{u8.9}
\begin{enumerate}
\item[$(1)$] Every factor of an equicontinuous semiflow is always equicontinuous.
\item[$(2)$] Every factor of a weakly equicontinuous semiflow is weakly equicontinuous.
\item[$(3)$] Every subsemiflow of a weakly equicontinuous semiflow is weakly equicontinuous.
\end{enumerate}
\end{cor}

\begin{proof}
(1) Let $\pi\colon X\rightarrow Y$ be a factor map where $(T,X)$ equicontinuous. Then $\pi^*\colon f\mapsto f\circ\pi$ of $C(Y)$ into $C(X)$ is continuous 1-1 under the uniform topology. Let $f\in C(Y)$. Then $\pi^*(f)T$ is relatively compact in $C(X)$ by 2. of Proposition~\ref{u8.8}. So by $\pi^*(f)T=\pi^*(fT)$, it follows that $fT$ is relatively compact in $C(Y)$. Whence $(T,Y)$ is equicontinuous by 2. of Proposition~\ref{u8.8} again.

(2) If $E(X)\subset C(X,X)$, then it is easy to check that $E(Y)\subset C(Y,Y)$. Thus $(T,Y)$ weakly equicontinuous if so is $(T,X)$.

(3) Let $(T,X)$ be weakly equicontinuous and $Y$ is closed invariant subset of $X$. Let $p\in E(Y)$ with $t_n\to p$ in $E(Y)$. Then $t_ny\to p(y)$ for all $y\in Y$. Since $E(X)$ is compact, we can take a subnet $\{t_i\}$ from $\{t_n\}$ such that $t_i\to q\in E(X)\subseteq C(X,X)$. Then $t_ix\to q(x)$ for all $x\in X$. This shows that $p=q|Y$ whence $p\in C(Y,Y)$. Thus $(T,Y)$ is weakly equicontinuous.

This proves Corollary~\ref{u8.9}.
\end{proof}

It turns out that Corollary~\ref{u8.9} implies Corollary~\ref{u8.4} because every factor of a surjective semiflow is surjective. The following is another consequence of Theorem~\ref{thm1.13} and Proposition~\ref{u8.8} using the notion of weakly equicontinuous.

\begin{thm}\label{u8.10}
Let $(G,X)$ be a minimal flow with phase group $G$ and $T$ a subsemigroup of $G$ generating $G$, i.e., $G=\langle T\rangle$. If $(T,X)$ is distal and if there exists an $x_0\in X$ such that the w.a.p. functions on $X$ w.r.t. $(T,X)$ separate it, then $(G,X)$ is an equicontinuous flow.
\end{thm}

\begin{proof}
At first we note that $(G,X)$ is minimal distal by (3) of Theorem~\ref{thm1.13}.
Let $F_{wap}(X)$ be the set of w.a.p. functions on $X$ w.r.t. $(T,X)$. Define a relation $R$ on $X$ as follows:
$$
R=\{(x,x^\prime)\,|\,f(x)=f(x^\prime)\ \forall f\in F_{wap}(X)\}.
$$
It is easy to check that $R$ is a closed $T$-invariant equivalence relation on $X$. Let $\pi\colon X\rightarrow X/R$ be the canonical projection. Then $\pi\colon (T,X)\rightarrow(T,X/R)$ is a factor map such that $\pi$ is 1-1 at $x_0$ (i.e. $\pi^{-1}(\pi(x_0))$ is a singleton set). By the definition of $R$, we can see that the mapping
$$
\tilde{{}}\colon F_{wap}(X)\ni f\mapsto\tilde{f}\in C(X/R),\quad \textrm{where }\tilde{f}([x]_R)=f(x)\ \forall [x]_R\in X/R,
$$
is well defined and continuous. Thus $\tilde{f}$, for each $f\in F_{wap}(X)$, is a w.a.p.f. on $X/R$ w.r.t. $(T,X/R)$. Since $F_{wap}(X/R)$ separates all of the points of $X/R$, $F_{wap}(X/R)=C(X/R)$ so that $(T,X/R)$ is such that $E(X/R)\subset C(X/R,X/R)$ by Proposition~\ref{u8.8}. Since $(T,X/R)$ is distal, so $E(X/R)$ is group of homeomorphisms of $X/R$ and further $(T,X/R)$ is equicontinuous surjective.
Thus by (4) of Theorem~\ref{thm1.13}, $(G,X/R)$ is an equicontinuous flow.

In addition, $\pi\colon(G,X)\rightarrow(G,X/R)$ is also a factor map. Because $\pi$ is 1-1 at $x_0$, $(G,X)$ is ``locally almost periodic'' at $x_0$, that is, for all neighborhood $U$ of $x_0$ there are a right-syndetic subset $A$ of $G$ and a neighborhood $V$ of $x_0$ such that $AV\subseteq U$. By minimality of $(G,X)$, $(G,X)$ is locally almost periodic at every point of $X$.

Whence $(G,X)$ is equicontinuous (cf., e.g., \cite[Corollary~5.28]{E69} that asserts: \textit{A flow is equicontinuous iff it is distal and locally almost periodic}). The proof is thus completed.
\end{proof}

Since an a.p.f. on $X$ w.r.t. $(T,X)$ must be a w.a.p.f. on $X$ w.r.t. $(T,X)$, by Theorem~\ref{u8.10} we can easily obtain a generalization of a theorem of Ellis.

\begin{cor}[{cf.~\cite[Theorem~5 for the case $G=T$]{E59}}]\label{u8.11}
Let $(G,X)$ be a minimal flow and $T$ a subsemigroup of $G$ generating $G$. If $(T,X)$ is distal and if there exists an $x_0\in X$ such that the a.p. functions on $X$ w.r.t. $(T,X)$ separate it, then $(G,X)$ is an equicontinuous flow.
\end{cor}

The following theorem presents two sufficient and necessary conditions for point-distal flows to have non-trivial equicontinuous factors, which is possibly useful for Veech's question (cf.~Question~\ref{q5.7} in $\S\ref{sec5}$).

\begin{thm}\label{u8.12}
Let $(T,X)$ be a point-distal flow with $X$ a non-trivial compact $T_2$ space. Then the following three statements are pairwise equivalent.
\begin{enumerate}
\item[$(1)$] $(T,X)$ has a non-trivial equicontinuous factor.
\item[$(2)$] There exists a non-constant a.p.f. on $X$ w.r.t. $(T,X)$.
\item[$(3)$] There exists a non-constant w.a.p.f. on $X$ w.r.t. $(T,X)$.
\end{enumerate}
\begin{note}
When $(T,X)$ is merely a surjective semiflow, this theorem also holds by the same proof.
\end{note}
\end{thm}

\begin{proof}
(1)$\Rightarrow$(2): Let $\pi\colon(T,X)\rightarrow(T,Y)$ be an extension of a non-trivial equicontinuous flow $(T,Y)$. Then $\pi^*\colon C(Y)\rightarrow C(X)$, defined by $f\mapsto f^*=f\circ\pi$, is continuous 1-1 such that $f^*t=(ft)^*$ for all $t\in T$ and $f\in C(Y)$. Let $f\in C(Y)$ be non-constant. Since $fT$ is relatively compact in $C(Y)$ under the uniform topology, $(fT)^*=f^*T$ is also relatively compact in $C(X)$ with the uniform topology. Whence $f^*\in C(X)$ is a non-constant a.p. function w.r.t. $(T,X)$.

(2)$\Rightarrow$(3) is trivial for an a.p.f. on $X$ is w.a.p. w.r.t. $(T,X)$.

(3)$\Rightarrow$(1): Let there exist a non-constant w.a.p.f. on $X$ w.r.t. $(T,X)$. Let $F_{wap}(X)$ be the set of w.a.p. functions w.r.t. $(T,X)$. Define
$$
R=\{(x,x^\prime)\in X\times X\,|\,f(x)=f(x^\prime)\ \forall f\in F_{wap}(X)\}.
$$
Then $R\not=X\times X$ is a closed invariant equivalence relation and let $Y=X/R$. By 3. of Proposition~\ref{u8.8}, $(T,Y)$ is weakly equicontinuous (i.e. $E(Y)\subset C(Y,Y)$). Since $(T,Y)$ is a factor of $(T,X)$, hence $(T,Y)$ is point-distal. Thus by Corollary~\ref{cor6.8}, it follows that $(T,Y)$ is equicontinuous. Therefore $(T,X)$ has a non-trivial equicontinuous factor.

The proof of Theorem~\ref{u8.12} is therefore completed.
\end{proof}
\section{Appendix: revisit to Robert Ellis' joint continuity theorems}\label{sec9}
Let $T$ be a multiplicative topological group or semigroup and $X$ a compact $T_2$-space, and let $T\times X\rightarrow X,\ (t,x)\mapsto tx$ be a separately continuous flow or semiflow. Then the problem to find conditions on $T$ so that $(t,x)\mapsto tx$ jointly continuous goes back, at least, to Baire (1899); see, e.g., \cite{Bou, For, E57, Tro, Nam, Aus}. In this Appendix, we will revisit Robert Ellis' joint continuity theorems and generalize some of them to locally compact Hausdorff semi-topological semigroups based on Isaac Namioka's theorem.

\begin{no*}
In this appendix ``locally compact Hausdorff space'' will be abbreviated as ``l.c.$T_2$-space''.
\begin{quote}
Every l.c.$T_2$-space is of the second category and every closed or open subset of an l.c.$T_2$-space as a subspace itself is an l.c.$T_2$-space.
\end{quote}
\end{no*}

Differently with Ellis' original proof~\cite{E57} in which the group structure of $T$ plays an essential role, we will employ mainly the following basic joint continuity theorem due to Isaac~Namioka 1974~\cite[Theorem~1.2]{Nam} as our tool.

\begin{lem}[{Namioka~\cite{Nam}}]\label{lemB.1}
Let $G$ be an l.c.$T_2$-space or a separable Baire space and $X$ a compact $T_2$-space, and let $(Z,d)$ be a metric space. If a map $f\colon G\times X\rightarrow Z$ is unilaterally continuous, then there exists a dense $G_\delta$-set $R$ in $G$, such that $f$ is jointly continuous at each point of $R\times X$.
\end{lem}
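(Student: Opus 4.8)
The plan is to exhibit the set $R$ explicitly in terms of the pointwise joint oscillation of $f$, and then to show it is a dense $G_\delta$; the density will be the only genuine difficulty. For $(g,x)\in G\times X$ write $\operatorname{osc}_f(g,x)=\inf\{\operatorname{diam}f(V\times W): V\ \text{open in}\ G,\ W\ \text{open in}\ X,\ (g,x)\in V\times W\}$, the joint oscillation of $f$ at $(g,x)$, diameters being taken with respect to $d$. As usual, $\{(g,x):\operatorname{osc}_f(g,x)<\varepsilon\}$ is open in $G\times X$ for each $\varepsilon>0$, and $f$ is jointly continuous at $(g,x)$ if and only if $\operatorname{osc}_f(g,x)=0$. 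First I would set, for $n\ge 1$, $E_n=\{g\in G:\operatorname{osc}_f(g,x)<1/n\ \text{for all}\ x\in X\}$ and $R=\bigcap_{n\ge1}E_n$, so that $f$ is jointly continuous at every point of $R\times X$ by construction. Each $E_n$ is open in $G$: if $g_0\in E_n$ then the compact slice $\{g_0\}\times X$ lies in the open set $\{\operatorname{osc}_f<1/n\}$, so by the tube lemma some open $V\ni g_0$ has $V\times X\subseteq\{\operatorname{osc}_f<1/n\}$, i.e. $V\subseteq E_n$. Hence $R$ is a $G_\delta$. Since $G$ is a Baire space — an l.c.$T_2$-space is of the second category in itself, and a separable Baire space is Baire by hypothesis — it then suffices to prove that each $E_n$ is dense in $G$, whereupon $R=\bigcap_nE_n$ is a dense $G_\delta$.

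The heart of the matter, and the step I expect to be the main obstacle, is the density of $E_\varepsilon:=\{g:\operatorname{osc}_f(g,\cdot)<\varepsilon\ \text{on}\ X\}$: given a nonempty open $U\subseteq G$ and $\varepsilon>0$ I must produce a nonempty open $V\subseteq U$ such that for every $x\in X$ there is an open $W\ni x$ with $\operatorname{diam}f(V\times W)<\varepsilon$ — equivalently, using compactness of $X$, a finite open cover $W_1,\dots,W_k$ of $X$ with $\operatorname{diam}f(V\times W_i)<\varepsilon$ for every $i$. Separate continuity delivers this over a single point $g_0\in U$ in place of $V$, because $f(g_0,\cdot)$ is uniformly continuous on the compact space $X$; promoting a single fibre to a whole neighbourhood $V$ is precisely the content of the theorem, and it is false for an arbitrary Baire space. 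My plan is Namioka's transfinite shrinking: by recursion on ordinals $\alpha$, build a decreasing chain of nonempty closed sets $X_0=X\supseteq X_1\supseteq\cdots$ in the compact $X$ and a decreasing chain of nonempty open sets $U_0=U\supseteq U_1\supseteq\cdots$ in $G$, so that at a successor stage at which some box over $X_\alpha$ inside $U_\alpha$ still has diameter $\ge\varepsilon$ one uses continuity of the partial maps $f(\cdot,x)$ to pass to strictly smaller $X_{\alpha+1},U_{\alpha+1}$, committing to a value-separation that cannot recur, while at a limit stage one takes $X_\lambda=\bigcap_{\alpha<\lambda}X_\alpha$ (nonempty by compactness) together with a nonempty open $U_\lambda$ inside $\bigcap_{\alpha<\lambda}\overline{U_\alpha}$. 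It is exactly here that the hypothesis on $G$ is spent: in the l.c. case one first replaces $U$ by a relatively compact open subset, so that the closures $\overline{U_\alpha}$ decrease through compacta and cannot reach the empty set unless the process has already stopped; in the separable case a countable base for $G$ together with a diagonal Baire-category argument plays the same role and makes the recursion have length $\le\omega$, hence an ordinary induction. Since the $X_\alpha$ form a strictly decreasing chain of nonempty closed subsets of $X$, the recursion must terminate, and the terminal $U_\alpha$ is the desired $V$.

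Once the density of every $E_n$ is established, the Baire category theorem for $G$ yields that $R=\bigcap_{n\ge1}E_n$ is a dense $G_\delta$ subset of $G$, and by the first paragraph $f$ is jointly continuous at every point of $R\times X$, which is the assertion. The reductions in the first and last paragraphs are routine; essentially all the work — and all the use of local compactness, respectively separability-plus-Baire, of $G$ — is concentrated in the transfinite shrinking of the middle paragraph.
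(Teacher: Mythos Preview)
The paper does not give its own proof of this lemma; it is quoted from Namioka with only the remark afterwards that, via the continuous map $E_f\colon G\to C_{\mathfrak p}(X,Z)$, $g\mapsto f(g,\cdot)$, it is also a corollary of \cite[Lemma~4.2]{Aus} (Troallic). So there is no in-house argument to compare against; you are attempting a self-contained proof where the authors simply cite one.

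Your reduction to ``each $E_n$ is open and dense, then apply Baire'' is the standard and correct framework, and the tube-lemma argument for openness of $E_n$ is fine. The genuine gap is the density step. As written, your transfinite shrinking does not terminate: a compact Hausdorff space can carry strictly decreasing chains of nonempty closed subsets of arbitrary ordinal length, so the sentence ``since the $X_\alpha$ form a strictly decreasing chain of nonempty closed subsets of $X$, the recursion must terminate'' is not a valid inference. Something additional has to force a bound on the length of the recursion, and your phrase ``committing to a value-separation that cannot recur'' gestures at this but does not supply it; any such bound must come from finiteness on the \emph{target} side (an $\varepsilon$-discrete subset of the range, or a finite cover of a compact piece of $Z^X$), not merely from compactness of $X$. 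Likewise, in the separable-Baire case you invoke ``a countable base for $G$'', but separability of $G$ does not give second countability, so the reduction to a length-$\omega$ induction needs a different justification. In short: the architecture is right, but the engine of the density argument is missing.
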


Some alternate proofs and generalizations of Namioka's theorem have been given by, for examples, \cite{Tro79, Ken, Hel, Chr} and \cite[Lemma~1.36]{G03}.

Since $f\colon G\times X\rightarrow Z$ is unilaterally continuous, then $E_f\colon G\rightarrow C_{\mathfrak{p}}(X,Z)$ given by $g\mapsto f(g,\centerdot)$ is continuous, where $C_{\mathfrak{p}}(X,Z)$ is the space of all continuous functions from $X$ to $Z$ with the pointwise topology $\mathfrak{p}$. Thus Lemma~\ref{lemB.1} is a corollary of \cite[Lemma~4.2]{Aus} due to Troallic~\cite{Tro79}.

Under the setup of Lemma~\ref{lemB.1}, $C_u(X,Z)$ denotes the uniform space $C(X,Z)$ where the topology of uniform convergence on $C(X,Z)$ is induced by the standard supremum norm:
\begin{gather*}
\|\phi-\psi\|={\sup}_{x\in X}d(\phi(x),\psi(x))\quad \forall \phi,\psi\in C(X,Z).
\end{gather*}

It is very convenient to reformulate Lemma~\ref{lemB.1} in terms of functions spaces as follows. In our later application of this lemma, $Z$ will be the unit interval $I=[0,1]$ with the usual Euclidean metric.

\begin{lem}\label{lemB.2}
Let $G$ be an l.c.$T_2$-space and $X$ a compact $T_2$-space, and let $(Z,d)$ be a metric space. If $f\colon G\times X\rightarrow Z$ is unilaterally continuous, then there exists a dense $G_\delta$-set $R$ in $G$, such that the induced map
$F\colon G\rightarrow C_u(X,Z),\ t\mapsto f(t,\centerdot)\ \forall t\in G$,
is continuous at each point of $R$.
\end{lem}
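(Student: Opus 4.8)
The plan is to deduce Lemma~\ref{lemB.2} from Lemma~\ref{lemB.1} by a routine compactness argument that converts joint continuity of $f$ on $R\times X$ into continuity of $F$ into the \emph{uniform} function space $C_u(X,Z)$. First I would check that $F$ is well defined: since $f$ is unilaterally continuous, for each fixed $t\in G$ the slice $y\mapsto f(t,y)$ is continuous, so $F(t)=f(t,\centerdot)\in C(X,Z)$. Then I would invoke Lemma~\ref{lemB.1} to obtain a dense $G_\delta$-set $R$ in $G$ such that $f$ is jointly continuous at each point of $R\times X$, and claim that this very $R$ is the set asserted by Lemma~\ref{lemB.2}.

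The core step, carried out at an arbitrary $t_0\in R$, would run as follows. Fix $\varepsilon>0$. For each $x\in X$, joint continuity of $f$ at $(t_0,x)$ yields an open neighborhood $V_x$ of $t_0$ in $G$ and an open neighborhood $U_x$ of $x$ in $X$ with $d\big(f(t,y),f(t_0,x)\big)<\varepsilon/3$ for all $(t,y)\in V_x\times U_x$; in particular $d\big(f(t_0,y),f(t_0,x)\big)<\varepsilon/3$ for $y\in U_x$, so the triangle inequality gives $d\big(f(t,y),f(t_0,y)\big)<\varepsilon$ whenever $(t,y)\in V_x\times U_x$. Since $X$ is compact, pick $x_1,\dotsc,x_n$ with $X=U_{x_1}\cup\dotsm\cup U_{x_n}$ and put $V=V_{x_1}\cap\dotsm\cap V_{x_n}$, an open neighborhood of $t_0$. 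For $t\in V$ and any $y\in X$ one has $y\in U_{x_i}$ for some $i$, hence $d\big(f(t,y),f(t_0,y)\big)<\varepsilon$; taking the supremum over $y\in X$ yields $\|F(t)-F(t_0)\|\le\varepsilon$. As $\varepsilon$ was arbitrary, $F$ is continuous at $t_0$, and the proof is complete.

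I do not expect a serious obstacle: the only points needing care are that Lemma~\ref{lemB.1} genuinely supplies \emph{joint} continuity on $R\times X$ (so that the two neighborhoods $V_x$ and $U_x$ are available simultaneously at each $x$) and that the passage from the pointwise estimates to the supremum norm is legitimate, which is exactly where compactness of $X$ enters. It is worth remarking that the converse implication also holds by an even simpler argument — if $F$ is continuous at $t_0$ then $f$ is jointly continuous at every point of $\{t_0\}\times X$ — so Lemmas~\ref{lemB.1} and \ref{lemB.2} are genuinely equivalent reformulations and no hypothesis beyond those of Lemma~\ref{lemB.1} is required.
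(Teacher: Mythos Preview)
Your proposal is correct and follows essentially the same route as the paper's own proof: invoke Lemma~\ref{lemB.1} to obtain the dense $G_\delta$-set $R$, then use a finite-cover compactness argument at each $t_0\in R$ to upgrade joint continuity on $\{t_0\}\times X$ to continuity of $F$ in the sup norm. The only difference is cosmetic---you spell out the triangle-inequality step with $\varepsilon/3$ that the paper absorbs into a single line, and you explicitly verify that $F$ is well defined.
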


\begin{proof}
This is just a consequence of \cite[Theorem~2.2]{Nam}; however we present its proof here for reader's convenience. Let $R$ be a dense $G_\delta$-set in $G$ given by Lemma~\ref{lemB.1}. Then for any $\tau\in R$, $f\colon G\times X\rightarrow Z$ is continuous at each point of $\{\tau\}\times X$. Since $X$ is compact, we see that $F$ is continuous at $\tau$ in the sense of the topology of uniform convergence on $C(X,Z)$. Indeed, given any $\varepsilon>0$, for any $x\in X$, there are open neighborhoods $U_x$ of $\tau$ in $G$ and $V_x$ of $x$ in $X$ such that
\begin{gather*}
d(f(\tau,y), f(t,y))<\varepsilon\quad \forall t\in U_x\textrm{ and }y\in V_x.
\end{gather*}
Choosing $x_1,\dotsc,x_n\in X$ so that $X=V_{x_1}\cup\dotsm\cup V_{x_n}$ and letting $U=\bigcap_{i=1}^n U_{x_i}$, it follows that
\begin{gather*}
\|F(\tau)-F(t)\|={\sup}_{x\in X}d(f(\tau,x),f(t,x))<\varepsilon\quad \forall t\in U.
\end{gather*}
This concludes the desired.
\end{proof}

A topological space $Y$ is called \textit{completely regular} iff for each member $y$ of $Y$ and each neighborhood $U$ of $y$ there is a continuous function $\alpha$ on $Y$ to the closed unit interval $I$ such that $\alpha(y)=0$ and $\alpha$ is identically $1$ on $Y\setminus U$. It is clear that the family $C(Y,I)$ of all continuous functions on a completely regular space $Y$ to the unit interval $I$ distinguishes points and closed sets in the sense that for closed subset $A$ of $Y$ and each point $y\in Y\setminus A$ there is an $\alpha\in C(Y,I)$ such that $\alpha(y)$ does not belong to the closure of $\alpha(A)$.

If $X$ is a completely regular $T_1$-space, then by the classical Embedding Lemma (cf.~\cite[Chapter~4]{Kel}) $X$ is homeomorphic to a subspace of the cube $Q=I^{C(X,I)}$. Therefore we can easily obtain the following

\begin{lem}\label{lemB.3}
Let $X$ be a completely regular $T_1$-space and $W$ a topological space. Then a map $f\colon W\rightarrow X$ is continuous at a point $w_0\in W$ if and only if $\alpha\circ f\colon W\rightarrow I$ is continuous at the point $w_0$ for each $\alpha\in C(X,I)$.
\end{lem}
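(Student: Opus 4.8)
The plan is to dispose of the ``only if'' direction in one line and to handle the ``if'' direction by a direct separation argument using only the complete regularity of $X$; the Embedding Lemma quoted just above packages the same idea and gives an equally short alternative.

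For the ``only if'' part, if $f$ is continuous at $w_0$, then for every $\alpha\in C(X,I)$ the composite $\alpha\circ f$ is continuous at $w_0$, since $\alpha$ is continuous on all of $X$ and the composition of a map continuous at a point with a map continuous at the image point is continuous at the first point.

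For the ``if'' part, assume $\alpha\circ f$ is continuous at $w_0$ for each $\alpha\in C(X,I)$, and let $U$ be an arbitrary open neighborhood of $f(w_0)$ in $X$. By complete regularity of $X$ there is an $\alpha\in C(X,I)$ with $\alpha(f(w_0))=0$ and $\alpha\equiv 1$ on $X\setminus U$. Since $[0,1)$ is a neighborhood of $0=\alpha(f(w_0))$ in $I$ and $\alpha\circ f$ is continuous at $w_0$, the set $(\alpha\circ f)^{-1}([0,1))$ is a neighborhood of $w_0$ in $W$; and because $\alpha\equiv 1$ on $X\setminus U$ we have $(\alpha\circ f)^{-1}([0,1))\subseteq f^{-1}(U)$. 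Hence $f^{-1}(U)$ contains a neighborhood of $w_0$, and as $U$ was arbitrary, $f$ is continuous at $w_0$.

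Alternatively one invokes the Embedding Lemma directly: since $X$ is completely regular $T_1$, the evaluation map $e\colon X\to Q=I^{C(X,I)}$ given by $e(x)=(\alpha(x))_{\alpha\in C(X,I)}$ is a homeomorphism onto its image, and the $\alpha$-coordinate of $e\circ f$ is precisely $\alpha\circ f$; since continuity at a point of a map into a product space is equivalent to continuity at that point of every coordinate map, $e\circ f$ is continuous at $w_0$, and composing with the inverse of $e$ on $e(X)$ yields continuity of $f$ at $w_0$. There is no genuine obstacle in either route; the only thing to keep an eye on is that the pointwise notion of continuity (rather than global continuity) is preserved by composition and by passage to and from product coordinates, which is exactly what lets the pointwise statement go through verbatim. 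The $T_1$ hypothesis is used only to guarantee that $C(X,I)$ separates points, so that $e$ is injective; complete regularity by itself already furnishes the function $\alpha$ separating $f(w_0)$ from the closed set $X\setminus U$ that drives the direct argument.
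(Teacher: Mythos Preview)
Your proof is correct. The paper does not give a detailed argument; it simply invokes the classical Embedding Lemma to identify $X$ with a subspace of the cube $Q=I^{C(X,I)}$ and declares the lemma an easy consequence, which is precisely your alternative route. Your primary argument is a direct separation proof using only the definition of complete regularity, and it is slightly more elementary in that it avoids the full embedding and works neighborhood by neighborhood; it also makes transparent that the $T_1$ hypothesis is not actually needed for the ``if'' direction (it enters only to make the evaluation map $e$ injective in the embedding approach). Both routes are standard and equally valid here.
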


With this lemma at hands, we do not need here to strengthen Lemma~\ref{lemB.2} by uniform space instead of a metric space $Z$ as \cite{Tro79, Law84} there.

Recall that $(T,X)$ is weakly equicontinuous iff $E(X)\subseteq C(X,X)$. Comparing with \cite[Corollary~6]{AAB98}, an interesting point of the following is that $X$ is not necessarily to be a metric space.

\begin{thm}\label{thm9.4}
Let $(T,X)$ be a weakly equicontinuous flow and $\mathrm{Tran}\,(T,X)=\{x\,|\,\textrm{cls}_XTx=X\}$. Then $\mathrm{Tran}\,(T,X)\subseteq\mathrm{Equi}\,(T,X)$.
\end{thm}

\begin{note}
Therefore by Lemma~\ref{lem1.6}, if $(T,X)$ is a minimal flow, then $(T,X)$ is equicontinuous if and only if it is weakly equicontinuous (cf.~\cite[Theorem~4.6]{Aus}).
\end{note}

\begin{proof}
Let $\alpha\in C(X,I)$ where $I=[0,1]$ and $\pi\colon X\times E(X)\rightarrow X$ by $(x,p)\mapsto p(x)$ which is unilaterally continuous. Then by Lemma~\ref{lemB.1}, there is an $x_0\in X$ such that $\alpha\circ\pi\colon X\times E(X)\rightarrow I$ is jointly continuous at each point of $\{x_0\}\times E(X)$. Thus by compactness of $E(X)$, for $\varepsilon>0$ there is $\delta_0\in\mathscr{U}_X$ such that $|\alpha\circ p(y)-\alpha\circ p(x_0)|<\varepsilon/3$ for all $y\in\delta_0[x_0]$ and $p\in E(X)$. Now let $x\in\mathrm{Tran}\,(T,X)$; there are $t\in T$ and $\delta\in\mathscr{U}_X$ such that $t(\delta[x])\subseteq\delta_0[x_0]$. Using $E(X)t=E(X)$, we can conclude that
$|\alpha\circ p(y)-\alpha\circ p(x)|<\varepsilon$ for all $y\in\delta[x]$ and $p\in E(X)$. This shows by Lemma~\ref{lemB.3} that $\pi\colon X\times E(X)\rightarrow X$ is jointly continuous at each point of $\mathrm{Tran}\,(T,X)\times E(X)$. Thus $(T,X)$ is equicontinuous at each point of $\mathrm{Tran}\,(T,X)$.
\end{proof}

\begin{sn}\label{def9.5}
Given any semigroup $T$ and non-empty set $X$, $T\times X\rightarrow X, (t,x)\mapsto tx$ is called an \textit{algebraic semiflow}\index{semiflow!algebraic} if $ex=x$ and $(st)x=s(tx)$ for all $x\in X$ and $s,t\in T$.

For example, let $\mathbb{I}$ be a minimal left ideal in $\beta T$, $u$ an idempotent in $\mathbb{I}$, and $G=u\mathbb{I}$ which is a group with identity $u$ by (4) of Lemma~\ref{lem6.3}. Then $G\times\mathbb{I}\rightarrow\mathbb{I},\ (g,p)\mapsto gp$, is an algebraic flow where $g\colon p\mapsto gp$ does not need to be continuous.
\end{sn}

Following the classical work of Ellis~\cite{E57} we now introduce a notion we will need in our later arguments.

\begin{sn}\label{def9.6}
Let $T$ be a semigroup with a topology $\mathfrak{S}$ and $X$ a compact $T_2$-space. Then an algebraic semiflow $(T,X)$ is called an \textit{Ellis semiflow} if
\begin{enumerate}
\item[(a)] $T$ is an l.c.$T_2$-space and a right-topological semigroup under $\mathfrak{S}$;

\item[(b)]  $(t,x)\mapsto tx$ of $T\times X$ to $X$ is unilaterally continuous.
\end{enumerate}
\end{sn}

In view of Definition~\ref{def9.6} we now introduce ``admissible'' time.

\begin{sn}[\cite{Law84}]\label{def9.7}
We say that an Ellis semiflow $(T,X)$ is \textit{admissible at an element $\tau\in T$} if
\begin{enumerate}
\item[(c)] $\mathrm{Int}_T\textrm{cls}_T{\tau\{t\in T\,|\,t\textrm{ is a surjection of }X\}}\not=\emptyset$.
\end{enumerate}
We shall say $(T,X)$ is \textit{admissible} if it is admissible at each element of $T$.

\begin{note}
In Ellis' setup that $T$ is a group, every Ellis flow of $T$ is admissible.
\end{note}
\end{sn}

Clearly each $T_2$ non-compact \textit{C}-semigroup $T$ is such that $T\setminus tT$ relatively compact and so $\mathrm{Int}_T\textrm{cls}_T{tT}\not=\emptyset$ for each $t\in T$ like the additive semigroups $\mathbb{N}$ and $\mathbb{R}_+$ (cf.~\cite{KM}).

By adapting ``transport'' technique of Troallic~\cite{Tro83, HT}, Lawson \cite{Law84} generalized Ellis' celebrated joint continuity theorem~\cite[Theorem~1]{E57} (also cf.~\cite[Theorem~4.7]{Aus}) as follows, for which we will present another concise proof based on Namioka~\cite{Nam}.

\begin{thm}[{Ellis-Lawson joint continuity theorem; cf.~\cite[Theorem~5.2]{Law84}}]\label{thm9.8}
Let $(T,X)$ be an Ellis semiflow. If it is admissible at an element $\tau\in T$ for $(T,X)$, then $(t,x)\mapsto tx$ of $T\times X$ to $X$ is jointly continuous at each point of $\{\tau\}\times X$.

\begin{note}
Therefore, if $(T,X)$ is an Ellis flow, then $(t,x)\mapsto tx$ of $T\times X$ to $X$ is jointly continuous~\cite[Theorem~1]{E57}.
\end{note}
\end{thm}

\begin{proof}
For simplicity, write $M=\{t\in T\,|\,t\textrm{ is a surjection of }X\}$.
In view of Lemma~\ref{lemB.3} with $W=T\times X$, to prove Theorem~\ref{thm9.8} it is sufficient to show that for any $\theta\in C(X,I)$, the induced function $\vartheta\colon (t,x)\mapsto\theta(tx)$ of $T\times X$ to $I$ is jointly continuous at each point of $\{\tau\}\times X$.
For that, by Lemma~\ref{lemB.2} with $G=T$ and $Z=I$, it follows that there exists a residual subset $R$ of $T$ such that at each point of $R$, the map $\varTheta$ induced by $\vartheta$,
\begin{gather*}
\varTheta\colon T\rightarrow C(X,I); \quad t\mapsto\vartheta(t,\centerdot)\ \forall t\in T,
\end{gather*}
is continuous under the topology of uniform convergence on $C(X,I)$. Next, we will prove that $\varTheta$ is continuous at $\tau$ under the topology of uniform convergence on $C(X,I)$.

Indeed, let $\tau$ be an arbitrary admissible element of $T$ and let $\{t_\gamma\,|\,\gamma\in\Gamma\}$ be a net in $T$ with $t_\gamma\to\tau$ under the topology of $\mathfrak{S}$. We need to show that $\|\varTheta(t_\gamma)-\varTheta(\tau)\|\to0$.

By condition (c), $R\cap\textrm{cls}_T{\tau M}\not=\emptyset$; and so it follows that we can choose an $a\in R$ with $\tau a_j\to a$ for some net $\{a_j\,|\, j\in J\}$ in $M$. Then $t_\gamma a_j\to\tau a_j$ for any $j\in J$ by condition (b).
Now given any $\varepsilon>0$, there exists a neighborhood $U$ of $a$ in $T$ such that $\|\varTheta(a)-\varTheta(t)\|<\varepsilon$ for each $t\in U$, because $\varTheta$ is continuous at the point $a\in R$.

Therefore, there exist two indices $j_0\in J$ and $\gamma_0\in\Gamma$ such that $\|\varTheta(t_\gamma a_j)-\varTheta(\tau a_j)\|<2\varepsilon$ if $j>j_0$ and $\gamma>\gamma_0$. Since $a_j\colon x\mapsto a_jx, j\in J$, is a surjection of $X$, then
\begin{align*}
\|\varTheta(t_\gamma)-\varTheta(\tau)\|&={\sup}_{x\in X}|\vartheta(t_\gamma x)-\vartheta(\tau x)|
={\sup}_{x\in X}|\vartheta(t_\gamma(a_jx))-\vartheta(\tau(a_jx))|\\
&=\|\varTheta(t_\gamma a_j)-\varTheta(\tau a_j)\|\\
&<2\varepsilon
\end{align*}
as $j>j_0$ in the directed index set $J$.
Thus $\|\varTheta(t_\gamma)-\varTheta(\tau)\|\to 0$ for $\varepsilon>0$ is arbitrary; and so $\varTheta$ is continuous at the point $\tau$ from $(T,\mathfrak{S})$ to $(C(X,I),\|\cdot\|)$.

This, of course, implies that $\vartheta\colon (t,x)\mapsto\theta(tx)$ of $T\times X$ to $I$ is jointly continuous at each point of $\{\tau\}\times X$. The proof of Theorem~\ref{thm9.8} is thus completed.
\end{proof}

Note that the group structure of $T$ plays a role in Namioka's proof of Ellis' joint continuity theorem (\cite[Theorem~1]{E57} and \cite[Theorem~3.1]{Nam}). From Theorem~\ref{thm9.8}, we can easily obtain the following four corollaries and Ellis' joint continuity theorem.

As the first simple application of Theorem~\ref{thm9.8}, we can obtain an affirmative answer to the following open question:
\begin{quote}
{\it Let $S$ be a compact $T_2$ semi-topological semigroup with a dense algebraic subgroup $G$. Suppose a net $g_\alpha^{}\to g$ in $G$. Does $g_\alpha^{-1}$ converges to $g^{-1}$ in $G$?} (See \cite[Question~10.3]{Law84}.)
\end{quote}

\begin{cor}\label{cor9.9}
Let $S$ be a compact $T_2$ semi-topological semigroup with a dense algebraic subgroup $G$. Then $G$ is a topological subgroup of $S$.
\end{cor}

\begin{proof}
Let $T=S, X=S$ and define $T\times X\rightarrow X$ by $(t,x)\mapsto tx$ and $X\times T\rightarrow X$ by $(x,t)\mapsto xt$. Since $G$ is a subgroup and dense in $S$, it follows that $\textrm{cls}_T{gG}=T=\textrm{cls}_T{Gg}$ for all $g\in G$. Thus $g\colon x\mapsto gx$ and $g\colon x\mapsto xg$ are surjections of $X$ for each $g\in G$ and further $T$ is admissible at each element $g\in G$. Then by Theorem~\ref{thm9.8}, $(t,x)\mapsto tx$ is continuous on $G\times X$ and $(x,t)\mapsto xt$ is continuous on $X\times G$. Now let $g_\alpha^{}\to x$ in $G$ and let $g_\alpha^{-1}\to y$ in $S$; then by the continuity, $xy=e=yx$. Whence $y=x^{-1}$. This concludes the proof of Corollary~\ref{cor9.9}.
\end{proof}

The interesting point of Corollary~\ref{cor9.9} is that $G$ as a subspace of $S$ is not necessarily locally compact so Ellis' theorem (cf.~Theorem~\ref{thm9.15} below) plays no role here.

\begin{cor}\label{cor9.10}
Let $T$ be a semigroup of continuous self-surjections of a compact $T_2$-space $X$; and let $\mathfrak{S}$ be a topology on $T$ such that $(T,X)$ is admissible. Then $(t,x)\mapsto tx$ of $T\times X$ into $X$ is jointly continuous.
\end{cor}

Given any integer $d\ge1$, the following corollary seems to be non-trivial because it is beyond Ellis' joint continuity theorem.

\begin{cor}\label{cor9.11}
Let $\mathbb{R}_+^d\times X\rightarrow X, (t,x)\mapsto tx$ be a separately continuous semiflow, where $(\mathbb{R}_+^d,+)$ is under the usual Euclidean topology.
If $X$ is minimal, then $(t,x)\mapsto tx$ is jointly continuous on $\mathbb{R}_+^d\times X$.
\end{cor}

\begin{proof}
Write $T=\mathbb{R}_+^d$, which is an additive abelian semigroup. First, under the discrete topology of $T$, $(T,X)$ becomes a minimal semiflow. Then by Corollary~\ref{cor3.3B}, it follows that for each $t\in T$, $x\mapsto tx$ is a continuous surjection of $X$. Therefore, under the Euclidean topology of $\mathbb{R}_+^d$, the following conditions are satisfied:
\begin{itemize}
\item[(a)] $T$ is a locally compact $T_2$-space; and $(t,x)\mapsto tx$ is separately continuous of $T\times X$ to $X$.
\item[(b)] The right translation $R_s\colon t\mapsto t+s$ of $T$ to itself is continuous, for each $s\in T$.
\item[(c)] $\textrm{Int}_{T}\textrm{cls}_T(\tau+\{t\,|\,\pi_t\textrm{ is a surjection of }X\})\not=\emptyset$, for each $\tau\in T$.
\end{itemize}
Then by Lawson's theorem (cf.~\cite[Theorem~5.2]{Law84} and also see Theorem~\ref{thm9.8}), $(t,x)\mapsto tx$ is jointly continuous on $T\times X$. This completes the proof of Corollary~\ref{cor9.11}.
\end{proof}

This corollary may be applied to two interesting cases. First, let $\mathbb{R}_+^d\times X\rightarrow X$ be a semiflow; then it is well known that the induced Ellis semiflow $\mathbb{R}_+^d\times E(X)\rightarrow E(X)$ is only separately continuous, not necessarily jointly continuous. However, for any minimal left ideal $\mathbb{I}$ of $E(X)$, $\mathbb{R}_+^d\times\mathbb{I}\rightarrow\mathbb{I}$ is a jointly continuous semiflow by Corollary~\ref{cor9.11}. Particularly, if $(\pi,\mathbb{R}_+^d,X)$ is distal, then $E(X)$ itself is a minimal left ideal in $E(X)$ (by Lemma~\ref{lem6.7}) so that $(\pi_*,\mathbb{R}_+^d,E(X))$ is a semiflow with the phase semigroup $\mathbb{R}_+^d$ under the usual topology.

Secondly, let $\beta\mathbb{R}_+^d$ be the Stone-\v{C}ech compactification of $\mathbb{R}_+^d$. Then $\beta\mathbb{R}_+^d$ is a compact Hausdorff right-topological semigroup in a natural manner and there is a natural separately continuous semiflow $\mathbb{R}_+^d\times\beta\mathbb{R}_+^d\rightarrow\beta\mathbb{R}_+^d$. Therefore, for any minimal left ideal $\mathbb{I}$ of $\beta\mathbb{R}_+^d$, $\mathbb{R}_+^d\times\mathbb{I}\rightarrow\mathbb{I}$ is a jointly continuous semiflow by Corollary~\ref{cor9.11}.

Let $C_{\mathfrak{p}}(X,X)$ denote the Hausdorff space $C(X,X)$ equipped with the topology $\mathfrak{p}$ of pointwise convergence. Clearly, $C_{\mathfrak{p}}(X,X)$ is a semi-topological semigroup, since the maps $R_g\colon f\mapsto f\circ g$ and $L_g\colon f\mapsto g\circ f$ of $C_{\mathfrak{p}}(X,X)$ to itself are continuous for each $g\in C_{\mathfrak{p}}(X,X)$.
Then for any semigroup $G$ of homeomorphisms on $X$, by an argument similar to the proof of \cite[Proposition~8.3]{Fur}, we can see that the closure $\mathrm{cls}_{C_{\mathfrak{p}}(X,X)}^{}G$ of $G$ in $C_{\mathfrak{p}}(X,X)$ is a subsemigroup of $C_{\mathfrak{p}}(X,X)$.

The following corollary is a generalization of \cite[Lemma~3]{E57} using different approach. There Ellis is for compact metric phase space $X$.

\begin{cor}\label{cor9.12}
Let $G$ be a group of self-homeomorphisms of a compact $T_2$-space $X$; and let $T=\mathrm{cls}_{C_{\mathfrak{p}}(X,X)}^{}G$. If $T$ is an l.c. subset of $C_{\mathfrak{p}}(X,X)$, then $(g,x)\mapsto gx$ of $G\times X$ to $X$ is jointly continuous, where $G$ is regarded as a subspace of $C_{\mathfrak{p}}(X,X)$.
\end{cor}

\begin{note}
If $G$ itself is a compact subset of $C_{\mathfrak{p}}(X,X)$, then $(G,X,\pi)$ is equicontinuous (cf.~\cite[Theorem~4.3]{Aus} and Theorem~\ref{cor6.11} before).
\end{note}

\begin{proof}
We consider $T\times X\rightarrow X$ defined by the evaluation map $(t,x)\mapsto tx$. As $\textrm{cls}_T{gG}=T$ for each $g\in G$, $T$ is admissible at each element of $G$. Thus Corollary~\ref{cor9.12} follows at once from Theorem~\ref{thm9.8}.
\end{proof}

We shall say that for a group $G$, an action $G\times X\rightarrow X$ is \textit{effective} if whenever $g\not=e$ for $g\in G$ then $gx\not=x$ for some $x\in X$. This is only a minor technical condition. If the action is not effective, let $F=\{t\in G\,|\,tx=x\ \forall x\in X\}$. Then $F$ is a closed (since $X$ is $T_2$) normal subgroup of $T$. The quotient group $G/F$ acts on $X$ by $(Ft)x=tx$, and this action is clearly effective. Therefore, we can assume that the action of $G$ on $X$ is effective.

Another consequence of Theorem~\ref{thm9.8} is the following

\begin{cor}\label{cor9.13}
Let $G\times X\rightarrow X$ be an effective flow with compact $T_2$ phase space $X$ and discrete phase group $G$. If $G$ is abelian, then $G$ is a topological subgroup of the enveloping semigroup $E(X)$ in the pointwise topology.
\end{cor}

\begin{proof}
Since $G$ effectively acts on $X$, we may see $G\subseteq E(X)$. Let $\Pi\colon E(X)\times G\rightarrow E(X)$ be defined by $\Pi\colon (f,g)\mapsto f\circ g$, which is separately continuous in the pointwise topology by noting that $f\circ g=g\circ f$ for any $f\in E(X)$ and $g\in G$ (cf.~\cite[(1) of Lemma~3.4]{Aus}). Clearly, $\Pi$ is effective. Write $E=E(X)$. Let $T=\mathrm{cls}_{C_{\mathfrak{p}}(E,E)}G$ where we have identified $G$ with $\{\Pi_g\,|\,g\in G\}$ such that $G\subset C_{\mathfrak{p}}(E,E)$.

On the other hand, it is well-known fact that the Ellis semigroup of $(E,G)$ is such that $E(E,G)\approx E$ (cf.~\cite[p.~55]{Aus}). Thus, for any $\xi\in E(E,G)\subseteq E^E$, $\xi\colon f\mapsto f\circ \xi$ of $E$ to $E$ is continuous in the pointwise topology, i.e., $\xi\in C_{\mathfrak{p}}(E,E)$. So, $T=E(E,G)$ is a compact Hausdorff subset of $C_{\mathfrak{p}}(E,E)$.

Therefore by Corollary~\ref{cor9.12}, it follows that $\Pi\colon E\times G\rightarrow E$ is jointly continuous in the pointwise topologies and so $G$ is a paratopological group in the pointwise topology. Moreover, if $g_n^{}\to g$ in $G$ and $g_n^{-1}\to f\in E$ with the involved pointwise topologies, then $g_n^{-1}\circ g_n=e$ and $g_n^{-1}\circ g_n\to f\circ g$ for $\Pi$ is continuous. Whence $f\circ g=e$ and then $f=g^{-1}$ since $G$ is a group. This implies that $G$ is a topological subgroup of $E$ in the pointwise topology.

The proof of Corollary~\ref{cor9.13} is thus completed.
\end{proof}

Notice that under the situation of Corollary~\ref{cor9.13}, while $G$ is abelian, $E(X)$ is not necessarily abelian (cf.~\cite[p.~55]{Aus}); otherwise, $E(X)$ becomes a compact $T_2$ semi-topological semigroup and then the conclusion of Corollary~\ref{cor9.13} follows at once from Corollary~\ref{cor9.9}.

The following is a slight generalization of a theorem of Ellis, in which the only new ingredient is condition (1) $\Rightarrow$ (3).

\begin{thm}\label{thm9.14}
Let $G$ be a group of self-homeomorphisms of a compact $T_2$-space $X$; and let $T=\mathrm{cls}_{C_{\mathfrak{p}}(X,X)}G$. Then the following conditions are pairwise equivalent.
\begin{enumerate}
\item[$(1)$] $T$ is a compact $T_2$-topological subsemigroup of $C_{\mathfrak{p}}(X,X)$.
\item[$(2)$] $T$ is a compact $T_2$-topological subgroup of $C_{\mathfrak{p}}(X,X)$.
\item[$(3)$] $G$ is equicontinuous on $X$.
\end{enumerate}
\end{thm}

\begin{noteB}
Example~\ref{exa6.24} shows that the statement of Theorem~\ref{thm9.14} is not true if $G$ is a \textit{semigroup} of homeomorphisms of $X$ in place of $G$ being a group.
\end{noteB}
\begin{noteB}
It is comparable with \cite[Theorems~3.3 and 4.4]{Aus}. Condition $(2)\Leftrightarrow(3)$ is just Ellis'~\cite[Theorem~3]{E57}. Here our proof is completely independent of Ellis~\cite{E57} and it is more concise than his one.
\end{noteB}

\begin{proof}
Condition $(1)\Rightarrow(3)$. Let $T$ be a compact $T_2$-topological subsemigroup of $C_{\mathfrak{p}}(X,X)$. Then $(f,g)\mapsto f\circ g$ of $T\times T$ to $T$ is continuous in the topology $\mathfrak{S}$ of pointwise convergence on $T$ inherited from $C_{\mathfrak{p}}(X,X)$. We will prove that $T\times X\rightarrow X, (t,x)\mapsto tx$ is jointly continuous.
According to Theorem~\ref{thm9.8}, it suffices to show that $(T,\mathfrak{S})$ is admissible. Obviously we only need to check condition (c). Indeed, since $G$ is a group consisting of homeomorphisms on $X$, hence $\textrm{cls}_T{gG}=T$ for all $g\in G$. Now for any $\tau\in T\setminus G$ and $t\in T$, take nets $\{\tau_i\}\subset G, \{t_i\}\subset T$ with $\tau_i\to\tau$ and $\tau_i\circ t_i=t$. By choosing a subnet of $\{t_i\}$ in the compact $T$ if necessary, we may assume $t_i\to f\in T$. Thus, $\tau\circ f=t$ and then $\tau T=T$ for each $\tau\in T$. Thus $(T,\mathfrak{S})$ is admissible.
Furthermore, $\pi$ is continuous on $T\times X$ and so $T$ is equicontinuous on $X$ since $T\times X$ is compact.

Condition $(3)\Rightarrow(2)$. Since $G$ is equicontinuous, hence $G$ is distal on $X$ and further $T$ is a compact $T_2$-space with a group structure (\cite[Theorem~1]{E58}).\footnote{The proof that $T$ has the group structure is somewhat involved in \cite{E57, E58}. Here is an easy direct argument. First by the equicontinuity of $G$, it follows that under the uniform topology $T$ is a compact $T_2$ semi-topological semigroup. Now for any $\xi\in T$ and net $\{t_n\}$ in $G$ with $t_n\to\xi$ uniformly, let $t_n^{-1}\to\eta$ uniformly. Then $e=t_nt_n^{-1}\to\xi\eta, e=t_n^{-1}t_n\to\eta\xi$ and thus $\xi^{-1}=\eta$. This shows that $T$ is a compact $T_2$ semi-topological group.}
Thus by Theorem~\ref{thm9.8}, it follows that the map $(u,v)\mapsto u\circ v$ of $T\times T$ to $T$ is continuous. Now let $\{t_i\}\subset T$ be a net with $t_i\to t$. If $t_i^{-1}\to r$, then $t_it_i^{-1}=e$ implies that $r=t^{-1}$. Thus $t_i^{-1}\to t^{-1}$.
Therefore $T$ is a compact group relative to the space $C_{\mathfrak{p}}(X,X)$.

Condition $(2)\Rightarrow(1)$. This is trivial by definitions.

The proof of Theorem~\ref{thm9.14} is thus completed.
\end{proof}

Finally we will simply reprove another classical theorem of Ellis using our Theorem~\ref{thm9.8} above as follows.

\begin{thm}[{\cite[Theorem~2]{E57}}]\label{thm9.15}
Let $G$ be an l.c.$T_2$-space with a group structure such that $(x,y)\mapsto xy$ of $G\times G$ to $G$ is separately continuous. Then $G$ is a topological group.
\end{thm}

\begin{proof}
Let $X$ be the one-point compactification of $G$ with point at infinity $\infty$. Then $G$ may be thought of as a subset of $C_{\mathfrak{p}}(X,X)$ by setting $g\infty=\infty$ and $\infty g=\infty$ for all $g\in G$. By Theorem~\ref{thm9.8}, it follows that
\begin{gather*}
G\times X\rightarrow X,\ (g,x)\mapsto gx\quad \textrm{and}\quad X\times G\rightarrow X,\ (x,g)\mapsto xg
\end{gather*}
are jointly continuous. Thus, $(x,y)\mapsto xy$ of $G\times G$ to $G$ is continuous.

Now let $g\in G$ and $\{g_\gamma\}$ a net in $G$ with $g_\gamma\to g$ in $G$. Since $X$ is compact, we may assume $g_\gamma^{-1}\to h$ in $X$. Thus by $g_\gamma g_\gamma^{-1}=e=g_\gamma^{-1}g_\gamma$, we see that $gh=e=hg$ and $h=g^{-1}\in G$. Therefore, the map $g\mapsto g^{-1}$ of $G$ to $G$ is continuous. The proof is completed.
\end{proof}

Comparing our independent self-closed proof of Theorem~\ref{thm9.15} with Ellis' presented in \cite{E57}, here we need not use \cite[Exercise~17]{Bou} which is not accessible for many readers. The proof that inversion is continuous is somewhat more involved in the available literature (see, e.g., \cite{E57a,E57} and \cite[p.~63]{Aus}).
Theorem~\ref{thm9.15} is comparable with \cite[Theorem~2]{M36} and \cite[Lemma, p.~982]{G48} where $G$ is a Polish space.

Following Definition~\ref{sn3.8B} a topological semigroup $T$ is a left \textit{C}-semigroup if and only if $T\setminus{sT}$ is relatively compact in $T$ for every $s\in T$.

\begin{thm}\label{thm9.16}
Let $X$ be a compact $T_2$-space and let $T$ be a non-compact l.c.$T_2$-topological semigroup consisting of self-surjections of $X$. If $T$ is a left \textit{C}-semigroup and $(t,x)\mapsto tx$ of $T\times X$ onto $X$ is separately continuous, then $(T,X)$ is a semiflow $($i.e. $(t,x)\mapsto tx$ is jointly continuous$)$.
\end{thm}

\begin{proof}
The conditions (a) and (b) of Definition~\ref{def9.6} evidently hold. Since $T$ is not compact, hence $T\setminus(\textrm{cls}_T(T\setminus{\tau T}))$ is a non-empty open set. Thus $\tau T$ has a non-empty interior. This implies condition (c) of Definition~\ref{def9.7}. Then our statement follows at once from Theorem~\ref{thm9.8}.
\end{proof}

Note that under the usual topology, $(\mathbb{R}_+^d,+)$, for $d\ge2$, is not a left \textit{C}-semigroup. Thus Corollary~\ref{cor9.11} has different flavor with Theorem~\ref{thm9.16}.

\begin{cor}\label{cor9.17}
Let $T$ be a non-compact l.c. \textit{C}-semigroup and $X$ a compact $T_2$-space. Suppose that $(T,X)$ is an invertible semiflow. If $t_n\to t$ in $T$ implies that $t_n^{-1}x\to t^{-1}x$ for all $x\in X$, then the reflection $(X,T)$ is a semiflow.
\end{cor}

\begin{proof}
By hypothesis, $(x,t)\mapsto xt=t^{-1}x$ is separately continuous. Then $(X,T)$ is a semiflow by Theorem~\ref{thm9.16}.
\end{proof}

\section*{\textbf{Acknowledgments}}
This project was supported by National Natural Science Foundation of China (Grant Nos. 11431012 and 11271183) and PAPD of Jiangsu Higher Education Institutions.


\end{document}